\DeclareMathAlphabet{\matholdcal}{OMS}{cmsy}{m}{n}
\numberwithin{equation}{section} \DeclareMathSizes{2}{10}{12}{13}
\newtheorem{thm}{Proposition}[section]
\newtheorem{Thm}[thm]{Theorem}
\newtheorem{cor}[thm]{Corollary}
\newtheorem{lem}[thm]{Lemma}
\newtheorem{defn}[thm]{Definition}
\numberwithin{thm}{section} 
\title{Entwined comodules and contramodules over coalgebras with several objects: Frobenius, separability and Maschke theorems}
\author{Abhishek Banerjee \footnote{Department of Mathematics, Indian Institute of Science, Bangalore, India. Email: abhishekbanerjee1313@gmail.com} $\qquad\qquad$ Surjeet Kour \footnote{Department of Mathematics, Indian Institute of Technology, Delhi, India. Email: koursurjeet@gmail.com}}
\date{}
\begin{document}

\maketitle 

\medskip

\begin{abstract} We study module like objects over categorical quotients of algebras by the action of  coalgebras with several objects. These take the form of ``entwined comodules'' and ``entwined contramodules'' over a triple $(\mathscr C,A,\psi)$, where $A$ is an algebra, $\mathscr C$ is a coalgebra with several objects and $\psi$ is a collection of maps that ``entwines'' $\mathscr C$ with $A$. Our objective is to prove Frobenius, separability and Maschke type theorems
for functors between categories of entwined comodules and entwined contramodules. 
\end{abstract}

\medskip

{\bf MSC(2020) Subject Classification:} 16T15, 18E10
\medskip

{\bf Keywords:} Frobenius, separability, Maschke functors, entwined comodules, contramodules
\medskip

\section{Introduction}

An entwining structure $(C,A,\psi)$ consists of a coalgebra $C$, an algebra $A$ and a map $\psi:C\otimes A\longrightarrow A\otimes C$ that weaves together the comultiplication and multiplication structures in a manner  similar to a bialgebra. This notion, introduced by Brzezi\'{n}ski and Majid \cite{BrMj}, is extremely versatile. For instance, an entwining structure captures the essence of a coalgebra Galois extension, or may be seen as a noncommutative replacement for a principal fiber bundle produced by the quotient of the free action of an  affine algebraic group on a scheme. The properties of such a noncommutative space are then  understood in terms of the category of modules over it. In general, it was discovered that modules over an entwining structure (introduced in \cite{Brz1999}) bring together several objects of study in the literature, such as relative Hopf modules, Doi-Hopf modules and Yetter-Drinfeld modules (see for instance, \cite{BBR-}, \cite{BBR1}, \cite{AB}, \cite{uni}, \cite{Brz1}, \cite{Brz2002}, \cite{Bul1}, \cite{Bul2}, \cite{CaDe}, \cite{SchP}). In this paper, we prove Frobenius, separability and Maschke type theorems for entwined comodules and entwined contramodules for coalgebras with several objects.

\smallskip
  For a field $K$,   the  general philosophy of Mitchell \cite{Mit} is that  a small $K$-linear category should be seen as a ``$K$-algebra with several objects.''  Hence, most results on modules over a $K$-algebra can be extended to modules over small $K$-linear categories. We continue with the counterpart of this intuition for coalgebras. A $K$-coalgebra $\mathscr C$ with several objects (see Day and Street \cite{DS}, McCrudden \cite{Mc1}) is a small category enriched over the opposite of the category of $K$-vector spaces. Accordingly, we have $K$-linear maps
\begin{equation}\label{11c2.1}
\delta_{XYZ}:\mathscr C(X,Z)\longrightarrow \mathscr C(Y,Z)\otimes \mathscr C(X,Y) \qquad \epsilon_X: \mathscr C(X,X)\longrightarrow K
\end{equation} for every $X$, $Y$, $Z\in Ob(\mathscr C)$ that satisfy the required coassociativity and counit conditions. By an entwining of $\mathscr C$ with a $K$-algebra $A$, we will mean a collection of $K$-linear maps
\begin{equation}\label{112.2e}
\psi=\{\psi_{XY}:\mathscr C(X,Y)\otimes A\longrightarrow A\otimes \mathscr C(X,Y) \}_{X,Y\in Ob(\mathscr C)}
\end{equation} sarisfying certain conditions that we describe in Section 2. The  structure $(\mathscr C,A,\psi)$ may be seen as  a categorical quotient of the noncommutative space produced by the action of $\mathscr C$ on $A$.  This noncommutative space $(\mathscr C,A,\psi)$ need not exist as an explicit geometric object, but it can be studied by means of module like objects over it. 

\smallskip 
The structure  $(\mathscr C,A,\psi)$ has two ``module like categories'' canonically associated to it: entwined comodules and entwined contramodules. An entwined comodule consists of a comodule object over $\mathscr C$ equipped with $A$-linear structures that are well behaved with respect to the entwining $\psi$. Similarly, an entwined contramodule consists of a contramodule object over $\mathscr 
C$  equipped with $A$-linear structures that are well behaved with respect to the entwining $\psi$. We mention here that one of our aims is to treat the theory of contramodules at par with that of comodules.  We know that the structure map of an $A$-module $M$ can be described in two equivalent ways, either as a linear map $M\otimes A\longrightarrow M$ or as a linear map
$M\longrightarrow Hom_K(A,M)$. Over a coalgebra $C$, the notion of module can therefore be dualized in two separate ways, with structure maps given by
\begin{equation}\label{1.3dx}
N\longrightarrow N\otimes C\qquad\mbox{or}\qquad Hom_K(C,N)\longrightarrow N
\end{equation} The first of these structure maps in \eqref{1.3dx} leads to comomodules over $C$, while the second leads to  contramodules over $C$. While the notion of contramodules is classical, introduced by Eilenberg and Moore \cite[$\S$ IV.5]{EiMo}, the development of the theory of contramodules has somewhat lagged behind that of the theory of comodules. However, in recent years, there has been a lot of interest in the subject of contramodules (see \cite{BBR2}, \cite{Baz}, \cite{P}, \cite{Pos}, \cite{Pos1}, \cite{Pos0}, \cite{Shap}, \cite{Wiscc}). 

\smallskip
In classical ring theory, the properties of a ring are often recovered from its category of modules. Similarly, the properties of a morphism of rings are obtained from the adjoint pair consisting of extension and restriction of scalars. While the noncommutative quotient space corresponding to $(\mathscr C,A,\psi)$ need not exist as an explicit geometric object, there are adjoint functors between categories of entwined comodules or entwined contramodules that behave like extension and restriction of scalars. We study the properties of such adjoint pairs to give conditions under which   morphisms between these  noncommutative spaces behave like Frobenius or separable extensions of rings. In the case of entwined comodules, we adapt the methods of Brzezi\'{n}ski, Caenepeel, Militaru and Zhu \cite{uni} used to obtain Frobenius and separability theorems for modules over an entwining of a usual coalgebra with an algebra. The unified approach with entwined modules  in \cite{uni} brings together several ideas that had been used in   \cite{CMZ}, \cite{C13}, \cite{C10}, \cite{C11} to prove a range of Frobenius, separability and Maschke type theorems for objects such as Doi-Hopf modules, Yetter-Drinfeld modules 
and relative Hopf modules.  In \cite{BBR1}, \cite{AB} we have used the  methods of \cite{uni} to study  Frobenius and separability conditions for  modules over small preadditive categories entwined with coalgebras. In \cite{AB}, we considered quiver representations of entwining structures inspired by Estrada and Virili \cite{EV}, giving rise to scheme like objects, along with ``sheaves of modules'' and ``quasi-coherent sheaves'' over them.
 
\smallskip
We now describe the paper in more detail. An entwined comodule $\mathcal M$ over an entwining structure $(\mathscr C,A,\psi)$ consists of vector spaces 
$\{\mathcal M(X)\}_{X\in Ob(\mathscr C)}$ and is equipped with coaction maps as well as action maps 
\begin{equation}
\mathcal M(X)\longrightarrow \mathcal M(Y)\otimes \mathscr C(X,Y) \qquad \mathcal M(X)\otimes A\longrightarrow \mathcal M(X)\qquad X,Y\in Ob(\mathscr C)
\end{equation} satisfying compatibility conditions that we have laid out in Definition \ref{D2.1}.  We remark here that since the structure of $\mathcal M$ consists of both actions and coactions, it would be just as fair to refer to it  as an entwined module over $(\mathscr C,A,\psi)$. However since we want to work with both comodules and contramodules over the coalgebra $\mathscr C$ with several objects, we prefer the term entwined comodule. 

\smallskip We let $Com^{\mathscr C}_A(\psi)$ denote the category of entwined comodules over $(\mathscr C,A,\psi)$. Our first main result (see Theorem \ref{T2.5}) is that $Com^{\mathscr C}_A(\psi)$ is a Grothendieck category. In Section 2, we also construct a pair of adjoint functors 
\begin{equation}\label{adjsec1}
F_\psi :Com^{\mathscr C}\longrightarrow Com_A^{\mathscr C}(\psi) \qquad G_\psi :Com_A^{\mathscr C}(\psi)\longrightarrow Com^{\mathscr C}
\end{equation} where $Com^{\mathscr C}$ denotes the category of comodules over the coalgebra $\mathscr C$ with several objects. As mentioned before, these adjoint functors may be seen as restriction and extension of scalars between module like categories over noncommutative spaces. 

\smallskip
  In Section 3, we describe contramodules over the coalgebra $\mathscr C$ with several objects. Such an object $\matholdcal M$ is given by a collection of vector spaces $\{\matholdcal M(X)\}_{X\in Ob(\mathscr C)}$  equipped with structure maps $Hom_K(\mathscr C(X,Y),\matholdcal M(Y))\longrightarrow \matholdcal M(X)$ for $X$, $Y\in Ob(\mathscr C)$ satisfying conditions given in Definition \ref{D3.1}. We let ${^\mathscr C}Ctr$ denote the category of contramodules over $\mathscr C$. While ${^\mathscr C}Ctr$ need not be a Grothendieck category, we show in Proposition \ref{P3.4} that it has a set of projective generators. An entwined contramodule over $(\mathscr C,A,\psi)$  has structure maps
\begin{equation}
Hom_K(\mathscr C(X,Y),\matholdcal M(Y))\longrightarrow \matholdcal M(X)\qquad \matholdcal M(X)\longrightarrow Hom_K(A,\matholdcal M(X))\qquad X,Y\in Ob(\mathscr C)
\end{equation} as described in Definition \ref{D3.2}. We show in Theorem \ref{T3.8} how to obtain a set of generators for ${_A^{\mathscr C}}Ctr(\psi)$. In Section 3, we also obtain a pair of adjoint functors
\begin{equation}\label{adjsec3}
  S_\psi: {_A^{\mathscr C}}Ctr(\psi)\longrightarrow  {^\mathscr C}Ctr\qquad T_\psi:{^\mathscr C}Ctr\longrightarrow {_A^{\mathscr C}}Ctr(\psi)
\end{equation} As in the case of entwined comodules, we see  these adjoint functors as extension and restriction of scalars  between module like categories over noncommutative spaces. 

\smallskip
We then  give conditions for the functors appearing in \eqref{adjsec1} and \eqref{adjsec3} to be separable.  
  The main results of Sections 4 and 5 can   be summarized into the following two theorems.

\begin{Thm}\label{T1.1introd} (see Theorem \ref{T4.5hm} and Theorem  \ref{T4.5hm5})
Let $\mathscr C$ be a coalgebra with several objects and let $(\mathscr C,A,\psi)$ be an entwining structure. Let $V_1$ be the space such that
an element $\sigma\in V_1$ is a collection of maps $
\sigma=\{\sigma_X:\mathscr C(X,X)\otimes A\longrightarrow K\}_{X\in Ob(\mathscr C)}
$
 such that 
 \begin{equation}\label{4.3zintro}
 \sigma_Y(f_{Y1}\otimes a_{\psi})f_{Y2}^\psi=\sigma_X(f_{X2}\otimes a)f_{X1} \in \mathscr C(X,Y)
 \end{equation} for any $f\in \mathscr C(X,Y)$, $a\in A$, $X$, $Y\in Ob(\mathscr C)$.

\smallskip Then, the following are equivalent:

\smallskip
(1) There exists $\sigma \in V_1$ such that
\begin{equation}\label{421condin}
\sigma_X(g\otimes 1)=\epsilon_X(g)\qquad \forall \textrm{ }g\in \mathscr C(X,X), \textrm{ }X\in Ob(\mathscr C)
\end{equation}

\smallskip
(2) The functor $F_\psi :Com^{\mathscr C}\longrightarrow Com^{\mathscr C}_A(\psi)$ is separable.

\smallskip
(3) The functor $T_\psi :{^{\mathscr C}}Ctr\longrightarrow  {_A^{\mathscr C}}Ctr(\psi)$ is separable.
\end{Thm}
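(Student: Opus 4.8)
The plan is to establish both equivalences (1) $\Leftrightarrow$ (2) and (1) $\Leftrightarrow$ (3) by means of the separability criterion for adjoint functors (Rafael's theorem), which reduces separability to the splitting of the relevant unit or counit by a natural transformation. Since $F_\psi$ is the left adjoint of the pair in \eqref{adjsec1}, Rafael's theorem says that $F_\psi$ is separable if and only if the unit $\eta:\mathrm{Id}_{Com^{\mathscr C}}\longrightarrow G_\psi F_\psi$ admits a natural retraction $\nu:G_\psi F_\psi\longrightarrow \mathrm{Id}_{Com^{\mathscr C}}$, \ie a natural transformation with $\nu\circ\eta=\mathrm{id}$. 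Dually, since $T_\psi$ is the right adjoint of the pair $S_\psi\dashv T_\psi$ in \eqref{adjsec3}, $T_\psi$ is separable if and only if the counit $\varepsilon:S_\psi T_\psi\longrightarrow \mathrm{Id}_{{^\mathscr C}Ctr}$ admits a natural section $\zeta:\mathrm{Id}_{{^\mathscr C}Ctr}\longrightarrow S_\psi T_\psi$ with $\varepsilon\circ\zeta=\mathrm{id}$. So the whole theorem comes down to describing these natural transformations explicitly and matching them against the set $V_1$.

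First I would compute $G_\psi F_\psi(\mathcal N)$ for a comodule $\mathcal N\in Com^{\mathscr C}$. Since $F_\psi$ is extension of scalars (tensoring with $A$) and $G_\psi$ forgets the $A$-action, $G_\psi F_\psi(\mathcal N)$ is the comodule whose value at $X$ is $\mathcal N(X)\otimes A$, and the unit is $\eta_{\mathcal N}(n)=n\otimes 1$. A natural transformation $\nu:G_\psi F_\psi\longrightarrow \mathrm{Id}$ is then a family of $\mathscr C$-colinear maps $\nu_{\mathcal N,X}:\mathcal N(X)\otimes A\longrightarrow \mathcal N(X)$, natural in $\mathcal N$. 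Evaluating $\nu$ on the corepresentable comodules $\mathscr C(-,Z)$ and composing with the counit $\epsilon$ should yield a collection of scalar maps $\sigma_X:\mathscr C(X,X)\otimes A\longrightarrow K$; conversely, naturality in $\mathcal N$ lets one reconstruct the full family $\nu$ from $\sigma$ by means of the coaction. The point is that the requirement that such a reconstructed $\nu$ be well defined and $\mathscr C$-colinear collapses, after inserting the entwining $\psi$, to exactly the compatibility \eqref{4.3zintro} defining membership in $V_1$. This identifies $V_1$ with $\mathrm{Nat}(G_\psi F_\psi,\mathrm{Id})$.

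With this dictionary in hand, the retraction condition $\nu\circ\eta=\mathrm{id}$ unwinds: inserting $\eta_{\mathcal N}(n)=n\otimes 1$ and using the counit axiom for $\mathscr C$, it becomes the single normalization $\sigma_X(g\otimes 1)=\epsilon_X(g)$, which is precisely \eqref{421condin}. This yields (1) $\Leftrightarrow$ (2). For (1) $\Leftrightarrow$ (3) I would run the dual computation on the contramodule pair: one computes $S_\psi T_\psi$ and its counit $\varepsilon$ explicitly, and shows that a natural section $\zeta:\mathrm{Id}\longrightarrow S_\psi T_\psi$ --- now tested against the projective generators of ${^\mathscr C}Ctr$ supplied by Proposition \ref{P3.4} in place of the corepresentable comodules --- is again governed by the same scalar data $\sigma=\{\sigma_X\}$, with \eqref{4.3zintro} being the condition for $\zeta$ to be a natural contralinear transformation and \eqref{421condin} being the condition $\varepsilon\circ\zeta=\mathrm{id}$. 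Since the controlling data $V_1$ and the normalization \eqref{421condin} are literally the same in both settings, (2) and (3) are each equivalent to (1), hence to each other.

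The step I expect to be the main obstacle is the explicit parametrization of the natural transformations by $V_1$. One has to carry out the Yoneda-type reduction in the several-objects setting and verify that the naturality together with $\mathscr C$-colinearity --- respectively $\mathscr C$-contralinearity --- constraints collapse exactly to \eqref{4.3zintro}, keeping careful track of the entwining $\psi$ through the Sweedler-type bookkeeping that produces the $\psi$-twisted terms in \eqref{4.3zintro}. The contramodule side is the more delicate of the two, since the structure maps point the opposite way and one must work with projective generators rather than with cofree objects; checking that the very same $\sigma$ controls both sides is what ultimately forces the three-way equivalence.
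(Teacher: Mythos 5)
Your proposal follows essentially the same route as the paper: the paper likewise invokes the separability criterion for adjoint pairs (citing \cite{uni}) to reduce (2) to splitting the unit of $(F_\psi,G_\psi)$ and (3) to splitting the counit of $(S_\psi,T_\psi)$, and then proves the isomorphisms $V_1\cong \mathrm{Nat}(G_\psi F_\psi,\mathrm{id}_{Com^{\mathscr C}})$ (Lemmas \ref{Lem4.1}--\ref{Lem4.2}, Proposition \ref{P4.4}, testing against the regular representations $\mathcal H_X$) and $V_1\cong \mathrm{Nat}(\mathrm{id}_{{^\mathscr C}Ctr},S_\psi T_\psi)$ (Section 5), after which both normalizations unwind to \eqref{421condin} exactly as you describe. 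The one small divergence is that on the contramodule side the paper tests not merely against the projective generators $\matholdcal H_X^*$ of Proposition \ref{P3.4} but against the full family of cofree-type objects $\matholdcal H_X^U$ for arbitrary $U\in Vect_K$, using functoriality in $U$ and the Yoneda lemma to extract the maps $\zeta^\tau_{XY}$ --- precisely the ``Yoneda-type reduction'' you flag as the delicate step, so your sketch is consistent with the paper's argument.
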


\begin{Thm}\label{T1.2introd} (see Theorem \ref{T4.11hm} and Theorem  \ref{5T4.11hm5})
Let $\mathscr C$ be a coalgebra with several objects and let $(\mathscr C,A,\psi)$ be an entwining structure. Let $W_1$ be the space such that an element
$\lambda\in W_1$ is a collection of linear maps
\begin{equation}
\lambda=\{\mbox{$\lambda^X$ $\vert$ $\lambda^X:\mathscr C(X,X)\longrightarrow A\otimes A$, $\textrm{ }$ $\lambda^{X}(f):= \lambda^{X1}(f)\otimes 
\lambda^{X2}(f)$}\}_{X\in Ob(\mathscr C)}
\end{equation} satisfying the following conditions 
\begin{gather}
\lambda^{Y1}(f_{Y1})\otimes \lambda^{Y2}(f_{Y1})\otimes f_{Y2}=\lambda^{X1}(f_{X2})_\psi\otimes \lambda^{X2}(f_{X2})_\psi\otimes f_{X1}^{\psi\psi} \in A\otimes A\otimes 
\mathscr C(X,Y) \label{425nintro}\\
\lambda^{Z1}(g)\otimes \lambda^{Z2}(g)a=a_\psi\lambda^{Z1}(g^\psi)\otimes \lambda^{Z2}(g^\psi)\in A\otimes A \label{426nintro}
\end{gather} for $f\in \mathscr C(X,Y)$, $g\in \mathscr C(Z,Z)$, $a\in A$, $X,Y,Z\in Ob(\mathscr C)$. 

\smallskip Then, the following are equivalent:

\smallskip
(1) There exists $\lambda \in W_1$ such that
\begin{equation}\label{448condin}
\lambda^{X1}(h)\lambda^{X2}(h)=\epsilon_X(h)\cdot 1 \in A\qquad \forall \textrm{ }h\in \mathscr C(X,X), \textrm{ }X\in Ob(\mathscr C)
\end{equation}

\smallskip
(2) The functor $G_\psi :Com^{\mathscr C}_A(\psi)\longrightarrow Com^{\mathscr C}$   is separable.

\smallskip
(3) The functor $S_\psi : {_A^{\mathscr C}}Ctr(\psi)\longrightarrow  {^{\mathscr C}}Ctr$   is separable. 
\end{Thm}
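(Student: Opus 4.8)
The plan is to deduce all three equivalences from the standard criterion characterizing separability of a functor that possesses an adjoint, in terms of a natural splitting of the associated unit or counit (Rafael's theorem; see \cite{uni}). Recall from Sections 2 and 3 that the extension/restriction functors arrange themselves into adjunctions $F_\psi\dashv G_\psi$ on comodules and $S_\psi\dashv T_\psi$ on contramodules, so that in the present statement $G_\psi$ is a right adjoint while $S_\psi$ is a left adjoint. Accordingly, $G_\psi$ is separable if and only if the counit $\varepsilon:F_\psi G_\psi\longrightarrow 1_{Com^{\mathscr C}_A(\psi)}$ admits a natural section $\zeta$ with $\varepsilon\circ\zeta=1$, while $S_\psi$ is separable if and only if the unit $\eta:1_{{_A^{\mathscr C}}Ctr(\psi)}\longrightarrow T_\psi S_\psi$ admits a natural retraction $\nu$ with $\nu\circ\eta=1$. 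It therefore suffices to establish $(1)\Leftrightarrow(2)$ and $(1)\Leftrightarrow(3)$ separately, after which $(2)\Leftrightarrow(3)$ is immediate.

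For $(1)\Leftrightarrow(2)$, I would first make the counit explicit: on an entwined comodule $\mathcal M$ one has $F_\psi G_\psi(\mathcal M)(X)=\mathcal M(X)\otimes A$, with $\varepsilon_{\mathcal M}$ the $A$-action $m\otimes a\mapsto ma$. A natural section $\zeta$ is then a family $\zeta_{\mathcal M}(X):\mathcal M(X)\longrightarrow \mathcal M(X)\otimes A$ compatible with all coactions and all $A$-actions. The key device is to evaluate $\zeta$ on the cofree entwined comodules $F_\psi(\mathscr C(-,X))$ obtained by applying $F_\psi$ to the corepresentable comodules $\mathscr C(-,X)$, whose coaction is given by the comultiplication $\delta$. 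By naturality $\zeta$ is completely determined by these values, and reading off the component landing in $A\otimes A$ (after collapsing the residual $\mathscr C(X,X)$-factor with $\epsilon_X$) produces precisely a family $\lambda^X:\mathscr C(X,X)\longrightarrow A\otimes A$. Colinearity of $\zeta$ translates into \eqref{425nintro}, compatibility of $\zeta$ with the $A$-action translates into \eqref{426nintro}, so that $\lambda\in W_1$; and the section identity $\varepsilon\circ\zeta=1$ becomes exactly the normalization \eqref{448condin}. Conversely, any $\lambda\in W_1$ satisfying \eqref{448condin} reconstructs a natural section of $\varepsilon$ built from $\lambda$ and the coaction of $\mathcal M$, giving $(1)\Rightarrow(2)$.

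For $(1)\Leftrightarrow(3)$ the argument is dual, carried out with the $Hom_K(A,-)$ and $Hom_K(\mathscr C(X,Y),-)$ structure maps of entwined contramodules. Since $S_\psi$ is the left adjoint here, I would make the unit $\eta:1\longrightarrow T_\psi S_\psi$ explicit and seek a natural retraction $\nu$. Testing $\nu$ against the projective generators of ${^{\mathscr C}}Ctr$ furnished by Proposition \ref{P3.4}, together with the generators of ${_A^{\mathscr C}}Ctr(\psi)$ from Theorem \ref{T3.8}, again pins down a family of maps valued in $A\otimes A$; dualizing the comodule computation, contra-linearity and $A$-compatibility of $\nu$ yield the same equations \eqref{425nintro} and \eqref{426nintro}, and $\nu\circ\eta=1$ yields \eqref{448condin}. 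Because the space $W_1$ and the condition \eqref{448condin} extracted on the two sides literally coincide, $(2)\Leftrightarrow(3)$ follows at once.

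The routine parts are the appeal to Rafael's theorem and the verification that $W_1$ together with \eqref{448condin} parametrizes each splitting. The hard part will be the bookkeeping with the entwining $\psi$ in the several-objects setting: deriving \eqref{425nintro} requires transporting the coaction past the $A$-factor twice, which is the source of the double twist $f_{X1}^{\psi\psi}$, and one must keep the Sweedler indices for $\delta$ consistent across the three objects $X,Y,Z$ (the comultiplication splitting an arrow of $\mathscr C(X,Z)$ into factors in $\mathscr C(Y,Z)\otimes\mathscr C(X,Y)$). Confirming that the contravariant $Hom$-computation on the contramodule side produces exactly these identities, rather than a transpose of them, is the step demanding the most care.
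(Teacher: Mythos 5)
Your overall strategy coincides with the paper's: both Theorem \ref{T4.11hm} and Theorem \ref{5T4.11hm5} are proved via the splitting criterion of \cite[Proposition 1.1]{uni} applied to the counit of $(F_\psi,G_\psi)$ and the unit of $(S_\psi,T_\psi)$ respectively, and your comodule half follows the paper's route essentially verbatim: the natural section $\zeta\in Nat(id_{Com_A^{\mathscr C}(\psi)},F_\psi G_\psi)$ is evaluated on the objects $\mathcal H_X\otimes A$ exactly as in \eqref{430bet}, colinearity plus naturality (with respect to the morphisms $\delta^X_Y\otimes A$) yields \eqref{425nintro}, $A$-linearity plus naturality yields \eqref{426nintro}, the splitting identity gives \eqref{448condin}, and conversely $\lambda\in W_1$ rebuilds the section by $m\mapsto m_{X0}\lambda^{X1}(m_{X1})\otimes\lambda^{X2}(m_{X1})$, as in Lemmas \ref{Lem4.6}--\ref{LRG4} and Theorem \ref{T4.11hm}.

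The contramodule half, however, has a genuine gap precisely where you claim that testing the retraction $\nu$ of the unit against the projective generators $\{\matholdcal H_X^*\}$ of Proposition \ref{P3.4}, together with the generators of Theorem \ref{T3.8} (which are subobjects of $(A,\matholdcal H_X^*)$), ``pins down a family of maps valued in $A\otimes A$.'' It does not: all of these test objects have coefficient space $U=K$, so evaluating $\nu$ on $(A,\matholdcal H_X^*)$ and collapsing with $\epsilon_X$ and $u_A$ produces only a linear map $(A\otimes A)^*\longrightarrow \mathscr C(X,X)^*$ between dual spaces. Since neither $A$ nor $\mathscr C(X,X)$ is assumed finite dimensional, such a map need not be the transpose of any $\lambda^X:\mathscr C(X,X)\longrightarrow A\otimes A$ --- this is exactly the transpose issue you flag in your last paragraph, but your proposed device is the place where it bites, and $U=K$ data cannot resolve it. The paper's resolution is to evaluate on the whole family of cofree contramodules $\matholdcal H_X^U$ for \emph{arbitrary} $U\in Vect_K$, observe that the resulting maps are functorial in $U$ (the analogue of Lemma \ref{L5.01b}), and then apply the Yoneda lemma to the induced natural transformation $(A\otimes A,\_\_)\longrightarrow(\mathscr C(X,X),\_\_)$ of representable functors to extract a genuine $\lambda^X:\mathscr C(X,X)\longrightarrow A\otimes A$; this is the content of \eqref{523eqcf} and of the reformulations \eqref{cond51vn}--\eqref{cond52vn} of the $W_1$-identities as identities of natural transformations. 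The same device is needed for your determinacy claim (``naturality determines $\nu$''), which again requires all coefficients $U$, not just $U=K$. Once this Yoneda-in-$U$ machinery replaces your appeal to Proposition \ref{P3.4} and Theorem \ref{T3.8}, the rest of your outline is sound, and $(2)\Leftrightarrow(3)$ follows, as you say, because both halves land on the same space $W_1$ and the same normalization \eqref{448condin}.
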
 

We come to Frobenius properties in Section 6. We recall that a pair $(S,T)$ of adjoint functors is said to be Frobenius (see, for instance, \cite{uni}) if $(T,S)$ is also an adjoint pair.  This terminology corresponds to the fact that an extension of rings is Frobenius if and only if restriction and extension of scalars  induced by it form a Frobenius pair of functors. Using again the spaces $V_1$ and $W_1$ from Theorem \ref{T1.1introd} and Theorem \ref{T1.2introd}, we prove the following result for entwined contramodules and entwined comodules over $(\mathscr C,A,\psi)$.

\begin{Thm}\label{T1.3introd} (see Theorem \ref{P6.2} and Theorem \ref{T6.27})  
Let $(\mathscr C,A,\psi)$ be an entwining structure. Then, the following are equivalent:

\smallskip
(1) There exists $\sigma\in V_1$ and $\lambda\in W_1$ such that 
\begin{gather}
\epsilon_X(f)\cdot 1= \sigma_X(f_{X1}\otimes \lambda^{X1}(f_{X2}))\lambda^{X2}(f_{X2})\label{6.4crtintr}\\
\epsilon_X(f)\cdot 1= \sigma_X(f_{X1}^\psi\otimes \lambda^{X2}(f_{X2}))\lambda^{X1}(f_{X2})_\psi\label{6.5crtintr} 
\end{gather} for every $f\in \mathscr C(X,X)$, $X\in Ob(\mathscr C)$. 

\smallskip
(2) The functors \begin{equation}
S_\psi: {_A^{\mathscr C}}Ctr(\psi)\longrightarrow {^{\mathscr C}}Ctr \qquad T_\psi:  {^{\mathscr C}}Ctr
\longrightarrow  {_A^{\mathscr C}}Ctr(\psi)
\end{equation} between categories of   contramodules form a Frobenius pair.

\smallskip
(3) The functors
\begin{equation}
F_\psi: Com^{\mathscr C}\longrightarrow Com_A^{\mathscr C}(\psi) \qquad G_\psi: Com_A^{\mathscr C}(\psi)\longrightarrow Com^{\mathscr C}
\end{equation} between categories of   comodules form a Frobenius pair.
\end{Thm}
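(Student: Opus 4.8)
The plan is to reduce the Frobenius property of each adjoint pair to the existence of a reverse adjunction, and then to read off the triangle identities of that reverse adjunction as the two conditions \eqref{6.4crtintr} and \eqref{6.5crtintr}. I will treat the comodule statement $(3)$ and the contramodule statement $(2)$ in parallel, exploiting the fact that the combinatorial data $\sigma\in V_1$ and $\lambda\in W_1$ govern natural transformations on both sides simultaneously.

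For the comodule pair, recall that $(F_\psi,G_\psi)$ carries a unit $\eta$ and a counit $\varepsilon$ constructed in Section 2, and that by definition $(F_\psi,G_\psi)$ is a Frobenius pair exactly when $G_\psi$ is also left adjoint to $F_\psi$; equivalently, when there exist natural transformations $\bar\varepsilon\colon G_\psi F_\psi\to 1$ and $\bar\eta\colon 1\to F_\psi G_\psi$ satisfying the triangle identities $(\bar\varepsilon G_\psi)\circ(G_\psi\bar\eta)=1_{G_\psi}$ and $(F_\psi\bar\varepsilon)\circ(\bar\eta F_\psi)=1_{F_\psi}$. The first step is to identify these two families of natural transformations with the spaces $V_1$ and $W_1$. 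The analysis already carried out in proving Theorem \ref{T1.1introd} shows that natural transformations $G_\psi F_\psi\to 1$ are in bijection with $\sigma\in V_1$, condition \eqref{4.3zintro} being precisely the naturality constraint; dually, the analysis for Theorem \ref{T1.2introd} shows that natural transformations $1\to F_\psi G_\psi$ are in bijection with $\lambda\in W_1$, with \eqref{425nintro} and \eqref{426nintro} encoding naturality. Here the normalizations \eqref{421condin} and \eqref{448condin}, which single out separability, are deliberately \emph{not} imposed; for the Frobenius property they are replaced by the triangle identities.

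With these bijections in hand, I would substitute the explicit formula for $\bar\varepsilon$ determined by $\sigma$ and the formula for $\bar\eta$ determined by $\lambda$, together with the known unit and counit of $(F_\psi,G_\psi)$, into the two triangle identities. Evaluating the resulting composites and reducing, through the coalgebra structure maps $\delta$ and $\epsilon$, should collapse the first triangle identity to an identity in $A$ indexed by $f\in\mathscr C(X,X)$ that is exactly \eqref{6.4crtintr}, and the second to \eqref{6.5crtintr}; this yields $(1)\Leftrightarrow(3)$. For the contramodule pair $(S_\psi,T_\psi)$ the directions of the arrows are dualized: the same $\sigma\in V_1$ now parametrizes a natural transformation $1\to S_\psi T_\psi$ and the same $\lambda\in W_1$ a natural transformation $T_\psi S_\psi\to 1$, as recorded in the contramodule halves of Theorems \ref{T1.1introd} and \ref{T1.2introd}. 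Repeating the triangle-identity computation, now inside the $Hom_K(\mathscr C(X,Y),-)$-spaces, should produce the identical pair of equations \eqref{6.4crtintr} and \eqref{6.5crtintr}, giving $(1)\Leftrightarrow(2)$ and completing the chain of equivalences.

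I expect the main obstacle to be the explicit verification that the triangle identities collapse precisely to \eqref{6.4crtintr} and \eqref{6.5crtintr}, rather than any conceptual difficulty. The computation must carry the full entwining calculus, including the doubly twisted terms such as $f_{X1}^{\psi\psi}$ appearing in \eqref{425nintro} and the interleaving of the comultiplication with the maps $\psi_{XY}$; keeping the $\psi$-decorations consistent through the mixed composites $G_\psi\bar\eta$ and $\bar\eta F_\psi$ is where index-bookkeeping errors are most likely. A secondary point requiring care is the contramodule side, where the structure maps land in $Hom_K$-spaces and evaluation runs in the opposite direction, so one must check that the dualized computation genuinely reproduces the identical conditions and not merely their formal transposes.
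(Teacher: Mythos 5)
Your proposal follows essentially the same route as the paper: the paper likewise characterizes the Frobenius property via natural transformations $\tau\in Nat(G_\psi F_\psi,\mathrm{id})\cong V_1$ and $\kappa\in Nat(\mathrm{id},F_\psi G_\psi)\cong W_1$ (and, on the contramodule side, $Nat(\mathrm{id},S_\psi T_\psi)\cong V_1$, $Nat(T_\psi S_\psi,\mathrm{id})\cong W_1$), reusing the un-normalized bijections from Sections 4--5 and imposing the two triangle identities of the reverse adjunction, which it then reduces to \eqref{6.4crtintr}--\eqref{6.5crtintr} by evaluating on the regular objects $\mathcal H_X$, $\mathcal H_X\otimes A$, $\matholdcal H_X^U$, $(A,\matholdcal H_X^U)$ together with the Yoneda lemma, and checks the converse directly. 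Your decomposition into the equivalences $(1)\Leftrightarrow(3)$ and $(1)\Leftrightarrow(2)$ is exactly the paper's split into Theorem \ref{T6.27} and Theorem \ref{P6.2}.
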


In Section 7, we prove Maschke type results for entwined contramodules and entwined comodules over $(\mathscr C,A,\psi)$. In the case of entwined comodules over 
$(\mathscr C,A,\psi)$, we adapt the techniques of Brzezi\'{n}ski \cite{Brz1}. By a normalized cointegral on $(\mathscr C,A,\psi)$, we will mean a collection  of
$K$-linear maps
\begin{equation}\label{norm7intro}
\gamma=\{\gamma_{X}:A^*\otimes\mathscr C(X,X)\longrightarrow A\}_{X\in Ob(\mathscr C)}
\end{equation} satisfying certain conditions that we describe in Definition \ref{D7.1}. We conclude by proving the following result. 

\begin{Thm}\label{T1.4introd} (see Theorem \ref{T7.5} and Theorem \ref{T7.7}) Let $(\mathscr C,A,\psi)$ be an entwining structure. Suppose there exists a normalized cointegral  $\gamma=\{\gamma_{X}:A^*\otimes\mathscr C(X,X)\longrightarrow A\}_{X\in Ob(\mathscr C)}$ on $(\mathscr C,A,\psi)$. Then, we have

\smallskip
(a) The  functor $S_\psi: {_A^{\mathscr C}}Ctr(\psi)\longrightarrow {^{\mathscr C}}Ctr$ on entwined contramodules is a semisimple functor, and also a Maschke functor.

\smallskip
(b) The  functor $G_\psi:  Com_A^{\mathscr C}(\psi)\longrightarrow Com^{\mathscr C}$ on entwined comodules is a semisimple functor, and also a Maschke functor.

\end{Thm}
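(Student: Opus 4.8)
The plan is to prove both parts by the averaging argument that underlies every Maschke-type theorem, with the normalized cointegral $\gamma$ of Definition \ref{D7.1} playing the role that $\frac{1}{|G|}\sum_{g\in G}g$ plays in the classical statement. Since $G_\psi$ and $S_\psi$ are precisely the functors that forget the $A$-linear structure, the assertion reduces to the following: any splitting of a morphism that exists in $Com^{\mathscr C}$ (resp. ${}^{\mathscr C}Ctr$) can be corrected, by convolution against $\gamma$, into a splitting inside $Com_A^{\mathscr C}(\psi)$ (resp. ${}_A^{\mathscr C}Ctr(\psi)$). Following the conventions of \cite{uni}, I read ``semisimple functor'' as the property that every $G_\psi$-split (resp. $S_\psi$-split) epimorphism is already split, and ``Maschke functor'' as the corresponding statement for monomorphisms; both will be obtained from the same construction, applied once to a colinear section and once to a colinear retraction.

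I would begin with part (b). Let $p:\mathcal M\longrightarrow \mathcal N$ be a morphism in $Com_A^{\mathscr C}(\psi)$ whose underlying comodule map $G_\psi(p)$ admits a section $s$ in $Com^{\mathscr C}$; I must manufacture an $A$-linear, $\mathscr C$-colinear section $\bar s$ of $p$. Writing the $\mathscr C$-coaction of $\mathcal N$ at $X$ as $n\longmapsto n_{[0]}\otimes n_{[1]}$ with $n_{[0]}\in \mathcal N(Y)$ and $n_{[1]}\in \mathscr C(X,Y)$, I define $\bar s_X$ by applying $s$ on the $\mathcal N(Y)$-component, re-acting by $A$ through the entwining $\psi_{XY}$, and correcting by the cointegral; the counit maps $\epsilon_X$ of \eqref{11c2.1} are what allow the index $Y$ to be collapsed back to $X$ so that the argument fed to $\gamma_X$ genuinely lands in $\mathscr C(X,X)$. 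The defining inputs are the coassociativity and counit identities \eqref{11c2.1}, the compatibility conditions on the entwined comodule $\mathcal N$ from Definition \ref{D2.1}, and the conditions on $\gamma$ recorded in Definition \ref{D7.1}.

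The verification then breaks into three checks, of which only one carries weight. That $\bar s$ is again $\mathscr C$-colinear is inherited from the colinearity of $s$ together with the compatibility of $\gamma$ with the coaction. That $p\circ\bar s=\mathrm{id}_{\mathcal N}$ follows from the normalization clause of Definition \ref{D7.1}, which equates the appropriate evaluation of $\gamma_X$ with the counit $\epsilon_X$ of \eqref{11c2.1} and thereby collapses the averaging back to $p\circ s=\mathrm{id}$. The genuinely laborious step, and the main obstacle, is the $A$-linearity of $\bar s$: this is exactly where the entwining $\psi$ must be commuted past the $A$-action and absorbed into $\gamma$ by means of its defining relations, and in the several-objects setting one has to track the source and target objects through every instance of $\delta_{XYZ}$ and $\psi_{XY}$, so that all the content resides in the Sweedler-type bookkeeping with the $\psi$-superscripts. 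The Maschke statement is then obtained by the mirror-image computation: given a morphism $i$ in $Com_A^{\mathscr C}(\psi)$ with a $\mathscr C$-colinear retraction $r$, the same averaging produces an $A$-linear, $\mathscr C$-colinear retraction $\bar r$ with $\bar r\circ i=\mathrm{id}$.

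For part (a) I would run the formally dual argument in ${}_A^{\mathscr C}Ctr(\psi)$, where the $A$-structure is encoded contravariantly by maps $\matholdcal M(X)\longrightarrow Hom_K(A,\matholdcal M(X))$ and the $\mathscr C$-structure by the contra-action $Hom_K(\mathscr C(X,Y),\matholdcal M(Y))\longrightarrow \matholdcal M(X)$ of Definition \ref{D3.2}. Here the factor $A^*$ in the domain of $\gamma_X$ is exactly what makes the averaging type-correct: it pairs against the contravariant $A$-structure in the same way that the output $A$ of $\gamma_X$ acted on the covariant $A$-structure in part (b), so the whole construction is self-dual and the single cointegral serves both functors at once. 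Given an $S_\psi$-split morphism of entwined contramodules, I correct the contralinear splitting by precomposing with $\gamma$ through the contra-action and the evaluation on $A^*$; the three checks are the formal duals of those in part (b), with the $A$-compatibility once more the only nontrivial one. I expect this computation to mirror the comodule case line for line once the adjunction between $-\otimes A$ and $Hom_K(A,-)$ has been installed, so that the crux of the entire theorem is the single $A$-linearity verification carried out in part (b).
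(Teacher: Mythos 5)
Your proposal takes essentially the same route as the paper: the deformations constructed in Lemma \ref{L7.2} and in \eqref{comp7.6} are exactly your cointegral-averaging correction of a contralinear (resp.\ colinear) splitting, with the $A$-linearity verification as the only substantial computation, and Propositions \ref{P7.4} and \ref{P7.6} then transfer sections and retractions just as you describe (note only that the paper's deformation at an object $Y$ uses the diagonal coaction $\rho_{YY}$, so the argument of $\gamma_Y$ lands in $\mathscr C(Y,Y)$ automatically, without any counit collapsing). The single point you elide is that the Maschke property in Definition \ref{D7.4} (following \cite{CM2003}) is an extension property rather than mere reflection of split monomorphisms, so the paper additionally observes that $S_\psi$ and $G_\psi$ reflect monomorphisms and invokes \cite{CM2003}*{Proposition 3.7} --- an easy step that your argument supplies implicitly.
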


\section{Entwined comodules over coalgebras with several objects}

Let $K$ be a field and $Vect_K$ be the category of vector spaces over $K$. By a coalgebra with several objects (see \cite{DS}, \cite{Mc1}), we will mean a small category $\mathscr C$ that is enriched over the opposite category of $Vect_K$. More explicitly, we have a set $Ob(\mathscr C)$ of objects as well as $K$-linear maps
\begin{equation}\label{c2.1}
\delta_{XYZ}:\mathscr C(X,Z)\longrightarrow \mathscr C(Y,Z)\otimes \mathscr C(X,Y) \qquad \epsilon_X: \mathscr C(X,X)\longrightarrow K
\end{equation} for every $X$, $Y$, $Z\in Ob(\mathscr C)$ that satisfy ``coassociativity'' and ``counit'' conditions making $\mathscr C$ a category enriched over $Vect_K^{op}$. In other words, we have commutative diagrams
\begin{equation}\label{coalg2cd}
\begin{array}{c}
\begin{CD}
\mathscr C(X,Z) @>\delta_{XYZ}>>  \mathscr C(Y,Z)\otimes \mathscr C(X,Y) \\
@V\delta_{XWZ}VV @VV\delta_{YWZ}\otimes \mathscr C(X,Y)V\\
 \mathscr C(W,Z)\otimes \mathscr C(X,W)@>\mathscr C(W,Z)\otimes \delta_{XYW}>> \mathscr C(W,Z)\otimes \mathscr C(Y,W)\otimes \mathscr C(X,Y)\\
\end{CD}
\\
\\
\xymatrix{
& & \mathscr C(X,Y) \ar[d]^{\delta_{XYY}}\ar[lld]^{id}\\
\mathscr C(X,Y) &&\ar[ll]^{\epsilon_Y\otimes \mathscr C(X,Y)\quad } \mathscr C(Y,Y)\otimes \mathscr C(X,Y)
}\qquad \qquad \qquad 
\xymatrix{
 \mathscr C(X,Y) \ar[d]_{\delta_{XXY}}\ar[rrd]^{id}&& \\
\mathscr C(X,Y)\otimes \mathscr C(X,X)\ar[rr]_{\quad \quad \mathscr C(X,Y)\otimes \epsilon_X}&&\mathscr C(X,Y)\\
}\\
\end{array}
\end{equation} We use Sweedler type notation throughout and suppress the summation to write $\delta_{XYZ}(f):=f_{Y1}\otimes f_{Y2}$ for each $f\in \mathscr C(X,Z)$.  

\smallskip
Let $A$ be a $K$-algebra and suppose we have a collection of $K$-linear maps
\begin{equation}\label{2.2e}
\psi=\{\psi_{XY}:\mathscr C(X,Y)\otimes A\longrightarrow A\otimes \mathscr C(X,Y) \}_{X,Y\in Ob(\mathscr C)}
\end{equation}
For $f\in \mathscr C(X,Y)$ and $a\in A$, we will always write $\psi(f\otimes a)=a_\psi\otimes f^\psi$ by suppressing the summation signs. We will say that the tuple 
$(\mathscr C,A,\psi)$ is an entwining structure if it satisfies the following conditions
\begin{equation}\label{ent2}
\begin{array}{c}
a_\psi\otimes (f^{\psi})_{Y1}\otimes (f^\psi)_{Y2} = a_\psi\otimes \delta_{XYZ}(f^\psi)=\psi_{YZ}(f_{Y1}\otimes a_\psi)\otimes (f_{Y2})^\psi=a_{\psi\psi}\otimes (f_{Y1})^\psi\otimes (f_{Y2})^\psi\\
(ab)_\psi\otimes f^\psi=\psi_{XZ}(f\otimes ab)=a_\psi b_\psi\otimes f^{\psi\psi}\\
\psi_{XZ}(f\otimes 1)=1\otimes f \qquad a_\psi\epsilon_Z(g^\psi)=\epsilon_Z(g)a\\
\end{array}
\end{equation}
 for each $f\in \mathscr C(X,Z)$, $g\in \mathscr C(Z,Z)$, $a$, $b\in A$, $X$, $Y$, $Z\in Ob(\mathscr C)$. 
 
 \smallskip
A (right) $\mathscr C$-comodule $(\mathcal M,\rho^{\mathcal M})$ (see \cite{Mc1}) consists of a collection of vector spaces $\{ \mathcal M(X)\}_{X\in Ob(\mathscr C)}$ along with coaction maps
$\rho_{XY}^{ \mathcal M}: \mathcal M(X)\longrightarrow  \mathcal M(Y)\otimes \mathscr C(X,Y)$ for $X$, $Y\in Ob(\mathscr C)$ such that the following diagrams commute
 \begin{equation}\label{dg2.4}
 \begin{array}{ccc}
 \xymatrix{
 \mathcal M(X) \ar[rr]^{\rho^{ \mathcal M}_{XY}\qquad}\ar[d]_{\rho^{ \mathcal M}_{XZ}} &&  \mathcal M(Y)\otimes \mathscr C(X,Y) \ar[d]^{\mathcal M(Y)\otimes \delta_{XZY}} \\
 \mathcal M(Z)\otimes \mathscr C(X,Z) \ar[rr]^{\rho^{ \mathcal M}_{ZY}\otimes \mathscr C(X,Z)\qquad} &&   \mathcal M(Y)\otimes \mathscr C(Z,Y)\otimes \mathscr C(X,Z) \\
} &&
\xymatrix{
 \mathcal M(X) \ar[d]_{\rho^{ \mathcal M}_{XX}} \ar[drr]^{id}&& \\
 \mathcal M(X)\otimes \mathscr C(X,X) \ar[rr]^{\mathcal M(X)\otimes \epsilon_X}&& \mathcal M(X) \\
}\\
 \end{array}
 \end{equation} While expressing the coaction, we will typically suppress the superscript and the summation and write 
 $\rho^{ \mathcal M}_{XY}(m)=\rho_{XY}(m)=m_{Y0}\otimes m_{Y1} \in  \mathcal M(Y)\otimes  \mathscr C(X,Y)$ for 
 $m\in  \mathcal M(X)$. In this adaptation
 of Sweedler's notation, the commutative diagrams in \eqref{dg2.4} can be expressed as
 \begin{equation}\label{not2.5}
 \begin{array}{c}
 m_{Y0}\otimes m_{Y1Z1}\otimes m_{Y1Z2} =m_{Z0Y0}\otimes m_{Z0Y1}\otimes m_{Z1}\\
  m_{X0} \epsilon_X(m_{X1})=m\\
 \end{array}
 \end{equation} for each $X$, $Y$, $Z\in Ob(\mathscr C)$ and $m\in  \mathcal M(X)$.
 
 \smallskip
 A morphism
 $\phi: \mathcal M\longrightarrow  \mathcal N$ of $\mathscr C$-comodules consists of $k$-linear maps
 $\{\phi(X): \mathcal M(X)\longrightarrow  \mathcal N(X)\}_{X\in Ob(\mathscr C)}$ that is compatible with respective coactions. We will denote by 
 $Com^{\mathscr C}$ the category of right $\mathscr C$-comodules. 
 
 \smallskip
 For $X\in Ob(\mathscr C)$, we let $ \mathcal H_X$ be the right $\mathscr C$-comodule given by setting $ \mathcal H_X(Y):=\mathscr C(Y,X)$ for each 
 $Y\in Ob(\mathscr C)$. The comodule  $ \mathcal H_X$ is referred to as the regular representation of $\mathscr C$ at $X$ (see \cite[Example 2.3]{Mc1}).  For any comodule $(\mathcal M,
 \rho^{\mathcal M})\in Com^{\mathscr C}$, we know from \cite{Mc1} that we have an isomorphism
\begin{equation}\label{Mc111}
Vect_K(\mathcal M(X),K)\cong Com^{\mathscr C}(\mathcal M,\mathcal H_X)
\end{equation}
for each $X\in Ob(\mathscr C)$. More explicitly, given $f\in Vect_K(\mathcal M(X),K)$, we have an induced morphism $\phi_f\in Com^{\mathscr C}(\mathcal M,\mathcal H_X)$ given by $\phi_f(Y):\mathcal M(Y)\xrightarrow{\rho^{\mathcal M}_{YX}}\mathcal M(X)\otimes \mathscr C(Y,X)\xrightarrow{f\otimes 
\mathscr C(Y,X)}\mathcal H_X(Y)$ for each $Y\in Ob(\mathscr C)$. Conversely, given $\phi\in Com^{\mathscr C}(\mathcal M,\mathcal H_X)$, we have the map 
$f: \mathcal M(X)\xrightarrow{\phi(X)}\mathcal H_X(X)=\mathscr C(X,X)\xrightarrow{\epsilon_X}K$ of vector spaces.

 \begin{defn}\label{D2.1}
 Let $(\mathscr C,A,\psi)$ be an entwining structure. An entwined comodule $(\mathcal M,\rho^{\mathcal M},\mu^{\mathcal M})$ over $(\mathscr C,A,\psi)$ consists of the following data:
 
 \smallskip
 (a) A right $\mathscr C$-comodule $( \mathcal M,\rho^{\mathcal M})$
 
 \smallskip
 (b) A collection of right $A$-module structures $\mu^{\mathcal M}=\{\mu_X:\mathcal M(X)\otimes A\longrightarrow \mathcal M(X)\}_{X\in Ob(\mathscr C)}$ such that
 \begin{equation}\label{e2.5}
 (ma)_{Y0}\otimes (ma)_{Y1}=m_{Y0}a_\psi\otimes (m_{Y1})^\psi\in  \mathcal M(Y)\otimes  \mathscr C(X,Y)
 \end{equation} for  $m\in  \mathcal M(X)$, $a\in A$ and $X,Y\in Ob(\mathscr C)$. 
 
 \smallskip
 We will often write an entwined comodule $(\mathcal M,\rho^{\mathcal M},\mu^{\mathcal M})$ simply as $\mathcal M$. A morphism $\phi: \mathcal M\longrightarrow  \mathcal N$ of entwined comodules is a morphism $\phi: \mathcal M\longrightarrow  \mathcal N$ in $Com^{\mathscr C}$ such that 
 $\phi(X): \mathcal M(X)\longrightarrow  \mathcal N(X)$ is $A$-linear for each $X\in Ob(\mathscr C)$. We will denote by $Com^{\mathscr C}_A(\psi)$ the category of entwined comodules. 
 \end{defn}
 
 Since $\otimes$ is exact in the category $Vect_K$, we note that $Com^{\mathscr C}_A(\psi)$ and $Com^{\mathscr C}$ are abelian categories, with kernels, cokernels and direct sums computed objectwise. 
 
 \begin{lem}
 \label{L2.2} Let $ \mathcal M\in Com^{\mathscr C}$. Then,  the collection $ \mathcal M\otimes A:=\{ \mathcal M(X)\otimes A\}_{X\in Ob(\mathscr C)}$ determines an object of $Com^{\mathscr C}_A(\psi)$.
 \end{lem}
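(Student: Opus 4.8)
The plan is to equip each $\mathcal M(X)\otimes A$ with the right $A$-module structure acting on the second factor, $(m\otimes a)\cdot b:=m\otimes ab$, and to define a $\mathscr C$-coaction that pushes the algebra factor past the coalgebra by means of the entwining. Explicitly, writing $\rho^{\mathcal M}_{XY}(m)=m_{Y0}\otimes m_{Y1}$ and $\psi_{XY}(m_{Y1}\otimes a)=a_\psi\otimes(m_{Y1})^\psi$, I would set
\[
\rho_{XY}:\mathcal M(X)\otimes A\longrightarrow(\mathcal M(Y)\otimes A)\otimes\mathscr C(X,Y),\qquad \rho_{XY}(m\otimes a):=(m_{Y0}\otimes a_\psi)\otimes(m_{Y1})^\psi.
\]
These maps are $K$-linear by construction, and the right $A$-module axioms are immediate since multiplication only affects the second tensor factor. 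The real content is therefore to check that $\{\rho_{XY}\}$ satisfies the coassociativity and counit conditions \eqref{not2.5}, and that the resulting coaction and action are compatible in the sense of \eqref{e2.5}.

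For the counit condition I would compute $\rho_{XX}(m\otimes a)=(m_{X0}\otimes a_\psi)\otimes(m_{X1})^\psi$ and apply $\epsilon_X$ to the last factor. Since $m_{X1}\in\mathscr C(X,X)$, the counit axiom for $\psi$ from \eqref{ent2}, namely $a_\psi\epsilon_X(g^\psi)=\epsilon_X(g)a$, rewrites this as $m_{X0}\otimes\epsilon_X(m_{X1})a$, and the counit axiom for $\mathcal M$ in \eqref{not2.5} then collapses it to $m\otimes a$. For coassociativity I would expand both composites in the first diagram of \eqref{dg2.4}: the right-then-down route applies $\psi$ to $m_{Y1}$ and then comultiplies the result, whereas the down-then-right route comultiplies $m$ first and applies $\psi$ at each of the two coaction stages, producing a doubly entwined element $a_{\psi\psi}$. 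The two outputs are matched by feeding the coassociativity \eqref{not2.5} of $\rho^{\mathcal M}$ into the first compatibility relation of \eqref{ent2}, which is exactly the identity relating $\psi$ to the comultiplication $\delta$. I expect this to be the main obstacle: the difficulty is not conceptual but lies in the bookkeeping, since one must keep track of the two independent occurrences of $\psi$ and verify that the relabelling of the comultiplication indices matches the precise form of \eqref{ent2}.

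Finally, the compatibility \eqref{e2.5} reduces to the multiplicativity axiom for $\psi$. Evaluating the left-hand side of \eqref{e2.5} on $(m\otimes a)\cdot b=m\otimes ab$ yields $(m_{Y0}\otimes(ab)_\psi)\otimes(m_{Y1})^\psi$, while the right-hand side yields $(m_{Y0}\otimes a_\psi b_\psi)\otimes(m_{Y1})^{\psi\psi}$; these coincide by the relation $(ab)_\psi\otimes f^\psi=a_\psi b_\psi\otimes f^{\psi\psi}$ of \eqref{ent2}. Assembling these three checks shows that $(\mathcal M\otimes A,\rho,\cdot)$ is an entwined comodule, so $\mathcal M\otimes A\in Com^{\mathscr C}_A(\psi)$ as claimed.
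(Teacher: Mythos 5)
Your proposal is correct and follows essentially the same route as the paper: you define exactly the structure maps of \eqref{act1}, namely $(m\otimes a)\cdot b=m\otimes ab$ and $(m\otimes a)_{Y0}\otimes(m\otimes a)_{Y1}=m_{Y0}\otimes a_\psi\otimes(m_{Y1})^\psi$, and your verification of the compatibility \eqref{e2.5} via $(ab)_\psi\otimes f^\psi=a_\psi b_\psi\otimes f^{\psi\psi}$ is precisely the computation the paper carries out. If anything, you are more complete than the paper, which records only the \eqref{e2.5} check and leaves the counit and coassociativity axioms (which you correctly derive from $a_\psi\epsilon_X(g^\psi)=\epsilon_X(g)a$ and from the first relation of \eqref{ent2} combined with \eqref{not2.5}) to the reader.
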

 
 \begin{proof}
 Let $m\in  \mathcal M(X)$ and $a\in A$. Then, we set 
 \begin{equation}\label{act1}
 \begin{array}{c}
 (m\otimes a)\cdot b := m\otimes ab \in  \mathcal M(X)\otimes A \qquad \forall\textrm{ }b\in A\\
 (m\otimes a)_{Y0}\otimes (m\otimes a)_{Y1}:= m_{Y0}\otimes a_\psi\otimes ( m_{Y1})^\psi \in  \mathcal M(Y)\otimes A \otimes \mathscr C(X,Y) \qquad\forall\textrm{ }Y\in Ob(\mathscr C)
 \\
 \end{array}
 \end{equation} For $m\in  \mathcal M(X)$ and $a$, $b\in A$, we now verify that
 \begin{equation}
 \begin{array}{ll}
 ( (m\otimes a)\cdot b)_{Y0}\otimes ( (m\otimes a)\cdot b)_{Y1}= (m\otimes ab)_{Y0}\otimes (m\otimes ab)_{Y1}&= m_{Y0}\otimes (ab)_\psi\otimes ( m_{Y1})^\psi \\
 &= m_{Y0}\otimes a_\psi b_\psi\otimes ( m_{Y1})^{\psi\psi} \\
 &=(m_{Y0}\otimes a_\psi)b_\psi\otimes  ( m_{Y1})^{\psi\psi}\\
 &= (m\otimes a)_{Y0}b_\psi\otimes ((m\otimes a)_{Y1})^\psi\\
 \end{array}
 \end{equation} This proves the result. 
 \end{proof}
 
 \begin{lem}
 \label{L2.3} Let $X\in Ob(\mathscr C)$ and let $P\in Mod_A$ be a right $A$-module. Then,  the collection $P\otimes  \mathcal H_X:=\{P\otimes  \mathcal H_X(Y)\}_{Y\in Ob(\mathscr C)}$ determines an object of $Com^{\mathscr C}_A(\psi)$.
 \end{lem}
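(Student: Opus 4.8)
The plan is to equip the collection $P\otimes\mathcal H_X=\{P\otimes\mathscr C(Y,X)\}_{Y\in Ob(\mathscr C)}$ with the comodule structure inherited from the regular representation $\mathcal H_X$, together with an $A$-action twisted through the entwining $\psi$. Concretely, for $p\in P$, $h\in\mathscr C(Y,X)$ and $a\in A$ I would set
\[
\rho_{YZ}(p\otimes h):=(p\otimes h_{Z1})\otimes h_{Z2},\qquad (p\otimes h)\cdot a:=pa_\psi\otimes h^\psi,
\]
where $\delta_{YZX}(h)=h_{Z1}\otimes h_{Z2}$ (so $h_{Z1}\in\mathscr C(Z,X)$ and $h_{Z2}\in\mathscr C(Y,Z)$) and $\psi_{YX}(h\otimes a)=a_\psi\otimes h^\psi$. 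The twist in the $A$-action is essential here: the naive action $(p\otimes h)\cdot a=pa\otimes h$ fails \eqref{e2.5}, and since the passive factor $P$ carries no coalgebra data, there is no room to twist the coaction instead — the twist must live in the action.

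I would first dispose of the two families of axioms that do not involve the interaction. The comodule axioms for $\rho$ are immediate: since $\rho=\mathrm{id}_P\otimes\rho^{\mathcal H_X}$ and $\mathcal H_X$ is a $\mathscr C$-comodule, coassociativity and counitality follow from \eqref{coalg2cd} with $P$ treated as a passive tensor factor. That $\cdot$ is a unital associative right $A$-action follows from the unitality and multiplicativity relations for $\psi$ in \eqref{ent2}: the relation $\psi_{YX}(h\otimes 1)=1\otimes h$ gives $(p\otimes h)\cdot 1=p\otimes h$, while $\psi_{YX}(h\otimes ab)=a_\psi b_\psi\otimes h^{\psi\psi}$ yields $((p\otimes h)\cdot a)\cdot b=pa_\psi b_\psi\otimes h^{\psi\psi}=(p\otimes h)\cdot(ab)$.

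The only step of substance is the compatibility \eqref{e2.5}, which for $m=p\otimes h\in(P\otimes\mathcal H_X)(Y)$ reads $(m\cdot a)_{Z0}\otimes(m\cdot a)_{Z1}=m_{Z0}\,a_\psi\otimes(m_{Z1})^\psi$. Expanding the left-hand side (twist first, then coact) gives $pa_\psi\otimes(h^\psi)_{Z1}\otimes(h^\psi)_{Z2}$ after applying $\delta_{YZX}$ to $h^\psi$. Expanding the right-hand side (coact first, then twist) gives $pa_{\psi\psi}\otimes(h_{Z1})^\psi\otimes(h_{Z2})^\psi$: here $a$ is twisted along $h_{Z2}\in\mathscr C(Y,Z)$ by $\psi_{YZ}$, and the resulting $a_\psi$ is twisted along $h_{Z1}\in\mathscr C(Z,X)$ by $\psi_{ZX}$ as it acts on $p\otimes h_{Z1}$. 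Both sides are thus $p$ paired with the two ends of
\[
a_\psi\otimes(h^\psi)_{Z1}\otimes(h^\psi)_{Z2}=a_{\psi\psi}\otimes(h_{Z1})^\psi\otimes(h_{Z2})^\psi,
\]
which is exactly the first entwining relation in \eqref{ent2} applied to $h\in\mathscr C(Y,X)$ with comultiplication $\delta_{YZX}$. I expect the main obstacle to be purely notational bookkeeping: one must keep the three occurrences of $\psi$ — the single $\psi_{YX}$ used before coacting, and the two nested instances $\psi_{YZ}$ and $\psi_{ZX}$ arising after coacting — correctly indexed so that the hexagon-type relation of \eqref{ent2} applies verbatim, with the factor $P$ simply carried along throughout.
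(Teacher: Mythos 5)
Your proposal is correct and coincides with the paper's own proof: the same coaction $(p\otimes h)_{Z0}\otimes(p\otimes h)_{Z1}=p\otimes h_{Z1}\otimes h_{Z2}$ inherited from $\mathcal H_X$, the same $\psi$-twisted action $(p\otimes h)\cdot a=pa_\psi\otimes h^\psi$, and the same verification of \eqref{e2.5} reducing both sides to $pa_{\psi\psi}\otimes(h_{Z1})^\psi\otimes(h_{Z2})^\psi$ via the first entwining relation in \eqref{ent2}. Your explicit checks of unitality, associativity and the comodule axioms are details the paper leaves implicit, but the substance is identical.
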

 \begin{proof}
 Let $p\in P$ and $f\in  \mathcal H_X(Y)=\mathscr C(Y,X)$. Then, we set
 \begin{equation}\label{act2}
 \begin{array}{c}
 (p\otimes f)\cdot b:= pb_\psi \otimes f^\psi  \in P\otimes  \mathcal H_X(Y) \qquad \forall\textrm{ }b\in A
 \\
 (p\otimes f)_{Z0}\otimes (p\otimes f)_{Z1} := p\otimes f_{Z1}\otimes f_{Z2}\in P\otimes  \mathcal H_X(Z)\otimes \mathscr C(Y,Z) \qquad\forall\textrm{ }Z\in Ob(\mathscr C)\\
 \end{array}
 \end{equation} For $p\in P$, $f\in  \mathcal H_X(Y)=\mathscr C(Y,X)$ and $b\in A$, we now verify that
 \begin{equation}
 \begin{array}{ll}
( (p\otimes f)\cdot b)_{Z0}\otimes ( (p\otimes f)\cdot b)_{Z1}= (pb_\psi \otimes f^\psi)_{Z0}\otimes  (pb_\psi \otimes f^\psi)_{Z1}&=pb_\psi\otimes (f^\psi)_{Z1}
\otimes (f^\psi)_{Z2}\\
&= pb_{\psi\psi}\otimes (f_{Z1})^\psi\otimes (f_{Z2})^\psi\\
&= (p\otimes f_{Z1})b_\psi\otimes (f_{Z2})^{\psi}\\
&=(p\otimes f)_{Z0}b_\psi\otimes ((p\otimes f)_{Z1})^\psi\\
 \end{array}
 \end{equation}This proves the result. 
 \end{proof}
 
 For the next result, we pick a $K$-vector space basis for each $\mathscr C(X,Y)$, $X$, $Y\in Ob(\mathscr C)$, take their disjoint union, and denote the collection by $\{h_i\}_{i\in I}$. We now have   the following result (compare \cite[Theorem 2.1.7]{Das}). 
 
 \begin{thm}\label{P2.4}
 Let $ \mathcal M\in Com^{\mathscr C}$ and let $m\in  \mathcal M(X)$ for some $X\in Ob(\mathscr C)$. Then, there exists a subobject $ \mathcal N\subseteq  \mathcal M$ in 
 $Com^{\mathscr C}$ such that $m\in  \mathcal N(X)$ and $ \mathcal N(Y)$ is a finite dimensional $K$-vector space for each $Y\in Ob(\mathscr C)$. 
 \end{thm}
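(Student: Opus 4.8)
The plan is to adapt the classical finiteness (``fundamental'') theorem for comodules over a single coalgebra, the only genuinely new feature being the bookkeeping needed to keep track of the various hom-spaces $\mathscr C(-,-)$. The subobject $\mathcal N$ will be built, at each object $Z \in Ob(\mathscr C)$, out of the coefficients produced by applying the single coaction $\rho^{\mathcal M}_{XZ}$ to the given element $m$; coassociativity is what forces this collection of coefficients to already be stable under all the remaining coactions, so that no iteration is required.

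Concretely, first I would fix the basis $\{h_i\}_{i\in I}$ of $\bigoplus_{U,V}\mathscr C(U,V)$ as in the statement and, for each $Z\in Ob(\mathscr C)$, expand the coaction in the corresponding basis of $\mathscr C(X,Z)$, writing $\rho^{\mathcal M}_{XZ}(m)=\sum_p m^Z_p\otimes h_p\in \mathcal M(Z)\otimes \mathscr C(X,Z)$ with $m^Z_p\in \mathcal M(Z)$ and only finitely many $m^Z_p$ nonzero. I then set $\mathcal N(Z):=\mathrm{span}_K\{m^Z_p\}_p\subseteq \mathcal M(Z)$, which is finite dimensional by construction. The counit identity $m=m_{X0}\epsilon_X(m_{X1})$ from \eqref{not2.5} immediately gives $m=\sum_p \epsilon_X(h_p)\,m^X_p\in \mathcal N(X)$, so $m$ indeed lies in $\mathcal N$.

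The heart of the argument is to check that $\mathcal N$ is a subcomodule, i.e. $\rho^{\mathcal M}_{ZY}(\mathcal N(Z))\subseteq \mathcal N(Y)\otimes \mathscr C(Z,Y)$ for all $Y,Z$, and here I would invoke coassociativity in the form \eqref{not2.5}. Applying $\rho^{\mathcal M}_{ZY}\otimes \mathscr C(X,Z)$ to $\rho^{\mathcal M}_{XZ}(m)$ gives $\sum_p \rho^{\mathcal M}_{ZY}(m^Z_p)\otimes h_p = m_{Z0Y0}\otimes m_{Z0Y1}\otimes m_{Z1}$, which by \eqref{not2.5} equals $m_{Y0}\otimes m_{Y1Z1}\otimes m_{Y1Z2}$. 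On this last expression I would expand $\rho^{\mathcal M}_{XY}(m)=\sum_q m^Y_q\otimes g_q$ in the basis $\{g_q\}$ of $\mathscr C(X,Y)$, and then expand each $\delta_{XZY}(g_q)\in \mathscr C(Z,Y)\otimes \mathscr C(X,Z)$ in the bases of $\mathscr C(Z,Y)$ and $\mathscr C(X,Z)$. Matching the coefficients of each basis vector $h_p$ of $\mathscr C(X,Z)$ (legitimate since these are linearly independent and occupy the last tensor slot) expresses $\rho^{\mathcal M}_{ZY}(m^Z_p)$ as a $K$-linear combination of terms $m^Y_q\otimes(\text{basis vector of }\mathscr C(Z,Y))$; since the $m^Y_q$ are exactly the generators of $\mathcal N(Y)$, this lands in $\mathcal N(Y)\otimes \mathscr C(Z,Y)$, as desired. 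The counit axiom for $\mathcal N$ is inherited from $\mathcal M$. I expect the coefficient-comparison step to be the main obstacle: one must disentangle $\delta_{XZY}(g_q)$ into its two hom-space factors and be disciplined about which of $\mathscr C(Z,Y)$, $\mathscr C(X,Z)$ carries which index, but this is precisely the several-object analogue of the comparison of comultiplication coefficients in the one-object proof.
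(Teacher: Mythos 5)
Your proposal is correct and is essentially the paper's own proof: both define $\mathcal N(Z)$ as the span of the coefficients of $\rho^{\mathcal M}_{XZ}(m)$ in a fixed basis $\{h_i\}_{i\in I}$, use the counit identity from \eqref{not2.5} to get $m\in\mathcal N(X)$, and use coassociativity plus comparison of coefficients of the linearly independent basis vectors in the last tensor slot to show the coactions restrict to $\mathcal N$. The only differences are notational (your $Y\leftrightarrow Z$ relabeling and the explicit expansion of $\delta_{XZY}(g_q)$, which the paper writes as the coefficients $t_i(j,k)$).
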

 
 \begin{proof}
 Let $\{\rho_{YZ}: \mathcal M(Y)\longrightarrow  \mathcal M(Z)\otimes \mathscr C(Y,Z)\}_{Y,Z\in Ob(\mathscr C)}$ denote the coaction maps of $ \mathcal M\in Com^{\mathscr C}$.  Using the collection $\{h_i\}_{i\in I}$, we can write for each $Y\in Ob(\mathscr C)$
 \begin{equation}\label{2.11}
 \rho_{XY}(m)=m_{Y0}\otimes m_{Y1}=\underset{i\in I}{\sum} m_i^Y\otimes h_i \in  \mathcal M(Y)\otimes \mathscr C(X,Y)
 \end{equation} In \eqref{2.11}, it is understood that the sum is taken only over those $i\in I$ such that $h_i\in \mathscr C(X,Y)$. 
 
 \smallskip
 For each $Y\in Ob(\mathscr C)$, we now set $ \mathcal N(Y)\subseteq  \mathcal M(Y)$ to be the $K$-subspace generated by the elements $m_i^Y\in  \mathcal M(Y)$ appearing
 in \eqref{2.11}. From \eqref{not2.5}, we notice that $\sum_{i\in I}\epsilon_{X}(h_i)m_i^X=m$, and hence $m\in  \mathcal N(X)$. Using \eqref{dg2.4}, we see that for 
 $Y$, $Z\in Ob(\mathscr C)$, we have
 \begin{equation}\label{212d}
 \underset{i\in I}{\sum} \rho_{YZ}(m_i^Y)\otimes h_i = \underset{i\in I}{\sum} m_i^Z\otimes \delta_{XYZ}( h_i)=\underset{i,j,k\in I}{\sum}t_i(j,k) m_i^Z\otimes h_j\otimes h_k \in  \mathcal M(Z)\otimes \mathscr C(Y,Z)\otimes \mathscr C(X,Y)
 \end{equation} where the coefficients $t_i(j,k)\in K$. From \eqref{212d}, it follows that 
 $\rho_{YZ}(m^Y_i)=\underset{i,j\in I}{\sum} t_i(j,i)m_i^Z\otimes h_j\in  \mathcal M(Z)\otimes \mathscr C(Y,Z)$. It follows that the coactions $\{\rho_{YZ}: \mathcal M(Y)\longrightarrow  \mathcal M(Z)\otimes \mathscr C(Y,Z)\}_{Y,Z\in Ob(\mathscr C)}$ restrict to $\{\rho_{YZ}: \mathcal N(Y)\longrightarrow  \mathcal N(Z)\otimes \mathscr C(Y,Z)\}_{Y,Z\in Ob(\mathscr C)}$. This proves the result.
 \end{proof}
 
From now onwards, we will say that an object $ \mathcal N\in Com^{\mathscr C}$ is finitary if $ \mathcal N(X)$ is finite dimensional as a $K$-vector space for each 
$X\in Ob(\mathscr C)$. 

\begin{Thm}\label{T2.5}
Let $(\mathscr C,A,\psi)$ be an entwining structure. Then, the category $Com_A^{\mathscr C}(\psi)$ of entwined comodules is a Grothendieck category. 
\end{Thm}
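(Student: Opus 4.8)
The plan is to verify the three defining properties of a Grothendieck category: that $Com^{\mathscr C}_A(\psi)$ is abelian, that it is cocomplete with exact filtered colimits (AB5), and that it admits a (set of) generator(s). The first two properties are essentially formal consequences of the fact that all the relevant structure is computed objectwise in $Vect_K$, so the real content lies in producing a generator. First I would record that $Com^{\mathscr C}_A(\psi)$ is abelian with kernels, cokernels and arbitrary coproducts formed objectwise, as already observed after Definition \ref{D2.1}: the coaction and action maps of a coproduct $\bigoplus_\alpha \mathcal P_\alpha$ are induced componentwise, using that $-\otimes \mathscr C(X,Y)$ and $-\otimes A$ commute with coproducts. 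Since an abelian category with arbitrary coproducts is cocomplete, this gives AB3; and since filtered colimits in $Vect_K$ are exact while kernels, cokernels and filtered colimits are all computed objectwise, exactness of filtered colimits is checked objectwise and hence holds, giving AB5.

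The heart of the argument is the construction of a set of generators, and here I would combine Theorem \ref{P2.4} with Lemma \ref{L2.2}. Let $\mathcal S$ be a set of representatives for the isomorphism classes of finitary $\mathscr C$-comodules. This is genuinely a \emph{set}, not a proper class, because a finitary comodule is determined up to isomorphism by the dimension function $X\mapsto \dim_K\mathcal N(X)$ together with its coaction maps, and the latter range over a set once bases are chosen (as $\mathscr C$ is small, both $Ob(\mathscr C)$ and each $\mathscr C(X,Y)$ are sets). For each $\mathcal N\in\mathcal S$ I form the entwined comodule $\mathcal N\otimes A$ of Lemma \ref{L2.2}, and I claim that $\{\mathcal N\otimes A : \mathcal N\in\mathcal S\}$ is a set of generators, so that $\bigoplus_{\mathcal N\in\mathcal S}(\mathcal N\otimes A)$ is a single generator.

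To prove generation, let $\mathcal P\in Com^{\mathscr C}_A(\psi)$ and let $p\in\mathcal P(X)$. Forgetting the $A$-action and applying Theorem \ref{P2.4} to the underlying $\mathscr C$-comodule of $\mathcal P$ yields a finitary subcomodule $\mathcal N_p\subseteq\mathcal P$ in $Com^{\mathscr C}$ with $p\in\mathcal N_p(X)$. I would then define $\theta_p:\mathcal N_p\otimes A\to\mathcal P$ by $\theta_p(Y)(n\otimes a)=n\cdot a$, using the $A$-action of $\mathcal P$ on $n\in\mathcal N_p(Y)\subseteq\mathcal P(Y)$. This map is manifestly right $A$-linear, and that it respects the coactions is precisely condition \eqref{e2.5} for $\mathcal P$ matched against the coaction formula \eqref{act1} of $\mathcal N_p\otimes A$: both sides reduce to $n_{Y0}a_\psi\otimes (n_{Y1})^\psi$. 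Since $\theta_p(X)(p\otimes 1)=p$, the element $p$ lies in the image of $\theta_p$. Hence the canonical map $\bigoplus_{X\in Ob(\mathscr C)}\bigoplus_{p\in\mathcal P(X)}(\mathcal N_p\otimes A)\to\mathcal P$ is surjective in each $Vect_K(X)$-component, hence an epimorphism, exhibiting $\mathcal P$ as a quotient of a coproduct of objects of the form $\mathcal N\otimes A$ with $\mathcal N\in\mathcal S$. This establishes the generator and completes the verification of the Grothendieck axioms.

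I expect the main obstacle to be the set-theoretic bookkeeping — confirming that the finitary comodules constitute a set up to isomorphism, so that we obtain a legitimate set of generators rather than a proper class — together with the routine but necessary check that $\theta_p$ is a morphism of comodules. The genuinely substantive ingredient, namely the local finiteness of $\mathscr C$-comodules, has already been isolated in Theorem \ref{P2.4}, so that the present proof is mostly a matter of transporting that finiteness through the free functor $\mathcal N\mapsto\mathcal N\otimes A$ of Lemma \ref{L2.2}.
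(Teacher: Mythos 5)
Your proof is correct and takes essentially the same route as the paper: finitary subcomodules via Theorem \ref{P2.4}, the free entwined comodules $\mathcal N\otimes A$ of Lemma \ref{L2.2}, the evaluation morphism $n\otimes a\mapsto na$ checked against \eqref{e2.5}, and AB5 by objectwise computation. Your only additions are the explicit set-theoretic justification that finitary comodules form a set up to isomorphism and the assembled epimorphism from a coproduct, both of which the paper leaves implicit.
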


\begin{proof}
We consider $\mathcal M\in Com^{\mathscr C}_A(\psi)$ as well as $m\in  \mathcal M(X)$ for some $X\in Ob(\mathscr C)$. By Proposition \ref{P2.4}, there exists a subobject $ \mathcal N\subseteq  \mathcal M$ in 
 $Com^{\mathscr C}$ such that $ \mathcal N$ is finitary and $m\in  \mathcal N(X)$. Using Lemma \ref{L2.2}, we have the object $ \mathcal N\otimes A\in  Com^{\mathscr C}_A(\psi)$. We now define
 \begin{equation}
 \phi(Y): \mathcal N(Y)\otimes A\longrightarrow  \mathcal M(Y) \qquad n\otimes a\mapsto na
 \end{equation} for each $Y\in Ob(\mathscr C)$. It is immediate that each $\phi(Y)$ is right $A$-linear. Since $m\in  \mathcal N(X)$, it is also clear that $m\in Im(\phi(X))$. For $Y$, $Z
 \in Ob(\mathscr C)$, we now see that
 \begin{equation}\label{214}
 \begin{array}{ll}
 (\phi(Y)(n\otimes a))_{Z0}\otimes  (\phi(Y)(n\otimes a))_{Z1}=(na)_{Z0}\otimes (na)_{Z1} &=n_{Z0}a_\psi\otimes (n_{Z1})^\psi\\
 &=\phi(Z)(n_{Z0}\otimes a_\psi )\otimes (n_{Z1})^\psi \\
 &=\phi(Z)((n\otimes a)_{Z0})\otimes (n\otimes a)_{Z1} \\
 \end{array}
 \end{equation} It follows from \eqref{214} that $\phi=\{\phi(Y)\}_{Y\in Ob(\mathscr C)}$ is a morphism in $Com_A^{\mathscr C}(\psi)$.  We now see that the collection of isomorphism classes 
 $\{ \mathcal N'\otimes A\}$ where $ \mathcal N'$ varies over all finitary objects in $Com^{\mathscr C}$ gives a set of generators for $Com_A^{\mathscr C}(\psi)$.  Since filtered colimits and finite limits in 
 $Com_A^{\mathscr C}(\psi)$ are both computed objectwise, we know that $Com_A^{\mathscr C}(\psi)$ satisfies the (AB5) condition. Hence,   $Com_A^{\mathscr C}(\psi)$  is a Grothendieck category.
\end{proof}

From the proof of Lemma \ref{L2.2}, it is clear that we have a functor
\begin{equation}\label{215t}
F_\psi :Com^{\mathscr C}\longrightarrow Com_A^{\mathscr C}(\psi)\qquad  \mathcal M\mapsto  \mathcal M\otimes A
\end{equation} We also  denote by $G_\psi $ the forgetful functor from $ Com_A^{\mathscr C}(\psi)$ to $Com^{\mathscr C}$. 

\begin{thm}\label{P2.6}
Let $(\mathscr C,A,\psi)$ be an entwining structure. Then, $(F_\psi ,G_\psi )$ is a pair of adjoint functors, i.e., we have natural isomorphisms
\begin{equation}\label{216g}
 Com_A^{\mathscr C}(\psi)( \mathcal M\otimes A, \mathcal N)\cong Com^{\mathscr C}( \mathcal M, \mathcal N)
\end{equation} for $ \mathcal M\in Com^{\mathscr C}$, $ \mathcal N\in Com^{\mathscr C}_A(\psi)$. 
\end{thm}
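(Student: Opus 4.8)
The plan is to exhibit the adjunction isomorphism explicitly by describing mutually inverse bijections between $Com_A^{\mathscr C}(\psi)(\mathcal M\otimes A,\mathcal N)$ and $Com^{\mathscr C}(\mathcal M,\mathcal N)$, where the right-hand side is understood as morphisms $\mathcal M\to G_\psi\mathcal N$ in $Com^{\mathscr C}$. Given a morphism $\phi:\mathcal M\otimes A\to\mathcal N$ in $Com_A^{\mathscr C}(\psi)$, I would define $\bar\phi:\mathcal M\to\mathcal N$ by $\bar\phi(X)(m):=\phi(X)(m\otimes 1)$ for $X\in Ob(\mathscr C)$ and $m\in\mathcal M(X)$. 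Conversely, given $\theta:\mathcal M\to\mathcal N$ in $Com^{\mathscr C}$, I would define $\hat\theta:\mathcal M\otimes A\to\mathcal N$ by $\hat\theta(X)(m\otimes a):=\theta(X)(m)\cdot a$, using the right $A$-action on $\mathcal N(X)$.

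The first task is to check that these assignments land in the correct morphism categories. For $\bar\phi$, I would verify $\mathscr C$-colinearity: applying colinearity of $\phi$ to $m\otimes 1$ and using the unit axiom $\psi_{XY}(m_{Y1}\otimes 1)=1\otimes m_{Y1}$ from \eqref{ent2}, the coaction of $\mathcal M\otimes A$ from \eqref{act1} collapses to $m_{Y0}\otimes 1\otimes m_{Y1}$, so that $\rho^{\mathcal N}_{XY}(\bar\phi(X)(m))=\bar\phi(Y)(m_{Y0})\otimes m_{Y1}$, which is precisely colinearity. For $\hat\theta$, right $A$-linearity is immediate from associativity of the $A$-action on $\mathcal N$, while $\mathscr C$-colinearity is the substantive point: the compatibility \eqref{e2.5} of the entwined comodule $\mathcal N$ rewrites $\rho^{\mathcal N}_{XY}(\theta(X)(m)\cdot a)$ as $(\theta(X)(m))_{Y0}a_\psi\otimes((\theta(X)(m))_{Y1})^\psi$, and colinearity of $\theta$ turns this into $\theta(Y)(m_{Y0})a_\psi\otimes(m_{Y1})^\psi$, which matches $(\hat\theta(Y)\otimes\mathscr C(X,Y))$ applied to the coaction $m_{Y0}\otimes a_\psi\otimes(m_{Y1})^\psi$ of $m\otimes a$.

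Next, I would confirm the two assignments are mutually inverse. Starting from $\phi$, $A$-linearity together with $(m\otimes 1)\cdot a=m\otimes a$ gives $\widehat{\bar\phi}(X)(m\otimes a)=\phi(X)(m\otimes 1)\cdot a=\phi(X)(m\otimes a)$, so $\widehat{\bar\phi}=\phi$; starting from $\theta$, the unit of $A$ gives $\overline{\hat\theta}(X)(m)=\theta(X)(m)\cdot 1=\theta(X)(m)$, so $\overline{\hat\theta}=\theta$. Finally, naturality in both $\mathcal M$ and $\mathcal N$ follows by a direct check against postcomposition and precomposition with morphisms in each category.

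I expect the main obstacle to be the verification of $\mathscr C$-colinearity of $\hat\theta$, since this is the one step that genuinely combines the $\psi$-twisted coaction on $\mathcal M\otimes A$ with the interaction \eqref{e2.5} between action and coaction on the target $\mathcal N$; the remaining checks are either purely formal or use only the unit axioms of the entwining structure.
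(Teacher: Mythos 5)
Your proposal is correct and takes essentially the same approach as the paper's proof of Proposition \ref{P2.6}: both directions use the unit-insertion map $\phi\mapsto\phi(X)(-\otimes 1)$ and the action-extension map $\theta\mapsto\theta(X)(-)\cdot a$, with the substantive colinearity check for the latter resting, exactly as in the paper's computation \eqref{220bg}, on the compatibility \eqref{e2.5} combined with colinearity of $\theta$. Your explicit verification that the two assignments are mutually inverse (and the remark on naturality) is left implicit in the paper but follows just as you describe.
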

 \begin{proof}
We consider $\eta\in  Com_A^{\mathscr C}(\psi)( \mathcal M\otimes A, \mathcal N)$. For each $X\in Ob(\mathscr C)$, we set
\begin{equation}
\zeta(X): \mathcal M(X)\longrightarrow \mathcal N(X)\qquad m\mapsto \eta(X)(m\otimes 1)
\end{equation} For $X$, $Y\in Ob(\mathscr C)$ and $m\in  \mathcal M(X)$, we verify that
\begin{equation}\label{218bg}
\begin{array}{ll}
(\zeta(X)(m))_{Y0}\otimes (\zeta(X)(m))_{Y1}= ( \eta(X)(m\otimes 1))_{Y0}\otimes ( \eta(X)(m\otimes 1))_{Y1}&=\eta(Y)((m\otimes 1)_{Y0})\otimes (m\otimes 1)_{Y1}\\
&=\eta(Y)(m_{Y0}\otimes 1)\otimes m_{Y1}\\
&=\zeta(Y)(m_{Y0})\otimes m_{Y1}\\
\end{array}
\end{equation} It follows from \eqref{218bg} that $\zeta=\{\zeta(X)\}_{X\in Ob(\mathscr C)}$ gives a morphism in $Com^{\mathscr C}$.

\smallskip
Conversely, we consider $\zeta\in Com^{\mathscr C}( \mathcal M, \mathcal N)$. For each $X\in Ob(\mathscr C)$, we set
\begin{equation}\label{219c}
\eta(X): \mathcal M(X)\otimes A\longrightarrow  \mathcal N(X)\qquad (m\otimes a)\mapsto \zeta(X)(m)a
\end{equation} It is immediate that $\eta(X)$ is right $A$-linear.  For $X$, $Y\in Ob(\mathscr C)$ and $m\in  \mathcal M(X)$, $a\in A$, we verify that
\begin{equation}\label{220bg}
\begin{array}{ll}
(\eta(X)(m\otimes a))_{Y0}\otimes (\eta(X)(m\otimes a))_{Y1}&=(\zeta(X)(m)a)_{Y0}\otimes (\zeta(X)(m)a)_{Y1}\\
&=(\zeta(X)(m))_{Y0}a_\psi\otimes ((\zeta(X)(m))_{Y1})^\psi \\
&=\zeta(X)(m_{Y0})a_\psi\otimes (m_{Y1})^\psi\\
&=\eta(X)(m_{Y0}\otimes a_\psi)\otimes (m_{Y1})^\psi\\
&=\eta(X)((m\otimes a)_{Y0})\otimes (m\otimes a)_{Y1}\\
\end{array}
\end{equation} It follows from \eqref{220bg} that $\eta=\{\eta(X)\}_{X\in Ob(\mathscr C)}$ gives a morphism in $Com^{\mathscr C}_A(\psi)$.
 \end{proof}

 \section{Entwined contramodules over coalgebras with several objects}
 
 We continue with $\mathscr C$ being a coalgebra with several objects.  The notion of contramodule over a coalgebra with several objects is implicit in the literature (see also \cite{Posi}), but we describe it here. If $U'$, $U$ are vector spaces over $K$, we will denote by $(U',U)$ the space of $K$-linear maps from $U'$ to $U$. 
 
 \begin{defn}\label{D3.1}
 Let $\mathscr C$ be a coalgebra with several objects. A left contramodule $
( \matholdcal M,\pi^{\matholdcal M})$ over $\mathscr C$ consists of the following data:
 
 \smallskip
 (a) A vector space $\matholdcal M(X)$ for each $X\in Ob(\mathscr C)$. 
 
 \smallskip
 (b) Maps $\pi_{XY}^{\matholdcal M}=\pi_{XY}: (\mathscr C(X,Y),\matholdcal M(Y))\longrightarrow \matholdcal M(X)$ for each pair $X$, $Y$ of objects in $\mathscr C$  such that the following diagrams commute
 \begin{equation}\label{dg3.4}
 \begin{array}{c}
 \xymatrix{
(\mathscr C(X,Y), (\mathscr C(Y,Z),\matholdcal M(Z)))\cong (\mathscr C(Y,Z)\otimes \mathscr C(X,Y),\matholdcal M(Z)) \ar[rrr]^{\qquad\qquad\qquad \qquad(\delta_{XYZ},\matholdcal M(X))}\ar[d]_{(\mathscr C(X,Y),\pi_{YZ})} &&& (\mathscr C(X,Z),\matholdcal M(Z)) \ar[d]^{\pi_{XZ}} \\
(\mathscr C(X,Y),\matholdcal M(Y))\ar[rrr]^{\pi_{XY}} &&&  \matholdcal M(X) \\
} \\ \\
\xymatrix{
\matholdcal M(X) \cong (K,\matholdcal M(X))\ar[d]_{(\epsilon_X,\matholdcal M(X))} \ar[drr]^{id}&& \\
(\mathscr C(X,X),\matholdcal M(X)) \ar[rr]^{\pi_{XX}}&&\matholdcal M(X) \\
}\\
 \end{array}
 \end{equation}
 A morphism $\phi:( \matholdcal M,\pi^{\matholdcal M})\longrightarrow ( \matholdcal N,\pi^{\matholdcal N})$ of $\mathscr C$-contramodules consists of linear maps
 $\{\phi(X):\matholdcal M(X)\longrightarrow \matholdcal N(X)\}_{X\in Ob(\mathscr C)}$ such that $\phi(X)\circ \pi_{XY}^{\matholdcal M}=\pi_{XY}^{\matholdcal N}\circ (\mathscr C(X,Y),\phi(Y))$ for all $X$, $Y\in Ob(\mathscr C)$. We will denote by ${^\mathscr C}Ctr$ the category of left $\mathscr C$-contramodules. 
 \end{defn}

For an ordinary coalgebra entwined with an algebra, the notion of entwined contramodule appears in \cite[$\S$ 10]{Pos}. We describe below the extension of this notion to coalgebras with several objects. While working with contramodules, we will often write the structure map on a left $A$-module $M$ as a linear map $M\longrightarrow (A,M)$ instead of the more common $A\otimes M \longrightarrow M$. 
 
 \begin{defn}\label{D3.2}
 Let $\mathscr C$ be a coalgebra with several objects and let $(\mathscr C,A,\psi)$ be an entwining structure.  An entwined contramodule 
 $(\matholdcal M,\pi^{\matholdcal M},\mu^{\matholdcal M})$ over $(\mathscr C,A,\psi)$ consists of the following data:
 
 \smallskip
 (a) A left $\mathscr C$-contramodule $(\matholdcal M,\pi^{\matholdcal M})$
 
 \smallskip
 (b) A collection of  left $A$-module structures $\mu^{\matholdcal M}=\{\mu_X: \matholdcal M(X)\longrightarrow (A,\matholdcal M(X))\}_{X\in Ob(\mathscr C)}$   such that the following diagram commutes
 \begin{equation}\label{com3.2}
 \xymatrix{
  (\mathscr C(X,Y),\matholdcal M(Y))\ar[r]^{\pi_{XY}}\ar[d]_{(\mathscr C(X,Y),\mu_Y)}& \matholdcal M(X) \ar[r]^{\mu_X} & (A,\matholdcal M(X))\\
   (\mathscr C(X,Y),(A,\matholdcal M(Y))) \cong (A\otimes \mathscr C(X,Y),\matholdcal M(Y))\ar[rr]_{(\psi_{XY},\matholdcal M(Y))}&&( \mathscr C(X,Y)\otimes A,\matholdcal M(Y))\cong  (A,(\mathscr C(X,Y),\matholdcal M(Y)))\ar[u]_{(A,\pi_{XY})}\\
 }
 \end{equation}  
 A morphism $\phi: \matholdcal M\longrightarrow  \matholdcal N$ of entwined contramodules is a morphism $\phi: \matholdcal M\longrightarrow  \matholdcal N$ in ${^\mathscr C}Ctr$ such that 
 $\phi(X): \matholdcal M(X)\longrightarrow  \matholdcal N(X)$ is $A$-linear for each $X\in Ob(\mathscr C)$. We will denote by ${_A^{\mathscr C}}Ctr(\psi)$ the category of entwined contramodules. 
 \end{defn}
 
 Since  Hom and $\otimes$ are exact in $Vect_K$, we observe that ${^\mathscr C}Ctr$ and ${_A^{\mathscr C}}Ctr(\psi)$ are abelian categories, with finite limits and finite colimits computed objectwise in the category of vector spaces. 
 
 \smallskip
 
Let $U$ be a $K$-vector space. For  $X\in Ob(\mathscr C)$, we denote by $\matholdcal H_X^U$ the $\mathscr C$-contramodule given by setting $\matholdcal H_X^U(Y):=(\mathscr C(Y,X),U)$. The structure maps for $\matholdcal H_X^U$ are given  by
\begin{equation}\label{33tr}
(\mathscr C(Y,Z),\matholdcal H_X^U(Z))\cong (\mathscr C(Y,Z),(\mathscr C(Z,X),U))\cong (\mathscr C(Z,X)\otimes \mathscr C(Y,Z),U)\xrightarrow{\qquad (\delta_{YZX},U)\qquad }
(\mathscr C(Y,X),U)=\matholdcal H_X^U(Y)
\end{equation} for $Y$, $Z\in Ob(\mathscr C)$.  When $U=K$, we write $\matholdcal H_X^*:=\matholdcal H_X^K$. 

\begin{thm}\label{P3.3}
Let $X\in Ob(\mathscr C)$. Consider the functors 
\begin{equation}\label{eq3.4f}
\begin{array}{c} \matholdcal H_X:Vect_K\longrightarrow {^\mathscr C}Ctr\qquad U\mapsto \matholdcal H_X^U \\
ev_X:{^\mathscr C}Ctr\longrightarrow Vect_K \qquad \matholdcal M\mapsto \matholdcal M(X)\\
\end{array}
\end{equation} Then, $(\matholdcal H_X,ev_X)$ is a pair of adjoint functors. 
\end{thm}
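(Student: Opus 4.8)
The plan is to exhibit, for each $U \in Vect_K$ and each $\matholdcal M \in {^\mathscr C}Ctr$, a natural bijection
\[
{^\mathscr C}Ctr(\matholdcal H_X^U, \matholdcal M) \cong Vect_K(U, \matholdcal M(X)),
\]
which displays $\matholdcal H_X$ as a left adjoint of $ev_X$. This is the $\mathrm{Hom}_K(-,U)$-dual of the comodule isomorphism \eqref{Mc111}, so I would set up two mutually inverse assignments in direct analogy with the explicit maps described after \eqref{Mc111}.

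First I would construct the assignment $Vect_K(U, \matholdcal M(X)) \to {^\mathscr C}Ctr(\matholdcal H_X^U, \matholdcal M)$. Given $g \in Vect_K(U, \matholdcal M(X))$, define $\phi_g = \{\phi_g(Y)\}_{Y \in Ob(\mathscr C)}$ by
\[
\phi_g(Y): \matholdcal H_X^U(Y) = (\mathscr C(Y,X), U) \xrightarrow{(\mathscr C(Y,X), g)} (\mathscr C(Y,X), \matholdcal M(X)) \xrightarrow{\pi^{\matholdcal M}_{YX}} \matholdcal M(Y).
\]
In the reverse direction, send a contramodule morphism $\phi: \matholdcal H_X^U \to \matholdcal M$ to $\bar\phi := \phi(X) \circ (\epsilon_X, U) \in Vect_K(U, \matholdcal M(X))$, where $(\epsilon_X, U): U \cong (K,U) \to (\mathscr C(X,X), U) = \matholdcal H_X^U(X)$ is induced by the counit $\epsilon_X$.

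The main work, and the step I expect to be the principal obstacle, is to verify that $\phi_g$ is genuinely a morphism in ${^\mathscr C}Ctr$, i.e. that $\phi_g(Y) \circ \pi^{\matholdcal H_X^U}_{YZ} = \pi^{\matholdcal M}_{YZ} \circ (\mathscr C(Y,Z), \phi_g(Z))$ for all $Y, Z$. Here I would unwind the structure maps of $\matholdcal H_X^U$ from \eqref{33tr}, which amount to precomposition by the comultiplication $\delta_{YZX}$, and then apply the contramodule associativity axiom (the first diagram of \eqref{dg3.4}) for $\matholdcal M$ to rewrite $\pi^{\matholdcal M}_{YZ} \circ (\mathscr C(Y,Z), \pi^{\matholdcal M}_{ZX})$ as $\pi^{\matholdcal M}_{YX} \circ (\delta_{YZX}, \matholdcal M(X))$. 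Since the same $\delta_{YZX}$ governs both sides, the two expressions coincide once one observes that precomposition by $\delta_{YZX}$ commutes with postcomposition by $g$, which is just functoriality of $(-,-)$ in each variable. The delicate part is bookkeeping the hom-tensor adjunction isomorphisms in \eqref{33tr} and \eqref{dg3.4} so that the instances of $\delta_{YZX}$ are matched correctly.

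It then remains to check that the two assignments are mutually inverse and natural. That $\overline{\phi_g} = g$ follows from the counit diagram of $\matholdcal M$ (the second diagram of \eqref{dg3.4}): the identity $(\mathscr C(X,X), g) \circ (\epsilon_X, U) = (\epsilon_X, \matholdcal M(X)) \circ g$ reduces $\overline{\phi_g}$ to $\pi^{\matholdcal M}_{XX} \circ (\epsilon_X, \matholdcal M(X)) \circ g = g$. Conversely, $\phi_{\bar\phi} = \phi$ follows by using that $\phi$ is a contramodule morphism to rewrite $\pi^{\matholdcal M}_{YX} \circ (\mathscr C(Y,X), \phi(X))$ as $\phi(Y) \circ \pi^{\matholdcal H_X^U}_{YX}$, and then invoking the counit axiom of $\mathscr C$ in the form $(\epsilon_X \otimes \mathscr C(Y,X)) \circ \delta_{YXX} = \mathrm{id}$ to see that $\pi^{\matholdcal H_X^U}_{YX} \circ (\mathscr C(Y,X), (\epsilon_X, U))$ is the identity on $\matholdcal H_X^U(Y)$. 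Finally, naturality in both $U$ and $\matholdcal M$ is immediate, since every constituent map ($\pi^{\matholdcal M}$, postcomposition with $g$, and $(\epsilon_X, U)$) is natural in the relevant argument. This yields the desired adjunction.
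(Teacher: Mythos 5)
Your proposal is correct and follows essentially the same route as the paper: the same two explicit assignments $g\mapsto \pi^{\matholdcal M}_{YX}\circ(\mathscr C(Y,X),g)$ and $\phi\mapsto\phi(X)\circ(\epsilon_X,U)$, with the paper leaving the mutual-inverse and morphism checks to the reader. Your added verifications via the associativity and counit diagrams of \eqref{dg3.4} and the counit axiom of $\mathscr C$ are exactly the omitted details and are carried out correctly.
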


\begin{proof}
We consider $U\in Vect_K$ and $\matholdcal M\in {^\mathscr C}Ctr$. We have to show that there are natural isomorphisms
\begin{equation}\label{3.5rf}
{^\mathscr C}Ctr(\matholdcal H_X^U,\matholdcal M)\cong Vect_K(U,\matholdcal M(X))
\end{equation} Let $\eta:
U\longrightarrow \matholdcal M(X)$ be a morphism in $Vect_K$. Then, for each $Y\in Ob(\mathscr C)$, we have an induced morphism
\begin{equation}\label{3.6t}
\zeta(Y):\matholdcal H_X^U(Y)=(\mathscr C(Y,X),U)\xrightarrow{\quad(\mathscr C(Y,X),\eta)\quad}(\mathscr C(Y,X),\matholdcal M(X))\xrightarrow{\quad \pi_{YX}\quad}\matholdcal M(Y)
\end{equation} where $\pi_{YX}$ comes from the structure maps of $\matholdcal M$ as a $\mathscr C$-contramodule. Together, the morphisms in \eqref{3.6t} determine 
$\zeta\in {^\mathscr C}Ctr(\matholdcal H_X^U,\matholdcal M)$.

\smallskip
Conversely, we consider some $\zeta\in {^\mathscr C}Ctr(\matholdcal H_X^U,\matholdcal M)$. Then, we have an induced map
\begin{equation}
\eta: U=(K,U)\xrightarrow{\quad (\epsilon_X,U)\quad} (\mathscr C(X,X),U)=\matholdcal H_X^U(X)\xrightarrow{\quad\zeta(X)\quad}\matholdcal M(X)
\end{equation} in $Vect_K$. It may be verified that these associations are inverse to each other. This proves the result. 
\end{proof}

\begin{thm}\label{P3.4}
The collection $\{\matholdcal H_X^*\}_{X\in Ob(\mathscr C)}$ is a set of projective generators for ${^\mathscr C}Ctr$. 
\end{thm}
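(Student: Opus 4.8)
The plan is to leverage the adjunction $(\matholdcal H_X, ev_X)$ established in Proposition \ref{P3.3}. Specializing the natural isomorphism \eqref{3.5rf} to $U = K$ gives, for every $\matholdcal M \in {^\mathscr C}Ctr$, a natural isomorphism ${^\mathscr C}Ctr(\matholdcal H_X^*, \matholdcal M) = {^\mathscr C}Ctr(\matholdcal H_X^K, \matholdcal M) \cong Vect_K(K, \matholdcal M(X)) \cong \matholdcal M(X)$. In other words, the covariant functor ${^\mathscr C}Ctr(\matholdcal H_X^*, -)$ is naturally isomorphic to the evaluation functor $ev_X$. This single identification drives both halves of the statement.

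For projectivity, I would observe that a sequence in ${^\mathscr C}Ctr$ is exact precisely when it is exact at each object $X$, since kernels and cokernels are computed objectwise in $Vect_K$. Hence each evaluation functor $ev_X$ is exact, and by the isomorphism above so is ${^\mathscr C}Ctr(\matholdcal H_X^*, -)$. An object whose covariant Hom functor is exact is projective, so each $\matholdcal H_X^*$ is projective.

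For the generating property, I would make the adjunction isomorphism explicit at $U = K$. Given $m \in \matholdcal M(X)$, corresponding to the map $\eta : K \to \matholdcal M(X)$ with $\eta(1) = m$, the associated morphism $\phi_m \in {^\mathscr C}Ctr(\matholdcal H_X^*, \matholdcal M)$ is given at each object $Y$ by the composite in \eqref{3.6t}. The key computation is at $Y = X$: evaluating $\phi_m(X)$ on $\epsilon_X \in (\mathscr C(X,X), K) = \matholdcal H_X^*(X)$, the map $(\mathscr C(X,X), \eta)$ sends $\epsilon_X$ to the assignment $g \mapsto \epsilon_X(g) m$, which is exactly $(\epsilon_X, \matholdcal M(X))(m)$; the counit axiom for the $\mathscr C$-contramodule $\matholdcal M$ (the second diagram of \eqref{dg3.4}, i.e. $\pi_{XX} \circ (\epsilon_X, \matholdcal M(X)) = id$) then yields $\phi_m(X)(\epsilon_X) = m$.

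Finally, I would conclude using the objectwise description of subobjects. If $\matholdcal N \subsetneq \matholdcal M$ is a proper subobject, then since monomorphisms in ${^\mathscr C}Ctr$ are objectwise injective there is some $X$ with $\matholdcal N(X) \subsetneq \matholdcal M(X)$; choosing $m \in \matholdcal M(X) \setminus \matholdcal N(X)$, the previous step shows $\phi_m(X)(\epsilon_X) = m \notin \matholdcal N(X)$, so $\phi_m$ does not factor through $\matholdcal N$. This verifies that $\{\matholdcal H_X^*\}_{X \in Ob(\mathscr C)}$ is a generating set. The main point requiring care is the explicit identification in the generating step, namely recognizing $g \mapsto \epsilon_X(g) m$ as $(\epsilon_X, \matholdcal M(X))(m)$ so that the contramodule counit axiom applies; phrasing generation via proper subobjects rather than via an epimorphism out of a coproduct also conveniently sidesteps any question of whether arbitrary direct sums exist in ${^\mathscr C}Ctr$, which for contramodules is a genuinely delicate matter.
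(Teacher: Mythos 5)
Your proposal is correct and takes essentially the same approach as the paper: both identify ${^\mathscr C}Ctr(\matholdcal H_X^*,-)$ with the evaluation functor $ev_X$ via the adjunction of Proposition \ref{P3.3}, deduce projectivity from the fact that kernels and cokernels are computed objectwise, and verify generation by testing against a monomorphism $\matholdcal N\hookrightarrow\matholdcal M$ that is not an isomorphism. The only cosmetic difference is that you establish generation by the explicit evaluation $\phi_m(X)(\epsilon_X)=m$ (which is precisely the content of the paper's Corollary \ref{C3.5}, proved immediately after), whereas the paper argues that a factorization of $\zeta$ through $\matholdcal N$ would force the corresponding $\eta:K\longrightarrow\matholdcal M(X)$ to factor through $\matholdcal N(X)$, and then cites \cite[$\S$ 1.9]{Toh}.
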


\begin{proof}
We have mentioned before that kernels and cokernels are computed objectwise in ${^\mathscr C}Ctr$. From Proposition \ref{P3.3}, it follows that 
${^\mathscr C}Ctr(\matholdcal H_X^*,\matholdcal M)\cong Vect_K(K,\matholdcal M(X))\cong \matholdcal M(X)$ for any $\matholdcal M\in {^\mathscr C}Ctr$. From this, it is clear that each 
$\matholdcal H_X^*$ is projective. 

\smallskip
It remains to show that $\{\matholdcal H_X^*\}_{X\in Ob(\mathscr C)}$ is a set of generators. For this, we consider a monomorphism $\iota: \matholdcal N\hookrightarrow \matholdcal M$ in ${^\mathscr C}Ctr$ that is not an isomorphism. Since kernels in ${^\mathscr C}Ctr$ are computed objectwise, this means there exists some $X\in Ob(\mathscr C)$ such that
$\iota(X):\matholdcal N(X)\hookrightarrow \matholdcal M(X)$ is not an isomorphism. In other words, we can choose a morphism $\eta:K\longrightarrow \matholdcal M(X)$ in $Vect_K$ that does not factor through $\matholdcal N(X)$. Then, the morphism $\zeta\in {^\mathscr C}Ctr(\matholdcal H_X^*,\matholdcal M)\cong Vect_K(K,\matholdcal M(X))$ corresponding to 
$\eta$ does not factor through $\matholdcal N$. It now follows by \cite[$\S$ 1.9]{Toh} that $\{\matholdcal H_X^*\}_{X\in Ob(\mathscr C)}$ is a set of  generators for ${^\mathscr C}Ctr$. 
\end{proof}

\begin{cor}\label{C3.5}
Let $\matholdcal M\in {^\mathscr C}Ctr$. Let $m\in \matholdcal M(X)$ for some $X\in Ob(\mathscr C)$ and let $\phi_m: \matholdcal H_X^*\longrightarrow \matholdcal M$ be the morphism 
in ${^\mathscr C}Ctr$ corresponding to $m$. Then, $m$ lies in the image of the map $\phi_m(X):\matholdcal H_X^*(X)\longrightarrow \matholdcal M(X)$.
\end{cor}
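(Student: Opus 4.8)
The plan is to unwind the adjunction of Proposition \ref{P3.3} so that the abstract morphism $\phi_m$ becomes a concrete formula, and then to evaluate its $X$-component on one carefully chosen element. Recall that under the isomorphism ${^\mathscr C}Ctr(\matholdcal H_X^*,\matholdcal M)\cong Vect_K(K,\matholdcal M(X))\cong \matholdcal M(X)$, the element $m$ corresponds to the linear map $\eta:K\longrightarrow \matholdcal M(X)$ with $\eta(1)=m$, and $\phi_m$ is the morphism $\zeta$ built from $\eta$ exactly as in \eqref{3.6t}. In particular its $X$-component is the composite
$$\phi_m(X)=\zeta(X):(\mathscr C(X,X),K)\xrightarrow{(\mathscr C(X,X),\eta)}(\mathscr C(X,X),\matholdcal M(X))\xrightarrow{\pi_{XX}}\matholdcal M(X).$$

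First I would single out the distinguished element $\epsilon_X\in (\mathscr C(X,X),K)=\matholdcal H_X^*(X)$ and compute $\phi_m(X)(\epsilon_X)$. Applying $(\mathscr C(X,X),\eta)$ produces the composite $\eta\circ\epsilon_X:\mathscr C(X,X)\longrightarrow \matholdcal M(X)$, which is the map $g\mapsto \epsilon_X(g)\,m$. The crux is then to recognize that this map coincides with the image of $m$ under the vertical arrow $(\epsilon_X,\matholdcal M(X))$ of the counit triangle in \eqref{dg3.4}: under the identification $\matholdcal M(X)\cong (K,\matholdcal M(X))$ the element $m$ becomes the map $1\mapsto m$, and $(\epsilon_X,\matholdcal M(X))$ carries it to the map $g\mapsto \epsilon_X(g)\,m$, which is precisely $\eta\circ\epsilon_X$. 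The commutativity of that counit triangle states exactly that $\pi_{XX}$ applied to this element returns $m$. Hence $\phi_m(X)(\epsilon_X)=\pi_{XX}(\eta\circ\epsilon_X)=m$, so $m$ lies in the image of $\phi_m(X)$, as required.

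The only real subtlety — and the step I would verify most carefully — is the bookkeeping that matches the map $\eta\circ\epsilon_X$ produced by the adjunction formula \eqref{3.6t} with the input to the counit axiom of Definition \ref{D3.1}. Once these two maps are identified, the conclusion is immediate from the contramodule counit condition, and no further input is needed.
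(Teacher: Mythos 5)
Your proposal is correct and is essentially identical to the paper's own proof: both unwind \eqref{3.6t} to get $\phi_m(X)(g)=\pi_{XX}(g\circ\eta$-type map$)$, evaluate at the distinguished element $\epsilon_X\in\matholdcal H_X^*(X)$, identify the resulting map $f\mapsto\epsilon_X(f)\,m$ with the input of the counit triangle in \eqref{dg3.4}, and conclude $\phi_m(X)(\epsilon_X)=\pi_{XX}(\epsilon_X^m)=m$. No differences worth noting.
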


\begin{proof}
From the construction in \eqref{3.6t}, we see  that $\phi_m(X)(g):=\pi_{XX}(g^m)$, where $g\in (\mathscr C(Y,X),K)$ and $g^m:\mathscr C(X,X)\longrightarrow \matholdcal M(X)$ is given by 
setting $g^m(f):=g(f)m$. In particular, we take $g:=\epsilon_X :\mathscr C(X,X)\longrightarrow K$. Then, $g^m(f)=\epsilon_X^m(f):=\epsilon_X(f)m$. From the counit condition in \eqref{dg3.4}, we know that $\pi_{XX}(\epsilon_X^m)=m$. It is now clear that $\phi_m(X)(\epsilon_X)=\pi_{XX}(\epsilon_X^m)=m$. 
\end{proof}
 
 We now define some notation. If $L_1$,...,$L_k$, $k\geq 1$ is a sequence of vector spaces, we write
 \begin{equation}\label{ntn3}
 (L_k,L_{k-1},...,L_1):=(L_k,(L_{k-1},(...(L_2,L_1))))
 \end{equation} Further, we will make no distinction between all the different ways in which the right hand side of \eqref{ntn3} can be written using the hom-tensor adjunction. For example, 
 $(L_k,L_{k-1},...,L_1)$ will also be used to denote $(L_k,(L_{k-2}\otimes L_{k-1},(L_{k-3},(...(L_2,L_1)))))$, or $(L_k,L_{k-1},(...(L_3\otimes L_4\otimes L_5,(L_2,L_1))))$ and so on. 
 
 \smallskip
Given an entwining structure $(\mathscr C,A,\psi)$, we see that  the maps $\psi=\{\psi_{YZ}:\mathscr C(Y,Z)\otimes A\longrightarrow A\otimes \mathscr C(Y,Z) \}_{Y,Z\in Ob(\mathscr C)}$ induce morphisms
\begin{equation}\label{39uh}
\begin{CD}
(\mathscr C(Y,Z),A,U)=(A\otimes \mathscr C(Y,Z),U)@>(\psi_{YZ},U)>> (\mathscr C(Y,Z)\otimes A,U)=(A,\mathscr C(Y,Z),U)
\end{CD}
\end{equation}
for each vector space $U$. From the properties in \eqref{ent2}, it follows that the maps in \eqref{39uh} satisfy the following conditions
\begin{equation}\label{homent3}
\xymatrix{(\mathscr C(Y,W), \mathscr C(W,Z),A,U)\ar[rr]^{\qquad (\delta_{YWZ},A,U) }\ar[d]_{(\mathscr C(Y,W),\psi_{WZ},U)}&&(\mathscr C(Y,Z),A,U)\ar[rr]^{(\psi_{YZ},U)} && (A,\mathscr C(Y,Z),U) \\
( \mathscr C(Y,W),A,\mathscr C(W,Z), U)\ar[rrrr]_{(\psi_{YW},\mathscr C(W,Z),U)}&&&&(A, \mathscr C(Y,W),\mathscr C(W,Z), U)\ar[u]_{(A,\delta_{YWZ},U)}\\
}
\end{equation} for any $Y$, $Z$, $W\in Ob(\mathscr C)$. Similarly, if $\mu_A:A\otimes A\longrightarrow A$ denotes the multiplication map, it follows from the properties in 
\eqref{ent2} that the following diagram commutes 
\begin{equation}\label{homent4}
\xymatrix{(\mathscr C(Y,Z),A,U)\ar[rr]^{\qquad (\psi_{YZ},U) }\ar[d]_{(\mathscr C(Y,Z),\mu_A,V)\qquad }&&(A,\mathscr C(Y,Z),U)\ar[rr]^{(\mu_A,\mathscr C(Y,Z),U)} && (A,A,\mathscr C(Y,Z),U) \\
( \mathscr C(Y,Z),A,A, U)\ar[rrrr]_{(\psi_{YZ},A,U)}&&&&(A, \mathscr C(Y,Z),A, V)\ar[u]_{(A,\psi_{YZ},U)}\\
}
\end{equation}
for any $Y$, $Z\in Ob(\mathscr C)$.
 
 \begin{lem}\label{L3.5}
 Let $(\mathscr C,A,\psi)$ be an entwining structure and let $P\in {_A}Mod$ be left $A$-module. We fix $X\in Ob(\mathscr C)$. Then, setting $\matholdcal H_X^P(Y):=(\mathscr C(Y,X),P)$ for
 $Y\in Ob(\mathscr C)$  determines an entwined contramodule. Further, this gives a functor from ${_A}Mod$ to ${_A^{\mathscr C}}Ctr(\psi)$.  
 \end{lem}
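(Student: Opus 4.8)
The plan is to equip each space $\matholdcal H_X^P(Y)=(\mathscr C(Y,X),P)$ with the two pieces of data required by Definition \ref{D3.2} and then verify the compatibility square \eqref{com3.2}. For part (a) of that definition I would simply observe that $\matholdcal H_X^P$ already carries its $\mathscr C$-contramodule structure $\pi$ from the construction \eqref{33tr}: this is literally $\matholdcal H_X^U$ with $U$ taken to be the underlying vector space of $P$, so the coassociativity and counit axioms of Definition \ref{D3.1} hold for exactly the same reason as for $\matholdcal H_X^U$, namely the coassociativity and counit conditions for $\delta$ and $\epsilon$.

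For part (b), write $\alpha_P\colon P\to(A,P)$ for the structure map of $P$ as a left $A$-module, and define the collection $\mu=\{\mu_Y\}$ as the composite
\[
(\mathscr C(Y,X),P)\xrightarrow{(\mathscr C(Y,X),\alpha_P)}(\mathscr C(Y,X),(A,P))\cong(A\otimes\mathscr C(Y,X),P)\xrightarrow{(\psi_{YX},P)}(\mathscr C(Y,X)\otimes A,P)\cong(A,(\mathscr C(Y,X),P)).
\]
This unwinds to the formula $\mu_Y(g)(a)(f)=a_\psi\cdot g(f^\psi)$ for $g\in(\mathscr C(Y,X),P)$, $a\in A$, $f\in\mathscr C(Y,X)$. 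That this is a unital, associative left $A$-action is exactly the content of the normalization and multiplicativity axioms of \eqref{ent2}: the identity $\psi_{YX}(f\otimes 1)=1\otimes f$ yields $\mu_Y(g)(1)=g$, while $(ab)_\psi\otimes f^\psi=a_\psi b_\psi\otimes f^{\psi\psi}$ together with associativity of the action on $P$ yields $\mu_Y(g)(ab)=\mu_Y(\mu_Y(g)(b))(a)$. Diagrammatically these are precisely the commutativity of \eqref{homent4}.

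The main step is verifying the compatibility square \eqref{com3.2} for $\matholdcal M=\matholdcal H_X^P$, with a pair of objects $Y,Z$ playing the roles of $X,Y$ there. Tracing the top composite $\mu_Y\circ\pi_{YZ}$ produces $\delta_{YZX}$ followed by a single application of $\psi_{YX}$ and of $\alpha_P$, whereas the bottom composite $(A,\pi_{YZ})\circ(\psi_{YZ},-)\circ(\mathscr C(Y,Z),\mu_Z)$ produces the same $\alpha_P$ but two applications of $\psi$, namely $\psi_{ZX}$ and $\psi_{YZ}$, again together with $\delta_{YZX}$. The equality of the two sides is therefore exactly the Hom-dualized entwining compatibility between $\delta$ and $\psi$, which is the commutativity of \eqref{homent3}; the factor $\alpha_P$ occurs linearly and identically on both paths, so no property of $P$ beyond $\alpha_P$ being linear and natural is needed for this step. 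I expect this diagram chase --- keeping track of which copy of $\psi$ acts on which tensor factor after the coproduct has been applied --- to be the one genuinely delicate point, and it is controlled precisely by the identity $a_\psi\otimes(f^\psi)_{Y1}\otimes(f^\psi)_{Y2}=a_{\psi\psi}\otimes(f_{Y1})^\psi\otimes(f_{Y2})^\psi$ of \eqref{ent2}.

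Finally, for functoriality, a morphism $\theta\colon P\to P'$ of left $A$-modules induces maps $(\mathscr C(Y,X),\theta)\colon\matholdcal H_X^P(Y)\to\matholdcal H_X^{P'}(Y)$ by post-composition. Compatibility with the contramodule structures $\pi$ is the naturality of \eqref{33tr} in the coefficient space, and compatibility with the $A$-actions $\mu$ is immediate from the formula $\mu_Y(g)(a)(f)=a_\psi\cdot g(f^\psi)$ together with the $A$-linearity of $\theta$. Since post-composition is visibly functorial, the assignment $P\mapsto\matholdcal H_X^P$ defines a functor ${_A}Mod\to{_A^{\mathscr C}}Ctr(\psi)$, completing the argument.
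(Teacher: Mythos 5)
Your proposal is correct and follows essentially the same route as the paper's proof: the $A$-action on $\matholdcal H_X^P(Y)$ is defined by the same composite $(\psi_{YX},P)\circ(\mathscr C(Y,X),\mu_P)$ as in \eqref{39k}, and the compatibility square \eqref{com3.2} is verified via the Hom-dualized entwining identity \eqref{homent3}, exactly as in the computation \eqref{40j}. The only difference is that you explicitly check unitality and associativity of the action (via \eqref{ent2}, equivalently \eqref{homent4}) and the functoriality in $P$, both of which the paper leaves implicit; your element-level formula $\mu_Y(g)(a)(f)=a_\psi\cdot g(f^\psi)$ is the correct unwinding.
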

 \begin{proof}
 Let $\mu_P:P\longrightarrow (A,P)$ denote the structure map of $P$ as a left $A$-module. The $\mathscr C$-contramodule structure on $\matholdcal H_X^P$ follows as in 
 \eqref{33tr}. The $A$-module structure on $\matholdcal H_X^P(Y)$ is determined by
 \begin{equation}\label{39k}
 \begin{CD}
 \matholdcal H_X^P(Y)=(\mathscr C(Y,X),P)@>(\mathscr C(Y,X),\mu_P)>> (\mathscr C(Y,X),A,P)@>(\psi_{YX},P)>>(A,\mathscr C(Y,X),P)=(A, \matholdcal H_X^P(Y))
 \end{CD}
 \end{equation} It remains to show that $\matholdcal H_X^P$ satisfies the conditions in \eqref{dg3.4}. It follows from \eqref{homent3} that the following maps 
 from $(\mathscr C(Y,Z),\matholdcal H_X^P(Z))$ to $(A,\matholdcal H_X^P(Y))$ are equal:
 \begin{equation}\label{40j}
 \begin{array}{l}
(A,\delta_{YZX},P)\circ (\psi_{YZ},\mathscr C(Z,X),P)\circ (\mathscr C(Y,Z),\psi_{ZX},P)\circ  (\mathscr C(Y,Z),(\mathscr C(Z,X),\mu_P))\\
=(\psi_{YX},P)\circ (\delta_{YZX},A,P)\circ  (\mathscr C(Y,Z),(\mathscr C(Z,X),\mu_P))\\
=(\psi_{YX},P)\circ (\delta_{YZX},(A,P))\circ  (\mathscr C(Z,X)\otimes \mathscr C(Y,Z),\mu_P)\\
=(\psi_{YX},P)\circ (\mathscr C(Y,X),\mu_P)\circ  (\delta_{YZX},P)\\
 \end{array}
 \end{equation} This proves the result.
 \end{proof}

 \begin{lem}\label{L3.6}
 Let $(\mathscr C,A,\psi)$ be an entwining structure and let $\matholdcal M\in {^\mathscr C}Ctr$.   Then, we have a functor
 \begin{equation}
 T_\psi:{^\mathscr C}Ctr\longrightarrow {_A^{\mathscr C}}Ctr(\psi) \qquad T_\psi(\matholdcal M)(X):=(A,\matholdcal M(X)) \quad X\in Ob(\mathscr C)
 \end{equation}  
 \end{lem}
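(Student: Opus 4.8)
The plan is to define, for a fixed $\matholdcal M\in{}^{\mathscr C}Ctr$, both a left $A$-module structure and a left $\mathscr C$-contramodule structure on the collection $T_\psi(\matholdcal M)(X):=(A,\matholdcal M(X))$, to check that they satisfy the compatibility \eqref{com3.2}, and then to confirm functoriality. It is worth noting that the roles of $\psi$ here are exactly opposite to those in Lemma \ref{L3.5}: for $\matholdcal H_X^P$ the entwining entered the $A$-action while the contramodule structure was the naive one of \eqref{33tr}, whereas here the $A$-action will be the naive one and $\psi$ will be built into the contramodule structure.

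First I would equip each $(A,\matholdcal M(X))$ with its standard left $A$-module structure $\mu_X$ induced by the multiplication $\mu_A$ of $A$, that is, $\mu_X=(\mu_A,\matholdcal M(X))$ under the adjunction isomorphism $(A,(A,\matholdcal M(X)))\cong(A\otimes A,\matholdcal M(X))$; associativity and unitality of $A$ make this a genuine left $A$-module. Next I would define the $\mathscr C$-contramodule structure maps by transporting $\pi^{\matholdcal M}$ across the entwining, setting $\pi_{XY}^{T_\psi(\matholdcal M)}$ to be the composite
\begin{equation*}
(\mathscr C(X,Y),(A,\matholdcal M(Y)))\cong (A\otimes\mathscr C(X,Y),\matholdcal M(Y))\xrightarrow{(\psi_{XY},\matholdcal M(Y))}(\mathscr C(X,Y)\otimes A,\matholdcal M(Y))\cong (A,(\mathscr C(X,Y),\matholdcal M(Y)))\xrightarrow{(A,\pi_{XY}^{\matholdcal M})}(A,\matholdcal M(X)),
\end{equation*}
using the maps \eqref{39uh} induced by $\psi$.

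I would then verify the two contramodule axioms in \eqref{dg3.4}. For the counit axiom, precomposing $\pi_{XX}^{T_\psi(\matholdcal M)}$ with $(\epsilon_X,-)$ collapses the $\mathscr C(X,X)$-factor along $\epsilon_X$; the counit compatibility $a_\psi\epsilon_X(g^\psi)=\epsilon_X(g)a$ from \eqref{ent2} then reduces the composite to $\pi_{XX}^{\matholdcal M}\circ(\epsilon_X,\matholdcal M(X))$, which is the identity by the counit axiom for $\matholdcal M$. For coassociativity, I would expand both legs of the square into their $\psi$-and-$\pi^{\matholdcal M}$ constituents and rearrange: the two copies of $\psi$ together with the comultiplication $\delta$ are recombined by \eqref{homent3} (the Hom-form of the first entwining identity in \eqref{ent2}), after which the two copies of $\pi^{\matholdcal M}$ are merged using the coassociativity of $\matholdcal M$, namely the first diagram of \eqref{dg3.4}. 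For the compatibility \eqref{com3.2} between the two structures, the top path carries one $\psi$ (from $\pi^{T_\psi(\matholdcal M)}$) and one $\mu_A$, while the bottom path carries two copies of $\psi$ and one $\mu_A$; these are reconciled precisely by \eqref{homent4}, the Hom-form of the multiplicativity identity $(ab)_\psi\otimes f^\psi=a_\psi b_\psi\otimes f^{\psi\psi}$, together with the associativity of $\mu_A$. This last step runs parallel to the computation \eqref{40j} in Lemma \ref{L3.5}, with the fixed module $P$ replaced by the variable space $\matholdcal M(Y)$.

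Finally, for functoriality, given $\phi:\matholdcal M\to\matholdcal N$ in ${}^{\mathscr C}Ctr$ I would set $T_\psi(\phi)(X):=(A,\phi(X))$; its $A$-linearity is immediate from the naturality of $(\mu_A,-)$, and its compatibility with the maps $\pi^{T_\psi(\cdot)}$ follows from the naturality of the transformations \eqref{39uh} in the last slot together with the fact that $\phi$ intertwines $\pi^{\matholdcal M}$ and $\pi^{\matholdcal N}$. The main obstacle is the coassociativity check: keeping the two occurrences of $\psi$ and the two occurrences of $\delta$ correctly aligned across the hom-tensor adjunctions. This is exactly the bookkeeping that \eqref{homent3} was set up to absorb, so the difficulty is organizational rather than conceptual, and once the entwining square and the coassociativity square of $\matholdcal M$ are pasted together the verification is routine.
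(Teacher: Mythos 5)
Your proposal is correct and follows essentially the same route as the paper: the same standard $A$-module structure $(\mu_A,\matholdcal M(X))$, the same $\psi$-twisted contramodule structure maps as in \eqref{45k}, and the same use of \eqref{homent4} to verify the compatibility \eqref{com3.2}, with your coassociativity check via \eqref{homent3} mirroring the computation \eqref{40j} from Lemma \ref{L3.5} exactly as the paper intends. If anything, you are more thorough than the paper's proof, which leaves the counit and coassociativity axioms of \eqref{dg3.4} for $T_\psi(\matholdcal M)$ (your appeal to the identity $a_\psi\epsilon_X(g^\psi)=\epsilon_X(g)a$ from \eqref{ent2} and to \eqref{homent3}) and the functoriality of $T_\psi$ implicit.
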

 \begin{proof} If $\mu_A:A\otimes A\longrightarrow A$ denotes the multiplication on $A$, it is clear that $(A,\matholdcal M(X))\xrightarrow{(\mu_A,\matholdcal M(X))}(A,(A,
 \matholdcal M(X)))$ gives each $ (A,\matholdcal M(X)) $ the structure of a left $A$-module. Let $\pi_{XY}: (\mathscr C(X,Y),\matholdcal M(Y))\longrightarrow \matholdcal M(X)$  denote the structure maps of $\matholdcal M$ as a $\mathscr C$-contramodule.  The $\mathscr C$-contramodule structure on $T_\psi(\matholdcal M)$ now follows from the structure maps
 \begin{equation}\label{45k}
 (\mathscr C(X,Y),T_\psi(\matholdcal M)(Y))=(\mathscr C(X,Y),A,\matholdcal M(Y))\xrightarrow{(\psi_{XY},\matholdcal M(Y))} (A,\mathscr C(X,Y),\matholdcal M(Y))
 \xrightarrow{(A,\pi_{XY})}(A,\matholdcal M(X))
 \end{equation}
 for $X$, $Y\in Ob(\mathscr C)$. It remains to show that $T_\psi(\matholdcal M)$ satisfies the conditions in \eqref{dg3.4}. It follows from \eqref{homent4} that the following maps 
 from $(\mathscr C(X,Y),T_\psi(\matholdcal M)(Y))$ to $(A,T_\psi(\matholdcal M)(X))$ are equal:
 \begin{equation}
 \begin{array}{l}
(A,A,\pi_{XY})\circ (A,\psi_{XY},\matholdcal M(Y))\circ (\psi_{XY},A,\matholdcal M(Y))\circ (\mathscr C(X,Y),\mu_A,\matholdcal M(Y))\\
=(A,A,\pi_{XY})\circ (\mu_A,\mathscr C(X,Y),\matholdcal M(Y))\circ (\psi_{XY},\matholdcal M(Y))\\
=(\mu_A,\matholdcal M(X))\circ (A,\pi_{XY})\circ (\psi_{XY},\matholdcal M(Y))\\
 \end{array}
 \end{equation}This proves the result.
 \end{proof}
 
 \begin{thm}\label{P3.7}
 Let $(\mathscr C,A,\psi)$ be an entwining structure and let $ S_\psi: {_A^{\mathscr C}}Ctr(\psi)\longrightarrow {^\mathscr C}Ctr$ be the forgetful functor. Then, $( S_\psi, T_\psi)$ is a pair of adjoint functors.
 \end{thm}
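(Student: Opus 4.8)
The plan is to exhibit, for each $\matholdcal N\in {_A^{\mathscr C}}Ctr(\psi)$ and $\matholdcal M\in {^\mathscr C}Ctr$, a natural isomorphism
\begin{equation}
{_A^{\mathscr C}}Ctr(\psi)\big(\matholdcal N,T_\psi(\matholdcal M)\big)\cong {^\mathscr C}Ctr\big(S_\psi(\matholdcal N),\matholdcal M\big),
\end{equation}
so that $S_\psi$ is left adjoint to $T_\psi$. Since $S_\psi$ merely forgets the $A$-action, $S_\psi(\matholdcal N)$ is $\matholdcal N$ regarded as a $\mathscr C$-contramodule, while $T_\psi(\matholdcal M)(X)=(A,\matholdcal M(X))$ as in Lemma \ref{L3.6}. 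This is the contramodule incarnation of the classical coinduction adjunction $Hom_A(N,(A,M))\cong Hom_K(N,M)$ relating the forgetful functor to $Hom_K(A,-)$, so I expect the two sides of the bijection to be built from the $A$-action on $\matholdcal N$ and from evaluation at $1\in A$.

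Concretely, writing $\mu_X^{\matholdcal N}\colon\matholdcal N(X)\to(A,\matholdcal N(X))$ for the $A$-module structure on $\matholdcal N$, I would send $\zeta\in{^\mathscr C}Ctr(S_\psi(\matholdcal N),\matholdcal M)$ to the collection $\eta=\{\eta(X)\}$ with $\eta(X):=(A,\zeta(X))\circ\mu_X^{\matholdcal N}$, i.e.\ $\eta(X)(n)(a)=\zeta(X)(a\cdot n)$; conversely I would send $\eta\in{_A^{\mathscr C}}Ctr(\psi)(\matholdcal N,T_\psi(\matholdcal M))$ to $\zeta=\{\zeta(X)\}$ with $\zeta(X):=ev_1\circ\eta(X)$, where $ev_1\colon(A,\matholdcal M(X))\to\matholdcal M(X)$ is evaluation at $1$. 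That $\eta(X)$ is $A$-linear into the module $(A,\matholdcal M(X))$, whose structure is $(\mu_A,\matholdcal M(X))$, is immediate from associativity of the action on $\matholdcal N$, and the two assignments are mutually inverse by a short computation using $1\cdot n=n$ together with the $A$-linearity of $\eta$.

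The crux is to check that each assignment lands in the correct morphism category, namely that it respects the $\mathscr C$-contramodule structures; this is the main obstacle and it is exactly where the entwining is used. For the forward direction, unwinding the entwined-contramodule compatibility \eqref{com3.2} for $\matholdcal N$ yields, for $\Theta\in(\mathscr C(X,Y),\matholdcal N(Y))$ and $a\in A$, the identity $a\cdot\pi_{XY}^{\matholdcal N}(\Theta)=\pi_{XY}^{\matholdcal N}\big(f\mapsto a_\psi\cdot\Theta(f^\psi)\big)$. Feeding this through $\zeta$ and using that $\zeta$ is a $\mathscr C$-contramodule morphism, one finds that $\eta(X)\circ\pi_{XY}^{\matholdcal N}$ agrees with $\pi_{XY}^{T_\psi(\matholdcal M)}\circ(\mathscr C(X,Y),\eta(Y))$ after expanding the structure map \eqref{45k} of $T_\psi(\matholdcal M)$, both sides evaluating at $a$ to $\pi_{XY}^{\matholdcal M}\big(f\mapsto\zeta(Y)(a_\psi\cdot\Theta(f^\psi))\big)$. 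For the reverse direction, the analogous verification reduces to the observation $ev_1\circ\pi_{XY}^{T_\psi(\matholdcal M)}=\pi_{XY}^{\matholdcal M}\circ(\mathscr C(X,Y),ev_1)$, which follows at once from the normalization $\psi_{XY}(f\otimes 1)=1\otimes f$ in \eqref{ent2}.

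Finally, naturality in both $\matholdcal N$ and $\matholdcal M$ is routine and is obtained by chasing the two assignments against morphisms in the respective categories. Thus the only genuinely nonformal input is the entwining compatibility \eqref{com3.2}, which is precisely what ensures that coinduction along $\psi$ produces an entwined contramodule $T_\psi(\matholdcal M)$ receiving $\matholdcal N$; once that identity is in hand, the adjunction follows by the same bookkeeping as in the module-theoretic coinduction adjunction.
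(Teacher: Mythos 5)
Your proposal is correct and takes essentially the same route as the paper's proof of Proposition \ref{P3.7}: the same two assignments $\eta(X)=(A,\zeta(X))\circ\mu_X$ and $\zeta(X)=(u_A,\matholdcal M(X))\circ\eta(X)$ (evaluation at $1$), verified via the entwining compatibility \eqref{com3.2}, the structure map \eqref{45k} of $T_\psi$, and the normalization $\psi_{XY}(f\otimes 1)=1\otimes f$ from \eqref{ent2}. The only difference is cosmetic: you compute pointwise on elements where the paper writes the same identities as compositions of maps.
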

 
 \begin{proof}
 We consider $\matholdcal M\in {_A^{\mathscr C}}Ctr(\psi)$, $\matholdcal M'\in {^\mathscr C}Ctr$  and $\eta\in {^\mathscr C}Ctr(S_\psi(\matholdcal M),\matholdcal M')$. We now define $\zeta:\matholdcal M\longrightarrow T_\psi(
 \matholdcal M')$ by setting for each $X\in Ob(\mathscr C)$:
 \begin{equation}\label{317g}
\zeta(X):\matholdcal M(X)\xrightarrow{\mu_X} (A,\matholdcal M(X)) \xrightarrow{(A,\eta(X))} (A,\matholdcal M'(X))
 \end{equation} where $\mu_X:\matholdcal M(X)\longrightarrow (A,\matholdcal M(X))$ is the structure map of $\matholdcal M(X)$ as an $A$-module. The map in \eqref{317g} is clearly 
 $A$-linear.  Let $\pi_{XY}: (\mathscr C(X,Y),\matholdcal M(Y))\longrightarrow \matholdcal M(X)$ (resp.  $\pi'_{XY}: (\mathscr C(X,Y),\matholdcal M'(Y))\longrightarrow \matholdcal M'(X)$ )  denote the structure maps of $\matholdcal M$ (resp. $\matholdcal M'$) as a $\mathscr C$-contramodule. To show that the maps in \eqref{317g} are compatible with the  $\mathscr C$-contramodule structures on $\matholdcal M$ and $T_\psi(\matholdcal M')$, we check that for $X$, $Y\in Ob(\mathscr  C)$, we have
 \begin{equation}
 \begin{array}{l}
  (A,\pi'_{XY})\circ (\psi_{XY},\matholdcal M'(Y))\circ (\mathscr C(X,Y),\zeta(Y))\\
 (A,\pi'_{XY})\circ (\psi_{XY},\matholdcal M'(Y))\circ (\mathscr C(X,Y),A,\eta(Y))\circ (\mathscr C(X,Y),\mu_Y)\\
 = (A,\pi'_{XY})\circ (A,\mathscr C(X,Y),\eta(Y)) \circ (\psi_{XY},\matholdcal M(Y))\circ   (\mathscr C(X,Y),\mu_Y)\\
  =  (A,\eta(X))\circ (A,\pi_{XY}) \circ (\psi_{XY},\matholdcal M(Y))\circ  (\mathscr C(X,Y),\mu_Y)\\
=    (A,\eta(X))\circ  \mu_X\circ \pi_{XY}=\zeta(X)\circ \pi_{XY}\\
 \end{array}
 \end{equation} Conversely, we take $\zeta\in {_A^{\mathscr C}}Ctr(\psi)(\matholdcal M, T_\psi(
 \matholdcal M')$. We define $\eta: S_\psi(\matholdcal M)\longrightarrow \matholdcal M'$ by setting for each $X\in Ob(\mathscr C)$:
 \begin{equation}\label{318g}
 \eta(X):\matholdcal M(X)\xrightarrow{\zeta(X)}(A,\matholdcal M'(X))\xrightarrow{(u_A,\matholdcal M'(X))}\matholdcal M'(X)
 \end{equation} where $u_A:K\longrightarrow A$ is the unit map of the algebra $A$. To show that the maps in \eqref{318g} are compatible with the  $\mathscr C$-contramodule structures on $\matholdcal M$ and $\matholdcal M'$, we check that for $X$, $Y\in Ob(\mathscr  C)$, we have
 \begin{equation}
 \begin{array}{ll}
\eta(X)\circ \pi_{XY}&= (u_A,\matholdcal M'(X))\circ \zeta(X)\circ \pi_{XY}\\
& = (u_A,\matholdcal M'(X))\circ  (A,\pi'_{XY})\circ (\psi_{XY},\matholdcal M'(Y))\circ (\mathscr C(X,Y),\zeta(Y))\\
 & =\pi'_{XY}\circ (u_A,\mathscr C(X,Y),\matholdcal M'(Y))\circ (\psi_{XY},\matholdcal M'(Y))\circ (\mathscr C(X,Y),\zeta(Y))\\
  & =\pi'_{XY}\circ (\mathscr C(X,Y),u_A,\matholdcal M'(Y))\circ (\mathscr C(X,Y),\zeta(Y))=\pi'_{XY}\circ (\mathscr C(X,Y),\eta(Y))\\
 \end{array}
 \end{equation} It may be verified that these two associations are inverse to each other. This proves the result.
 \end{proof}
 
 \begin{lem}\label{L3.75}
  Let $(\mathscr C,A,\psi)$ be an entwining structure. Then,  the category ${_A^{\mathscr C}}Ctr(\psi)$ of entwined contramodules is well powered, i.e., the collection of subobjects
  of any object forms a set. 
 \end{lem}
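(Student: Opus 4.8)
The plan is to show that every subobject of an entwined contramodule $\matholdcal M \in {_A^{\mathscr C}}Ctr(\psi)$ is determined by set-theoretically bounded data, so that the subobjects cannot form a proper class. Since finite limits in ${_A^{\mathscr C}}Ctr(\psi)$ are computed objectwise in $Vect_K$ (as noted right after Definition \ref{D3.2}), a subobject $\iota:\matholdcal N\hookrightarrow\matholdcal M$ is given by a collection of $K$-linear subspaces $\matholdcal N(X)\subseteq\matholdcal M(X)$, one for each $X\in Ob(\mathscr C)$, that is closed under the contramodule structure maps $\pi_{XY}$ and the $A$-module structure maps $\mu_X$. The essential observation is that a subobject of $\matholdcal M$ is \emph{completely determined} by the underlying collection of subspaces $\{\matholdcal N(X)\subseteq\matholdcal M(X)\}_{X\in Ob(\mathscr C)}$: two monomorphisms into $\matholdcal M$ with the same objectwise image are isomorphic as subobjects, because the structure maps on $\matholdcal N$ are forced to be the restrictions of those on $\matholdcal M$.

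\textbf{First I would} make precise that a subobject of $\matholdcal M$ is the same datum as a \emph{contrasubmodule}: a choice, for each $X\in Ob(\mathscr C)$, of a $K$-subspace $\matholdcal N(X)\subseteq\matholdcal M(X)$ such that (i) for all $X,Y$, the map $\pi_{XY}$ sends $(\mathscr C(X,Y),\matholdcal N(Y))\subseteq(\mathscr C(X,Y),\matholdcal M(Y))$ into $\matholdcal N(X)$, and (ii) for all $X$, the map $\mu_X$ sends $\matholdcal N(X)$ into $(A,\matholdcal N(X))\subseteq(A,\matholdcal M(X))$. Indeed, given such a collection the restricted structure maps endow $\matholdcal N$ with the structure of an object of ${_A^{\mathscr C}}Ctr(\psi)$ and the inclusions assemble into a monomorphism; conversely, every monomorphism arises this way up to the equivalence defining subobjects. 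The point to verify is merely that these restrictions are well defined and satisfy the commuting diagrams of Definition \ref{D3.1} and Definition \ref{D3.2}, which is automatic since they hold on the ambient object $\matholdcal M$.

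\textbf{Then I would} bound the cardinality. For each fixed $X\in Ob(\mathscr C)$, the collection of $K$-subspaces of the fixed vector space $\matholdcal M(X)$ forms a set, namely a subset of the power set $\mathcal P(\matholdcal M(X))$. Since $\mathscr C$ is a \emph{small} category, $Ob(\mathscr C)$ is a set, and hence the collection of all families $\{\matholdcal N(X)\}_{X\in Ob(\mathscr C)}$ with $\matholdcal N(X)\subseteq\matholdcal M(X)$ is a subset of the product $\prod_{X\in Ob(\mathscr C)}\mathcal P(\matholdcal M(X))$, which is a set. The contrasubmodules form a subcollection of this set, cut out by the closure conditions (i) and (ii), and the subobjects of $\matholdcal M$ inject into this subcollection. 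Therefore the subobjects of $\matholdcal M$ form a set, proving that ${_A^{\mathscr C}}Ctr(\psi)$ is well powered.

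\textbf{The main obstacle} is conceptual rather than computational: one must justify that distinct subobjects correspond to distinct objectwise families of subspaces, i.e.\ that the structure maps of a subobject are uniquely forced by the ambient structure maps on $\matholdcal M$ once the underlying subspaces are fixed. This follows from the objectwise computation of limits together with the fact that a monomorphism in ${_A^{\mathscr C}}Ctr(\psi)$ is exactly a morphism that is objectwise injective (again because kernels are objectwise), so its image is recovered from the objectwise images and the subobject is recovered from its image. No delicate cardinality arithmetic or structural coalgebra computation is needed beyond invoking the smallness of $\mathscr C$.
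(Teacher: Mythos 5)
Your proof is correct and takes essentially the same route as the paper: both arguments use the fact that kernels (hence monomorphisms) in ${_A^{\mathscr C}}Ctr(\psi)$ are computed objectwise to conclude that the structure maps $\pi'_{XY}$ and $\mu'_X$ of a subobject are uniquely forced by those of the ambient object, so a subobject is completely determined by the family of subspaces $\{\matholdcal N(X)\subseteq \matholdcal M(X)\}_{X\in Ob(\mathscr C)}$. Your explicit cardinality bound via the set $\prod_{X\in Ob(\mathscr C)}\mathcal P(\matholdcal M(X))$, and the characterization of exactly which families of subspaces arise, simply spell out what the paper's proof leaves implicit.
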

 
 \begin{proof}
 Let $\phi: (\matholdcal M',\pi',\mu')\hookrightarrow (\matholdcal M,\pi,\mu)$ be a monomorphism in ${_A^{\mathscr C}}Ctr(\psi)$. In particular,  $\phi(X)$ is a monomorphism for 
 each $X\in Ob(\mathscr C)$. Then for $X$, $Y\in Ob(\mathscr C)$, we see that $\pi'_{XY}: (\mathscr C(X,Y),\matholdcal M'(Y))\longrightarrow \matholdcal M'(X)$ is the unique morphism satisfying   $\phi(X)\circ \pi'_{XY}=\pi_{XY}\circ (\mathscr C(X,Y),\phi(Y))$. Also, $(A,\phi(X)):(A,\matholdcal M'(X))\longrightarrow (A,\matholdcal M(X))$ must be a monomorphism for 
 each $X\in Ob(\mathscr C)$. Hence, $\mu'_X:\matholdcal M'(X)\longrightarrow (A,\matholdcal M'(X))$ is the unique morphism satisfying $(A,\phi(X))\circ \mu'_X=\mu_X\circ \phi(X)$. Accordingly, the subobject $\matholdcal M'\subseteq 
 \matholdcal M$ is completely determined by the collection of $K$-subspaces $\{\matholdcal M'(X)\subseteq \matholdcal M(X)\}_{X\in Ob(\mathscr C)}$. This proves the result. 
 \end{proof}
 
 \begin{Thm}\label{T3.8}
  Let $(\mathscr C,A,\psi)$ be an entwining structure. Then, the category ${_A^{\mathscr C}}Ctr(\psi)$ of entwined contramodules  over $(\mathscr C,A,\psi)$ has a set of generators. 
 \end{Thm}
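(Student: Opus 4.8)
The plan is to exhibit an explicit set of generators indexed by the objects of $\mathscr C$. For each $X\in Ob(\mathscr C)$, regard $A$ as a left module over itself and form, via Lemma \ref{L3.5}, the entwined contramodule $\matholdcal H_X^A$, so that $\matholdcal H_X^A(Y)=(\mathscr C(Y,X),A)$ with $A$-module structure given by \eqref{39k}. Since $Ob(\mathscr C)$ is a set, the collection $\{\matholdcal H_X^A\}_{X\in Ob(\mathscr C)}$ is a set, and I claim it generates ${_A^{\mathscr C}}Ctr(\psi)$. This is the entwined, $A$-linear refinement of the statement (Proposition \ref{P3.4}) that the $\matholdcal H_X^*$ generate ${^\mathscr C}Ctr$; informally, $\matholdcal H_X^{(-)}:{_A}Mod\longrightarrow {_A^{\mathscr C}}Ctr(\psi)$ is left adjoint to evaluation at $X$, and $\matholdcal H_X^A$ is the corresponding free object on one generator. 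I will not need the full adjunction, only the explicit morphisms it would produce.

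First I would construct, for any $\matholdcal M\in {_A^{\mathscr C}}Ctr(\psi)$ and any $m\in \matholdcal M(X)$, a morphism $\phi_m:\matholdcal H_X^A\longrightarrow \matholdcal M$. Writing $\mu_X:\matholdcal M(X)\longrightarrow (A,\matholdcal M(X))$ for the $A$-module structure, the element $m$ determines the left $A$-linear map $g_m:=\mu_X(m):A\longrightarrow \matholdcal M(X)$, which satisfies $g_m(1)=m$ by the unit axiom and $g_m(ca)=c\cdot g_m(a)$ by associativity of the action. Following the recipe of \eqref{3.6t}, I set $\phi_m(Y):\matholdcal H_X^A(Y)=(\mathscr C(Y,X),A)\xrightarrow{(\mathscr C(Y,X),g_m)}(\mathscr C(Y,X),\matholdcal M(X))\xrightarrow{\pi_{YX}}\matholdcal M(Y)$ for each $Y\in Ob(\mathscr C)$. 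That $\phi_m=\{\phi_m(Y)\}_{Y\in Ob(\mathscr C)}$ is a morphism of $\mathscr C$-contramodules is verified exactly as in Proposition \ref{P3.3}. The substance of the argument is to check that each $\phi_m(Y)$ is $A$-linear, i.e. compatible with the $A$-module structure on $\matholdcal H_X^A(Y)$ coming from \eqref{39k} and that on $\matholdcal M(Y)$; this is where the entwining axioms \eqref{ent2} enter, through the compatibility square \eqref{com3.2} in Definition \ref{D3.2} together with the defining structure of $\matholdcal H_X^A$. I expect this $A$-linearity verification to be the main obstacle, the remaining steps being formal.

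Next I would show that $m$ lies in the image of $\phi_m(X)$. Let $\theta\in (\mathscr C(X,X),A)=\matholdcal H_X^A(X)$ be the map $\theta(f)=\epsilon_X(f)\cdot 1_A$. Then $(\mathscr C(X,X),g_m)(\theta)$ is the map $f\mapsto \epsilon_X(f)m$, which is precisely the element $\epsilon_X^m$ of Corollary \ref{C3.5}; applying $\pi_{XX}$ and invoking the counit condition of \eqref{dg3.4} yields $\phi_m(X)(\theta)=\pi_{XX}(\epsilon_X^m)=m$, exactly as in Corollary \ref{C3.5}.

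Finally I would conclude by the generating criterion of \cite[$\S$ 1.9]{Toh}, as in Proposition \ref{P3.4}. Since kernels, and hence subobjects, in ${_A^{\mathscr C}}Ctr(\psi)$ are computed objectwise (see the remark after Definition \ref{D3.2} and Lemma \ref{L3.75}), any proper subobject $\matholdcal N\subsetneq \matholdcal M$ satisfies $\matholdcal N(X)\subsetneq \matholdcal M(X)$ for some $X\in Ob(\mathscr C)$. Choosing $m\in \matholdcal M(X)\setminus \matholdcal N(X)$, the morphism $\phi_m:\matholdcal H_X^A\longrightarrow \matholdcal M$ cannot factor through $\matholdcal N$, because $m=\phi_m(X)(\theta)\in \matholdcal M(X)\setminus \matholdcal N(X)$. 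Hence $\{\matholdcal H_X^A\}_{X\in Ob(\mathscr C)}$ is a set of generators for ${_A^{\mathscr C}}Ctr(\psi)$, which proves the theorem.
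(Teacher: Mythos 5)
Your proposal is correct, but it takes a genuinely different route from the paper's proof. The paper never forms $\matholdcal H_X^A$: it starts from the ${^\mathscr C}Ctr$-morphism $\phi_m:\matholdcal H_X^*\longrightarrow \matholdcal M$ of Corollary \ref{C3.5}, applies the functor $(A,\_\_)$ of Lemma \ref{L3.6} to get $(A,\phi_m):(A,\matholdcal H_X^*)\longrightarrow (A,\matholdcal M)$ in ${_A^{\mathscr C}}Ctr(\psi)$, observes that the structure maps assemble into a monomorphism $\mu:\matholdcal M\hookrightarrow (A,\matholdcal M)$ of entwined contramodules, and then pulls $(A,\phi_m)$ back along $\mu$ to produce a subobject $\matholdcal T\subseteq (A,\matholdcal H_X^*)$ mapping to $\matholdcal M$ with $m$ in the image; the resulting generating family is the collection of \emph{all} subobjects of the objects $(A,\matholdcal H_X^*)$, which is a set only because of the well-poweredness result Lemma \ref{L3.75}. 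Your argument dispenses with the pullback and with Lemma \ref{L3.75} altogether and yields the smaller, completely explicit family $\{\matholdcal H_X^A\}_{X\in Ob(\mathscr C)}$ of free objects, which is arguably the cleaner statement (if you completed your adjunction ${_A^{\mathscr C}}Ctr(\psi)(\matholdcal H_X^P,\matholdcal M)\cong {_A}Mod(P,\matholdcal M(X))$, it would even give projective generators, strengthening the theorem in the direction of Proposition \ref{P3.4}). The one step you only ``expected'' to work --- the $A$-linearity of $\phi_m(Y)$ --- does in fact close, and since it is the only nonformal point, here is why: by \eqref{39k} with $P=A$, the action on $h\in \matholdcal H_X^A(Y)=(\mathscr C(Y,X),A)$ is $(b\cdot h)(f)=b_\psi h(f^\psi)$, so
\begin{equation*}
\phi_m(Y)(b\cdot h)=\pi_{YX}\bigl(f\mapsto b_\psi h(f^\psi)\,m\bigr),
\end{equation*}
while the compatibility square \eqref{com3.2} for $\matholdcal M$ (with the roles of the two objects interchanged, i.e. for $\pi_{YX}$), applied to the element $f\mapsto h(f)m$ of $(\mathscr C(Y,X),\matholdcal M(X))$, gives
\begin{equation*}
b\cdot \phi_m(Y)(h)=\mu_Y\bigl(\pi_{YX}(f\mapsto h(f)m)\bigr)(b)=\pi_{YX}\bigl(f\mapsto b_\psi\,(h(f^\psi)m)\bigr),
\end{equation*}
and the two agree by associativity of the $A$-action, i.e. left $A$-linearity of $g_m$. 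Note that the entwining axioms \eqref{ent2} are needed only to know that $\matholdcal H_X^A$ is a well-defined object of ${_A^{\mathscr C}}Ctr(\psi)$, which is exactly Lemma \ref{L3.5}; the linearity check itself uses only \eqref{com3.2}. The remaining steps of your proof (hitting $m$ via the counit element $\theta=\epsilon_X(\cdot)1_A$, objectwise computation of subobjects, and the criterion of \cite[$\S$ 1.9]{Toh}) are all sound.
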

 
 \begin{proof}
 We consider $\matholdcal M\in {_A^{\mathscr C}}Ctr(\psi)$ and choose $m\in \matholdcal M(X)$ for some $X\in Ob(\mathscr C)$. From Corollary \ref{C3.5}, we know that 
 $m\in \matholdcal M(X)$ corresponds to a morphism
$\phi_m:\matholdcal H_X^*\longrightarrow \matholdcal M$ in ${^\mathscr C}Ctr$ such that $m$ lies in the image of $\phi_m(X)$.  By Lemma 
\ref{L3.6}, we also have an induced morphism 
$(A,\phi_m): (A,\matholdcal H_X^*)\longrightarrow (A,\matholdcal M)$ in ${_A^{\mathscr C}}Ctr(\psi)$. 

\smallskip
Now let $\mu_X:\matholdcal M(X)\longrightarrow (A,\matholdcal M(X))$ denote the left $A$-module structure on $\matholdcal M(X)$ for each $X\in Ob(\mathscr C)$. By the commutativity of the diagram in \eqref{com3.2}, it may be verified that the maps $\{\mu_X\}_{X\in Ob(\mathscr C)}$ together determine a morphism $\mu:\matholdcal M\longrightarrow 
(A,\matholdcal M)$ in ${_A^{\mathscr C}}Ctr(\psi)$. Additionally, we know that $(u_A,\matholdcal M(X))\circ \mu_X=id$, where $u_A:K\longrightarrow A$ is the unit map of the algebra $A$.  Hence, each $\mu_X$ is a monomorphism. Since kernels in ${_A^{\mathscr C}}Ctr(\psi)$ are computed objectwise, it follows that  $\mu:\matholdcal M\longrightarrow 
(A,\matholdcal M)$ is a monomorphism in ${_A^{\mathscr C}}Ctr(\psi)$. We now form the pullback squares
\begin{equation}\label{321cd2}
\begin{array}{ccc}
\begin{CD}
\matholdcal T @>\phi_m'>> \matholdcal M\\
@V\mu'VV @VV\mu V\\
(A,\matholdcal H_X^*) @>(A,\phi_m)>>  (A,\matholdcal M)\\
\end{CD}&\qquad\qquad &
\begin{CD}
\matholdcal T(X) @>\phi_m'(X)>> \matholdcal M(X)\\
@V\mu'(X)VV @VV\mu(X)=\mu_X V\\
(A,\matholdcal H_X^*(X)) @>(A,\phi_m(X))>>  (A,\matholdcal M(X))\\
\end{CD}\\
\end{array}
\end{equation} The left hand square in \eqref{321cd2} is a pullback in ${_A^{\mathscr C}}Ctr(\psi)$, while the right hand square is a pullback in 
$Vect_K$. Since $\mu$ is a monomorphism, so is $\mu'$. 

\smallskip
We now claim that $m\in \matholdcal M(X)$ lies in the image of $\phi'_m(X):\matholdcal T(X)\longrightarrow \matholdcal M(X)$. In the category of $K$-vector spaces, we note that
$(A,Im(\phi_m(X)))=Im(A,\phi_m(X))$. Since $m\in Im(\phi_m(X))$ and $\matholdcal M(X)$ is a left $A$-module, we can now choose $g\in (A,Im(\phi_m(X)))$ given by setting
$g(a):=am$ for each $a\in A$. Then, $g$ may be treated as an element of $Im(A,\phi_m(X))$. On the other hand, we know by construction that $\mu(X)(m)(a)=am=g(a)$ for each $a\in A$. Hence, $g\in Im(\mu(X))$. Since the right hand square in \eqref{321cd2} is a pullback in $Vect_K$, it follows that $m\in Im(\phi'_m(X))$.

\smallskip
We now consider the collection of subobjects of $(A,\matholdcal H_X^*)$ in  ${_A^{\mathscr C}}Ctr(\psi)$, as $X$ varies over all objects of $\mathscr C$. Since  ${_A^{\mathscr C}}Ctr(\psi)$ is well powered by Lemma \ref{L3.75}, we know that this collection forms a set. We claim that this is a set of generators for  ${_A^{\mathscr C}}Ctr(\psi)$. Indeed, suppose that
$\matholdcal N\hookrightarrow \matholdcal N'$ is a monomorphism in  ${_A^{\mathscr C}}Ctr(\psi)$ that is not an isomorphism. Then, there is some $X\in Ob(\mathscr C)$ such that the inclusion $\matholdcal N(X)\hookrightarrow \matholdcal N'(X)$ is not an isomorphism.  By the above, there is a subobject $\matholdcal T\subseteq (A,\matholdcal H_X^*)$ in 
${_A^{\mathscr C}}Ctr(\psi)$ and a morphism $\matholdcal T\longrightarrow \matholdcal N'$ that does not factor through $\matholdcal N$. This proves the result. 
 \end{proof}
 
 \section{Separability properties for entwined comodules}
 
 In Section 2, we have shown that the forgetful functor $G_\psi :Com_A^{\mathscr C}(\psi)\longrightarrow Com^{\mathscr C}$ has a left adjoint $F_\psi $. In other words, we have natural isomorphisms
 \begin{equation}\label{416g}
 Com_A^{\mathscr C}(\psi)( F_\psi(\mathcal M), \mathcal N)=Com_A^{\mathscr C}(\psi)( \mathcal M\otimes A, \mathcal N)\cong Com^{\mathscr C}( \mathcal M, \mathcal N)
 =Com^{\mathscr C}( \mathcal M, G_\psi(\mathcal N))
\end{equation} for $ \mathcal M\in Com^{\mathscr C}$, $ \mathcal N\in Com^{\mathscr C}_A(\psi)$.  In this section, we will give conditions for the functors $F_\psi $ and 
$G_\psi $ to be separable. 

\subsection{Separability of the functor $F_\psi $} We set $V:=Nat(G_\psi F_\psi ,id_{Com^{\mathscr C}})$.  Motivated by the approach of \cite{uni}, we consider the vector space $V_1$ such that an element $\sigma\in V_1$ is a collection
\begin{equation}\label{4.2z}
\sigma=\{\sigma_X:\mathscr C(X,X)\otimes A\longrightarrow K\}_{X\in Ob(\mathscr C)}
\end{equation}
 such that 
 \begin{equation}\label{4.3z}
 \sigma_Y(f_{Y1}\otimes a_{\psi})f_{Y2}^\psi=\sigma_X(f_{X2}\otimes a)f_{X1} \in \mathscr C(X,Y)
 \end{equation} for any $f\in \mathscr C(X,Y)$, $a\in A$, $X$, $Y\in Ob(\mathscr C)$. 
 
 \begin{lem}\label{Lem4.1}
 There is a linear map $\alpha:V_1\longrightarrow V$ which associates to $\sigma\in V_1$ the natural transformation
 $\tau:=\alpha(\sigma)\in Nat(G_\psi F_\psi ,id_{Com^{\mathscr C}})$ given by setting
 \begin{equation}\label{44r}
 \tau(\mathcal M)(X):\mathcal M(X)\otimes A \longrightarrow \mathcal M(X)\qquad m\otimes a\mapsto m_{X0}\sigma_X(m_{X1}\otimes a)
 \end{equation} for any $\mathcal M\in  Com^{\mathscr C}$  and $X\in Ob(\mathscr C)$. 
 \end{lem}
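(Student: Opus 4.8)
The plan is to verify three things: that for each fixed $\sigma \in V_1$ and each $\mathcal M \in Com^{\mathscr C}$ the collection $\tau(\mathcal M)=\{\tau(\mathcal M)(X)\}_{X\in Ob(\mathscr C)}$ is a genuine morphism $G_\psi F_\psi\mathcal M \longrightarrow \mathcal M$ in $Com^{\mathscr C}$; that these morphisms are natural in $\mathcal M$, so that $\tau$ really is an element of $V=Nat(G_\psi F_\psi,id_{Com^{\mathscr C}})$; and finally that the assignment $\sigma\mapsto\tau$ is $K$-linear. The last point is immediate, since the defining formula \eqref{44r} is visibly linear in $\sigma_X$, hence in $\sigma$. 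The bulk of the work is the first point.

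First I would verify that each $\tau(\mathcal M)(X)$ is compatible with the coactions, i.e. that $\rho^{\mathcal M}_{XY}\circ\tau(\mathcal M)(X)=(\tau(\mathcal M)(Y)\otimes\mathscr C(X,Y))\circ\rho^{G_\psi F_\psi\mathcal M}_{XY}$ for all $X,Y$. Here the coaction on the source is the one supplied by Lemma \ref{L2.2}, namely $(m\otimes a)_{Y0}\otimes(m\otimes a)_{Y1}=m_{Y0}\otimes a_\psi\otimes(m_{Y1})^\psi$. Evaluating the left-hand side on $m\otimes a$ produces $\rho^{\mathcal M}_{XY}(m_{X0})\,\sigma_X(m_{X1}\otimes a)$; applying the coassociativity relation \eqref{not2.5} with the intermediate object taken to be $X$ rewrites this as $m_{Y0}\otimes (m_{Y1})_{X1}\,\sigma_X((m_{Y1})_{X2}\otimes a)$, where $\delta_{XXY}(m_{Y1})=(m_{Y1})_{X1}\otimes(m_{Y1})_{X2}$. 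Evaluating the right-hand side on $m\otimes a$ and expanding $\tau(\mathcal M)(Y)$ gives $m_{Y0Y0}\,\sigma_Y(m_{Y0Y1}\otimes a_\psi)\otimes(m_{Y1})^\psi$; a second application of \eqref{not2.5}, now with intermediate object $Y$, turns this into $m_{Y0}\,\bigl[\sigma_Y((m_{Y1})_{Y1}\otimes a_\psi)\,((m_{Y1})_{Y2})^\psi\bigr]$. At this point the two sides are matched precisely by the defining condition \eqref{4.3z} on $\sigma$, applied to $f=m_{Y1}\in\mathscr C(X,Y)$, which identifies $\sigma_Y(f_{Y1}\otimes a_\psi)\,f_{Y2}^\psi$ with $\sigma_X(f_{X2}\otimes a)\,f_{X1}$. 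Thus the two expressions agree and $\tau(\mathcal M)$ is a morphism in $Com^{\mathscr C}$.

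Next I would establish naturality. Given a morphism $g:\mathcal M\longrightarrow\mathcal N$ in $Com^{\mathscr C}$, the map $G_\psi F_\psi(g)(X)$ is simply $g(X)\otimes id_A$, and since $g$ is a comodule morphism we have $(g(X)(m))_{X0}\otimes(g(X)(m))_{X1}=g(X)(m_{X0})\otimes m_{X1}$. Substituting this into \eqref{44r} and using that $g(X)$ is $K$-linear shows directly that $\tau(\mathcal N)(X)\circ(g(X)\otimes id_A)=g(X)\circ\tau(\mathcal M)(X)$, which is exactly the naturality square. Combined with the linearity of $\sigma\mapsto\tau$ already noted, this produces the desired linear map $\alpha:V_1\longrightarrow V$.

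The main obstacle is purely the bookkeeping in the coaction-compatibility step: one must invoke the coassociativity identity \eqref{not2.5} in two different specializations of the intermediate object (once to collapse the double coaction on the source side, once on the target side) and then recognize that the condition \eqref{4.3z} — which was reverse-engineered for exactly this purpose — is what forces the two resulting Sweedler expressions to coincide. Keeping the nested indices $(m_{Y1})_{X1},(m_{Y1})_{X2}$ versus $(m_{Y1})_{Y1},(m_{Y1})_{Y2}$ straight, and tracking which copy of $\psi$ binds $a$ to which comodule index, is where care is needed; everything else is formal.
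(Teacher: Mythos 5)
Your proposal is correct and follows essentially the same route as the paper's proof: the comodule-morphism check is exactly the paper's chain of equalities (two applications of the coassociativity relation \eqref{not2.5}, with intermediate objects $X$ and $Y$ respectively, matched in the middle via the defining condition \eqref{4.3z} applied to $f=m_{Y1}$), merely organized as a meet-in-the-middle computation rather than a single one-directional string. Your naturality argument, using that $G_\psi F_\psi(g)(X)=g(X)\otimes id_A$ together with the comodule-morphism property of $g$, coincides with the paper's, and your observation that linearity of $\sigma\mapsto\tau$ is immediate from \eqref{44r} fills in the one point the paper leaves tacit.
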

 \begin{proof}
 We consider $(\mathcal M,\rho^{\mathcal M})\in Com^{\mathscr C}$. First, we will show that $\tau(\mathcal M):\mathcal M\otimes A\longrightarrow \mathcal M$ is a morphism
 in $Com^{\mathscr C}$. In other words, we need to show that the following diagram commutes 
 \begin{equation}\label{cd4.5v}
 \xymatrix{
 \mathcal M(X)\otimes A \ar[rr]^{\rho^{\mathcal M}_{XY}\otimes A\quad}\ar[d]_{\tau(\mathcal M)(X)}&& \mathcal M(Y)\otimes\mathscr C(X,Y)\otimes A\ar[rr]^{\mathcal M(Y)\otimes \psi_{XY}}&& \mathcal M(Y)\otimes A
 \otimes  \mathscr C(X,Y) \ar[d]^{\tau(\mathcal M)(Y)\otimes \mathscr C(X,Y)}\\
 \mathcal M(X) \ar[rrrr]^{\rho^{\mathcal M}_{XY}}&&&&\mathcal M(Y)\otimes \mathscr C(X,Y)\\
 }
 \end{equation}
 for any $X$, $Y\in Ob(\mathscr C)$. For $m\otimes a\in \mathcal M(X)\otimes A$, we check that
 \begin{equation}
 \begin{array}{lll}
(\rho^{\mathcal M}_{XY}\circ \tau(\mathcal M)(X))(m\otimes a)& =\rho^{\mathcal M}_{XY}( m_{X0}\sigma_X(m_{X1}\otimes a))&\\
&=m_{X0Y0}\otimes m_{X0Y1}\sigma_X(m_{X1}\otimes a)&\\
&=m_{Y0}\otimes m_{Y1X1}\sigma_X(m_{Y1X2}\otimes a)&  \mbox{(using \eqref{not2.5})}\\
&=m_{Y0}\otimes m_{Y1Y2}^\psi\sigma_Y(m_{Y1Y1}\otimes a_\psi)& \mbox{(using \eqref{4.3z})}\\
&=m_{Y0Y0}\otimes m_{Y1}^\psi\sigma_Y(m_{Y0Y1}\otimes a_\psi)& \mbox{(using \eqref{not2.5})}\\
&=(\tau(\mathcal M)(Y)\otimes \mathscr C(X,Y))(m_{Y0}\otimes a_\psi\otimes m_{Y1}^\psi)&\\
 \end{array}
 \end{equation} Hence, the diagram \eqref{cd4.5v} commutes. Finally, let $\phi:\mathcal M\longrightarrow \mathcal N$ be a morphism in 
 $Com^{\mathscr C}$. We note that for $m\otimes a\in \mathcal M(X)\otimes A$, we have
 \begin{equation}
 \begin{array}{ll}
( \tau(\mathcal N)\circ (\phi\otimes A))(X)(m\otimes a)&= \tau(\mathcal N)(X)(\phi(X)(m)\otimes a)\\
&=(\phi(X)(m))_{X0}\sigma_X((\phi(X)(m))_{X1}\otimes a)\\
&=\phi(X)(m_{X0})\sigma_X(m_{X1}\otimes a)=(\phi\circ \tau(\mathcal M))(X)(m\otimes a)\\
 \end{array}
 \end{equation} It follows that $\tau=\alpha(\sigma)\in Nat(G_\psi F_\psi ,id_{Com^{\mathscr C}})$. 
 \end{proof}

\begin{lem}\label{Lem4.01}
Let $\mathcal M\in Com^{\mathscr C}$ and $U\in Vect_K$. Let $\tau\in Nat(G_\psi F_\psi ,id_{Com^{\mathscr C}})$ be a natural transformation. Then, we have
\begin{equation}
\tau(U\otimes \mathcal M)(Z) (u\otimes m\otimes a)=u\otimes \tau(\mathcal M)(Z)(m\otimes a)
\end{equation} for any $u\in U$, $m\otimes a\in \mathcal M(Z)\otimes A$, $Z\in Ob(\mathscr C)$.
\end{lem}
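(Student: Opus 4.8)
The plan is to derive the identity purely from the naturality of $\tau$, applied to a well-chosen morphism of $\mathscr C$-comodules into $U\otimes\mathcal M$. First I would fix $u\in U$ and introduce the $K$-linear maps $\iota_u(X):\mathcal M(X)\longrightarrow U\otimes\mathcal M(X)$ given by $m\mapsto u\otimes m$, for each $X\in Ob(\mathscr C)$. Here $U\otimes\mathcal M$ is the object of $Com^{\mathscr C}$ whose value at $X$ is $U\otimes\mathcal M(X)$, equipped with the coaction induced by that of $\mathcal M$ in the second tensor factor (and no nontrivial structure on $U$), so that $u\otimes m\mapsto u\otimes m_{Y0}\otimes m_{Y1}$. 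Since this coaction only affects the $\mathcal M$-factor, the coaction compatibility square for $\iota_u$ commutes immediately, and hence $\iota_u=\{\iota_u(X)\}_{X\in Ob(\mathscr C)}$ is a morphism in $Com^{\mathscr C}$.

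Next I would record the effect of $G_\psi F_\psi$ on this morphism. Since $G_\psi F_\psi(\mathcal M)(Z)=\mathcal M(Z)\otimes A$ and the functor simply tensors with $A$ on the right, the map $G_\psi F_\psi(\iota_u)(Z):\mathcal M(Z)\otimes A\longrightarrow U\otimes\mathcal M(Z)\otimes A$ is $\iota_u(Z)\otimes A$, namely $m\otimes a\mapsto u\otimes m\otimes a$. The naturality of $\tau\in Nat(G_\psi F_\psi,id_{Com^{\mathscr C}})$ with respect to $\iota_u$ then gives, at the object $Z$,
\[
\tau(U\otimes\mathcal M)(Z)\circ\bigl(G_\psi F_\psi(\iota_u)(Z)\bigr)=\iota_u(Z)\circ\tau(\mathcal M)(Z).
\]
Evaluating both sides at $m\otimes a\in\mathcal M(Z)\otimes A$, the left-hand side reads $\tau(U\otimes\mathcal M)(Z)(u\otimes m\otimes a)$, while the right-hand side reads $u\otimes\tau(\mathcal M)(Z)(m\otimes a)$. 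Equating the two yields exactly the asserted identity.

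I do not expect any genuine obstacle here: the whole content is the single naturality square, and the only verification needed is that $\iota_u$ is a morphism of $\mathscr C$-comodules, which is automatic because $U$ plays no role in the coaction. The one point demanding a little care is the bookkeeping of $G_\psi F_\psi$ on morphisms, namely confirming that it sends $\iota_u$ to $\iota_u\otimes A$, so that the naturality square is precisely the one written above and not a twisted variant; once this is fixed, the element chase is routine.
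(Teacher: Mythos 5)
Your proof is correct and is essentially identical to the paper's: both fix $u\in U$, observe that $m\mapsto u\otimes m$ defines a morphism $\phi_u:\mathcal M\longrightarrow U\otimes\mathcal M$ in $Com^{\mathscr C}$ (since the coaction on $U\otimes\mathcal M$ only involves the second factor), and then read the desired identity off the naturality square for $\tau$ at $\phi_u$, whose top edge is $\phi_u(Z)\otimes A$. There is nothing to add or correct.
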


\begin{proof}
We choose $u\in U$. We notice that $\phi_u:\mathcal M\longrightarrow U\otimes \mathcal M$ given by setting 
\begin{equation}\phi_u(Z):\mathcal M(Z)\longrightarrow (U\otimes \mathcal M)(Z)\qquad m\mapsto u\otimes m
\end{equation} is a morphism in $Com^{\mathscr C}$.  Since $\tau\in Nat(G_\psi F_\psi ,id_{Com^{\mathscr C}})$ is a natural transformation, we obtain  the following commutative diagram
\begin{equation}
\begin{CD}
\mathcal M(Z)\otimes A @>(\phi_u\otimes A)(Z)>> U\otimes \mathcal M(Z)\otimes A\\
@V\tau(\mathcal M)(Z)VV @VV\mbox{$\tau(U\otimes \mathcal M)(Z)$}V \\
\mathcal M(Z)@>\phi_u(Z)>> U\otimes \mathcal M(Z)\\
\end{CD}
\end{equation} The result is now clear. 
\end{proof}
 
 \begin{lem}\label{Lem4.2}
 There is a linear map $\beta:V\longrightarrow V_1$ which associates to $\tau\in V$ the element $\sigma:=\beta(\tau)\in V_1$ given by setting
 \begin{equation}\label{4fh}
 \sigma_X: \mathscr C(X,X)\otimes A=\mathcal H_X(X)\otimes A\xrightarrow{\quad\tau(\mathcal H_X)(X) \quad} \mathcal H_X(X)=\mathscr C(X,X)\xrightarrow{\epsilon_X}K
 \end{equation} for every $X\in Ob(\mathscr C)$.  
 \end{lem}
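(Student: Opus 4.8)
The statement of Lemma \ref{Lem4.2} asserts that the formula \eqref{4fh} produces a well-defined element $\sigma=\beta(\tau)\in V_1$, and that $\beta$ is linear. Linearity in $\tau$ is immediate since $\tau\mapsto\tau(\mathcal H_X)(X)$ is linear and postcomposition with $\epsilon_X$ is linear, so the real content is verifying that the collection $\sigma=\{\sigma_X\}_{X\in Ob(\mathscr C)}$ defined by \eqref{4fh} satisfies the defining condition \eqref{4.3z} of $V_1$. First I would unwind the definition of $\sigma_X$ explicitly: for $g\in\mathscr C(X,X)$ and $a\in A$, we have $\sigma_X(g\otimes a)=\epsilon_X\big(\tau(\mathcal H_X)(X)(g\otimes a)\big)$, where we use the identification $\mathcal H_X(X)=\mathscr C(X,X)$ from the regular representation.

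The key input is the naturality of $\tau$ together with the structure of morphisms into the regular representations $\mathcal H_X$. The plan is to exploit the isomorphism \eqref{Mc111}, namely $Vect_K(\mathcal M(X),K)\cong Com^{\mathscr C}(\mathcal M,\mathcal H_X)$, which tells us precisely how to produce comodule morphisms into $\mathcal H_X$ out of linear functionals. Concretely, given $f\in\mathscr C(X,Y)$, I would consider the regular representation $\mathcal H_Y$ and the comodule morphism $\phi\colon\mathcal H_Y\to\mathcal H_X$ (or between appropriate regular representations) induced via \eqref{Mc111} by a suitable evaluation functional attached to $f$; naturality of $\tau$ against this $\phi$ then forces a relation between $\tau(\mathcal H_Y)$ and $\tau(\mathcal H_X)$. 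Applying $\epsilon$ and using the coaction formulas \eqref{act2} for the entwined comodule structure on $P\otimes\mathcal H_X$ (here with $P=A$), which read $(p\otimes f)\cdot b=pb_\psi\otimes f^\psi$ and coaction $f\mapsto f_{Z1}\otimes f_{Z2}$, should convert the naturality square into exactly the entwined compatibility \eqref{4.3z}. The counit axioms in \eqref{coalg2cd} will be needed to collapse the $\epsilon_X$ applied after the coaction back to the required elements of $\mathscr C(X,Y)$.

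I expect the main obstacle to be bookkeeping: correctly tracking how the entwining $\psi$ enters through the coaction \eqref{act2} on $\mathcal H_X$ viewed as an object of $Com_A^{\mathscr C}(\psi)$, and making sure the Sweedler-type indices $f_{X1}\otimes f_{X2}$ versus $f_{Y1}\otimes f_{Y2}$ (which live in different tensor factors, per the conventions of \eqref{11c2.1} and \eqref{dg2.4}) are matched on the correct sides of the naturality square. The delicate point is that $\tau$ is a natural transformation $G_\psi F_\psi\Rightarrow id$, so $\tau(\mathcal H_X)(X)$ is the component of the composite $G_\psi F_\psi(\mathcal H_X)=\mathcal H_X\otimes A\to\mathcal H_X$; one must be careful that this is the correct functor evaluated on $\mathcal H_X$, and that the naturality used is with respect to morphisms of plain $\mathscr C$-comodules (the source category of $G_\psi F_\psi$). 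Once the naturality square is written down for the appropriate induced morphism and both composites are evaluated on a generic element and pushed through $\epsilon$, the identity \eqref{4.3z} should fall out, with the entwining term $a_\psi\otimes f^\psi$ appearing precisely because of how $A\otimes\mathcal H_X$ carries its coaction via $\psi$. The verification that $\alpha$ and $\beta$ are mutually inverse is presumably deferred to a later result, so here I would stop once membership $\sigma\in V_1$ and linearity of $\beta$ are established.
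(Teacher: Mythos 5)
Your plan gets half of the argument right, and that half even contains a legitimate simplification. The paper proves $\sigma\in V_1$ by computing the single element $\tau(\mathcal H_Y)(X)(f\otimes a)$ in \emph{two} independent ways and equating them: it equals $f_{X1}\sigma_X(f_{X2}\otimes a)$ by \eqref{4.11cond}, and it equals $\sigma_Y(f_{Y1}\otimes a_\psi)f_{Y2}^\psi$ by \eqref{4.10cond}. Your route to the first expression works: instead of the paper's map $\delta^X_Y:\mathcal H_Y\longrightarrow\mathscr C(X,Y)\otimes\mathcal H_X$ from \eqref{410eqr} (whose use in \eqref{412cdtr} requires the constancy statement of Lemma \ref{Lem4.01}), you invoke naturality of $\tau$ against the comodule maps $\phi_g:\mathcal H_Y\longrightarrow\mathcal H_X$ attached via \eqref{Mc111} to functionals $g\in Vect_K(\mathscr C(X,Y),K)$. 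Evaluating that square at the object $X$ on $f\otimes a$, applying $\epsilon_X$, using the counit axiom, and letting $g$ range over the (point-separating) dual of $\mathscr C(X,Y)$ yields exactly \eqref{4.11cond}, with no appeal to Lemma \ref{Lem4.01}; this is a genuine, if minor, streamlining.

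The gap is on the other half. No naturality square against a morphism into $\mathcal H_X$ can produce the entwined terms $a_\psi\otimes f^\psi$; those come from \eqref{4.10cond}, which uses that the single component $\tau(\mathcal H_Y):\mathcal H_Y\otimes A\longrightarrow\mathcal H_Y$ is itself a morphism in $Com^{\mathscr C}$, where the source carries the coaction of $F_\psi(\mathcal H_Y)$ from Lemma \ref{L2.2}, namely \eqref{act1}: $f\otimes a\mapsto f_{Y1}\otimes a_\psi\otimes f_{Y2}^{\psi}$; composing this compatibility square with $\epsilon_Y\otimes\mathscr C(X,Y)$ is the paper's diagram \eqref{cd4.9b} and gives \eqref{4.10cond}. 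You instead cite \eqref{act2}, the structure of $P\otimes\mathcal H_X$ with $P=A$ from Lemma \ref{L2.3}, and assert that the entwining term appears ``because of how $A\otimes\mathcal H_X$ carries its coaction via $\psi$'' --- but in \eqref{act2} the coaction is $p\otimes f\mapsto p\otimes f_{Z1}\otimes f_{Z2}$ and contains no $\psi$ at all (there the entwining sits in the $A$-action), and the object $A\otimes\mathcal H_X$ plays no role in this lemma (it only enters in Section 4.2). So as written your recipe establishes \eqref{4.11cond} but never the $\sigma_Y$-side, hence not the membership condition \eqref{4.3z}. The repair is straightforward: add the second computation just described and equate the two expressions for $\tau(\mathcal H_Y)(X)(f\otimes a)$.
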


\begin{proof}
We have to show that the collection $\{\sigma_X\}_{X\in Ob(\mathscr C)}$ satisfies the condition in \eqref{4.3z}. For this, we consider $f\otimes a\in \mathscr C(X,Y)\otimes A$ for some $X$, $Y\in Ob(\mathscr C)$. Since $\tau(\mathcal H_Y)$ is a morphism in 
$Com^{\mathscr C}$, we have the following commutative diagram
\begin{equation}\label{cd4.9b}
\small
\begin{array}{c}
\xymatrix{
 \mathscr C(X,Y)\otimes A=\mathcal H_Y(X)\otimes A \ar[rr]^{\delta_{XYY}\otimes A}\ar[d]_{\tau(\mathcal H_Y)(X)}&& \mathcal H_Y(Y)\otimes \mathscr C(X,Y)\otimes A
\ar[rr]^{\mathscr C(Y,Y)\otimes \psi_{XY}} &&\mathcal H_Y(Y)\otimes A\otimes \mathscr C(X,Y)\ar[d]^{\tau(\mathcal H_Y)(Y)
\otimes \mathscr C(X,Y)}&&\\
\mathcal H_Y(X)\ar[rrrr]^{\delta_{XYY}} && && \mathcal H_Y(Y)\otimes \mathscr C(X,Y)\ar[rr]^{ \qquad\epsilon_Y\otimes 
\mathscr C(X,Y)}&&\mathscr C(X,Y)\\
}\\
\end{array}
\end{equation} Since the composition of the two bottom horizontal arrows in \eqref{cd4.9b} is the identity, we observe that
\begin{equation}\label{4.10cond}
\tau(\mathcal H_Y)(X)(f\otimes a)=((\epsilon_Y\circ \tau(\mathcal H_Y)(Y))\otimes \mathscr C(X,Y))(f_{Y1}\otimes a_\psi\otimes f_{Y2}^\psi)
=\sigma_Y(f_{Y1}\otimes a_\psi)f_{Y2}^\psi
\end{equation}
We now observe that $\delta^X_Y:\mathcal H_Y\longrightarrow \mathscr C(X,Y)\otimes \mathcal H_X$ defined by setting
\begin{equation}\label{410eqr}
\delta^X_Y(Z):\mathcal H_Y(Z)=\mathscr C(Z,Y)\xrightarrow{\qquad\delta_{ZXY}\qquad} \mathscr C(X,Y)\otimes \mathcal H_X(Z) =\mathscr C(X,Y)\otimes \mathscr C(Z,X)\qquad Z\in Ob(\mathscr C)
\end{equation} is a morphism in $Com^{\mathscr C}$, where $\mathscr C(X,Y)\otimes \mathcal H_X$ is treated as a right $\mathcal C$-comodule in the obvious manner. 
Since $\tau\in Nat(G_\psi F_\psi ,id_{Com^{\mathscr C}})$ is a natural transformation, this gives us the following commutative diagram
\begin{equation}
\label{412cdtr}
\xymatrix{
\mathcal H_Y(X)\otimes A \ar[rr]^{\delta^X_Y(X)\otimes A\quad }\ar[d]_{\tau(\mathcal H_Y)(X)}&& \mathscr C(X,Y)\otimes \mathcal H_X(X) \otimes A
\ar[d]^{\tau(\mathscr C(X,Y)\otimes \mathcal H_X)(X)}&& &&\\ 
\mathcal H_Y(X) \ar[rr]^{\delta^X_Y(X)\quad }&& \mathscr C(X,Y)\otimes \mathcal H_X(X) \ar[rr]^{\mathscr C(X,Y)\otimes \epsilon_X}&& \mathscr C(X,Y)&&\\ 
}
\end{equation}  Since the composition of the two bottom horizontal arrows in \eqref{412cdtr} is the identity, we observe that
\begin{equation}\label{4.11cond}
\begin{array}{lll}
\tau(\mathcal H_Y)(X)(f\otimes a)&= (\mathscr C(X,Y)\otimes \epsilon_X)( \tau(\mathscr C(X,Y)\otimes \mathcal H_X)(X)(f_{X1}\otimes f_{X2}\otimes a))&\\
&= (\mathscr C(X,Y)\otimes \epsilon_X)(f_{X1}\otimes \tau(\mathcal H_X)(X)(f_{X2}\otimes a))&\mbox{(by Lemma \ref{Lem4.01})}\\
&=f_{X1}\sigma_X(f_{X2}\otimes a)&\\
\end{array}
\end{equation} The result is now clear from \eqref{4.10cond} and \eqref{4.11cond}
\end{proof}

\begin{thm}\label{P4.4}
The morphisms $\alpha:V_1\longrightarrow V$ and $\beta: V\longrightarrow V_1$ are mutually inverse isomorphisms. In particular, 
$V_1\cong Nat(G_\psi F_\psi ,id_{Com^{\mathscr C}})$.
\end{thm}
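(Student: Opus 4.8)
The plan is to show that the two $K$-linear maps $\alpha$ and $\beta$ built in Lemma \ref{Lem4.1} and Lemma \ref{Lem4.2} are mutually inverse. Since both are already known to be linear, it suffices to check $\beta\circ\alpha=id_{V_1}$ and $\alpha\circ\beta=id_V$ separately. The first identity is a direct computation; the second is the substantive part and relies on the co-representing isomorphism \eqref{Mc111} together with a separation-of-points argument.

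For $\beta\circ\alpha=id_{V_1}$, I start with $\sigma\in V_1$, set $\tau=\alpha(\sigma)$, and evaluate $\beta(\tau)$ on $g\otimes a\in\mathscr C(X,X)\otimes A$. The key observation is that the regular representation $\mathcal H_X$ carries the coaction $\rho^{\mathcal H_X}_{XX}=\delta_{XXX}$, so the defining formula \eqref{44r} gives $\tau(\mathcal H_X)(X)(g\otimes a)=g_{X1}\sigma_X(g_{X2}\otimes a)$ with $\delta_{XXX}(g)=g_{X1}\otimes g_{X2}$. Applying $\epsilon_X$ as in \eqref{4fh} and using linearity of $\sigma_X$ together with the counit axiom $\epsilon_X(g_{X1})g_{X2}=g$ from \eqref{coalg2cd} yields $\beta(\tau)_X(g\otimes a)=\sigma_X(\epsilon_X(g_{X1})g_{X2}\otimes a)=\sigma_X(g\otimes a)$, as required. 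This direction is routine.

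The substantive direction is $\alpha\circ\beta=id_V$. Fix $\tau\in V$, put $\sigma=\beta(\tau)$, and take $\mathcal M\in Com^{\mathscr C}$, $m\in\mathcal M(X)$, $a\in A$; I must prove $\tau(\mathcal M)(X)(m\otimes a)=m_{X0}\sigma_X(m_{X1}\otimes a)$. The device is \eqref{Mc111}: each functional $\xi\in Vect_K(\mathcal M(X),K)$ produces a comodule morphism $\phi_\xi\colon\mathcal M\to\mathcal H_X$ with $\phi_\xi(X)(m)=\xi(m_{X0})m_{X1}$. Applying naturality of $\tau$ to $\phi_\xi$, and recalling that $G_\psi F_\psi(\phi_\xi)=\phi_\xi\otimes A$, then evaluating at $X$ on $m\otimes a$, I obtain in $\mathscr C(X,X)$ the identity $\xi(m_{X0})\,\tau(\mathcal H_X)(X)(m_{X1}\otimes a)=\phi_\xi(X)\big(\tau(\mathcal M)(X)(m\otimes a)\big)$.

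Finally I apply $\epsilon_X$ to both sides. On the left, $\epsilon_X\circ\tau(\mathcal H_X)(X)$ is exactly $\sigma_X$ by \eqref{4fh}, so the left side becomes $\xi\big(m_{X0}\sigma_X(m_{X1}\otimes a)\big)$. On the right, writing $n=\tau(\mathcal M)(X)(m\otimes a)$, the counit condition \eqref{not2.5} gives $\xi(n_{X0})\epsilon_X(n_{X1})=\xi(n)$. Hence $\xi\big(m_{X0}\sigma_X(m_{X1}\otimes a)\big)=\xi(n)$ for every $\xi\in\mathcal M(X)^*$, and since linear functionals separate points of the $K$-vector space $\mathcal M(X)$, this forces $m_{X0}\sigma_X(m_{X1}\otimes a)=n$, i.e. $\alpha(\beta(\tau))=\tau$. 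The main obstacle is precisely this last step: one must recover $\tau(\mathcal M)(X)$ on an \emph{arbitrary} comodule from its values on the representables $\mathcal H_X$, and the mechanism that makes this possible is the interplay of naturality against the family $\{\phi_\xi\}$, the counit axiom, and the fact that $\mathcal M(X)^*$ separates points.
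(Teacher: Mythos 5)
Your proposal is correct, and its second half takes a genuinely different route from the paper. For $\beta\circ\alpha=\mathrm{id}_{V_1}$ you reproduce the paper's computation essentially verbatim. For the remaining direction, however, the paper does not verify $\alpha\circ\beta=\mathrm{id}_V$ directly: having shown $\beta\circ\alpha=\mathrm{id}$, it reduces the claim to injectivity of $\beta$ and argues by contradiction --- if $\beta(\tau)=0$, the diagram \eqref{412cdtrx}, built from the auxiliary comodule morphisms $\delta^X_Y$ of \eqref{410eqr} together with the ``constants pull out'' Lemma \ref{Lem4.01}, forces $\tau(\mathcal H_Y)(X)=0$ for all $X,Y$ (equation \eqref{4.19eh}), after which a functional $f$ with $f\circ\tau(\mathcal M)(X)\neq 0$ and the correspondence \eqref{Mc111} produce a contradiction. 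You instead compute $\alpha(\beta(\tau))=\tau$ positively: naturality of $\tau$ against the morphism $\phi_\xi$ of \eqref{Mc111} for \emph{every} functional $\xi\in\mathcal M(X)^*$, followed by $\epsilon_X$, the counit law \eqref{not2.5}, and the definition $\sigma_X=\epsilon_X\circ\tau(\mathcal H_X)(X)$ from \eqref{4fh}, identifies $\xi\circ\tau(\mathcal M)(X)$ with $\xi$ applied to the formula \eqref{44r} defining $\alpha(\sigma)$, and separation of points finishes. The two arguments share the same engine --- transfer from the regular representations $\mathcal H_X$ to arbitrary comodules via \eqref{Mc111}, plus the fact that $\mathcal M(X)^*$ separates points --- but your direct version is structurally leaner for this particular proposition: within the proof of Proposition \ref{P4.4} it needs neither the morphisms $\delta^X_Y$ nor Lemma \ref{Lem4.01} (both are of course still required in Lemma \ref{Lem4.2} to show $\beta$ lands in $V_1$, which your argument quietly presupposes when it applies $\alpha$ to $\beta(\tau)$). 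What the paper's injectivity-by-contradiction scheme buys in exchange is reusability: the same vanishing-on-representables template is deployed again for $W$ and for contramodules (Lemma \ref{LRG4}, Proposition \ref{P5.5}), whereas your direct computation would have to be redone in each setting.
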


\begin{proof}
We consider $\sigma\in V_1$ and set $\tau: =\alpha(\sigma)$, $\sigma':=\beta(\tau)$. We choose $X\in Ob(\mathscr C)$ and $f\otimes a
\in \mathscr C(X,X)\otimes a$. By the constructions in \eqref{4fh} and \eqref{44r}, we have
\begin{equation}
\begin{array}{ll}
\sigma'_X(f\otimes a)&= \epsilon_X(\tau(\mathcal H_X)(X)(f\otimes a))\\
& = \epsilon_X(f_{X1}\sigma_X(f_{X2}\otimes a))=\sigma_X(\epsilon_X(f_{X1})f_{X2}\otimes a)=\sigma_X(f\otimes a)\\
\end{array}
\end{equation} This shows that $\beta\circ \alpha=id$. 

\smallskip To prove the result, it therefore suffices to show that $\beta$ is a monomorphism. Suppose therefore that we have a natural transformation $0\ne \tau\in Nat(G_\psi F_\psi ,id_{Com^{\mathscr C}})$ such that $\sigma=\beta(\tau)=0$, i.e., $\sigma_X=0$ for each $X\in Ob(\mathscr C)$. In the notation of the proof of Lemma \ref{Lem4.2}, we have the following commutative diagram from \eqref{412cdtr} for $X$, $Y\in Ob(\mathscr C)$
\begin{equation}
\label{412cdtrx}
\xymatrix{
\mathcal H_Y(X)\otimes A \ar[rr]^{\delta^X_Y(X)\otimes A\quad }\ar[d]_{\tau(\mathcal H_Y)(X)}&& \mathscr C(X,Y)\otimes \mathcal H_X(X) \otimes A
\ar[d]^{\tau(\mathscr C(X,Y)\otimes \mathcal H_X)(X)=\mathscr C(X,Y)\otimes \tau(\mathcal H_X)(X)}&& &&\\ 
\mathcal H_Y(X) \ar[rr]^{\delta^X_Y(X)\quad }&& \mathscr C(X,Y)\otimes \mathcal H_X(X) \ar[rr]^{\mathscr C(X,Y)\otimes \epsilon_X}&& \mathscr C(X,Y)&&\\ 
}
\end{equation} where the equality $\tau(\mathscr C(X,Y)\otimes \mathcal H_X)(X)=\mathscr C(X,Y)\otimes \tau(\mathcal H_X)(X)$ in \eqref{412cdtrx} follows from Lemma \ref{Lem4.01}. From \eqref{412cdtrx}, we obtain
\begin{equation}\label{4.19eh}
\tau(\mathcal H_Y)(X) = (\mathscr C(X,Y)\otimes \epsilon_X)\circ \delta^X_Y(X) \circ \tau(\mathcal H_Y)(X)= (\mathscr C(X,Y)\otimes (\epsilon_X\circ \tau(\mathcal H_X)(X)))\circ (\delta^X_Y(X)\otimes A)= (\mathscr C(X,Y)\otimes \sigma_X)\circ (\delta^X_Y(X)\otimes A) =0
\end{equation} for every $X$, $Y\in Ob(\mathscr C)$. By assumption, we know that $\tau\ne 0$. Hence, we can choose some $\mathcal M\in Com^{\mathscr C}$ and $X\in Ob(\mathscr C)$ such that $0\ne \tau(\mathcal M)(X):\mathcal M(X)\otimes A\longrightarrow \mathcal M(X)$. We choose a linear map 
$f: \mathcal M(X)\longrightarrow K$ such that $f\circ  \tau(\mathcal M)(X)\ne 0$. By \eqref{Mc111}, there exists $\phi\in Com^{\mathscr C}(
\mathcal M,\mathcal H_X)\cong Vect_K(\mathcal M(X),K)$ corresponding to $f$ and we have $f=\epsilon_X\circ \phi(X)$. We now have the following commutative diagram
\begin{equation}
\xymatrix{
\mathcal M(X)\otimes A\ar[rr]^{\tau(\mathcal M)(X)} \ar[d]_{\phi(X)\otimes A}&& \mathcal M(X)\ar[d]^{\phi(X)}\ar[rrd]^{f}&& \\
\mathcal H_X(X)\otimes A \ar[rr]_{\tau(\mathcal H_X)(X)=0\quad}&& \mathcal H_X(X)=\mathscr C(X,X)\ar[rr]_{\qquad\quad\epsilon_X}&& K\\
}
\end{equation} Since $f\circ  \tau(\mathcal M)(X)\ne 0$, we have a contradiction. This proves the result. 
\end{proof}

\begin{Thm}\label{T4.5hm}
Let $(\mathscr C,A,\psi)$ be an entwining structure. Then, the functor $F_\psi :Com^{\mathscr C}\longrightarrow Com^{\mathscr C}_A(\psi)$ defined by setting $F_\psi (\mathcal M):=\mathcal M\otimes A$ is separable if and only if there exists $\sigma \in V_1$ such that
\begin{equation}\label{421cond}
\sigma_X(f\otimes 1)=\epsilon_X(f)\qquad \forall \textrm{ }f\in \mathscr C(X,X), \textrm{ }X\in Ob(\mathscr C)
\end{equation}
\end{Thm}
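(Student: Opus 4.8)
The plan is to apply Rafael's theorem, which characterizes separability of a left adjoint through a splitting of the unit. Since Proposition \ref{P2.6} exhibits $(F_\psi,G_\psi)$ as an adjoint pair, Rafael's theorem (cf. \cite{uni}) says that $F_\psi$ is separable if and only if the unit $\eta:id_{Com^{\mathscr C}}\longrightarrow G_\psi F_\psi$ admits a natural retraction, i.e. there exists $\tau\in Nat(G_\psi F_\psi,id_{Com^{\mathscr C}})=V$ with $\tau\circ\eta=id_{Com^{\mathscr C}}$. The first step is therefore to identify the unit explicitly. Tracing through the adjunction isomorphism \eqref{216g}, by taking $\mathcal N=F_\psi(\mathcal M)=\mathcal M\otimes A$ and the identity morphism, one reads off that $\eta_{\mathcal M}(X):\mathcal M(X)\longrightarrow \mathcal M(X)\otimes A$ is given by $m\mapsto m\otimes 1$ for each $X\in Ob(\mathscr C)$.

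Next I would transport the retraction condition across the isomorphism $\alpha:V_1\xrightarrow{\sim}V$ established in Proposition \ref{P4.4}. For $\sigma\in V_1$ with $\tau=\alpha(\sigma)$, the composite $\tau\circ\eta$ evaluated at $\mathcal M$ and $X$ on $m\in\mathcal M(X)$ computes, via the formula \eqref{44r}, to
\[
(\tau(\mathcal M)(X)\circ \eta_{\mathcal M}(X))(m)=\tau(\mathcal M)(X)(m\otimes 1)=m_{X0}\,\sigma_X(m_{X1}\otimes 1).
\]
Thus the requirement $\tau\circ\eta=id$ amounts to demanding $m_{X0}\,\sigma_X(m_{X1}\otimes 1)=m$ for all $\mathcal M$, $X$ and $m\in\mathcal M(X)$; note here that $m_{X1}\in\mathscr C(X,X)$, so that the condition \eqref{421cond} is applicable. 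The key device relating this to \eqref{421cond} is the counit axiom $m_{X0}\,\epsilon_X(m_{X1})=m$ recorded in \eqref{not2.5}.

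For the forward implication I would argue that if $\sigma$ satisfies \eqref{421cond}, then $\sigma_X(m_{X1}\otimes 1)=\epsilon_X(m_{X1})$, whence $m_{X0}\,\sigma_X(m_{X1}\otimes 1)=m_{X0}\,\epsilon_X(m_{X1})=m$; so $\tau=\alpha(\sigma)$ retracts $\eta$ and $F_\psi$ is separable. For the converse, given a retraction $\tau$ of $\eta$, I would set $\sigma=\beta(\tau)\in V_1$ and specialize to the regular comodule $\mathcal H_X$: since $(\tau\circ\eta)(\mathcal H_X)(X)=id$, we get $\tau(\mathcal H_X)(X)(g\otimes 1)=g$ for $g\in\mathcal H_X(X)=\mathscr C(X,X)$, and applying $\epsilon_X$ together with the defining formula \eqref{4fh} for $\beta$ yields $\sigma_X(g\otimes 1)=\epsilon_X(g)$, which is exactly \eqref{421cond}.

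The substantive work has in fact already been carried out in Proposition \ref{P4.4}, which identifies $V_1$ with the space of natural transformations $V$; the only genuine step remaining is the bookkeeping that pins down the unit and recognizes the retraction condition as \eqref{421cond} through the counit identity. The main (and rather minor) obstacle is to be careful that in the forward direction the retraction equation must hold for \emph{all} comodules $\mathcal M$ simultaneously, which is precisely why the universal formula \eqref{44r} coupled with the counit axiom \eqref{not2.5} is invoked, rather than merely verifying the condition on the generating comodules $\mathcal H_X$.
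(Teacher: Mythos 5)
Your proposal is correct and follows essentially the same route as the paper's own proof: both invoke the separability criterion for a left adjoint from \cite[Proposition 1.1]{uni}, identify the unit $m\mapsto m\otimes 1$, transport the retraction condition through the isomorphism $V_1\cong V$ of Proposition \ref{P4.4}, use the counit identity \eqref{not2.5} for the forward direction, and evaluate on $\mathcal H_X$ with $\sigma=\beta(\tau)$ for the converse. No gaps; the argument matches the paper step for step.
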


\begin{proof}
From Proposition \ref{P2.6}, it is clear that the unit $\omega: id_{Com^{\mathscr C}}\longrightarrow G_\psi 
F_\psi $ of the adjoint pair $(F_\psi ,G_\psi )$ is given by $\omega(\mathcal M)(X):
\mathcal M(X)\xrightarrow{\quad m\mapsto m\otimes 1\quad }\mathcal M(X)\otimes A$. By \cite[Proposition 1.1]{uni}, we know that the left adjoint 
$F_\psi $ is separable if and only if there exists $\tau\in V=Nat(G_\psi F_\psi ,id_{Com^{\mathscr C}})$ such that
$\tau \circ \omega=id$. 

\smallskip
First, we suppose that there exists $\sigma\in V_1$ satisfying the condition in \eqref{421cond} and set $\tau:=\alpha(\sigma)$. By Lemma \ref{Lem4.1}, we know that 
$\tau(\mathcal M)(X)(m\otimes a)=m_{X0}\sigma_X(m_{X1}\otimes a)$ for any $m\in \mathcal M(X)$, $a\in A$, $\mathcal M\in Com^{\mathscr C}$, 
$X\in Ob(\mathscr C)$. It follows that
\begin{equation}
(\tau(\mathcal M)(X)\circ\omega(\mathcal M)(X))(m)=\tau(\mathcal M)(X)(m\otimes 1)=m_{X_0}\sigma_X(m_{X1}\otimes 1)=m_{X0}\epsilon_X(m_{X1})=m
\end{equation}
Hence, $\tau\circ \omega=id$ and $F_\psi $ is separable.

\smallskip
Conversely, suppose that  $F_\psi $ is separable. Then, there exists a natural transformation $\tau\in V=Nat(G_\psi F_\psi ,id_{Com^{\mathscr C}})$ such that $\tau\circ \omega=id$. In other words, $\tau(\mathcal M)(X)(\omega(\mathcal M)(X)(m))=
\tau(\mathcal M)(X)(m\otimes 1)=m$ for any $m\in \mathcal M(X)$,  $\mathcal M\in Com^{\mathscr C}$, 
$X\in Ob(\mathscr C)$. We set $\sigma:=\beta(\tau)\in V_1$. By the construction in Lemma \ref{Lem4.2}, we now have 
\begin{equation}
\sigma_X(f\otimes 1)=\epsilon_X(\tau(\mathcal H_X)(X)(f\otimes 1))=\epsilon_X(f)
\end{equation} for any $f\in \mathscr C(X,X)$.  This proves the result.

\end{proof}

\subsection{Separability of the functor $G_\psi $}

In a manner similar to Section 4.1, we set $W:=Nat(id_{Com_A^{\mathscr C}(\psi)},F_\psi G_\psi )$. We now consider the space $W_1$ such that an element
$\lambda\in W_1$ is a collection of linear maps
\begin{equation}\label{422lu}
\lambda=\{\mbox{$\lambda^X$ $\vert$ $\lambda^X:\mathscr C(X,X)\longrightarrow A\otimes A$, $\textrm{ }$ $\lambda^{X}(f):= \lambda^{X1}(f)\otimes 
\lambda^{X2}(f)$}\}_{X\in Ob(\mathscr C)}
\end{equation}  (where $\lambda^{X}(f):= \lambda^{X1}(f)\otimes 
\lambda^{X2}(f)$ is written by suppressing summation signs) satisfying the following conditions 
\begin{gather}
\lambda^{Y1}(f_{Y1})\otimes \lambda^{Y2}(f_{Y1})\otimes f_{Y2}=\lambda^{X1}(f_{X2})_\psi\otimes \lambda^{X2}(f_{X2})_\psi\otimes f_{X1}^{\psi\psi} \in A\otimes A\otimes 
\mathscr C(X,Y) \label{425n}\\
\lambda^{Z1}(g)\otimes \lambda^{Z2}(g)a=a_\psi\lambda^{Z1}(g^\psi)\otimes \lambda^{Z2}(g^\psi)\in A\otimes A \label{426n}
\end{gather} for $f\in \mathscr C(X,Y)$, $g\in \mathscr C(Z,Z)$, $a\in A$, $X,Y,Z\in Ob(\mathscr C)$.

 \begin{lem}\label{Lem4.6}
 There is a linear map $\alpha:W_1\longrightarrow W$ which associates to $\lambda\in W_1$ the natural transformation
 $\kappa:=\alpha(\lambda)\in Nat(id_{Com^{\mathscr C}_A(\psi)},F_\psi G_\psi )$ given by setting
 \begin{equation}\label{44rc}
 \kappa(\mathcal M)(X):\mathcal M(X)\longrightarrow \mathcal M(X)\otimes A \qquad m\mapsto m_{X0}\lambda^{X1}(m_{X1})\otimes \lambda^{X2}(m_{X1})
 \end{equation} for any $\mathcal M\in  Com^{\mathscr C}_A(\psi)$  and $X\in Ob(\mathscr C)$. 
 \end{lem}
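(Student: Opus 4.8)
The plan is to establish three facts: for each $\mathcal M\in Com^{\mathscr C}_A(\psi)$ and $X\in Ob(\mathscr C)$, the map $\kappa(\mathcal M)(X)$ defined in \eqref{44rc} is a morphism $\mathcal M\longrightarrow F_\psi G_\psi(\mathcal M)=\mathcal M\otimes A$ in $Com^{\mathscr C}_A(\psi)$; the family $\{\kappa(\mathcal M)\}_{\mathcal M}$ is natural in $\mathcal M$; and the assignment $\lambda\mapsto\kappa$ is $K$-linear. The linearity is immediate, since $\kappa(\mathcal M)(X)(m)=m_{X0}\lambda^{X1}(m_{X1})\otimes\lambda^{X2}(m_{X1})$ depends linearly on $\lambda^X$. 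Throughout I will use that the right $A$-action on $F_\psi G_\psi(\mathcal M)=\mathcal M\otimes A$ is $(n\otimes a')\cdot b=n\otimes a'b$, and that its coaction, as built in Lemma \ref{L2.2}, is $(n\otimes a')_{Y0}\otimes(n\otimes a')_{Y1}=n_{Y0}\otimes a'_\psi\otimes(n_{Y1})^\psi$.

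First I would verify that each $\kappa(\mathcal M)(X)$ is right $A$-linear. Expanding $\kappa(\mathcal M)(X)(mb)$, the entwined comodule identity \eqref{e2.5} rewrites $(mb)_{X0}\otimes(mb)_{X1}$ as $m_{X0}b_\psi\otimes(m_{X1})^\psi$, so that $\kappa(\mathcal M)(X)(mb)=m_{X0}b_\psi\lambda^{X1}((m_{X1})^\psi)\otimes\lambda^{X2}((m_{X1})^\psi)$. Condition \eqref{426n}, applied with $g=m_{X1}$ and $a=b$, transforms the factor $b_\psi\lambda^{X1}((m_{X1})^\psi)\otimes\lambda^{X2}((m_{X1})^\psi)$ into $\lambda^{X1}(m_{X1})\otimes\lambda^{X2}(m_{X1})b$, whence $\kappa(\mathcal M)(X)(mb)=m_{X0}\lambda^{X1}(m_{X1})\otimes\lambda^{X2}(m_{X1})b=\kappa(\mathcal M)(X)(m)\cdot b$. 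This is exactly the role played by condition \eqref{426n}.

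The main work, and the step I expect to be the obstacle, is checking that $\kappa(\mathcal M)$ is a morphism of $\mathscr C$-comodules, i.e. that $\rho^{\mathcal M\otimes A}_{XY}\circ\kappa(\mathcal M)(X)=(\kappa(\mathcal M)(Y)\otimes\mathscr C(X,Y))\circ\rho^{\mathcal M}_{XY}$. Along the right-hand path I would expand $\kappa(\mathcal M)(Y)(m_{Y0})$ and apply coassociativity \eqref{not2.5} with $Z=Y$ to replace $(m_{Y0})_{Y0}\otimes(m_{Y0})_{Y1}\otimes m_{Y1}$ by $m_{Y0}\otimes(m_{Y1})_{Y1}\otimes(m_{Y1})_{Y2}$, so that $\lambda^Y$ is now evaluated on $(m_{Y1})_{Y1}\in\mathscr C(Y,Y)$; this is precisely the left-hand side of \eqref{425n} with $f=m_{Y1}$, and \eqref{425n} converts it into $m_{Y0}\lambda^{X1}((m_{Y1})_{X2})_\psi\otimes\lambda^{X2}((m_{Y1})_{X2})_\psi\otimes(m_{Y1})_{X1}^{\psi\psi}$. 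Along the left-hand path I would instead apply \eqref{e2.5} to push the two algebra factors $\lambda^{X1}(m_{X1})$ and $\lambda^{X2}(m_{X1})$ through the coaction of $\mathcal M\otimes A$, and then coassociativity \eqref{not2.5} with $Z=X$ to replace $(m_{X0})_{Y0}\otimes(m_{X0})_{Y1}\otimes m_{X1}$ by $m_{Y0}\otimes(m_{Y1})_{X1}\otimes(m_{Y1})_{X2}$; the resulting expression then matches the one above termwise. The delicate point is the bookkeeping of the two independent applications of $\psi$: one must match the two subscripts $\psi$ carried by $\lambda^{X1}$ and $\lambda^{X2}$ on the one side against the superscript $\psi\psi$ on the coalgebra component on the other, while keeping each Sweedler factor in its correct hom-space. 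It is precisely because \eqref{425n} is stated with this pattern of twists that the two paths agree.

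Finally, naturality is routine. Given a morphism $\phi:\mathcal M\longrightarrow\mathcal N$ in $Com^{\mathscr C}_A(\psi)$, I would evaluate both $(\phi(X)\otimes A)\circ\kappa(\mathcal M)(X)$ and $\kappa(\mathcal N)(X)\circ\phi(X)$ on $m\in\mathcal M(X)$: the $A$-linearity of $\phi(X)$ lets me pull $\lambda^{X1}(m_{X1})$ outside $\phi(X)$, and the fact that $\phi$ is a $\mathscr C$-comodule morphism gives $(\phi(X)(m))_{X0}\otimes(\phi(X)(m))_{X1}=\phi(X)(m_{X0})\otimes m_{X1}$; the two sides then coincide. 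Combining these verifications shows $\kappa=\alpha(\lambda)\in Nat(id_{Com^{\mathscr C}_A(\psi)},F_\psi G_\psi)$ and that $\alpha$ is linear, as required.
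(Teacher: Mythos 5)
Your proof is correct and follows essentially the same route as the paper's: the paper likewise uses \eqref{426n} together with \eqref{e2.5} for the $A$-linearity of $\kappa(\mathcal M)(X)$, and proves the comodule-morphism property by combining \eqref{e2.5}, coassociativity \eqref{not2.5}, and \eqref{425n} applied to $f=m_{Y1}$, exactly as you describe (the paper runs one chain of equalities from the left path to the right path, while you reduce both paths to the common expression $m_{Y0}\lambda^{X1}(m_{Y1X2})_\psi\otimes\lambda^{X2}(m_{Y1X2})_\psi\otimes m_{Y1X1}^{\psi\psi}$, a purely presentational difference). Your explicit naturality check is also the routine verification the paper declares evident.
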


\begin{proof}
We consider $(\mathcal M,\rho^{\mathcal M},\mu^{\mathcal M})\in  Com^{\mathscr C}_A(\psi)$. We will first show that $\kappa(\mathcal M)$ is a morphism in $ Com^{\mathscr C}_A(\psi)$. To show that $\kappa(\mathcal M)$ is a morphism of $\mathcal C$-comodules, we have to show that the following diagram commutes
\begin{equation*}
\small
\xymatrix{
\mathcal M(X) \ar[rr]^{\rho_{XX}}\ar[dd]^{\rho_{XY}}&&\mathcal M(X)\otimes \mathscr C(X,X) \ar[rr]^{\mathcal M(X)\otimes \lambda^X} &&\mathcal M(X)\otimes A
\otimes A\ar[rr]^{\mu_X\otimes A}&&\mathcal M(X)\otimes A \ar[d]_{\rho_{XY}\otimes A}\\
&& && &&\mathcal M(Y)\otimes \mathscr C(X,Y)\otimes A \ar[d]_{\mathcal M(Y)\otimes \psi_{XY}}\\
\mathcal M(Y)\otimes \mathscr C(X,Y)\ar[rr]^{\rho_{YY}\otimes\mathscr C(X,Y)\quad }&&\mathcal M(Y)\otimes 
\mathscr C(Y,Y)\otimes\mathscr C(X,Y)  \ar[rr]^{\mathcal M(Y)\otimes \lambda^Y\otimes \mathscr C(X,Y)}&&
\mathcal M(Y)\otimes A\otimes A\otimes \mathscr C(X,Y)\ar[rr]^{\mu_Y\otimes A\otimes \mathscr C(X,Y)} && \mathcal M(Y)\otimes A\otimes \mathscr C(X,Y)  \\
}
\end{equation*} For this, we see that we have for $m\in \mathcal M(X)$:

\begin{equation}\label{4.28ar}
\small
\begin{array}{ll}
((\mathcal M(Y)\otimes \psi_{XY})\circ (\rho_{XY}\otimes A)\circ \kappa(\mathcal M)(X))(m) & \\
= ((\mathcal M(Y)\otimes \psi_{XY})\circ (\rho_{XY}\otimes A)\circ (\mu_X\otimes A)\circ (\mathcal M(X)\otimes \lambda^X)\circ \rho_{XX})(m) & \\
= ((\mathcal M(Y)\otimes \psi_{XY})\circ (\mu_Y\otimes \mathscr C(X,Y)\otimes A)\circ (\mathcal M(Y)\otimes \psi_{XY}\otimes A)\circ (\rho_{XY}\otimes A\otimes A)\circ (\mathcal M(X)\otimes \lambda^X)\circ \rho_{XX})(m) & \mbox{(by \eqref{e2.5})}\\
= ((\mu_Y\otimes A\otimes \mathscr C(X,Y))\circ (\mathcal M(Y)\otimes A\otimes \psi_{XY})\circ (\mathcal M(Y)\otimes \psi_{XY}\otimes A)\circ (\rho_{XY}\otimes A\otimes A)\circ (\mathcal M(X)\otimes \lambda^X)\circ \rho_{XX})(m) & \\
= ((\mu_Y\otimes A\otimes \mathscr C(X,Y))\circ (\mathcal M(Y)\otimes A\otimes \psi_{XY})\circ (\mathcal M(Y)\otimes \psi_{XY}\otimes A)\circ (
\mathcal M(Y)\otimes \mathscr C(X,Y)\otimes \lambda^X) \circ 
(\mathcal M(Y)\otimes \delta_{XXY})\circ \rho_{XY})(m) & \\
= m_{Y0}\lambda^{X1}(m_{Y1X2})_\psi\otimes \lambda^{X2}(m_{Y1X2})_\psi\otimes m_{Y1X1}^{\psi\psi}&\\
=m_{Y0}\lambda^{Y1}(m_{Y1Y1})\otimes \lambda^{Y2}(m_{Y1Y1})\otimes m_{Y1Y2}& \mbox{(by \eqref{425n})}\\
= ((\mu_Y\otimes A\otimes \mathscr C(X,Y))\circ (\mathcal M(Y)\otimes \lambda^Y\otimes \mathscr C(X,Y))\circ (\mathcal M(Y)\otimes \delta_{XYY})\circ \rho_{XY})(m)&\\
= ((\mu_Y\otimes A\otimes \mathscr C(X,Y))\circ (\mathcal M(Y)\otimes \lambda^Y\otimes \mathscr C(X,Y))\circ (\rho_{YY}\otimes \mathscr C(X,Y))\circ \rho_{XY})(m)&\\
\end{array}
\end{equation} For $m\otimes a\in \mathcal M(X)\otimes A$, we have
\begin{equation}\label{4.29ry}
(\kappa(\mathcal M)(X)\otimes A)(m\otimes a) 
=m_{X0}\lambda^{X1}(m_{X1})\otimes \lambda^{X2}(m_{X1})a =m_{X0}a_\psi\lambda^{X1}(m_{X1}^\psi)\otimes \lambda^{X2}(m_{X1}^\psi)
= (ma)_{X0}\lambda^{X1}((ma)_{X1})\otimes \lambda^{X2}((ma)_{X1})
\end{equation} where the equality $\lambda^{X1}(m_{X1})\otimes \lambda^{X2}(m_{X1})a =a_\psi\lambda^{X1}(m_{X1}^\psi)\otimes \lambda^{X2}(m_{X1}^\psi)$ follows from \eqref{426n}. It follows from \eqref{4.29ry} that $\kappa(\mathcal M)(X)$ is $A$-linear. Together, it follows from 
\eqref{4.28ar} and \eqref{4.29ry} that $\kappa(\mathcal M)$ is a morphism of entwined comodules.  Finally, it is evident that the maps in 
\eqref{44rc}   are well behaved with respect to morphisms in $Com_A^{\mathscr C}(\psi)$. This shows that we have a natural transformation $\kappa\in Nat(id_{Com^{\mathscr C}_A(\psi)},F_\psi G_\psi )=W$.

\end{proof}

We now set
\begin{equation}\label{430bet}
\beta: W\longrightarrow W_1\qquad\kappa\mapsto \lambda:=\{\lambda^X:\mathscr C(X,X)\xrightarrow{\mathscr C(X,X)\otimes u_A}\mathcal H_X(X)\otimes A
\xrightarrow{\kappa(\mathcal H_X\otimes A)(X)}\mathcal H_X(X)\otimes A\otimes A\xrightarrow{\epsilon_X\otimes A\otimes A} A\otimes A\}_{X\in Ob(\mathscr C)}
\end{equation} where $u_A:K\longrightarrow A$ is the unit map. We need to show that $\lambda=\beta(\kappa)$ satisfies the conditions in
\eqref{425n} and \eqref{426n}.

\begin{lem}\label{Lem4.77}
(a) Let $\kappa\in W=Nat(id_{Com_A^{\mathscr C}(\psi)},F_\psi G_\psi )$ and $\lambda:=\beta(\kappa)$. Then, for each $X\in Ob(\mathscr C)$, we have
\begin{equation}\label{431cx}
\lambda^X=(\epsilon_X\otimes A\otimes A)\circ (\kappa(\mathcal H_X\otimes A)(X))\circ(\mathscr C(X,X)\otimes u_A)=(A\otimes \epsilon_X\otimes A)\circ (\kappa(A\otimes \mathcal H_X)(X))\circ(u_A\otimes \mathscr C(X,X))
\end{equation}

(b) Let $\mathcal M\in Com^{\mathscr C}_A(\psi)$ and $U\in Vect_K$. Let $\kappa\in Nat(id_{Com_A^{\mathscr C}(\psi)},F_\psi G_\psi )$ be a natural transformation. Then, we have
\begin{equation}
\kappa(U\otimes \mathcal M)(Z) (u\otimes m)=u\otimes \kappa(\mathcal M)(Z)(m)
\end{equation} for any $u\in U$, $m\in \mathcal M(Z) $, $Z\in Ob(\mathscr C)$.

\end{lem}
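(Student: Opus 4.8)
The plan is to treat the two parts separately; part (b) is the exact analogue of Lemma \ref{Lem4.01} for the natural transformation $\kappa$, and part (a) reduces, via naturality of $\kappa$ against a single well-chosen morphism, to the entwining axioms of \eqref{ent2}. For part (b), I would first observe that for fixed $u\in U$ the assignment $\phi_u(Z):\mathcal M(Z)\longrightarrow (U\otimes\mathcal M)(Z)$, $m\mapsto u\otimes m$, is a morphism in $Com_A^{\mathscr C}(\psi)$: it is $A$-linear and intertwines the coactions since the factor $U$ is inert with respect to both the $A$-action and the $\mathscr C$-coaction on $U\otimes\mathcal M$. Since $\kappa\in Nat(id_{Com_A^{\mathscr C}(\psi)},F_\psi G_\psi)$, naturality against $\phi_u$ produces a commutative square relating $\kappa(\mathcal M)(Z)$ and $\kappa(U\otimes\mathcal M)(Z)$, and chasing $m$ around it gives $\kappa(U\otimes\mathcal M)(Z)(u\otimes m)=u\otimes\kappa(\mathcal M)(Z)(m)$, precisely as in the proof of Lemma \ref{Lem4.01}.

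For part (a), the first displayed equality is nothing but the definition of $\beta$ recorded in \eqref{430bet}, so the whole content lies in the second equality. Here $\mathcal H_X\otimes A$ carries the entwined comodule structure of Lemma \ref{L2.2} (applied to $\mathcal H_X$), while $A\otimes\mathcal H_X$ carries the structure of Lemma \ref{L2.3} with $P=A$ the right regular module. The key device I would introduce is the objectwise map $\Psi:\mathcal H_X\otimes A\longrightarrow A\otimes\mathcal H_X$ given by $\Psi(Y)(f\otimes a):=\psi_{YX}(f\otimes a)=a_\psi\otimes f^\psi$, and I would check that $\Psi$ is a morphism in $Com_A^{\mathscr C}(\psi)$. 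Compatibility with the $A$-actions follows from the multiplicativity axiom $(ab)_\psi\otimes f^\psi=a_\psi b_\psi\otimes f^{\psi\psi}$, and compatibility with the coactions is exactly the first of the conditions in \eqref{ent2}.

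With $\Psi$ available, naturality of $\kappa$ yields $\kappa(A\otimes\mathcal H_X)\circ\Psi=(\Psi\otimes A)\circ\kappa(\mathcal H_X\otimes A)$. Evaluating at $X$ and feeding in $f\otimes 1$, and using the unit axiom $\psi_{XX}(f\otimes 1)=1\otimes f$ to get $\Psi(X)(f\otimes 1)=1\otimes f$, I would obtain $\kappa(A\otimes\mathcal H_X)(X)(1\otimes f)=(\Psi(X)\otimes A)\bigl(\kappa(\mathcal H_X\otimes A)(X)(f\otimes 1)\bigr)$. Writing $\kappa(\mathcal H_X\otimes A)(X)(f\otimes 1)$ as an element of $\mathscr C(X,X)\otimes A\otimes A$ and applying $A\otimes\epsilon_X\otimes A$ to the right-hand side produces terms of the form $a_\psi\epsilon_X(g^\psi)$, which collapse to $\epsilon_X(g)a$ by the counit-entwining axiom $a_\psi\epsilon_Z(g^\psi)=\epsilon_Z(g)a$; this is exactly what applying $\epsilon_X\otimes A\otimes A$ to the left-hand side produces, namely $\lambda^X(f)$. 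The main obstacle is the verification that $\Psi$ is a morphism of entwined comodules, since the coaction compatibility must be matched term-by-term against the first entwining identity in \eqref{ent2}; once that is in place, the remainder is a short chase driven by the unit and counit axioms.
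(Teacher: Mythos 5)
Your proposal is correct and matches the paper's own proof essentially step for step: part (b) via the morphisms $\phi_u$ exactly as in Lemma \ref{Lem4.01}, and part (a) via naturality of $\kappa$ against the morphism $\psi_X:\mathcal H_X\otimes A\longrightarrow A\otimes \mathcal H_X$ built from the maps $\psi_{YX}$ (your $\Psi$), combined with the unit axiom $\psi_{XX}(f\otimes 1)=1\otimes f$ and the counit axiom $a_\psi\epsilon_X(g^\psi)=\epsilon_X(g)a$ from \eqref{ent2}. You in fact supply slightly more detail than the paper, which leaves the verification that $\psi_X$ is a morphism in $Com_A^{\mathscr C}(\psi)$ as ``easily verified''; your term-by-term check against the first identity in \eqref{ent2} is the right one.
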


\begin{proof}
(a) By Lemma \ref{L2.2} and Lemma \ref{L2.3}, we know that both $\mathcal H_X\otimes A$, $A\otimes \mathcal H_X\in Com^{\mathscr C}_A(\psi)$. It may be easily verified that
the maps $\psi_{YX}:\mathscr C(Y,X)\otimes A\longrightarrow A\otimes \mathscr C(Y,X)$ for $Y\in Ob(\mathscr C)$ determine a morphism $\psi_X:\mathcal H_X\otimes A
\longrightarrow A\otimes \mathcal H_X$ in $Com^{\mathscr C}_A(\psi)$. Since $\kappa\in Nat(id_{Com_A^{\mathscr C}(\psi)},F_\psi G_\psi )$, we now have a commutative diagram
\begin{equation}
\begin{CD}
\mathscr C(X,X)@>\mathscr C(X,X)\otimes u_A>> \mathcal H_X(X)\otimes A @>\kappa(\mathcal H_X\otimes A)(X)>> \mathcal H_X(X)\otimes A\otimes A @>\epsilon_X\otimes A\otimes A>> A\otimes A\\
@VidVV @V\psi_{X}(X)=\psi_{XX} VV @V\psi_X(X)\otimes AV=\psi_{XX}\otimes AV  @VidVV\\
\mathscr C(X,X)@>u_A\otimes \mathscr C(X,X)>>A\otimes \mathcal H_X(X) @>\kappa(A\otimes \mathcal H_X)(X)>> A\otimes \mathcal H_X(X)\otimes A @>A\otimes\epsilon_X\otimes A>> A\otimes A  \\
\end{CD}
\end{equation}  The result is now clear. 

\smallskip
(b) This is immediate from the fact that for each $u\in U$, the maps $\phi_u(Z):\mathcal M(Z)\xrightarrow{m\mapsto u\otimes m} U\otimes\mathcal M(Z)$ define a morphism
$\phi_u$ in $Com_A^{\mathscr C}(\psi)$. 
\end{proof}

\begin{lem}\label{Lem4.87}
For $\kappa\in W$, the collection $\lambda$ as defined in \eqref{430bet} satisfies
\begin{gather}
\lambda^{Y1}(f_{Y1})\otimes \lambda^{Y2}(f_{Y1})\otimes f_{Y2}=\lambda^{X1}(f_{X2})_\psi\otimes \lambda^{X2}(f_{X2})_\psi\otimes f_{X1}^{\psi\psi} \in A\otimes A\otimes 
\mathscr C(X,Y) \label{433n}\\
\lambda^{Z1}(g)\otimes \lambda^{Z2}(g)a=a_\psi\lambda^{Z1}(g^\psi)\otimes \lambda^{Z2}(g^\psi)\in A\otimes A \label{434n}
\end{gather} for $f\in \mathscr C(X,Y)$, $g\in \mathscr C(Z,Z)$, $a\in A$, $X,Y,Z\in Ob(\mathscr C)$. It follows that we have a well defined map 
$\beta:W\longrightarrow W_1$. 
\end{lem}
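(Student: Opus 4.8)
The plan is to verify the two defining relations \eqref{433n} and \eqref{434n} of $W_1$ directly from the naturality of $\kappa$, dualizing the argument used for $F_\psi$ in Lemma \ref{Lem4.2}: each relation is obtained by computing a single component of $\kappa$ on the regular representations in two different ways and then stripping off the maps $\epsilon_X$ and $u_A$ exactly as in the definition \eqref{430bet} of $\beta$. The two essential tools are Lemma \ref{Lem4.77}(a), which provides the \emph{two} equivalent descriptions of $\lambda^X$ (one through $\kappa(\mathcal H_X\otimes A)$, one through $\kappa(A\otimes\mathcal H_X)$, interchanged by the morphism $\psi_X$), and Lemma \ref{Lem4.77}(b), which carries a spectator vector-space factor through $\kappa$ unchanged. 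It is the switch between the two descriptions in Lemma \ref{Lem4.77}(a) that feeds the entwining $\psi$ into the relations; once this is set up, \eqref{433n} and \eqref{434n} are simply the two axioms of \eqref{ent2} transported through $\kappa$.

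To establish \eqref{433n}, I would first use that every component $\kappa(\mathcal M)$ is a morphism of entwined comodules, hence commutes with the coactions. Applying the comodule-morphism square of $\kappa(\mathcal H_Y\otimes A)$ at the pair $(X,Y)$ to $f\otimes 1$ and then applying $\epsilon_Y$: because $1_\psi=1$ the twisted coaction on $\mathcal H_Y\otimes A$ of Lemma \ref{L2.2} degenerates, and the branch feeding $\kappa$ at $Y$ returns the untwisted expression $\lambda^{Y1}(f_{Y1})\otimes\lambda^{Y2}(f_{Y1})\otimes f_{Y2}$. To compute the same element through the object $\mathcal H_X$, I would use that $\delta^X_Y$ of \eqref{410eqr} is a comodule morphism, so that $\delta^X_Y\otimes\mathrm{id}_A$ is a morphism in $Com_A^{\mathscr C}(\psi)$ onto an external tensor product; naturality of $\kappa$ against it, together with Lemma \ref{Lem4.77}(b) and the twisted structure of $\mathcal H_X\otimes A$, rewrites the element as $\lambda^{X1}(f_{X2})_\psi\otimes\lambda^{X2}(f_{X2})_\psi\otimes f_{X1}^{\psi\psi}$, the double twist being forced by the first axiom of \eqref{ent2}. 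Comparing the two computations yields \eqref{433n}.

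For \eqref{434n} I would work at the single object $Z$ with the morphism $\psi_Z\colon\mathcal H_Z\otimes A\to A\otimes\mathcal H_Z$ from the proof of Lemma \ref{Lem4.77}(a). Naturality of $\kappa$ against $\psi_Z$, evaluated on $g\otimes a$, equates $(\psi_{ZZ}\otimes\mathrm{id}_A)$ applied to $\kappa(\mathcal H_Z\otimes A)(Z)(g\otimes a)$ with $\kappa(A\otimes\mathcal H_Z)(Z)(a_\psi\otimes g^\psi)$. Applying $\epsilon_Z$ to the middle slot of the left-hand side and invoking the counit axiom $a_\psi\epsilon_Z(g^\psi)=\epsilon_Z(g)a$ of \eqref{ent2}, together with the right $A$-linearity of $\kappa(\mathcal H_Z\otimes A)$ (which lets the extra $a$ be detached from the last slot), gives $\lambda^{Z1}(g)\otimes\lambda^{Z2}(g)a$ via the first formula of Lemma \ref{Lem4.77}(a). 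Applying the corresponding projection to the right-hand side and pulling $a_\psi$ out of the leading slot by naturality against the left-multiplication endomorphism of $A\otimes\mathcal H_Z$ (which is readily checked to lie in $Com_A^{\mathscr C}(\psi)$) gives $a_\psi\lambda^{Z1}(g^\psi)\otimes\lambda^{Z2}(g^\psi)$ via the second formula. Equating the two produces \eqref{434n}; since $\beta$ is visibly $K$-linear, this shows that $\beta\colon W\to W_1$ is well defined.

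The step I expect to be the main obstacle is the bookkeeping in \eqref{433n}. One must track with care which copy of $\psi$ arises from the twisted coaction on $\mathcal H_Y\otimes A$ (Lemma \ref{L2.2}), which from the structural morphism $\delta^X_Y\otimes\mathrm{id}_A$ and the structure of $\mathcal H_X\otimes A$, and which from $\kappa$'s own comodule-morphism condition, so that precisely the double twist $f_{X1}^{\psi\psi}$ together with the single twists on $\lambda^{X1},\lambda^{X2}$ emerges rather than some other arrangement of entwinings. The only conceptual (as opposed to computational) point, subsumed in the obstacle, is checking that the candidate maps $\delta^X_Y\otimes\mathrm{id}_A$, $\psi_Z$ and the left-multiplication endomorphisms are genuine morphisms in $Com_A^{\mathscr C}(\psi)$, which is where the axioms \eqref{ent2} enter.
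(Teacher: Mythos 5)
Your proposal is correct and follows essentially the paper's own approach. For \eqref{433n} it is the same argument: one side comes from the comodule-morphism property of $\kappa(\mathcal H_Y\otimes A)$ evaluated at $f\otimes 1$ (where $\psi_{XY}(f_{Y2}\otimes 1)=1\otimes f_{Y2}$ makes the twisted coaction degenerate, exactly as in the paper's diagram \eqref{435cd}), the other from naturality of $\kappa$ against $\delta^X_Y\otimes A$ combined with Lemma \ref{Lem4.77}(b). For \eqref{434n} your route is a mild variant: you use naturality against $\psi_Z$, right $A$-linearity of $\kappa(\mathcal H_Z\otimes A)(Z)$ and the counit axiom of \eqref{ent2} for one side, and naturality against the left-multiplication endomorphisms $p\otimes f\mapsto bp\otimes f$ of $A\otimes\mathcal H_Z$ (one for each fixed $b\in A$) for the other; the paper instead uses right $A$-linearity of $\kappa(A\otimes\mathcal H_Z)(Z)$ (diagram \eqref{439cdf}) and naturality against the single action morphism $A\otimes\mathcal H_Z\otimes A\longrightarrow A\otimes\mathcal H_Z$ together with Lemma \ref{Lem4.77}(b) (diagram \eqref{442cdu}). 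Both routes are equally valid and of the same length, so this is a cosmetic rather than substantive difference. One small inaccuracy in your narrative, which does not affect the argument: the double twist $f_{X1}^{\psi\psi}$ in \eqref{433n} is produced by the rearrangement $(A\otimes\psi_{XY})\circ(\psi_{XY}\otimes A)$, i.e.\ by the iterated twisted coaction on $(\mathcal H_Y\otimes A)\otimes A$ followed by $\epsilon_Y$, not by the first axiom of \eqref{ent2}.
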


\begin{proof}
Since $\kappa(\mathcal H_Y\otimes A)$ is  a morphism in $Com_A^{\mathscr C}(\psi)$ and hence in $Com^{\mathscr C}$, we have the following commutative diagram
\begin{equation}\label{435cd}
\begin{CD}
\mathscr C(X,Y)\otimes A @>\kappa(\mathcal H_Y\otimes A)(X)>>  \mathscr C(X,Y)\otimes A \otimes A  @>(A\otimes \psi_{XY})\circ (\psi_{XY}\otimes A)>> A \otimes A\otimes \mathscr C(X,Y)\\
@V(\mathscr C(Y,Y)\otimes \psi_{XY})\circ(\delta_{XYY}\otimes A)VV @VV(\mathscr C(Y,Y)\otimes A\otimes \psi_{XY})\circ ((\mathscr C(Y,Y)\otimes \psi_{XY})\circ\delta_{XYY})\otimes A)V @VVidV \\
\mathscr C(Y,Y)\otimes A \otimes \mathscr C(X,Y)@>\kappa(\mathcal H_Y\otimes A)(Y)\otimes 
\mathscr C(X,Y)>>  \mathscr C(Y,Y)\otimes A \otimes A\otimes \mathscr C(X,Y) @>\epsilon_Y\otimes A \otimes A\otimes \mathscr C(X,Y)>>A \otimes A\otimes \mathscr C(X,Y)  \\
\end{CD}
\end{equation} From \eqref{435cd} it follows that for $f\in \mathscr C(X,Y)$ we have
\begin{equation}\label{436we}
((A\otimes \psi_{XY})\circ (\psi_{XY}\otimes A)\circ (\kappa(\mathcal H_Y\otimes A)(X)))(f\otimes 1)=\lambda^{Y1}(f_{Y1})\otimes \lambda^{Y2}(f_{Y1})\otimes f_{Y2}
\end{equation} We now recall the morphisms $\delta^X_Y:\mathcal H_Y\longrightarrow \mathscr C(X,Y)\otimes \mathcal H_X$ in $Com^{\mathscr C}$ defined in \eqref{410eqr}. Accordingly, 
$\delta^X_Y\otimes A:\mathcal H_Y\otimes A\longrightarrow \mathscr C(X,Y)\otimes \mathcal H_X\otimes A$ is a morphism in $Com^{\mathscr C}_A(\psi)$. The natural transformation 
$\kappa$ now gives a commutative diagram
\begin{equation*}
\small
\xymatrix{
\mathscr C(X,Y)\otimes A \ar[rrr]^{\kappa(\mathcal H_Y\otimes A)(X)}\ar[d]_{(\delta^X_Y\otimes A)(X)}&&& \mathscr C(X,Y)\otimes A\otimes A \ar[d]_{(\delta^X_Y\otimes A)(X)\otimes A}\ar[rrd]^{id} &&  && \\
\mathscr C(X,Y)\otimes \mathscr C(X,X)\otimes A \ar[rrr]^{\kappa( \mathscr C(X,Y)\otimes \mathcal H_X\otimes A)(X)}_{=\mathscr C(X,Y)\otimes \kappa( \mathcal H_X\otimes A)(X)}&&& \mathscr C(X,Y)\otimes \mathscr C(X,X) \otimes A\otimes A \ar[rr]^{\qquad\mathscr C(X,Y)\otimes \epsilon_X\otimes A\otimes A}&& \mathscr C(X,Y)
\otimes A\otimes A\ar[rr]^{(A\otimes \psi_{XY})\circ (\psi_{XY}\otimes A)} && A\otimes A\otimes \mathscr C(X,Y) \\
}
\end{equation*} where the equality $\kappa( \mathscr C(X,Y)\otimes \mathcal H_X\otimes A)(X)=\mathscr C(X,Y)\otimes \kappa( \mathcal H_X\otimes A)(X)$ follows from 
Lemma \ref{Lem4.77}(b).  It follows that for $f\in \mathscr C(X,Y)$ we have
\begin{equation}\label{438we}
((A\otimes \psi_{XY})\circ (\psi_{XY}\otimes A)\circ (\kappa(\mathcal H_Y\otimes A)(X)))(f\otimes 1)=\lambda^{X1}(f_{X2})_\psi\otimes \lambda^{X2}(f_{X2})_\psi\otimes f_{X1}^{\psi\psi} 
\end{equation} From \eqref{436we} and \eqref{438we}, we see that the condition \eqref{433n} is satisfied by $f\in \mathscr C(X,Y)$. On the other hand, since $\kappa(A\otimes \mathcal H_Z)(Z)$ is right $A$-linear for $Z\in Ob(\mathscr C)$, we have the commutative diagram
\begin{equation}\label{439cdf}
\begin{CD}
A\otimes \mathscr C(Z,Z) \otimes A@>\kappa(A\otimes \mathcal H_Z)(Z)\otimes A>> A\otimes \mathscr C(Z,Z)\otimes A\otimes A@>A\otimes\epsilon_Z\otimes A\otimes A>>A\otimes A\otimes A\\
@V(\mu_A\otimes \mathscr C(Z,Z))\circ (A\otimes\psi_{ZZ})VV @VVA\otimes \mathscr C(Z,Z)\otimes \mu_AV @VVA\otimes \mu_AV\\ 
A\otimes \mathscr C(Z,Z)@>\kappa(A\otimes \mathcal H_Z)(Z)>> A\otimes \mathscr C(Z,Z)\otimes A  @>A\otimes\epsilon_Z\otimes A>> A\otimes A\\
\end{CD}
\end{equation} where $\mu_A:A\otimes A\longrightarrow A$ is the multiplication on $A$. From \eqref{439cdf} and \eqref{431cx}, it follows that for $a\in A$ and 
$g\in \mathscr C(Z,Z)$, we have
\begin{equation}\label{440ve}
((A\otimes\epsilon_Z\otimes A)\circ (\kappa(A\otimes \mathcal H_Z)(Z)))(a_\psi\otimes g^\psi)=\lambda^{Z1}(g)\otimes \lambda^{Z2}(g)a
\end{equation} It may be easily verified that the maps
\begin{equation}
A\otimes\mathcal H_Z(Z')\otimes A=A\otimes \mathscr C(Z',Z)\otimes A\xrightarrow{A\otimes \psi_{Z'Z}}A\otimes A\otimes \mathscr C(Z',Z)\xrightarrow{\mu_A
\otimes \mathscr C(Z',Z)}A\otimes \mathcal H_Z(Z')\qquad Z'\in Ob(\mathscr C)
\end{equation}   define a morphism $A\otimes \mathcal H_Z\otimes A\longrightarrow A\otimes \mathcal H_Z$ in $Com^{\mathscr C}_A(\psi)$. Accordingly, the natural transformation $\kappa$ gives us a commutative diagram
\begin{equation}\label{442cdu}
\begin{CD}
\mathscr C(Z,Z)\otimes A\otimes A @>\psi_{ZZ}\otimes A>> A\otimes \mathscr C(Z,Z)\otimes A@>\kappa(A\otimes \mathcal H_Z\otimes A)(Z)>=A\otimes \kappa(\mathcal H_Z\otimes A)(Z)> A\otimes \mathscr C(Z,Z)\otimes A\otimes A @>A\otimes \epsilon_Z\otimes A\otimes A>>A\otimes A\otimes A\\ 
@. @V(\mu_A\otimes \mathscr C(Z,Z))\circ (A\otimes\psi_{ZZ})VV @VV((\mu_A\otimes \mathscr C(Z,Z))\circ (A\otimes\psi_{ZZ}))\otimes AV @VV\mu_A\otimes AV\\
@. A\otimes \mathscr C(Z,Z) @>\kappa(A\otimes \mathcal H_Z)(Z)>>A\otimes \mathscr C(Z,Z)\otimes A@>A\otimes \epsilon_Z\otimes A>>A\otimes A \\
\end{CD}
\end{equation} where  the equality $\kappa(A\otimes \mathcal H_Z\otimes A)(Z)=A\otimes \kappa(\mathcal H_Z\otimes A)(Z)$ follows from 
Lemma \ref{Lem4.77}(b). Taking $g\otimes a\otimes 1\in \mathscr C(Z,Z)\otimes A\otimes A $, it follows from \eqref{442cdu}  that
\begin{equation}\label{442cde}
((A\otimes\epsilon_Z\otimes A)\circ (\kappa(A\otimes \mathcal H_Z)(Z)))(a_\psi\otimes g^\psi)=a_\psi\lambda^{Z1}(g^\psi)\otimes \lambda^{Z2}(g^\psi)
\end{equation} From \eqref{440ve} and \eqref{442cdu}, it is clear that the condition \eqref{434n} is satisfied for $g\in \mathscr C(Z,Z)$ and $a\in A$. This proves the result.
\end{proof}

\begin{lem}\label{LRG4}
The map $\beta:W\longrightarrow W_1$ is a monomorphism. 
\end{lem}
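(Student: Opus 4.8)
The plan is to prove that $\beta$ is injective. Since $\beta$ is $K$-linear, it suffices to show that if $\kappa\in W$ satisfies $\lambda:=\beta(\kappa)=0$, then $\kappa=0$, i.e. $\kappa(\mathcal M)(X)=0$ for every $\mathcal M\in Com_A^{\mathscr C}(\psi)$ and every $X\in Ob(\mathscr C)$. I would carry this out in three stages: first annihilate $\kappa$ on the free objects $\mathcal H_Y\otimes A$, then on every free object $\mathcal N\otimes A$ with $\mathcal N\in Com^{\mathscr C}$, and finally on an arbitrary $\mathcal M$ by covering it with a free object.

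For the first stage, I would revisit the commutative diagram appearing in the proof of Lemma \ref{Lem4.87}, the one built from the morphism $\delta^X_Y\otimes A:\mathcal H_Y\otimes A\longrightarrow \mathscr C(X,Y)\otimes \mathcal H_X\otimes A$ (with $\delta^X_Y$ as in \eqref{410eqr}). Naturality of $\kappa$ with respect to this morphism, combined with the identity $\kappa(\mathscr C(X,Y)\otimes \mathcal H_X\otimes A)(X)=\mathscr C(X,Y)\otimes \kappa(\mathcal H_X\otimes A)(X)$ from Lemma \ref{Lem4.77}(b), gives $((\delta^X_Y\otimes A)(X)\otimes A)\circ \kappa(\mathcal H_Y\otimes A)(X)=(\mathscr C(X,Y)\otimes \kappa(\mathcal H_X\otimes A)(X))\circ (\delta^X_Y\otimes A)(X)$. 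Applying $\mathscr C(X,Y)\otimes \epsilon_X\otimes A\otimes A$ to both sides and using the counit axiom from \eqref{coalg2cd}, which makes $(\mathscr C(X,Y)\otimes \epsilon_X)\circ \delta_{XXY}=id$, turns the left-hand side into $\kappa(\mathcal H_Y\otimes A)(X)$ and the right-hand side, via the description of $\lambda$ in \eqref{431cx}, into the clean formula $\kappa(\mathcal H_Y\otimes A)(X)(f\otimes 1)=f_{X1}\otimes \lambda^{X1}(f_{X2})\otimes \lambda^{X2}(f_{X2})$. With $\lambda=0$ the right-hand side vanishes, and since $\kappa(\mathcal H_Y\otimes A)(X)$ is right $A$-linear while $\mathscr C(X,Y)\otimes A$ is freely generated over $A$ by $\mathscr C(X,Y)\otimes 1$ (Lemma \ref{L2.2}), I conclude $\kappa(\mathcal H_Y\otimes A)=0$ for all $X,Y$.

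For the second stage, I would propagate this to an arbitrary $\mathcal N\in Com^{\mathscr C}$. For each $Y$ and each $\theta\in Com^{\mathscr C}(\mathcal N,\mathcal H_Y)$, applying $F_\psi$ gives a morphism $\theta\otimes A:\mathcal N\otimes A\longrightarrow \mathcal H_Y\otimes A$ in $Com_A^{\mathscr C}(\psi)$, and naturality of $\kappa$ together with $\kappa(\mathcal H_Y\otimes A)=0$ forces $(\theta(X)\otimes A\otimes A)\circ \kappa(\mathcal N\otimes A)(X)=0$ for every $X$. By the isomorphism \eqref{Mc111}, the maps $\theta(X)$ arising from functionals $\mathcal N(X)\to K$ (taking $Y=X$) already separate the points of $\mathcal N(X)$, so the family $\{\theta(X)\}$ is jointly monic; since $-\otimes A\otimes A$ is exact on $Vect_K$ and preserves joint monicity, I obtain $\kappa(\mathcal N\otimes A)(X)=0$, hence $\kappa(\mathcal N\otimes A)=0$.

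Finally, for an arbitrary $\mathcal M\in Com_A^{\mathscr C}(\psi)$, I would invoke the counit $e_{\mathcal M}:\mathcal M\otimes A=F_\psi G_\psi(\mathcal M)\longrightarrow \mathcal M$, $m\otimes a\mapsto ma$, of the adjoint pair $(F_\psi,G_\psi)$ from Proposition \ref{P2.6}. This is an epimorphism, as it is already surjective on each $\mathcal M(X)$ via $a=1$, and $\mathcal M\otimes A=(G_\psi\mathcal M)\otimes A$ is a free object, so $\kappa(\mathcal M\otimes A)=0$ by the second stage. Naturality of $\kappa$ then gives $\kappa(\mathcal M)\circ e_{\mathcal M}=(e_{\mathcal M}\otimes A)\circ \kappa(\mathcal M\otimes A)=0$, and since $e_{\mathcal M}$ is epic, $\kappa(\mathcal M)=0$; thus $\kappa=0$ and $\beta$ is injective. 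The main obstacle is the first stage: the relations \eqref{436we} and \eqref{438we} pin down $\kappa(\mathcal H_Y\otimes A)(X)$ only after applying the composite $(A\otimes \psi_{XY})\circ (\psi_{XY}\otimes A)$, which is not invertible in general, so one cannot read $\kappa(\mathcal H_Y\otimes A)$ off them directly. The remedy is to exploit the same commutative diagram one step earlier, before this $\psi$-composite is applied, where the counit of $\mathscr C$ turns the relevant vertical map into a retraction and lets one recover $\kappa(\mathcal H_Y\otimes A)$ itself in terms of $\lambda$.
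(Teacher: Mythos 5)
Your proof is correct and follows essentially the same route as the paper's: first kill $\kappa$ on the objects $\mathcal H_Y\otimes A$ using the diagram built from $\delta^X_Y\otimes A$ together with Lemma \ref{Lem4.77}(b) (working before the non-invertible $\psi$-composite is applied), then on all $\mathcal N\otimes A$ via \eqref{Mc111}, and finally on arbitrary $\mathcal M$ via the epimorphism $\mathcal M\otimes A\longrightarrow\mathcal M$ given by the action. The only cosmetic difference is that you phrase the second stage through joint monicity of the separating functionals, where the paper argues by contradiction with a single functional $h$.
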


\begin{proof}
Suppose we have
$0\ne\kappa\in W$ such that $\lambda:=\beta(\kappa)=0$, i.e., $\lambda^X=0$ for each $X\in Ob(\mathscr C)$. From the proof of Lemma \ref{Lem4.87}, we have the commutative diagram
\begin{equation}\label{445xym}
\xymatrix{
 \mathscr C(X,Y) \ar[d]_{\mathscr C(X,Y)\otimes u_A}&&& \\
\mathscr C(X,Y)\otimes A \ar[rrr]^{\kappa(\mathcal H_Y\otimes A)(X)}\ar[d]_{(\delta^X_Y\otimes A)(X)}&&& \mathscr C(X,Y)\otimes A\otimes A \ar[d]_{(\delta^X_Y\otimes A)(X)\otimes A}\ar[rrd]^{id} \\
\mathscr C(X,Y)\otimes \mathscr C(X,X)\otimes A \ar[rrr]^{\kappa( \mathscr C(X,Y)\otimes \mathcal H_X\otimes A)(X)}_{=\mathscr C(X,Y)\otimes \kappa( \mathcal H_X\otimes A)(X)}&&& \mathscr C(X,Y)\otimes \mathscr C(X,X) \otimes A\otimes A \ar[rr]^{\qquad\mathscr C(X,Y)\otimes \epsilon_X\otimes A\otimes A}&& \mathscr C(X,Y)
\otimes A\otimes A  \\
}
\end{equation} From \eqref{445xym} it follows that for any $f\in \mathscr C(X,Y)$, we have $\kappa(\mathcal H_Y\otimes A)(X)(f\otimes 1)=f_{X1}\otimes \lambda^X(f_{X2})=0$. Additionally since $\kappa(\mathcal H_Y\otimes A)(X)$ is right $A$-linear, it follows that $\kappa(\mathcal H_Y\otimes A)(X)(f\otimes a)=0$ for any $a\in A$. Hence, $\kappa(\mathcal H_Y\otimes A)(X)=0$ for $X$, $Y\in Ob(\mathscr C)$. 

\smallskip
Suppose there exists  $\mathcal N\in Com^{\mathscr C}$ such that  $\kappa(\mathcal N\otimes A)\ne 0$. Then, there is $X\in Ob(\mathscr C)$ such that $0\ne \kappa(\mathcal N\otimes A)(X): \mathcal N(X)\otimes A\longrightarrow \mathcal N(X)\otimes A\otimes A$. So we can choose a linear map $h:\mathcal N(X)\longrightarrow K$ such that $(h\otimes 
A\otimes A)\circ  \kappa(\mathcal N\otimes A)(X)
\ne 0$.  By \eqref{Mc111}, we have $\phi\in Com^{\mathscr C}(
\mathcal N,\mathcal H_X)\cong Vect_K(\mathcal N(X),K)$ corresponding to $h$ and we have $h=\epsilon_X\circ \phi(X)$. This gives us the following commutative diagram
\begin{equation}\label{445fv}
\xymatrix{
\mathcal N(X)\otimes A\ar[rr]^{\kappa(\mathcal N\otimes A)(X)} \ar[d]_{\phi(X)\otimes A}&& \mathcal N(X)\otimes A\otimes A\ar[d]^{\phi(X)\otimes A\otimes A}\ar[rrd]^{h\otimes A\otimes A}&& \\
\mathcal H_X(X)\otimes A \ar[rr]_{\kappa(\mathcal H_X\otimes A)(X)=0\qquad\qquad\quad}&& \mathcal H_X(X)\otimes A\otimes A=\mathscr C(X,X)\otimes A\otimes A\ar[rr]_{\qquad\quad\qquad \epsilon_X\otimes A\otimes A}&&  A\otimes A\\
}
\end{equation} From \eqref{445fv}, we get $(h\otimes 
A\otimes A)\circ  \kappa(\mathcal N\otimes A)(X)
= 0$ which is a contradiction. It follows that $\kappa(\mathcal N\otimes A)=0$ for every $\mathcal N\in Com^{\mathscr C}$. 

\smallskip
Finally, we consider $(\mathcal M,\rho^{\mathcal M},\mu^{\mathcal M})\in Com^{\mathscr C}_A(\psi)$. By treating $\mathcal 
M$ as an object of $Com^{\mathscr C}$, we have $\mathcal M\otimes A\in Com^{\mathscr C}_A(\psi)$ as in Lemma \ref{L2.2}. It may be easily verified
that the maps $\nu(X):\mathcal M(X)\otimes A\xrightarrow{\mu_X}\mathcal M(X)$ induce a morphism in $Com^{\mathscr C}_A(\psi)$. The natural transformation 
$\kappa$ now gives a commutative diagram
\begin{equation}\label{446cd}
\begin{CD}
\mathcal M(X)\otimes A @>\kappa(\mathcal M\otimes A)(X)=0>> \mathcal M(X)\otimes A\otimes A\\
@V\nu(X)VV @VV\nu(X)\otimes AV \\
\mathcal M(X) @>\kappa(\mathcal M)(X)>> \mathcal M(X)\otimes A\\
\end{CD}
\end{equation} Since each $\nu(X):\mathcal M(X)\otimes A\xrightarrow{\mu_X}\mathcal M(X)$ is an epimorphism, it is now clear that
$\kappa(\mathcal M)(X)=0$. This proves the result.

\end{proof}

\begin{thm}\label{P4.9}
The morphisms $\alpha:W_1\longrightarrow W$ and $\beta: W\longrightarrow W_1$ are mutually inverse isomorphisms. In particular, 
$W_1\cong  Nat(id_{Com_A^{\mathscr C}(\psi)},F_\psi G_\psi )$.
\end{thm}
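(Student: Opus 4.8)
The plan is to follow the same two-step strategy used in the proof of Proposition \ref{P4.4}: first establish the identity $\beta\circ\alpha=id_{W_1}$ by a direct computation on the representing objects $\mathcal H_X$, and then combine this with Lemma \ref{LRG4}, which says that $\beta$ is a monomorphism, to deduce that both composites are the identity. The main work of checking that $\alpha$ and $\beta$ actually land in the correct spaces $W$ and $W_1$ has already been carried out in Lemma \ref{Lem4.6} and Lemma \ref{Lem4.87}, so only these two remaining points are needed here.

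For the identity $\beta\circ\alpha=id_{W_1}$, I would fix $\lambda\in W_1$, put $\kappa:=\alpha(\lambda)\in W$ and $\lambda':=\beta(\kappa)$, and compute $\lambda'^X(g)$ for $g\in\mathscr C(X,X)$ directly from the definitions. Evaluating $\kappa(\mathcal H_X\otimes A)(X)$ on $g\otimes 1$ via formula \eqref{44rc} requires the coaction of $\mathcal H_X\otimes A$ on $g\otimes 1$, which by Lemma \ref{L2.2} is $g_{X0}\otimes 1_\psi\otimes(g_{X1})^\psi$, where $g_{X0}\otimes g_{X1}=\delta_{XXX}(g)$. Here the entwining unit axiom $\psi_{XX}(g_{X1}\otimes 1)=1\otimes g_{X1}$ collapses this to $g_{X0}\otimes 1\otimes g_{X1}$, and the $A$-action $(g_{X0}\otimes 1)\cdot b=g_{X0}\otimes b$ on $\mathcal H_X\otimes A$ then yields
\begin{equation*}
\kappa(\mathcal H_X\otimes A)(X)(g\otimes 1)=g_{X0}\otimes\lambda^{X1}(g_{X1})\otimes\lambda^{X2}(g_{X1})\in\mathcal H_X(X)\otimes A\otimes A.
\end{equation*}
Applying $\epsilon_X\otimes A\otimes A$ gives $\lambda'^X(g)=\epsilon_X(g_{X0})\lambda^{X1}(g_{X1})\otimes\lambda^{X2}(g_{X1})$. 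Since the coaction of $\mathcal H_X$ at $X$ is literally the comultiplication $\delta_{XXX}$, the counit axiom of $\mathscr C$ supplies $\epsilon_X(g_{X0})g_{X1}=g$, and the linearity of $\lambda^X$ then gives $\lambda'^X(g)=\lambda^X\big(\epsilon_X(g_{X0})g_{X1}\big)=\lambda^X(g)$. Hence $\lambda'=\lambda$, so $\beta\circ\alpha=id_{W_1}$.

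Finally, since $\beta\circ\alpha=id_{W_1}$ exhibits $\alpha$ as a section of $\beta$, the map $\beta$ is surjective; combined with Lemma \ref{LRG4}, which gives injectivity, $\beta$ is a linear isomorphism. Then $\alpha=\beta^{-1}\circ(\beta\circ\alpha)=\beta^{-1}$, so $\alpha$ and $\beta$ are mutually inverse. This produces the asserted isomorphism $W_1\cong Nat(id_{Com_A^{\mathscr C}(\psi)},F_\psi G_\psi)$.

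The only delicate point I anticipate is the bookkeeping of Sweedler indices together with the correct invocation of the entwining and counit axioms. In particular, the step $\epsilon_X(g_{X0})g_{X1}=g$ is special to $\mathcal H_X$: its coaction at $X$ is exactly $\delta_{XXX}$, so the counit condition may be applied to the \emph{first} tensor factor, a move not available for a general comodule, where $g_{X0}$ need not even lie in the domain of $\epsilon_X$ and only $g_{X0}\epsilon_X(g_{X1})=g$ holds. Beyond this, the argument is a routine transcription of the pattern already seen in Proposition \ref{P4.4}.
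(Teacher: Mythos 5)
Your proposal is correct and follows essentially the same route as the paper: both establish $\beta\circ\alpha=id_{W_1}$ by evaluating $\kappa(\mathcal H_X\otimes A)(X)$ on $f\otimes 1$ (your computation, including the use of the unit axiom $\psi_{XX}(f_{X2}\otimes 1)=1\otimes f_{X2}$ and the counit identity on the first Sweedler factor, matches the paper's equation \eqref{4.44rt} exactly), and then conclude via the injectivity of $\beta$ from Lemma \ref{LRG4}. Your closing remark correctly isolates the one delicate point, namely that applying $\epsilon_X$ to the first tensor factor is legitimate here because the coaction of $\mathcal H_X$ at $X$ is $\delta_{XXX}$, for which both counit axioms of $\mathscr C$ are available.
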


\begin{proof}
Let $\lambda\in W_1$. We set $\kappa:=\alpha(\lambda)$ and $\lambda':=\beta(\kappa)$. For $X\in Ob(\mathscr C)$ and $f\in \mathscr C(X,X)$, we have
\begin{equation}\label{4.44rt}
\begin{array}{ll}
\lambda'^X(f) &=(\epsilon_X\otimes A\otimes A)( \kappa(\mathcal H_X\otimes A)(X)(f\otimes 1))\\
&=(\epsilon_X\otimes A\otimes A)(f_{X1}\otimes \lambda^{X1}(f_{X2})\otimes \lambda^{X2}(f_{X2}))\\
&=\epsilon_X(f_{X1})\lambda^{X1}(f_{X2})\otimes \lambda^{X2}(f_{X2})=\epsilon_X(f_{X1})\lambda^{X}(f_{X2}) =\lambda^X(f)\\
\end{array}
\end{equation} Hence, we see that $\beta\circ\alpha=id$. By Lemma \ref{LRG4}, we know that $\beta$ is a monomorphism, and the result follows. 

\end{proof}

\begin{Thm}\label{T4.11hm}
Let $(\mathscr C,A,\psi)$ be an entwining structure. Then, the functor $G_\psi :Com^{\mathscr C}_A(\psi)\longrightarrow Com^{\mathscr C}$   is separable if and only if there exists $\lambda \in W_1$ such that
\begin{equation}\label{448cond}
\lambda^{X1}(f)\lambda^{X2}(f)=\epsilon_X(f)\cdot 1 \in A\qquad \forall \textrm{ }f\in \mathscr C(X,X), \textrm{ }X\in Ob(\mathscr C)
\end{equation}
\end{Thm}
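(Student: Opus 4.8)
The plan is to run the argument dual to the one used for the separability of $F_\psi$ in Theorem \ref{T4.5hm}. Since $G_\psi$ is the \emph{right} adjoint in the pair $(F_\psi,G_\psi)$, the relevant instance of \cite[Proposition 1.1]{uni} says that $G_\psi$ is separable if and only if the counit $\varepsilon:F_\psi G_\psi\longrightarrow id_{Com_A^{\mathscr C}(\psi)}$ \emph{cosplits}, i.e. there is a natural transformation $\kappa\in W=Nat(id_{Com_A^{\mathscr C}(\psi)},F_\psi G_\psi)$ with $\varepsilon\circ\kappa=id$. First I would pin down the counit explicitly: feeding $\mathcal M=G_\psi(\mathcal N)$ and the identity morphism into the adjunction isomorphism of Proposition \ref{P2.6} (whose inverse sends $\zeta$ to $(m\otimes a)\mapsto\zeta(X)(m)a$) shows that $\varepsilon(\mathcal N)(X):\mathcal N(X)\otimes A\longrightarrow \mathcal N(X)$ is simply the action map $n\otimes a\mapsto na$. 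The decisive tool is then Theorem \ref{P4.9}: the isomorphism $W_1\cong W$ lets me replace the search for a cosplitting $\kappa$ by the search for a $\lambda\in W_1$, writing $\kappa=\alpha(\lambda)$.

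For the ``if'' direction, suppose $\lambda\in W_1$ satisfies \eqref{448cond} and set $\kappa:=\alpha(\lambda)$. Using the explicit formula from Lemma \ref{Lem4.6} and the description of $\varepsilon$ above, I compute for $\mathcal M\in Com_A^{\mathscr C}(\psi)$ and $m\in\mathcal M(X)$
\[
(\varepsilon(\mathcal M)(X)\circ\kappa(\mathcal M)(X))(m)=\big(m_{X0}\lambda^{X1}(m_{X1})\big)\lambda^{X2}(m_{X1})=m_{X0}\big(\lambda^{X1}(m_{X1})\lambda^{X2}(m_{X1})\big),
\]
where the last equality is associativity of the right $A$-action. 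Applying \eqref{448cond} with $m_{X1}\in\mathscr C(X,X)$ collapses the bracket to $\epsilon_X(m_{X1})\cdot 1$, and the counit identity $m_{X0}\epsilon_X(m_{X1})=m$ from \eqref{not2.5} yields $\varepsilon\circ\kappa=id$. Hence $G_\psi$ is separable.

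For the converse, assume $G_\psi$ is separable, pick $\kappa\in W$ with $\varepsilon\circ\kappa=id$, and set $\lambda:=\beta(\kappa)$. The idea is to test the cosplitting identity on the object $\mathcal H_X\otimes A\in Com_A^{\mathscr C}(\psi)$ (available by Lemma \ref{L2.2}) at $X$ and the element $f\otimes 1$. Writing $\kappa(\mathcal H_X\otimes A)(X)(f\otimes 1)=\sum (h^{(i)}\otimes a^{(i)})\otimes b^{(i)}\in\mathscr C(X,X)\otimes A\otimes A$ and recalling that $\varepsilon(\mathcal H_X\otimes A)(X)$ sends $(h\otimes a)\otimes b$ to $h\otimes ab$, the identity $\varepsilon\circ\kappa=id$ reads $\sum h^{(i)}\otimes a^{(i)}b^{(i)}=f\otimes 1$. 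On the other hand, the definition \eqref{430bet} of $\beta$ gives $\lambda^X(f)=\sum\epsilon_X(h^{(i)})a^{(i)}\otimes b^{(i)}$, so that $\lambda^{X1}(f)\lambda^{X2}(f)=\sum\epsilon_X(h^{(i)})a^{(i)}b^{(i)}$; applying $\epsilon_X\otimes id_A$ to the previous identity shows this equals $\epsilon_X(f)\cdot 1$, which is precisely \eqref{448cond}.

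The only genuinely delicate point is bookkeeping the two distinct roles played by $A$ in $\mathcal H_X\otimes A$: the tensor factor $A$ that is acted upon (through which the counit multiplies) versus the coaction structure twisted by $\psi$. The hard part is thus to be sure that the counit is the bare action map and not some $\psi$-twisted variant; once $\varepsilon$ is correctly identified, it matches the $\beta$-formula term by term and the remaining manipulations are routine, the whole argument being parallel to the proof of Theorem \ref{T4.5hm} with units and splittings replaced by counits and cosplittings.
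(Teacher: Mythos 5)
Your proposal is correct and takes essentially the same route as the paper's proof: both identify the counit $\omega'$ of $(F_\psi,G_\psi)$ as the bare action map, invoke \cite[Proposition 1.1]{uni} to reduce separability of the right adjoint $G_\psi$ to a cosplitting $\kappa$ of $\omega'$, prove the ``if'' direction by the computation $m_{X0}\lambda^{X1}(m_{X1})\lambda^{X2}(m_{X1})=m_{X0}\epsilon_X(m_{X1})=m$ using \eqref{448cond} and \eqref{not2.5}, and obtain the converse by setting $\lambda:=\beta(\kappa)$ and evaluating the cosplitting identity on $\mathcal H_X\otimes A$ at $f\otimes 1$. Your explicit summation bookkeeping is simply an unwound form of the paper's composite $(\epsilon_X\otimes A)\circ\omega'(\mathcal H_X\otimes A)(X)\circ\kappa(\mathcal H_X\otimes A)(X)\circ(\mathscr C(X,X)\otimes u_A)$, so no genuinely new ideas or gaps are involved.
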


\begin{proof}
We see that the counit $\omega':F_\psi G_\psi \longrightarrow id_{Com_A^{\mathscr C}(\psi)}$  of the adjunction $(F_\psi ,G_\psi )$ is given by $\omega'(\mathcal M)(X):\mathcal M(X)\otimes A\xrightarrow{\textrm{ } m\otimes a\mapsto ma\textrm{ }}\mathcal M(X)$. Applying \cite[Proposition 1.1]{uni}, we know that the right
adjoint $G_\psi $ is separable if and only if there exists $\kappa\in W$ such that $\omega'\circ\kappa=id$. 

\smallskip
First, we suppose there exists $\lambda\in W_1$ satisfying the condition in \eqref{448cond}. We set $\kappa:=\alpha(\lambda)$. For $\mathcal M
\in Com_A^{\mathscr C}(\psi)$, $X\in Ob(\mathscr C)$ and $m\in \mathcal M(X)$, we see that
\begin{equation}
(\omega'(\mathcal M)(X)\circ \kappa(\mathcal M)(X))(m)=\omega'(\mathcal M)(X)(m_{X0}\lambda^{X1}(m_{X1})\otimes \lambda^{X2}(m_{X1}))=m_{X0}\lambda^{X1}(m_{X1}) \lambda^{X2}(m_{X1})=m_{X0}\epsilon_X(m_{X1})=m
\end{equation} Hence,  $\omega'\circ\kappa=id$ and $G_\psi $ is separable.

\smallskip
Conversely, suppose that $G_\psi $ is separable, i.e., there exists a natural transformation $\kappa\in W$ such that  $\omega'\circ\kappa=id$. We set $
\lambda:=\beta(\kappa)$. From the definition in \eqref{430bet}, we now see that for $f\in \mathscr C(X,X)$, we have
\begin{equation}
\lambda^{X1}(f)\lambda^{X2}(f)=((\epsilon_X\otimes A)\circ (\omega'(\mathcal H_X\otimes A)(X))\circ (\kappa(\mathcal H_X\otimes A)(X))\circ (\mathscr C(X,X)\otimes u_A) )(f)=\epsilon_X(f)\cdot 1
\end{equation} This proves the result.
\end{proof}

 \section{Separability properties for entwined contramodules}
We continue with the entwining structure $(\mathscr C,A,\psi)$. In Proposition \ref{P3.7}, we have shown that the forgetful functor  $ S_\psi: {_A^{\mathscr C}}Ctr(\psi)\longrightarrow {^\mathscr C}Ctr$ has a right adjoint $T_\psi$. In other words, we have natural isomorphisms
\begin{equation}\label{5.1iso}
{^\mathscr C}Ctr(S_\psi(\matholdcal M),\matholdcal N)={^\mathscr C}Ctr(\matholdcal M,\matholdcal N)\cong {_A^{\mathscr C}}Ctr(\psi)(\matholdcal M,(A,\matholdcal N))={_A^{\mathscr C}}Ctr(\psi)(\matholdcal M,T_\psi(\matholdcal N))
\end{equation} for $\matholdcal M\in {_A^{\mathscr C}}Ctr(\psi)$ and $\matholdcal N\in {^\mathscr C}Ctr$. In this section, we will give conditions for the functors $S_\psi$ and $T_\psi$ to be separable.

\smallskip

 In order to work with contramodules, we will use the notation $ (L_k,L_{k-1},...,L_1):=(L_k,(L_{k-1},(...(L_2,L_1))))$ defined in \eqref{ntn3}.  We will often need to replace a vector space $U$ by the corresponding representable functor $\widetilde{U}:Vect_K\xrightarrow{\quad (U,\_\_)\quad} Vect_K$. If $f:U\longrightarrow U'$ is a morphism in $Vect_K$, we will denote by $\widetilde{f}:\widetilde{U'}\longrightarrow \widetilde{U}$ the induced morphism of representable functors. If $U$, $U'\in Vect_K$, we notice that the composition 
 $\widetilde{U'}\ast\widetilde{U}:=(U'(U,\_\_))=(U\otimes U',\_\_)=\widetilde{U\otimes U'}$. For instance, $\delta_{XYZ}:\mathscr C(X,Z)\longrightarrow \mathscr C(Y,Z)\otimes \mathscr C(X,Y)$ will often be expressed as 
 \begin{equation}
 \widetilde{\delta}_{XYZ}:\widetilde{\mathscr C(X,Y)}\ast \widetilde{\mathscr C(Y,Z)}\longrightarrow \widetilde{\mathscr C(X,Z)}
 \end{equation} Similarly, the morphisms $\psi_{XY}$ in the entwining structure $\psi$ will often  be replaced by $\widetilde{\psi}_{XY}:\widetilde{\mathscr C(X,Y)}
 \ast \widetilde{A}\longrightarrow \widetilde{A}\ast \widetilde{\mathscr C(X,Y)}$. An element $\sigma\in V_1$ as defined in \eqref{4.2z} can now be expressed as a collection
 of natural transformations
 \begin{equation}\label{5.2z}
\sigma=\{\widetilde{\sigma}_X:id_{Vect_K}\longrightarrow \widetilde{A}\ast \widetilde{\mathscr C(X,X)} \}_{X\in Ob(\mathscr C)}
\end{equation}
 such that 
\begin{equation}\label{5.3zz}
\begin{CD}
(\mathscr C(X,Y),\_\_) =\widetilde{\mathscr C(X,Y)}@>(\widetilde A\ast \widetilde\delta_{XYY})\circ (\widetilde{\psi}_{XY}\ast \widetilde{\mathscr C(Y,Y)})\circ (\widetilde{\mathscr C(X,Y)}\ast \widetilde\sigma_Y)\textrm{ }=\textrm{ }
 (\widetilde A\ast\widetilde\delta_{XXY})\circ (\widetilde\sigma_X\ast \widetilde{\mathscr C(X,Y)})>
(A,\delta_{XYY},\_\_)\circ (\psi_{XY},\mathscr C(Y,Y),\_\_)\circ (\mathscr C(X,Y), \sigma_Y,\_\_)= (A,\delta_{XXY},\_\_)\circ ( \sigma_X,\mathscr C(X,Y),\_\_)> \widetilde A\ast  \widetilde{\mathscr C(X,Y)}=(A,(\mathscr C(X,Y),\_\_))\\
 \end{CD}
\end{equation} We also set $V':=Nat(id_{{^\mathscr C}Ctr},S_\psi T_\psi)$. We will now show that $V'$ and $V_1$ are isomorphic.

\begin{lem}\label{L5.1}

 There is a linear map $\alpha:V_1\longrightarrow V'$ which associates to $\sigma\in V_1$ the natural transformation
 $\tau:=\alpha(\sigma)\in Nat(id_{{^\mathscr C}Ctr},S_\psi T_\psi)$ given by setting
 \begin{equation}\label{44r5}
 \tau(\matholdcal M)(X):\matholdcal M(X) \xrightarrow{\widetilde\sigma_X(\matholdcal M(X))=(\sigma_X,\matholdcal M(X))} (\widetilde{A}\ast \widetilde{\mathscr C(X,X)})(\matholdcal M(X))=(A,\mathscr C(X,X),\matholdcal M(X))
 \xrightarrow{(A,\pi_{XX})=\widetilde A(\pi_{XX})}   (A, \matholdcal M(X)) 
 \end{equation} for any $(\matholdcal M,\pi^{\matholdcal M}) \in   {^\mathscr C}Ctr$  and $X\in Ob(\mathscr C)$. 
 \end{lem}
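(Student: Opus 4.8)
The plan is to verify the three facts that together make $\alpha$ a well-defined $K$-linear map into $V'=Nat(id_{{^\mathscr C}Ctr},S_\psi T_\psi)$: that $\sigma\mapsto\tau$ is $K$-linear, that for each fixed $\matholdcal M$ the family $\tau(\matholdcal M)=\{\tau(\matholdcal M)(X)\}_X$ is a morphism of $\mathscr C$-contramodules $\matholdcal M\longrightarrow S_\psi T_\psi(\matholdcal M)$, and that this family is natural in $\matholdcal M$. Recall that, $S_\psi$ being the forgetful functor, $S_\psi T_\psi(\matholdcal M)$ is the contramodule with $S_\psi T_\psi(\matholdcal M)(X)=(A,\matholdcal M(X))$ and structure maps given by \eqref{45k}, namely $\pi^{T_\psi\matholdcal M}_{XY}=(A,\pi_{XY})\circ(\psi_{XY},\matholdcal M(Y))$. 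Linearity of $\alpha$ is immediate from \eqref{44r5}, since $\tau(\matholdcal M)(X)=(A,\pi_{XX})\circ(\sigma_X,\matholdcal M(X))$ depends on $\sigma$ only through the $K$-linear assignment $\sigma_X\mapsto\widetilde\sigma_X$.

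Naturality in $\matholdcal M$ I would dispatch first, as it is routine. Given a morphism $\phi:\matholdcal M\longrightarrow\matholdcal N$ in ${^\mathscr C}Ctr$, the required square $(A,\phi(X))\circ\tau(\matholdcal M)(X)=\tau(\matholdcal N)(X)\circ\phi(X)$ follows by combining the naturality of $\widetilde\sigma_X$, viewed as a natural transformation of endofunctors of $Vect_K$, applied to $\phi(X):\matholdcal M(X)\longrightarrow\matholdcal N(X)$, with the identity $\phi(X)\circ\pi^{\matholdcal M}_{XX}=\pi^{\matholdcal N}_{XX}\circ(\mathscr C(X,X),\phi(X))$ expressing that $\phi$ is a contramodule morphism. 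Since $S_\psi T_\psi(\phi)$ acts as $(A,\phi(X))$ on the $X$-component, this is exactly the naturality of $\tau$.

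The heart of the proof, and the step I expect to be the main obstacle, is that each $\tau(\matholdcal M)$ respects the contramodule structure, i.e. that $\tau(\matholdcal M)(X)\circ\pi^{\matholdcal M}_{XY}=\pi^{T_\psi\matholdcal M}_{XY}\circ(\mathscr C(X,Y),\tau(\matholdcal M)(Y))$ for all $X,Y$. I would prove this diagrammatically in the representable-functor language fixed before the lemma, by pasting together four commuting pieces. First, expanding $\tau(\matholdcal M)(X)=(A,\pi_{XX})\circ(\sigma_X,\matholdcal M(X))$ and using the naturality of $\widetilde\sigma_X$ on the map $\pi^{\matholdcal M}_{XY}$ lets me slide $\pi_{XY}$ inside the $\sigma_X$. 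Second, the coassociativity square \eqref{dg3.4} for $\pi^{\matholdcal M}$, taken in the instance $(X,X,Y)$, rewrites $\pi_{XX}\circ(\mathscr C(X,X),\pi_{XY})$ as $\pi_{XY}\circ(\delta_{XXY},\matholdcal M(Y))$; the composite $\tau(\matholdcal M)(X)\circ\pi^{\matholdcal M}_{XY}$ then carries, after the leading $(A,\pi_{XY})$, the factor $(\widetilde A\ast\widetilde\delta_{XXY})\circ(\widetilde\sigma_X\ast\widetilde{\mathscr C(X,Y)})$ evaluated at $\matholdcal M(Y)$, which is precisely the right-hand side of the defining relation \eqref{5.3zz} of $V_1$. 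Third, I invoke \eqref{5.3zz} to replace it by its left-hand side $(\widetilde A\ast\widetilde\delta_{XYY})\circ(\widetilde\psi_{XY}\ast\widetilde{\mathscr C(Y,Y)})\circ(\widetilde{\mathscr C(X,Y)}\ast\widetilde\sigma_Y)$, which brings in $\sigma_Y$, $\psi_{XY}$ and $\delta_{XYY}$ on the side matching $\tau(\matholdcal M)(Y)$.

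Finally, the coassociativity square \eqref{dg3.4} in the instance $(X,Y,Y)$ rewrites $\pi_{XY}\circ(\delta_{XYY},\matholdcal M(Y))$ as $\pi_{XY}\circ(\mathscr C(X,Y),\pi_{YY})$, reconstituting the $\pi_{YY}$ hidden inside $\tau(\matholdcal M)(Y)=(A,\pi_{YY})\circ(\sigma_Y,\matholdcal M(Y))$. What remains is the identity $(A,(\mathscr C(X,Y),\pi_{YY}))\circ(\psi_{XY},\mathscr C(Y,Y),\matholdcal M(Y))=(\psi_{XY},\matholdcal M(Y))\circ(\mathscr C(X,Y),(A,\pi_{YY}))$, which holds by pure bifunctoriality of the internal hom: $\psi_{XY}$ acts on the outer $\mathscr C(X,Y)\otimes A$ slots while $\pi_{YY}$ collapses the inner $\mathscr C(Y,Y)$ against $\matholdcal M(Y)$, so the two operations occupy disjoint tensor factors and commute, using no entwining axiom at all. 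Assembling these pieces reproduces $\pi^{T_\psi\matholdcal M}_{XY}\circ(\mathscr C(X,Y),\tau(\matholdcal M)(Y))$ and closes the argument. The only genuine care needed throughout is the hom-tensor reassociation bookkeeping, which the conventions \eqref{ntn3} permit us to suppress, together with ensuring that each instance of \eqref{5.3zz} and \eqref{dg3.4} is taken at the correct triple of objects and evaluated at $\matholdcal M(Y)$.
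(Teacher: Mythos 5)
Your proposal is correct and follows essentially the same route as the paper: the paper's proof of Lemma \ref{L5.1} verifies the contramodule-morphism condition by exactly the same chain of four ingredients---hom-bifunctoriality, the coassociativity instances $(X,X,Y)$ and $(X,Y,Y)$ of \eqref{dg3.4}, and the defining relation \eqref{5.3zz} of $V_1$---merely written starting from the composite $\pi^{T_\psi\matholdcal M}_{XY}\circ(\mathscr C(X,Y),\tau(\matholdcal M)(Y))$ rather than from $\tau(\matholdcal M)(X)\circ\pi_{XY}$, and it likewise dispatches naturality in $\matholdcal M$ (and implicitly the linearity of $\alpha$) as routine. There are no gaps in your argument.
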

 
 \begin{proof}
 We start by showing that $\tau(\matholdcal M):\matholdcal M\longrightarrow (A,\matholdcal M)$ is indeed a morphism in ${^\mathscr C}Ctr$. In other words, we have to show that the 
 following diagram commutes for $X$, $Y\in Ob(\mathscr C)$:
 \begin{equation}
 \xymatrix{
 (\mathscr C(X,Y),\matholdcal M(Y)) \ar[rrr]^{( \widetilde{\mathscr C(X,Y)}\ast \widetilde\sigma_Y)(\matholdcal M(Y))\qquad}_{=(\mathscr C(X,Y),\sigma_Y,\matholdcal M(Y))\qquad}\ar[dd]_{\pi_{XY}}&&& (\mathscr C(X,Y),A,\mathscr C(Y,Y),\matholdcal M(Y))\ar[rrr]^{(\mathscr C(X,Y),A,\pi_{YY})}\ar[d]^{(\psi_{XY},\mathscr C(Y,Y),\matholdcal M(Y))}
&&& (\mathscr C(X,Y),A,\matholdcal M(Y)) \ar[d]^{(\psi_{XY},\matholdcal M(Y))}\\
 &&& (A,\mathscr C(X,Y),\mathscr C(Y,Y),\matholdcal M(Y)) \ar[rrr]^{(A,\mathscr C(X,Y),\pi_{YY})} &&& (A,\mathscr C(X,Y),\matholdcal M(Y))\ar[d]^{(A,\pi_{XY})}\\
 \matholdcal M(X) \ar[rrr]^{\widetilde\sigma_X(\matholdcal M(X))}_{=(\sigma_X,\matholdcal M(X))}&&& (A,\mathscr C(X,X),\matholdcal M(X))\ar[rrr]^{(A,\pi_{XX})}&&& (A,\matholdcal M(X))\\
 }
 \end{equation} For this, we see that
 \begin{equation}
 \begin{array}{lr}
 (A,\pi_{XY})\circ (\psi_{XY},\matholdcal M(Y))\circ (\mathscr C(X,Y),A,\pi_{YY})\circ (\mathscr C(X,Y),\sigma_Y,\matholdcal M(Y)&\\
 = (A,\pi_{XY})\circ (A,\mathscr C(X,Y),\pi_{YY}) \circ (\psi_{XY},\mathscr C(Y,Y),\matholdcal M(Y)) \circ (\mathscr C(X,Y),\sigma_Y,\matholdcal M(Y))&\\
  = (A,\pi_{XY})\circ (A,\delta_{XYY},\matholdcal M(Y)) \circ (\psi_{XY},\mathscr C(Y,Y),\matholdcal M(Y)) \circ (\mathscr C(X,Y),\sigma_Y,\matholdcal M(Y))&
  \mbox{(by \eqref{dg3.4})}\\
   = (A,\pi_{XY})\circ (A,\delta_{XXY},\matholdcal M(Y)) \circ  (\sigma_X,\mathscr C(X,Y),\matholdcal M(Y))&
  \mbox{(by \eqref{5.3zz})}\\
   =(A,\pi_{XX})\circ (A,\mathscr C(X,X),\pi_{XY})\circ  \widetilde\sigma_X(\mathscr C(X,Y),\matholdcal M(Y))  &  \mbox{(by \eqref{dg3.4})}\\
 =(A,\pi_{XX})\circ \widetilde\sigma_X(\matholdcal M(X)) \circ \pi_{XY}=(A,\pi_{XX})\circ (\sigma_X,\matholdcal M(X))\circ \pi_{XY}&\\
 \end{array}
 \end{equation} If $\phi:\matholdcal M\longrightarrow \matholdcal N$ is a morphism in ${^\mathscr C}Ctr$, it is also clear that the maps in \eqref{44r5} are well behaved with 
 respect to the maps $\phi(X):\matholdcal M(X)\longrightarrow \matholdcal N(X)$ for $X\in Ob(\mathscr C)$. This proves the result. 
 \end{proof}

 Now suppose we have $\tau\in V'=Nat(id_{{^\mathscr C}Ctr},S_\psi T_\psi)$. For each $X\in Ob(\mathscr C)$, we can define a natural transformation  by setting for $U\in Vect_K$:
 \begin{equation}\label{5.8uu}
 \begin{CD}
 \widetilde\sigma_X(U): U @>(\epsilon_X,U)>> (\mathscr C(X,X),U) =\matholdcal H_X^U(X)@>\tau(\mathcal H_X^U)(X)>> (A,\matholdcal H_X^U)(X)=(A,\mathscr C(X,X),U)  
 \end{CD}
 \end{equation} where $\matholdcal H_X^U\in {^\mathscr C}Ctr$ as in \eqref{33tr}. Then \eqref{5.8uu} corresponds to a map $\sigma_X:\mathscr C(X,X)\otimes A\longrightarrow K$. Since $\tau$ is a natural transformation, we have a collection of maps 
 \begin{equation}\label{5yon}
 \tau(\matholdcal H_Y^U)(X): (\mathscr C(X,Y),U)\longrightarrow (A,\mathscr C(X,Y),U)
 \end{equation} that are functorial in $U$. By Yoneda lemma, it follows from \eqref{5yon} that we have maps
 $\zeta^\tau_{XY}:\mathscr C(X,Y)\otimes A\longrightarrow \mathscr C(X,Y)$ such that $ \tau(\matholdcal H_Y^U)(X)=(\zeta^\tau_{XY},U)$ for $U\in Vect_K$. From \eqref{5.8uu}, it is now clear that $\sigma_X=\epsilon_X\circ \zeta^\tau_{XX}$.

 \begin{lem}\label{L5.01b} For $L$,  $U\in Vect_K$, we have
 \begin{equation}\label{5.10eqk}
 \tau(\matholdcal H_Y^{(L,U)})(X)=(L\otimes \zeta^\tau_{XY},U): (L\otimes\mathscr C(X,Y),U)=(\mathscr C(X,Y),L,U) \longrightarrow (A,\mathscr C(X,Y),L,U) =(L\otimes \mathscr C(X,Y)\otimes A,U)
 \end{equation}
 
 \end{lem}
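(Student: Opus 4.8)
The plan is to deduce the formula directly from the Yoneda identity established in the paragraph immediately preceding the statement, and then to rewrite its special case via the hom-tensor adjunction. First I would recall \emph{why} that identity is valid for an arbitrary vector space, not just notationally: by the functor $\matholdcal H_Y:Vect_K\to{^\mathscr C}Ctr$ of Proposition \ref{P3.3}, every linear map $g:V\to V'$ induces a morphism $\matholdcal H_Y^V\to \matholdcal H_Y^{V'}$ in ${^\mathscr C}Ctr$, so naturality of $\tau\in Nat(id_{{^\mathscr C}Ctr},S_\psi T_\psi)$ forces the components $\tau(\matholdcal H_Y^V)(X):(\mathscr C(X,Y),V)\to(A,\mathscr C(X,Y),V)$ to be natural in $V$; Yoneda then yields a single map $\zeta^\tau_{XY}:\mathscr C(X,Y)\otimes A\to\mathscr C(X,Y)$ with $\tau(\matholdcal H_Y^V)(X)=(\zeta^\tau_{XY},V)$ for \emph{every} $V\in Vect_K$. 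Since $(L,U)$ is itself an object of $Vect_K$, substituting $V:=(L,U)$ gives at once
\[
\tau(\matholdcal H_Y^{(L,U)})(X)=(\zeta^\tau_{XY},(L,U)).
\]

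The remaining step is purely formal: I would identify $(\zeta^\tau_{XY},(L,U))$ with $(L\otimes\zeta^\tau_{XY},U)$ by invoking the naturality in $W$ of the adjunction isomorphism $(W,(L,U))\cong(L\otimes W,U)$ that underlies the notation \eqref{ntn3}. For a map $g:W'\to W$ the contravariant naturality square identifies $(g,(L,U)):(W,(L,U))\to(W',(L,U))$ with $(L\otimes g,U):(L\otimes W,U)\to(L\otimes W',U)$. Taking $g:=\zeta^\tau_{XY}:\mathscr C(X,Y)\otimes A\to\mathscr C(X,Y)$ converts $(\zeta^\tau_{XY},(L,U))$ into $(L\otimes\zeta^\tau_{XY},U)$ after the identifications $(\mathscr C(X,Y),(L,U))=(\mathscr C(X,Y),L,U)=(L\otimes\mathscr C(X,Y),U)$ and $(A,\mathscr C(X,Y),(L,U))=(A,\mathscr C(X,Y),L,U)=(L\otimes\mathscr C(X,Y)\otimes A,U)$, which is exactly the assertion of the statement.

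The only point requiring care — and the main, if mild, obstacle — is keeping the hom-tensor bookkeeping consistent, since the paper deliberately suppresses both the reordering of tensor factors and the associativity/adjunction isomorphisms in the notation $(L_k,\dots,L_1)$. I would therefore stress that the displayed equality is an equality of maps only after the stated identifications, and would verify that the factor $L$ is extracted on the correct side, so that the induced map reads $L\otimes\zeta^\tau_{XY}$; under the paper's convention $L\otimes\zeta^\tau_{XY}$ and $\zeta^\tau_{XY}\otimes L$ are not distinguished, so no genuine difficulty survives this check.
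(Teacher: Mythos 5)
Your proof is correct, and it takes a slightly different route from the paper's. The paper does not substitute $V=(L,U)$ into the Yoneda identity directly; instead, for each element $w\in L$ it uses the evaluation map $w:(L,U)\longrightarrow U$, applies the functor $\matholdcal H_Y$ to obtain a morphism $\matholdcal H_Y^{(L,U)}\longrightarrow \matholdcal H_Y^U$ in ${^\mathscr C}Ctr$, and invokes naturality of $\tau$ once more to get the commutative square \eqref{510cdp}; since the bottom row is the already-identified map $(\zeta^\tau_{XY},U)$ and the family of post-compositions $(A,\mathscr C(X,Y),w)$, as $w$ ranges over $L$, is jointly injective on maps into $(A,\mathscr C(X,Y),L,U)$, this pins down $\tau(\matholdcal H_Y^{(L,U)})(X)$ as $(L\otimes\zeta^\tau_{XY},U)$. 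Your argument replaces this element-by-element test by the contravariant naturality of the hom-tensor adjunction applied to $\zeta^\tau_{XY}$, and this is legitimate: $\matholdcal H_Y^{(L,U)}$ is literally the functor $\matholdcal H_Y$ of Proposition \ref{P3.3} evaluated at the vector space $(L,U)$, so the identity $\tau(\matholdcal H_Y^V)(X)=(\zeta^\tau_{XY},V)$ established in the paragraph preceding the lemma applies verbatim with $V=(L,U)$. The two proofs are equivalent in content — the paper's pointwise computation at each $w\in L$ is precisely an element-level verification of the adjunction-naturality square you cite — but yours makes explicit that the lemma is pure bookkeeping once the Yoneda identity is in hand, at the cost of having to check carefully (as you do) that the suppressed identifications $(\mathscr C(X,Y),L,U)=(L\otimes\mathscr C(X,Y),U)$ and $(A,\mathscr C(X,Y),L,U)=(L\otimes\mathscr C(X,Y)\otimes A,U)$ transport precomposition by $\zeta^\tau_{XY}$ to precomposition by $L\otimes\zeta^\tau_{XY}$; the paper's evaluation maps sidestep any abstract tracking of those identifications by reducing everything to the already-known case $\matholdcal H_Y^U$.
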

 
 \begin{proof}
It is clear that an  element $w\in L$ induces a morphism $w:(L,U)\longrightarrow U$ and hence a morphism $\matholdcal H_Y^{(L,U)}\longrightarrow \matholdcal H_Y^U$ in  ${^\mathscr C}Ctr$. 
The natural transformation $\tau$ now gives us a commutative diagram
\begin{equation}\label{510cdp}
\begin{CD}
\matholdcal H_Y^{(L,U)}(X)=(\mathscr C(X,Y),L,U) @>\tau(\matholdcal H_Y^{(L,U)})(X)>> (A,\matholdcal H_Y^{(L,U)}(X) )=(A,\mathscr C(X,Y),L,U) \\
@V (\mathscr C(X,Y),w)VV @VV (A,\mathscr C(X,Y),w)V \\
\matholdcal H_Y^U(X) = (\mathscr C(X,Y),U)@>\tau(\matholdcal H_Y^U)(X)>=(\zeta^\tau_{XY},U)> (A,\matholdcal H_Y^U(X))=(A,\mathscr C(X,Y),U)\\
\end{CD}
\end{equation} The result is now clear. 
 \end{proof}

 \begin{lem}\label{L5.2c}
 The association in \eqref{5.8uu} gives a well defined map $\beta: V'\longrightarrow V_1$.
 \end{lem}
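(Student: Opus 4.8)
The plan is to show that for $\tau\in V'$ the collection $\sigma=\beta(\tau)$ produced by \eqref{5.8uu} genuinely satisfies the defining condition \eqref{4.3z} of $V_1$; linearity of $\beta$ is immediate, since the construction \eqref{5.8uu} is linear in $\tau$. This is the exact contramodule counterpart of Lemma \ref{Lem4.2}, so I would follow the same two-pronged strategy: compute the linear maps $\zeta^\tau_{XY}:\mathscr C(X,Y)\otimes A\longrightarrow \mathscr C(X,Y)$ (characterized by $\tau(\matholdcal H_Y^U)(X)=(\zeta^\tau_{XY},U)$ and satisfying $\sigma_X=\epsilon_X\circ\zeta^\tau_{XX}$) in two different ways, obtaining the left-hand and right-hand sides of \eqref{4.3z} respectively, and then equate.

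First I would exploit that $\tau(\matholdcal H_Y^U)$ is itself a morphism in ${^\mathscr C}Ctr$, hence compatible with the contramodule structure maps of $\matholdcal H_Y^U$ (given in \eqref{33tr}) and of its image $S_\psi T_\psi(\matholdcal H_Y^U)$, whose structure maps are those of \eqref{45k} and involve both $\psi_{XY}$ and $\pi_{XY}=(\delta_{XYY},U)$. Taking the component of the resulting commuting square at the pair $(X,Y)$ and contracting with $\epsilon_Y$ — legitimate because $(\epsilon_Y\otimes\mathscr C(X,Y))\circ\delta_{XYY}=\mathrm{id}$ by the counit axiom in \eqref{coalg2cd} — and then stripping off $U$ by the Yoneda lemma, I would read off $\zeta^\tau_{XY}(f\otimes a)=\sigma_Y(f_{Y1}\otimes a_\psi)f_{Y2}^\psi$, the precise dual of \eqref{4.10cond}.

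Next I would produce the second expression by naturality. The comultiplications $\delta_{ZXY}:\mathscr C(Z,Y)\to\mathscr C(X,Y)\otimes\mathscr C(Z,X)$ induce, upon applying $(-,U)$, a morphism $\theta:\matholdcal H_X^{(\mathscr C(X,Y),U)}\longrightarrow\matholdcal H_Y^U$ in ${^\mathscr C}Ctr$; that $\theta$ respects the contramodule structures follows from coassociativity \eqref{coalg2cd}, and $\theta$ is the contramodule analogue of the comodule morphism $\delta^X_Y$ of \eqref{410eqr}. Writing down the naturality square of $\tau$ for $\theta$, evaluating at $X$, and invoking Lemma \ref{L5.01b} — which identifies $\tau(\matholdcal H_X^{(\mathscr C(X,Y),U)})(X)$ with $(\mathscr C(X,Y)\otimes\zeta^\tau_{XX},U)$, so the $\mathscr C(X,Y)$-slot passes through $\tau$ untouched, exactly as in \eqref{412cdtr} — I would obtain $\zeta^\tau_{XY}(f\otimes a)=f_{X1}\sigma_X(f_{X2}\otimes a)$, the dual of \eqref{4.11cond}. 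Comparing the two computed values of $\zeta^\tau_{XY}$ yields precisely \eqref{4.3z}, so $\sigma\in V_1$ and $\beta$ is well defined.

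I expect the main obstacle to be organizational rather than conceptual: keeping the iterated hom-spaces $(\mathscr C(X,Y),A,\dots,U)$ correctly bracketed under the hom-tensor adjunctions of \eqref{ntn3}, checking that $\theta$ is genuinely a morphism of contramodules, and ensuring that each commuting diagram is correctly transported from a statement about natural transformations of representable functors to one about the honest linear maps $\zeta^\tau_{XY}$ by the Yoneda lemma. Once this bookkeeping is arranged as in Lemmas \ref{L5.01b} and \ref{Lem4.2}, the two identities and their comparison are routine.
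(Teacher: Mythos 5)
Your proposal is correct and follows essentially the same route as the paper's own proof: you use the contramodule-morphism property of $\tau(\matholdcal H_Y^U)$ together with the counit to get one expression for $\tau(\matholdcal H_Y^U)(X)$ (the paper's \eqref{5.10eq1}), and the naturality of $\tau$ applied to the morphism induced by $\delta_{ZXY}$ (your $\theta$, the paper's $\Delta^{XY}_U$) combined with Lemma \ref{L5.01b} to get the other (the paper's \eqref{5.10eq3}), then compare. The only difference is cosmetic: you translate both identities to element-level formulas via Yoneda before comparing, whereas the paper compares them directly as equalities of maps between hom-spaces and concludes via the reformulated condition \eqref{5.3zz}.
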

 
 \begin{proof}
Since $\tau(\mathcal H_Y^U)$ is a morphism in ${^\mathscr C}Ctr$, we have the following commutative diagram
\begin{equation}\label{cd5.9ti}\small
\xymatrix{
(\mathscr C(X,Y),U)\ar[rr]^{(\mathscr C(X,Y),\epsilon_Y,U)\qquad\qquad\qquad}\ar[rrdd]_{id}&& (\mathscr C(X,Y),\matholdcal H_Y^U(Y))=(\mathscr C(X,Y),\mathscr C(Y,Y),U) \ar[dd]_{(\delta_{XYY},U)}
\ar[rrr]^{(\mathscr C(X,Y),\tau(\matholdcal H_Y^U)(Y))}&&& (\mathscr C(X,Y),A,\matholdcal H_Y^U(Y))=
(\mathscr C(X,Y),A,\mathscr C(Y,Y),U)\ar[d]^{(\psi_{XY},\mathscr C(Y,Y),U)}\\
&&  &&& (A,\mathscr C(X,Y),\mathscr C(Y,Y),U))\ar[d]^{(A,\delta_{XYY},U)}\\
&& \mathcal H_Y^U(X) =(\mathscr C(X,Y),U)\ar[rrr]_{\tau(\matholdcal H_Y^U)(X)} &&& (A,\matholdcal H_Y^U(X))=(A,\mathscr C(X,Y),U)\\
}
\end{equation} From \eqref{cd5.9ti} and the definition in \eqref{5.8uu}, we see that
\begin{equation}\label{5.10eq1}\small 
\tau(\matholdcal H_Y^U)(X)=(A,\delta_{XYY},U)\circ (\psi_{XY},\mathscr C(Y,Y),U)\circ (\mathscr C(X,Y),\tau(\matholdcal H_Y^U)(Y)) \circ (\mathscr C(X,Y),\epsilon_Y,U)=(A,\delta_{XYY},U)\circ (\psi_{XY},\mathscr C(Y,Y),U)\circ (\mathscr C(X,Y),\sigma_Y,U)
\end{equation} On the other hand, we have maps 
\begin{equation}\label{514eq}
\Delta^{XY}_U(Z):\matholdcal H_X^{(\mathscr C(X,Y),U)}(Z)=(\mathscr C(Z,X),\mathscr C(X,Y),U)\xrightarrow{\qquad(\delta_{ZXY},U)\qquad} \matholdcal H_Y^U(Z)=(\mathscr  C(Z,Y),U)
\end{equation} which induce a morphism $\Delta^{XY}_U:\matholdcal H_X^{(\mathscr C(X,Y),U)}\longrightarrow \matholdcal H_Y^U$ in  ${^\mathscr C}Ctr$. The natural transformation
$\tau$ now gives us a commutative diagram
\begin{equation}\label{515tg}\small 
\xymatrix{
&(\mathscr C(X,X),\mathscr C(X,Y),U)=  \matholdcal H_X^{(\mathscr C(X,Y),U)}(X) \ar[d]_{\Delta^U_{XY}(X)}^{=(\delta_{XXY},U)}\ar[rrr]^{\tau(\matholdcal H_X^{(\mathscr C(X,Y),U)})(X)}_{=(\mathscr C(X,Y)\otimes \zeta^\tau_{XX},U)} &&& (A,\matholdcal H_X^{(\mathscr C(X,Y),U)}(X) )=(A,\mathscr C(X,X),\mathscr C(X,Y),U)  \ar[d]_{(A,\Delta^U_{XY}(X))}^{=(A,\delta_{XXY},U)}\\
(\mathscr C(X,Y),U)\ar[r]_{id}\ar[ru]^{(\mathscr C(X,Y)\otimes \epsilon_X,U)}&   \matholdcal H_Y^U(X) =(\mathscr C(X,Y),U)\ar[rrr]_{\tau(\matholdcal H_Y^U)(X)} &&&  (A,\matholdcal H_Y^U(X))=(A,\mathscr C(X,Y),U)\\
}
\end{equation} where the equality $\tau(\matholdcal H_X^{(\mathscr C(X,Y),U)})(X)=(\mathscr C(X,Y)\otimes \zeta^\tau_{XX},U)$ follows from Lemma \ref{L5.01b}. Since $\sigma_X=\epsilon_X\circ \zeta^\tau_{XX}$, it follows from \eqref{515tg} that
\begin{equation}\label{5.10eq3} 
\tau(\matholdcal H_Y^U)(X)= (A,\delta_{XXY},U)\circ (\mathscr C(X,Y)\otimes \zeta^\tau_{XX},U)\circ (\mathscr C(X,Y)\otimes \epsilon_X,U)= (A,\delta_{XXY},U)\circ   (\sigma_X,\mathscr C(X,Y) ,U)
\end{equation} From \eqref{5.10eq1} and \eqref{5.10eq3}, it is clear that the collection $\sigma=\{\sigma_X\}_{X\in Ob(\mathscr C)}$ satisfies the condition in 
\eqref{5.3zz}, i.e., $\sigma\in V_1$. This proves the result.
 \end{proof}
 
  \begin{thm}\label{P5.5}
 The morphisms $\alpha:V_1\longrightarrow V'$ and $\beta:V'\longrightarrow V_1$ are mutually inverse isomorphisms. In particular, $V_1\cong Nat(id_{{^\mathscr C}Ctr},S_\psi T_\psi)$.
 \end{thm}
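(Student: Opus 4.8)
The plan is to follow the template of Proposition \ref{P4.4} from the comodule case: first establish that $\beta\circ\alpha=id_{V_1}$ by a direct computation, then show separately that $\beta$ is a monomorphism, and finally deduce formally that $\alpha$ and $\beta$ are mutually inverse. The last step is automatic: once $\beta\circ\alpha=id$ and $\beta$ is injective, from $\beta\circ(\alpha\circ\beta)=(\beta\circ\alpha)\circ\beta=\beta=\beta\circ id$ and injectivity of $\beta$ we conclude $\alpha\circ\beta=id$ as well, so both are isomorphisms.

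For $\beta\circ\alpha=id$, I would fix $\sigma\in V_1$, set $\tau:=\alpha(\sigma)$ and $\sigma':=\beta(\tau)$, and compare the natural transformations $\widetilde{\sigma'}_X$ and $\widetilde{\sigma}_X$ for each $X\in Ob(\mathscr C)$. Combining the definition of $\beta$ in \eqref{5.8uu} with the definition of $\alpha$ in \eqref{44r5} evaluated at the contramodule $\matholdcal H_X^U$, one gets $\widetilde{\sigma'}_X(U)=\tau(\matholdcal H_X^U)(X)\circ(\epsilon_X,U)=(A,\pi_{XX})\circ\widetilde\sigma_X(\matholdcal H_X^U(X))\circ(\epsilon_X,U)$, where $\pi_{XX}$ is the $\mathscr C$-contramodule structure map of $\matholdcal H_X^U$ described in \eqref{33tr}. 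The key move is to invoke naturality of $\widetilde\sigma_X:id_{Vect_K}\to\widetilde A\ast\widetilde{\mathscr C(X,X)}$ with respect to the morphism $(\epsilon_X,U):U\to(\mathscr C(X,X),U)=\matholdcal H_X^U(X)$ in $Vect_K$; this slides $(\epsilon_X,U)$ across $\widetilde\sigma_X$ and leaves the composite $\pi_{XX}\circ(\mathscr C(X,X),(\epsilon_X,U))$ acting on the target of $\widetilde\sigma_X(U)$. A short element check, using the counit axiom of $\mathscr C$ encoded in \eqref{coalg2cd} (namely $\epsilon_X((f)_{X1})(f)_{X2}=f$ for $f\in\mathscr C(X,X)$, applied through $\delta_{XXX}$ inside $\pi_{XX}$), shows this composite is the identity. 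Hence $\widetilde{\sigma'}_X=\widetilde\sigma_X$, i.e. $\sigma'=\sigma$.

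For injectivity of $\beta$, I would take $\tau\in V'$ with $\beta(\tau)=0$, so that $\sigma_X=0$ for every $X$. The formulas \eqref{5.10eq1} and \eqref{5.10eq3} already derived in Lemma \ref{L5.2c} express $\tau(\matholdcal H_Y^U)(X)=(A,\delta_{XXY},U)\circ(\sigma_X,\mathscr C(X,Y),U)$ entirely in terms of $\sigma=\beta(\tau)$; hence $\tau$ vanishes on every $\matholdcal H_Y^U$, in particular $\tau(\matholdcal H_X^*)(X)=0$. To propagate this vanishing to an arbitrary $\matholdcal M\in{^\mathscr C}Ctr$, I would use Corollary \ref{C3.5}: for $m\in\matholdcal M(X)$ the associated morphism $\phi_m:\matholdcal H_X^*\to\matholdcal M$ satisfies $\phi_m(X)(\epsilon_X)=m$, and naturality of $\tau$ (applied to $\phi_m$) gives $\tau(\matholdcal M)(X)(m)=(A,\phi_m(X))\bigl(\tau(\matholdcal H_X^*)(X)(\epsilon_X)\bigr)=0$. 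Since $m$ and $X$ are arbitrary, $\tau(\matholdcal M)(X)=0$ for all $\matholdcal M$ and $X$, so $\tau=0$ and $\beta$ is a monomorphism.

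The hard part will be the bookkeeping in the first step. Because $\widetilde\sigma_X$ lands in an iterated Hom space $(A,\mathscr C(X,X),-)$ and $\matholdcal H_X^U(X)=(\mathscr C(X,X),U)$ is itself a Hom space, one must carefully distinguish the inner and outer $\mathscr C(X,X)$ slots: the counit collapse that makes $\pi_{XX}\circ(\mathscr C(X,X),(\epsilon_X,U))$ the identity is triggered by the counit on the appropriate Sweedler leg of $\delta_{XXX}$, and confusing the slots would point to the wrong leg. Everything else is either a formal consequence of a one-sided inverse plus injectivity, or a clean application of the generating property of the $\matholdcal H_X^U$ recorded in Corollary \ref{C3.5}.
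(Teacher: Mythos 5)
Your proposal is correct and follows essentially the same route as the paper: the paper likewise proves $\beta\circ\alpha=id$ by direct verification from \eqref{44r5} and \eqref{5.8uu} (your counit/naturality computation fills in exactly the details it leaves to the reader), and establishes injectivity of $\beta$ by using \eqref{5.10eq3} to kill $\tau$ on the $\matholdcal H_Y^U$ and then propagating via naturality against the morphism $\matholdcal H_X^*\to\matholdcal M$ classifying an element $m$ (the paper phrases this as a contradiction using the adjunction isomorphism \eqref{3.5rf}, which is the same fact as your Corollary \ref{C3.5}). The concluding formal step, that a one-sided inverse plus injectivity of $\beta$ forces $\alpha\circ\beta=id$, is also how the paper closes the argument.
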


 \begin{proof}
We claim that $\beta$ is a monomorphism.  Otherwise, suppose we have $0\ne \tau\in V'$ such that $\sigma:=\beta(\tau)=0$. From \eqref{5.10eq3}, it follows immediately that $\tau(\matholdcal H_Y^U)(X)=0$ for every
 $X$, $Y\in Ob(\mathscr C)$ and $U\in Vect_K$. In particular, we have $\tau(\matholdcal H_X^*)=0$ for every $X\in Ob(\mathscr C)$. Since $\tau\ne 0$, we can choose $\matholdcal M\in {^\mathscr C}Ctr$ and $X\in Ob(\mathscr C)$ such that $\tau(\matholdcal M)(X)\ne 0$. Equivalently, we have $v:K\longrightarrow  \matholdcal M(X)$ such that $\tau(\matholdcal M)(X)
 \circ v\ne 0$. Since ${^\mathscr C}Ctr(\matholdcal H_X^*,\matholdcal M)\cong Vect_K(K,\matholdcal M(X))$ by \eqref{3.5rf}, we have a map 
 $\xi_v:\matholdcal H_X^* \longrightarrow \matholdcal M $ in ${^\mathscr C}Ctr$ such that $\xi_v(X)\circ (\epsilon_X,K)=v$. The following commutative diagram
 \begin{equation*}
 \xymatrix{
 && \matholdcal H_X^*(X)\ar[rrr]^{\tau(\matholdcal H_X^* )(X)=0} \ar[d]_{\xi_v(X)}&&& (A,\matholdcal H_X^*(X))\ar[d]^{(A,\xi_v(X))}\\
K\ar[rr]_{v} \ar[rru]^{ (\epsilon_X,K)} && \matholdcal M(X)\ar[rrr]_{\tau(\matholdcal M )(X)} &&& (A,\matholdcal M(X))\\
 }
 \end{equation*}
now shows that $\tau(\matholdcal M)(X)
 \circ v=0$, which is a contradiction.
 
 \smallskip
  From the definitions in \eqref{44r5} and \eqref{5.8uu}, we can verify that $\beta\circ \alpha=id$.  This proves the result.
 \end{proof}
 
\begin{Thm}\label{T4.5hm5}
Let $(\mathscr C,A,\psi)$ be an entwining structure. Then, the functor $T_\psi :{^{\mathscr C}}Ctr\longrightarrow  {_A^{\mathscr C}}Ctr(\psi)$ defined by setting $T_\psi (\matholdcal M):=(
A,\matholdcal M)$ is separable if and only if there exists $\sigma \in V_1$ such that
\begin{equation}\label{421condxz}
\sigma_X(f\otimes 1)=\epsilon_X(f)\qquad \forall \textrm{ }f\in \mathscr C(X,X), \textrm{ }X\in Ob(\mathscr C)
\end{equation}
\end{Thm}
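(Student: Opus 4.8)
The plan is to invoke the separability criterion of \cite[Proposition 1.1]{uni} for the right adjoint $T_\psi$ of the pair $(S_\psi,T_\psi)$ of Proposition \ref{P3.7}, and then transport it across the isomorphism $V_1\cong V'=Nat(id_{{^\mathscr C}Ctr},S_\psi T_\psi)$ established in Proposition \ref{P5.5}. The first step is to read off the counit $\omega':S_\psi T_\psi\to id_{{^\mathscr C}Ctr}$ of this adjunction from the proof of Proposition \ref{P3.7}: specializing \eqref{318g} to $\matholdcal M=T_\psi(\matholdcal N)$ with $\zeta=id$ shows that $\omega'(\matholdcal N)(X)=(u_A,\matholdcal N(X)):(A,\matholdcal N(X))\to \matholdcal N(X)$, that is, evaluation of a linear map at the unit $1\in A$. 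By \cite[Proposition 1.1]{uni}, the right adjoint $T_\psi$ is separable precisely when there is some $\tau\in V'$ with $\omega'\circ\tau=id$; note that, in contrast with the left adjoint $F_\psi$ of Theorem \ref{T4.5hm}, here we must cosplit a counit rather than split a unit, even though the combinatorial condition \eqref{421condxz} will turn out identical to \eqref{421cond}.

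For the ``if'' direction, given $\sigma\in V_1$ satisfying \eqref{421condxz}, I would set $\tau:=\alpha(\sigma)$ as in Lemma \ref{L5.1}, so that $\tau(\matholdcal M)(X)=(A,\pi_{XX})\circ(\sigma_X,\matholdcal M(X))$. Composing with $\omega'$ amounts to post-composing with $\pi_{XX}$ and then evaluating the $A$-slot at $1$; tracking $m\in\matholdcal M(X)$ through these maps yields $\pi_{XX}(f\mapsto \sigma_X(f\otimes 1)\,m)$. Condition \eqref{421condxz} rewrites the inner map as $\epsilon_X^m$ in the notation of Corollary \ref{C3.5}, and the counit axiom of \eqref{dg3.4} gives $\pi_{XX}(\epsilon_X^m)=m$. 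Hence $\omega'\circ\tau=id$ and $T_\psi$ is separable.

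For the ``only if'' direction, given a natural $\tau\in V'$ with $\omega'\circ\tau=id$, I would put $\sigma:=\beta(\tau)\in V_1$. Recall from the discussion preceding Lemma \ref{L5.01b} that $\tau(\matholdcal H_X^U)(X)=(\zeta^\tau_{XX},U)$ for a map $\zeta^\tau_{XX}:\mathscr C(X,X)\otimes A\to\mathscr C(X,X)$, and that $\sigma_X=\epsilon_X\circ\zeta^\tau_{XX}$. Evaluating the identity $\omega'(\matholdcal H_X^U)(X)\circ\tau(\matholdcal H_X^U)(X)=id$ (again with the counit acting as evaluation at $1\in A$) and testing it with $U=\mathscr C(X,X)$ on the identity map forces $\zeta^\tau_{XX}(f\otimes 1)=f$ for every $f\in\mathscr C(X,X)$. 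Applying $\epsilon_X$ then gives $\sigma_X(f\otimes 1)=\epsilon_X(\zeta^\tau_{XX}(f\otimes 1))=\epsilon_X(f)$, which is exactly \eqref{421condxz}.

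The computations parallel the comodule case of Theorem \ref{T4.5hm}, so the only place demanding genuine care is the bookkeeping in the representable-functor notation of \eqref{44r5} and \eqref{5.8uu}: the main obstacle is to make the interaction between the counit $(u_A,-)$ (evaluation at $1\in A$) and the $A$-slot produced by $(\sigma_X,\matholdcal M(X))$, respectively by $\zeta^\tau_{XX}$, fully explicit, since the ambient identifications $(A,\mathscr C(X,X),\matholdcal M(X))=(\mathscr C(X,X)\otimes A,\matholdcal M(X))$ silently permute the tensor factors. Once these identifications are fixed, both directions collapse to the single contramodule counit axiom used above.
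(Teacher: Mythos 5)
Your proposal is correct and follows the paper's own route exactly: identify the counit $\omega(\matholdcal N)(X)=(u_A,\matholdcal N(X))$ from \eqref{318g}, invoke \cite[Proposition 1.1]{uni} for the right adjoint $T_\psi$, and transport the splitting condition across the isomorphism $V_1\cong V'$ of Proposition \ref{P5.5}. The paper compresses the final equivalence into a single asserted identity $(\epsilon_X,\_\_)=(u_A,\mathscr C(X,X),\_\_)\circ(\sigma_X,\_\_)$ with ``it may be verified,'' and your two explicit verifications --- the counit axiom applied to $\epsilon_X^m$ in the ``if'' direction, and the Yoneda reduction $\zeta^\tau_{XX}(f\otimes 1)=f$ at $\matholdcal H_X^U$ in the ``only if'' direction --- are precisely the omitted details, carried out correctly.
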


\begin{proof}
Using \eqref{318g}, the counit $\omega: S_\psi T_\psi\longrightarrow id$ of the adjunction $(S_\psi,T_\psi)$ is given by 
$\omega(\matholdcal M)(X):(A,\matholdcal M(X))\xrightarrow{\qquad (u_A,\matholdcal M(X))} \matholdcal M(X)$ for $\matholdcal M\in {^{\mathscr C}}Ctr$, where
$u_A:K\longrightarrow A$ is the unit map on $A$. By \cite[Proposition 1.1]{uni}, we know that the right adjoint $T_\psi$ is separable if and only if there is
$\tau\in V'=Nat(id_{{^\mathscr C}Ctr},S_\psi T_\psi)$ such that $\omega\circ\tau=id_{{^\mathscr C}Ctr}$. From the isomorphism $V_1\cong Nat(id_{{^\mathscr C}Ctr},S_\psi T_\psi)$ described in Proposition \ref{P5.5} and the definitions in \eqref{44r5} and \eqref{5.8uu}, it may be verified that this is equivalent to the existence of $\sigma\in V_1$ such that
\begin{equation}
(K,\_\_)\xrightarrow{\qquad\qquad(\epsilon_X,\_\_)=(u_A,\mathscr C(X,X),\_\_)\circ (\sigma_X,\_\_)\qquad\qquad} (\mathscr C(X,X),\_\_)
\end{equation} for each $X\in Ob(\mathscr C)$. The result is now clear. 
\end{proof}

We will now give conditions for the functor $S_\psi:  {_A^{\mathscr C}}Ctr(\psi)\longrightarrow {^{\mathscr C}}Ctr$ to be separable. We only sketch the proof here. We begin by setting $W':=Nat(T_\psi S_\psi,id_{ {_A^{\mathscr C}}Ctr(\psi)})$. An element $\lambda\in W_1$ as defined in \eqref{422lu} can now be expressed as a collection of natural transformations
\begin{equation}\label{422lux5}
\lambda=\{\widetilde\lambda_X: (A,A,\_\_)\xrightarrow{\qquad (\lambda^X,\_\_)\qquad} (\mathscr C(X,X),\_\_) \}_{X\in Ob(\mathscr C)}
\end{equation} satisfying the following two commutative diagrams
\begin{equation}\label{cond51vn}
\xymatrix{
(A,\mathscr C(X,Y),A,\_\_) \ar[rrr]^{(A,\psi_{XY},\_\_)}&&&  (A,A,\mathscr C(X,Y),\_\_)  
\ar[rrr]^{(\lambda^X,\mathscr C(X,Y),\_\_)} &&& (\mathscr C(X,X),\mathscr C(X,Y),\_\_)\ar[d]^{(\delta_{XXY},\_\_)}\\
(\mathscr C(X,Y),A,A,\_\_) \ar[rrr]_{(\mathscr C(X,Y),\lambda^Y,\_\_)}\ar[u]^{(\psi_{XY},A,\_\_)}&&& (\mathscr C(X,Y),\mathscr C(Y,Y),\_\_) 
\ar[rrr]_{(\delta_{XYY},\_\_)}&&& (\mathscr C(X,Y),\_\_)\\
}
\end{equation} 
\begin{equation}\label{cond52vn}
\xymatrix{
(A,A,A,\_\_)\ar[rrrrrr]^{(\lambda^Z,A,\_\_)}&&& &&& (\mathscr C(Z,Z),A,\_\_)\ar[d]^{(\psi_{ZZ},\_\_)}\\
(A,A,\_\_) \ar[rrr]^{(\mu_A,A,\_\_)}\ar[u]^{(A,\mu_A,\_\_)}&&& (A,A,A,\_\_)\ar[rrr]^{(A,\lambda^Z,\_\_)}&&& (A,\mathscr C(Z,Z),\_\_)\\
}
\end{equation} for $X$, $Y$, $Z\in Ob(\mathscr C)$. 

\smallskip
We now define $\alpha:W_1\longrightarrow W'$ by setting $\kappa:=\alpha(\lambda)$ to be the natural transformation given by
\begin{equation}\label{522eqcf}
\kappa(\matholdcal M)(X):(A,\matholdcal M(X))\xrightarrow{\quad (A,\mu_X)\quad}(A,A,\matholdcal M(X))\xrightarrow{\quad(\lambda^X,\matholdcal M(X))\quad} (\mathscr C(X,X),\matholdcal M(X))
\xrightarrow{\quad\pi_{XX}\quad}\matholdcal M(X)
\end{equation}
for each $(\matholdcal M,\pi^{\matholdcal M},\mu^{\matholdcal M})\in {_A^{\mathscr C}}Ctr(\psi)$. On the other hand, consider the object
 $(A,\matholdcal H_X^U)\in {_A^{\mathscr C}}Ctr(\psi)$ for each $X\in Ob(\mathscr C)$ and $U\in Vect_K$. Then, we define $\beta:
 W'\longrightarrow W_1
 $ by setting $\lambda:=\beta(\kappa)$ as follows
 \begin{equation}\label{523eqcf}
 \widetilde\lambda^X(U):(A,A,U)\xrightarrow{\quad(A,A,\epsilon_X,U)\quad }(A,A,\mathscr C(X,X),U)\xrightarrow{\quad\kappa(A,\matholdcal H_X^U)(X)\quad }(A,\mathscr C(X,X),U)\xrightarrow{\quad(u_A,\mathscr C(X,X),U)\quad } (\mathscr C(X,X),U)
 \end{equation} It may be verified that $\alpha$ and $\beta$ are mutually inverse isomorphisms and that we have the following result.
 
\begin{Thm}\label{5T4.11hm5}
Let $(\mathscr C,A,\psi)$ be an entwining structure. Then, the functor $S_\psi : {_A^{\mathscr C}}Ctr(\psi)\longrightarrow  {^{\mathscr C}}Ctr$   is separable if and only if there exists $\lambda \in W_1$ such that
\begin{equation}\label{448condxz}
\lambda^{X1}(f)\lambda^{X2}(f)=\epsilon_X(f)\cdot 1 \in A\qquad \forall \textrm{ }f\in \mathscr C(X,X), \textrm{ }X\in Ob(\mathscr C)
\end{equation}
\end{Thm}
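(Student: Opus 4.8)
The plan is to apply the separability criterion of \cite[Proposition 1.1]{uni} to the left adjoint $S_\psi$ of the pair $(S_\psi,T_\psi)$ produced in Proposition \ref{P3.7}. For a left adjoint this criterion asserts that $S_\psi$ is separable exactly when the unit $\eta:id_{{_A^{\mathscr C}}Ctr(\psi)}\longrightarrow T_\psi S_\psi$ admits a retraction, i.e. there is some $\kappa\in W'=Nat(T_\psi S_\psi,id_{{_A^{\mathscr C}}Ctr(\psi)})$ with $\kappa\circ\eta=id$. Reading the construction of Proposition \ref{P3.7} backwards (take $\matholdcal M'=S_\psi(\matholdcal M)$ and $\eta=id$ in \eqref{318g}), the unit is given objectwise by the left $A$-module structure map $\eta(\matholdcal M)(X)=\mu_X:\matholdcal M(X)\longrightarrow(A,\matholdcal M(X))=T_\psi S_\psi(\matholdcal M)(X)$. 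Thus the whole theorem reduces to transporting the single equation $\kappa\circ\eta=id$ across the isomorphism $W_1\cong W'$.

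The first substantial step is to establish that isomorphism $W_1\cong W'$ via the maps $\alpha$ and $\beta$ of \eqref{522eqcf} and \eqref{523eqcf}. This is the contramodule counterpart of Proposition \ref{P4.9}, and I would carry it out in the same two moves. First, I would check that $\beta$ is well defined, i.e. that $\lambda=\beta(\kappa)$ genuinely satisfies the two constraints \eqref{425n} and \eqref{426n} cutting out $W_1$; this is done by feeding the objects $(A,\matholdcal H_X^U)$ together with the structure morphisms $\psi_X$ and $\delta^X_Y$ into the naturality squares for $\kappa$, exactly mirroring Lemma \ref{Lem4.87} but with every tensor factor replaced by an iterated hom under the convention \eqref{ntn3}. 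Second, I would verify $\beta\circ\alpha=id$ by directly unwinding \eqref{522eqcf} against \eqref{523eqcf} using the counit axiom of \eqref{dg3.4}, and show that $\beta$ is injective by a representability argument on the projective generators $\matholdcal H_X^*$ in the spirit of Lemma \ref{LRG4}. Keeping the hom--tensor adjunctions and the order of multiplication straight throughout is where the care is needed.

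With $W_1\cong W'$ in hand, the equivalence is short. For sufficiency, given $\lambda\in W_1$ satisfying \eqref{448condxz}, set $\kappa:=\alpha(\lambda)$ and compute the composite $\kappa(\matholdcal M)(X)\circ\mu_X$ from \eqref{522eqcf}: unwinding the three arrows shows it carries $m\in\matholdcal M(X)$ to $\pi_{XX}(g_m)$, where $g_m:\mathscr C(X,X)\longrightarrow\matholdcal M(X)$ is the map $f\mapsto\big(\lambda^{X1}(f)\lambda^{X2}(f)\big)m$ (the order of the two factors being fixed by the paper's module and hom--tensor conventions). Condition \eqref{448condxz} collapses $g_m$ to $f\mapsto\epsilon_X(f)m$, which is precisely the element $\epsilon_X^m$ of Corollary \ref{C3.5}; the counit axiom of \eqref{dg3.4} then gives $\pi_{XX}(\epsilon_X^m)=m$, so $\kappa\circ\eta=id$ and $S_\psi$ is separable. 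For necessity I would run this in reverse: from a retraction $\kappa$ put $\lambda:=\beta(\kappa)$, and evaluate $\kappa\circ\eta=id$ on the representing contramodule $(A,\matholdcal H_X^*)$ at its distinguished element, reading \eqref{448condxz} straight off \eqref{523eqcf}.

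The main obstacle is the identification $W_1\cong W'$ itself, i.e. the verification that $\alpha$ and $\beta$ are mutually inverse isomorphisms — the point merely asserted just before the statement. This requires the faithful contramodule-side reworking of Lemma \ref{Lem4.87} and Lemma \ref{LRG4}, where the tensor bookkeeping of the comodule arguments must be recast entirely in terms of iterated hom-spaces; once that identification is secured, the passage to \eqref{448condxz} is the routine computation above.
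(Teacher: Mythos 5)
Your proposal is correct and follows exactly the route the paper itself sketches for Theorem \ref{5T4.11hm5}: identify the unit of $(S_\psi,T_\psi)$ with the module structure maps $\mu_X$, establish $W_1\cong W'$ via the maps $\alpha$ and $\beta$ of \eqref{522eqcf} and \eqref{523eqcf} by mirroring the comodule arguments (Lemmas \ref{Lem4.87} and \ref{LRG4}) in iterated-hom form, and then apply \cite[Proposition 1.1]{uni} as in Theorem \ref{T4.11hm}. The paper leaves these verifications as ``may be verified,'' and your write-up supplies precisely those details, including the correct use of the counit axiom of \eqref{dg3.4} in the sufficiency computation and the evaluation on $(A,\matholdcal H_X^U)$ in the necessity direction.
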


\section{Frobenius properties}

Given  the entwining structure $(\mathscr C,A,\psi)$, we have constructed two pairs $(S_\psi,T_\psi)$ and $(F_\psi,G_\psi)$ of adjoint functors
\begin{equation}
\begin{array}{c}
S_\psi: {_A^{\mathscr C}}Ctr(\psi)\longrightarrow {^{\mathscr C}}Ctr \qquad T_\psi:  {^{\mathscr C}}Ctr
\longrightarrow  {_A^{\mathscr C}}Ctr(\psi)\\
F_\psi: Com^{\mathscr C}\longrightarrow Com_A^{\mathscr C}(\psi) \qquad G_\psi: Com_A^{\mathscr C}(\psi)\longrightarrow Com^{\mathscr C}\\
\end{array}
\end{equation} In this section, we will give conditions for $(S_\psi,T_\psi)$ and $(F_\psi,G_\psi)$ to be Frobenius pairs, i.e., for $(T_\psi,S_\psi)$ and $(G_\psi,F_\psi)$ to also be adjoint pairs.

\smallskip
We start with the pair $(S_\psi,T_\psi)$. As in Section 5, we set $V':=Nat(id_{{^\mathscr C}Ctr},S_\psi T_\psi)$ and $W':=Nat(T_\psi S_\psi,id_{ {_A^{\mathscr C}}Ctr(\psi)})$. Using the general criterion for a pair of functors to be a Frobenius pair (see for instance, \cite[$\S$ 1]{uni}), it follows that
$(S_\psi,T_\psi)$ is a Frobenius pair if and only if there exists $\kappa\in W'=Nat(T_\psi S_\psi,id_{ {_A^{\mathscr C}}Ctr(\psi)})$ and 
$\tau\in V'=Nat(id_{{^\mathscr C}Ctr},S_\psi T_\psi)$ such that
\begin{gather}
 S_\psi(\kappa(\matholdcal M))\circ \tau_{S_\psi(\matholdcal M)}= id_{S_\psi(\matholdcal M)}
: \matholdcal M \xrightarrow{\qquad\tau_{S_\psi(\matholdcal M)}\qquad} (A,\matholdcal M)
\xrightarrow{\qquad S_\psi(\kappa(\matholdcal M))\qquad}\matholdcal M  \label{6.2cr} \\
\kappa(T_\psi(\matholdcal N))\circ T_\psi(\tau(\matholdcal N))= id_{T_\psi(\matholdcal N)}
: (A,\matholdcal N)\xrightarrow{\qquad  T_\psi(\tau(\matholdcal N)) \qquad} (A,(A,\matholdcal N))\xrightarrow{\qquad  \kappa(T_\psi(\matholdcal N))\qquad}  (A,\matholdcal N)\label{6.3cr}
\end{gather} for any $\matholdcal M\in  {_A^{\mathscr C}}Ctr(\psi)$ and $\matholdcal N\in {^{\mathscr C}}Ctr$. 

\begin{lem}\label{Lm6.1}
Suppose that $(S_\psi,T_\psi)$ is a Frobenius pair. Then, there exists $\sigma\in V_1$ and $\lambda\in W_1$ such that 
\begin{gather}
\epsilon_X(f)\cdot 1= \sigma_X(f_{X1}\otimes \lambda^{X1}(f_{X2}))\lambda^{X2}(f_{X2})\label{6.4cr}\\
\epsilon_X(f)\cdot 1= \sigma_X(f_{X1}^\psi\otimes \lambda^{X2}(f_{X2}))\lambda^{X1}(f_{X2})_\psi\label{6.5cr} 
\end{gather} for every $f\in \mathscr C(X,X)$, $X\in Ob(\mathscr C)$. 
\end{lem}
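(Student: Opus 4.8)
The plan is to pass from the abstract Frobenius (zig-zag) identities to the explicit scalar equations using the isomorphisms $V'\cong V_1$ and $W'\cong W_1$ already available. Since $(S_\psi,T_\psi)$ is a Frobenius pair, the criterion recalled in \eqref{6.2cr} and \eqref{6.3cr} produces a counit $\kappa\in W'=Nat(T_\psi S_\psi,id_{{_A^{\mathscr C}}Ctr(\psi)})$ and a unit $\tau\in V'=Nat(id_{{^\mathscr C}Ctr},S_\psi T_\psi)$ satisfying those two identities. By Proposition \ref{P5.5} I set $\sigma:=\beta(\tau)\in V_1$, and by the mutually inverse isomorphisms relating $W'$ and $W_1$ from the discussion preceding Theorem \ref{5T4.11hm5} I set $\lambda:=\beta(\kappa)\in W_1$. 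Under these identifications we have $\tau=\alpha(\sigma)$ and $\kappa=\alpha(\lambda)$, so the two natural transformations are given componentwise by the explicit formulas \eqref{44r5} and \eqref{522eqcf}; this is precisely what lets the abstract identities be rewritten in terms of $\sigma_X$ and $\lambda^X$.

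The two equations \eqref{6.4cr} and \eqref{6.5cr} are then obtained by evaluating \eqref{6.2cr} and \eqref{6.3cr} on suitable representable test objects and chasing an element $f\in\mathscr C(X,X)$ through. Since \eqref{6.3cr} holds for every plain contramodule $\matholdcal N\in{^{\mathscr C}}Ctr$, I would specialize it to $\matholdcal H_X^*$ (or, keeping $U$ free and invoking the Yoneda argument of Lemma \ref{L5.01b}, to $\matholdcal H_X^U$). Here the contra-action $\pi_{XX}$ of $\matholdcal H_X^U$ is induced by $\delta_{XXX}$ via \eqref{33tr}, so that threading $f$ through $T_\psi(\tau(\matholdcal N))$ and then $\kappa(T_\psi(\matholdcal N))$ unfolds the comultiplication $f\mapsto f_{X1}\otimes f_{X2}$, pairs a leg of $f$ and a leg of $\lambda^X(f_{X2})$ against $\sigma_X$, and leaves a single surviving factor in $A$; imposing $id_{T_\psi(\matholdcal N)}$ yields one of \eqref{6.4cr}, \eqref{6.5cr}. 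Since \eqref{6.2cr} holds for every entwined contramodule $\matholdcal M\in{_A^{\mathscr C}}Ctr(\psi)$, I would specialize it to the free object $\matholdcal H_X^A$ of Lemma \ref{L3.5} (the case $P=A$); because its left $A$-module structure is defined through the entwining in \eqref{39k}, the appearance of $\mu_X$ inside $\kappa$ commutes the $A$-action past the contra-action and introduces the twists $f_{X1}^\psi$ and $\lambda^{X1}(f_{X2})_\psi$, producing the remaining, entwining-twisted identity.

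Throughout the chase I would use the $\psi$-compatibility of $\sigma$ from \eqref{4.3z}, the defining conditions \eqref{425n} and \eqref{426n} of $W_1$, and the entwined-contramodule axiom \eqref{com3.2} to transport the maps $\psi_{XY}$ into the exact positions displayed in \eqref{6.4cr} and \eqref{6.5cr}. The main obstacle is the bookkeeping: the identities \eqref{6.2cr} and \eqref{6.3cr} live among the deeply nested Hom-spaces $(A,\matholdcal M(X))$ and $(A,(A,\matholdcal N(X)))$, and one must unwind the hom-tensor adjunctions (with the conventions of \eqref{ntn3}) while tracking where each copy of $\psi$ is inserted and then cancelling the counit slot against $\epsilon_X$. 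Getting the twisted identity \eqref{6.5cr} right, namely verifying that the $\psi$ generated by the $A$-action on $\matholdcal H_X^A$ lands on $f_{X1}$ and on the first leg $\lambda^{X1}$ rather than elsewhere, is the delicate point, whereas the untwisted identity \eqref{6.4cr} is comparatively routine once the element chase is organized.
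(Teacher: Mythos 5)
Your proposal is correct and structurally parallel to the paper's proof: both obtain $\tau\in V'$ and $\kappa\in W'$ from the Frobenius property, transfer them to $\sigma\in V_1$ and $\lambda\in W_1$ through Proposition \ref{P5.5} and the isomorphism $W'\cong W_1$ preceding Theorem \ref{5T4.11hm5}, substitute the explicit formulas \eqref{44r5} and \eqref{522eqcf} into \eqref{6.2cr}--\eqref{6.3cr}, and evaluate on test objects built from $\matholdcal H_X$; your assignment of identities (the untwisted \eqref{6.4cr} from \eqref{6.3cr}, the entwining-twisted \eqref{6.5cr} from \eqref{6.2cr}) agrees with the paper's. The genuine difference is the test object for \eqref{6.2cr}: you use $\matholdcal H_X^A$ (Lemma \ref{L3.5} with $P=A$), where the $\psi$-twist enters through the $A$-action \eqref{39k}, whereas the paper uses $T_\psi(\matholdcal H_X^U)=(A,\matholdcal H_X^U)$ with $U$ a free vector-space variable, where the twist enters through the contra-action \eqref{45k}. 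Both choices do produce \eqref{6.5cr}, but they buy different extraction mechanisms. Keeping $U$ free makes every space in sight a representable functor of $U$, so the Yoneda lemma converts the operator identities directly into the element identities \eqref{6.6eqt} and \eqref{6.7eqt} in $\mathscr C(X,X)\otimes A$, to which one applies $\epsilon_X$. With your fixed objects ($P=A$, or $U=K$ for $\matholdcal H_X^*$), Yoneda is unavailable: the identities \eqref{6.2cr}--\eqref{6.3cr} become equalities of operators on $(\mathscr C(X,X),A)$, resp.\ on $(\mathscr C(X,X)\otimes A,K)$, whose elements are maps $h$ or functionals $\omega$, not elements of $\mathscr C(X,X)$, so ``chasing $f$'' really means chasing $h$ (or $\omega$) and evaluating at $f$. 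To recover the element-level identity you must then run the equality over enough elements --- e.g.\ $h=\xi(\cdot)1_A$ for all $\xi\in \mathscr C(X,X)^{\ast}$ --- and use that such functionals separate the points of $\mathscr C(X,X)\otimes A$, and only afterwards apply the counit. This separation step is routine, so there is no gap in your method, but it is the one ingredient your sketch leaves implicit, and it is precisely what the paper's free variable $U$ is designed to avoid.
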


\begin{proof}
Since  $(S_\psi,T_\psi)$ is a Frobenius pair, we know there exist $\kappa\in W'$ and 
$\tau\in V' $ satisfying \eqref{6.2cr} and \eqref{6.3cr}.   Applying the results in Section 5, we choose $\sigma\in V_1\cong V'$ and $\lambda\in W_1\cong W'$ corresponding respectively to $\tau\in V'$ and $\kappa\in W'$. Applying \eqref{6.2cr} to $(A,\matholdcal H_X^U)\in {_A^{\mathscr C}}Ctr(\psi)$ and using the definitions in \eqref{44r5} and \eqref{522eqcf}, we see that the following diagram commutes
\begin{equation*}
\small
\xymatrix{
(A,\mathscr C(X,X),U)\ar[rr]^{(\sigma_X,A,\mathscr C(X,X),U)}\ar[dd]_{id}&& (A,\mathscr C(X,X),A,\mathscr C(X,X),U)
\ar[rr]^{(A,\psi_{XX},\mathscr C(X,X),U)}&& (A,A,\mathscr C(X,X),\mathscr C(X,X),U) \ar[d]^{ (A,A,\delta_{XXX},U)}\\
&& && (A,A,\mathscr C(X,X),U)\ar[d]^{(A,\mu_A,\mathscr C(X,X),U)}\\
(A,\mathscr C(X,X),U)&& \ar[ll]_{(A,\delta_{XXX},U)\qquad\qquad\qquad\qquad}(A,\mathscr C(X,X),\mathscr C(X,X),U)\xleftarrow{(\psi_{XX},\mathscr C(X,X),U)} (\mathscr C(X,X),A,\mathscr C(X,X),U)&&\ar[ll]_{\qquad\qquad\qquad\qquad(\lambda^X,A,\mathscr C(X,X),U)} (A,A,A,\mathscr C(X,X),U)\\
}
\end{equation*}
for every $X\in Ob(\mathscr C)$ and $U\in Vect_K$.   Using Yoneda lemma, it follows from the above diagram that we have
\begin{equation}\label{6.6eqt}
f\otimes 1= \sigma_X(f_{X1X2}^\psi\otimes \lambda^{X2}(f_{X2}))f_{X1X1}\otimes \lambda^{X1}(f_{X2})_\psi
\end{equation} for each $f\in \mathscr C(X,X)$. Applying $\epsilon_X$ to both sides of \eqref{6.6eqt}, we obtain the relation in \eqref{6.5cr}. Further, if we apply \eqref{6.3cr} to 
the object $\matholdcal H_X^U\in {^\mathscr C}Ctr$ and use the definitions in \eqref{44r5} and \eqref{522eqcf}, we see that the following diagram commutes
\begin{equation*}
\small
\xymatrix{
(A,\mathscr C(X,X),U)\ar[rr]^{(A,\sigma_X,\mathscr C(X,X),U)}\ar[d]_{id}&& (A,A,\mathscr C(X,X),\mathscr C(X,X),U)
\ar[rr]^{(A,A,\delta_{XXX},U)}&& (A,A,\mathscr C(X,X),U) \ar[d]_{ (A,\mu_A,\mathscr C(X,X),U)}\\
(A,\mathscr C(X,X),U)&& \ar[ll]_{(A,\delta_{XXX},U)\qquad\qquad\qquad\qquad}(A,\mathscr C(X,X),\mathscr C(X,X),U)\xleftarrow{(\psi_{XX},\mathscr C(X,X),U)} (\mathscr C(X,X),A,\mathscr C(X,X),U)&&\ar[ll]_{\qquad\qquad\qquad\qquad(\lambda^X,A,\mathscr C(X,X),U)} (A,A,A,\mathscr C(X,X),U)\\
}
\end{equation*}
for every $X\in Ob(\mathscr C)$ and $U\in Vect_K$.  Again using Yoneda lemma, it follows from the above diagram that we have
\begin{equation}\label{6.7eqt}
f\otimes 1=\sigma_X(f_{X1X2}\otimes \lambda^{X1}(f_{X2}))f_{X1X1}\otimes \lambda^{X2}(f_{X2})
\end{equation} for each $f\in \mathscr C(X,X)$.  Applying $\epsilon_X$ to both sides of \eqref{6.7eqt}, we obtain the relation in \eqref{6.4cr}. 
\end{proof}

\begin{Thm}\label{P6.2}
Let $(\mathscr C,A,\psi)$ be an entwining structure. Then, the functors
\begin{equation}
S_\psi: {_A^{\mathscr C}}Ctr(\psi)\longrightarrow {^{\mathscr C}}Ctr \qquad T_\psi:  {^{\mathscr C}}Ctr
\longrightarrow  {_A^{\mathscr C}}Ctr(\psi)
\end{equation} form a Frobenius pair if and only if there exists $\sigma\in V_1$ and $\lambda\in W_1$ such that 
\begin{gather}
\epsilon_X(f)\cdot 1= \sigma_X(f_{X1}\otimes \lambda^{X1}(f_{X2}))\lambda^{X2}(f_{X2})\label{6.4crt}\\
\epsilon_X(f)\cdot 1= \sigma_X(f_{X1}^\psi\otimes \lambda^{X2}(f_{X2}))\lambda^{X1}(f_{X2})_\psi\label{6.5crt} 
\end{gather} for every $f\in \mathscr C(X,X)$, $X\in Ob(\mathscr C)$. 
\end{Thm}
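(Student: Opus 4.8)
The statement is an equivalence, and its forward implication (Frobenius $\Rightarrow$ existence of $\sigma,\lambda$) is exactly the content of Lemma \ref{Lm6.1}, so the plan is to concentrate on the converse. Throughout I will use the general criterion recalled just before Lemma \ref{Lm6.1}: the pair $(S_\psi,T_\psi)$ is Frobenius precisely when there exist $\tau\in V'$ and $\kappa\in W'$ satisfying the triangle identities \eqref{6.2cr} and \eqref{6.3cr}. Starting from $\sigma\in V_1$ and $\lambda\in W_1$ satisfying \eqref{6.4crt} and \eqref{6.5crt}, I would transport them to $\tau:=\alpha(\sigma)\in V'$ and $\kappa:=\alpha(\lambda)\in W'$ via the isomorphisms $V_1\cong V'$ (Proposition \ref{P5.5}) and $W_1\cong W'$ established in Section 5, and then verify \eqref{6.2cr} and \eqref{6.3cr} for these choices.

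The first key step is to upgrade the counit-level conditions \eqref{6.4crt}, \eqref{6.5crt} to the full coalgebra-level identities \eqref{6.6eqt}, \eqref{6.7eqt} that appear inside the proof of Lemma \ref{Lm6.1}. Writing $\Phi(g):=\sigma_X(g_{X1}^\psi\otimes\lambda^{X2}(g_{X2}))\lambda^{X1}(g_{X2})_\psi$, condition \eqref{6.5crt} reads $\Phi(g)=\epsilon_X(g)\cdot 1$. Using coassociativity of $\delta_{XXX}$ in the form $f_{X1X1}\otimes f_{X1X2}\otimes f_{X2}=f_{X1}\otimes f_{X2X1}\otimes f_{X2X2}$, the right-hand side of \eqref{6.6eqt} rewrites as $f_{X1}\otimes\Phi(f_{X2})=f_{X1}\otimes\epsilon_X(f_{X2})\cdot 1=f\otimes 1$ by the counit axiom of $\mathscr C$; hence \eqref{6.5crt} $\Rightarrow$ \eqref{6.6eqt}, and the reverse implication is recovered by applying $\epsilon_X$ in the first tensor slot. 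The identical manipulation with $\Psi(g):=\sigma_X(g_{X1}\otimes\lambda^{X1}(g_{X2}))\lambda^{X2}(g_{X2})$ gives \eqref{6.4crt} $\Leftrightarrow$ \eqref{6.7eqt}. This step, where coassociativity and the counit condition do the real work, is the conceptual heart of the converse.

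Next I would feed \eqref{6.6eqt} and \eqref{6.7eqt} back through the two diagrams of Lemma \ref{Lm6.1}. Those diagrams are obtained purely by Yoneda from \eqref{6.2cr} applied to $(A,\matholdcal H_X^U)$ and from \eqref{6.3cr} applied to $\matholdcal H_X^U$, together with the explicit formulas \eqref{44r5} for $\tau$ and \eqref{522eqcf} for $\kappa$; every rewriting in that argument is reversible, so once \eqref{6.6eqt}, \eqref{6.7eqt} are available the diagrams commute, which is exactly \eqref{6.2cr} for $\matholdcal M=(A,\matholdcal H_X^U)$ and \eqref{6.3cr} for $\matholdcal N=\matholdcal H_X^U$, for every $X\in Ob(\mathscr C)$ and $U\in Vect_K$.

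It remains to promote these identities from the generating objects to arbitrary $\matholdcal M\in {_A^{\mathscr C}}Ctr(\psi)$ and $\matholdcal N\in {^\mathscr C}Ctr$, and this is the step I expect to be the main obstacle. I would settle it by the following uniformity observation: for fixed $X$, the composites occurring in \eqref{6.2cr} and \eqref{6.3cr}, evaluated at any object, are assembled only from $\sigma_X$, $\lambda^X$, the multiplication $\mu_A$, the entwining $\psi_{XX}$, and the object's own structure maps $\pi_{XX}$ and $\mu_X$. Since those structure maps obey the universal contramodule and entwined-contramodule axioms \eqref{dg3.4} and \eqref{com3.2}, the computation that proves commutativity on $\matholdcal H_X^U$ (equivalently, the identities \eqref{6.6eqt}, \eqref{6.7eqt}) transcribes verbatim and object-independently, so \eqref{6.2cr}, \eqref{6.3cr} hold in general. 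As a consistency check one can also note that $\{\matholdcal H_X^U\}$ and $\{(A,\matholdcal H_X^U)\}$ are generating families (\emph{cf.} Proposition \ref{P3.4} and Theorem \ref{T3.8}) on which both sides of each triangle identity, being natural in the object, already agree. Either route delivers \eqref{6.2cr} and \eqref{6.3cr} in full, whence $(S_\psi,T_\psi)$ is a Frobenius pair, completing the converse.
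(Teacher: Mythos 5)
Your overall architecture coincides with the paper's: the forward implication is delegated to Lemma \ref{Lm6.1}, and for the converse you transport $\sigma\in V_1$, $\lambda\in W_1$ to $\tau\in V'$, $\kappa\in W'$ via the isomorphisms of Section 5 and aim at the triangle identities \eqref{6.2cr}, \eqref{6.3cr}. Your first step is correct and nicely done: writing the right-hand sides of \eqref{6.6eqt}, \eqref{6.7eqt} as $f_{X1}\otimes\Phi(f_{X2})$ resp.\ $f_{X1}\otimes\Psi(f_{X2})$ and invoking coassociativity and the counit axiom does show that \eqref{6.5crt} and \eqref{6.4crt} are equivalent to the full identities \eqref{6.6eqt} and \eqref{6.7eqt}, and by Yoneda these give \eqref{6.2cr} on the objects $(A,\matholdcal H_X^U)$ and \eqref{6.3cr} on the objects $\matholdcal H_X^U$.

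The gap is in your final step, the promotion to arbitrary objects, and neither of your two routes closes it. The ``consistency check'' route is actually unsound: two natural transformations that agree on a generating family need not agree everywhere. An arbitrary object is only an epimorphic image of a \emph{coproduct} of generators, and to transfer agreement along such a presentation one needs the relevant functors to send the coprojections to a jointly epimorphic family; but $T_\psi=(A,\_\_)$ is a right adjoint and does not preserve coproducts (nor are coproducts of contramodules computed objectwise), so agreement of $\kappa(T_\psi(\matholdcal N))\circ T_\psi(\tau(\matholdcal N))$ with the identity on each $\matholdcal H_X^U$ does not formally propagate to all $\matholdcal N$. Moreover, the generators the paper actually provides for ${_A^{\mathscr C}}Ctr(\psi)$ (Theorem \ref{T3.8}) are \emph{subobjects} of $(A,\matholdcal H_X^*)$, not the objects $(A,\matholdcal H_X^U)$ themselves. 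Your primary route (``transcribes verbatim'') points at the right phenomenon but asserts rather than proves it: the substantive content of the converse, which is exactly what the paper establishes in \eqref{615cd} and \eqref{617com}, is that at an \emph{arbitrary} object each composite factors as precomposition with a fixed linear map --- $\upsilon_X$ of \eqref{ct613cd} for \eqref{6.3cr}, and a map $\mathscr C(X,X)\longrightarrow A$ for \eqref{6.2cr} --- followed by the object's own structure maps, after which \eqref{6.4crt}, \eqref{6.5crt} reduce these fixed maps to $u_A\circ\epsilon_X$ and the unit/counit axioms finish. Establishing that factorization is a genuine computation requiring the $A$-linearity of the composites \eqref{611ctcd}--\eqref{612ctcd} (equivalently the $W_1$-relation \eqref{426n}) and the entwined compatibility \eqref{com3.2}; it is not a formal consequence of the representable case. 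As written, your argument proves \eqref{6.2cr} and \eqref{6.3cr} only on $(A,\matholdcal H_X^U)$ and $\matholdcal H_X^U$, so the converse is incomplete.
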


\begin{proof}  The only if part follows from Lemma \ref{Lm6.1}. We suppose therefore that  there exists $\sigma\in V_1$ and $\lambda\in W_1$ satisfying 
\eqref{6.4crt} and \eqref{6.5crt}. We have $\tau\in V'\cong V_1$ corresponding to $\sigma\in V_1$ and $\kappa\in W'\cong W_1$ corresponding to $\lambda\in W_1$. For $
\matholdcal N\in {^{\mathscr C}}Ctr$, we first check the condition in \eqref{6.3cr}. For $X\in Ob(\mathscr C)$, it follows from \eqref{44r5} that $ T_\psi(\tau(\matholdcal N))(X)$ may be expressed as the composition 
\begin{equation}\label{611ctcd}
\begin{CD}
(A,\matholdcal N(X))@>(A,\sigma_X,\matholdcal N(X))>>(A,A,\mathscr C(X,X),\matholdcal N(X)) @>(A,A,\pi_{XX})>> (A,A,\matholdcal N(X))
\end{CD}
\end{equation} Similarly, it follows from \eqref{522eqcf} that $ \kappa(T_\psi(\matholdcal N))(X)$  may be expressed as the composition 
\begin{equation}\label{612ctcd}{\small
\begin{CD}
 (A,A,\matholdcal N(X))@>(A,\mu_A,\matholdcal N(X))>> (A,A,A,\matholdcal N(X))@>(\lambda^X,A,\matholdcal N(X))>> (\mathscr C(X,X),A,\matholdcal N(X))
 @>(\psi_{XX},\matholdcal N(X))>> (A,\mathscr C(X,X),\matholdcal N(X)) @>(A,\pi_{XX})>> (A,\matholdcal N(X))
\end{CD}}
\end{equation} We now see that the composition of \eqref{611ctcd} and \eqref{612ctcd} may be expressed as the composition 
\begin{equation}\label{6.13eqt} (A,\matholdcal N(X))\xrightarrow{(\upsilon_X,\matholdcal N(X))}(A,\mathscr C(X,X),\matholdcal N(X))\xrightarrow{(A,\pi_{XX})}(A,\matholdcal N(X))
\end{equation} where
\begin{equation}\label{ct613cd}
\upsilon_X:\mathscr C(X,X)\otimes A\longrightarrow A\qquad (f\otimes a)\mapsto \sigma_X(f_{X1}\otimes a_\psi\lambda^{X1}(f_{X2}^\psi))\lambda^{X2}(f_{X2}^\psi)
\end{equation} Since both \eqref{611ctcd} and \eqref{612ctcd} are $A$-linear, it follows that the following diagram commutes
\begin{equation}\label{615cd}\small 
\xymatrix{
(A,\matholdcal N(X))\ar[rrrr]^{(\upsilon_X,\matholdcal N(X))}\ar[dd]_{(\mu_A,\matholdcal N(X))}
\ar[rrrrdd]^{(A,\epsilon_X,\matholdcal N(X))}&&&& (A,\mathscr C(X,X),\matholdcal N(X)) \ar[rr]^{(A,\pi_{XX})} && (A,\matholdcal N(X))
\ar[d]^{(\mu_A,\matholdcal N(X))}\\
&&&&&& (A,A,\matholdcal N(X))\ar[d]^{(A,u_A,\matholdcal N(X))}\\
(A,A,\matholdcal N(X)) \ar[rr]_{(A,\upsilon_X,\matholdcal N(X))}&& (A,A,\mathscr C(X,X),\matholdcal N(X)) \ar[rr]_{ (A,u_A,\mathscr C(X,X),\matholdcal N(X))} &&
(A,\mathscr C(X,X),\matholdcal N(X)) \ar[rr]_{(A,\pi_{XX})}& & (A,\matholdcal N(X))\\
}
\end{equation} Here, the lower triangle in \eqref{615cd} commutes because it follows from \eqref{6.4crt} that
\begin{equation}\label{rel616}
\upsilon_X(f\otimes 1)a= \sigma_X(f_{X1}\otimes \lambda^{X1}(f_{X2}))\lambda^{X2}(f_{X2})a=\epsilon_X(f)a
\end{equation} for $f\in \mathscr C(X,X)$, $a\in A$. Since $(A,\pi_{XX})\circ (A,\epsilon_X,\matholdcal N(X))=id$ and $(A,u_A,\matholdcal N(X))\circ (\mu_A,\matholdcal N(X))=id$, it is now clear from \eqref{615cd} that the composition in \eqref{6.13eqt} is the identity. 

\smallskip
Finally, we consider $(\matholdcal M,\mu^{\matholdcal M},\pi^{\matholdcal M})\in {_A^{\mathscr C}}Ctr(\psi)$. Then, we have the following commutative diagram
\begin{equation}\label{617com}\small
\xymatrix{
\matholdcal M(X) \ar[rrrrr]^{(\sigma_X,\matholdcal M(X))}\ar[rrrrrdddd]_{(\epsilon_X,\matholdcal M(X))}&&&&& (A,\mathscr C(X,X),\matholdcal M(X))\ar[d]^{(A,\mathscr C(X,X),\mu_X)} \ar[rrr]^{(A,\pi_{XX})}&&& 
(A,\matholdcal M(X))\ar[dd]^{(A,\mu_X)}
\\  &&&&& (A,\mathscr C(X,X),A,\matholdcal M(X))
\ar[d]^{(A,\psi_{XX},\matholdcal M(X))}&&&\\ &&&&&  (A,A,\mathscr C(X,X),\matholdcal M(X))\ar[rrr]_{(A,A,\pi_{XX})}\ar[d]^{(\lambda^X,\mathscr C(X,X),\matholdcal M(X))}&&& (A,A,\matholdcal M(X)) 
\ar[d]^{(\lambda^X,\matholdcal M(X))}\\
&&&&& (\mathscr C(X,X),\mathscr C(X,X),\matholdcal M(X)) \ar[d]^{(\delta_{XXX},\matholdcal M(X))}\ar[rrr]_{(\mathscr C(X,X),\pi_{XX})}&&& (\mathscr C(X,X),\matholdcal M(X)) 
\ar[d]^{\pi_{XX}}\\ &&&&& (\mathscr C(X,X),\matholdcal M(X))\ar[rrr]_{\pi_{XX}}&&&\matholdcal M(X)\\
}
\end{equation} Here, the triangle in \eqref{617com} commutes due to the relation in \eqref{6.5crt}. From the definitions in \eqref{44r5} and \eqref{522eqcf}, it is now clear that the composition in \eqref{6.2cr} applied to $(\matholdcal M,\mu^{\matholdcal M},\pi^{\matholdcal M})\in {_A^{\mathscr C}}Ctr(\psi)$ gives the identity.
\end{proof}

We now come to entwined comodules and the adjoint pair $(F_\psi,G_\psi)$. Again using the general criterion for a pair of functors to be Frobenius (see, for instance, 
\cite[$\S$ 1]{uni}), we see that $(F_\psi,G_\psi)$ is a Frobenius pair if and only if there exist $\tau\in V=Nat(G_\psi F_\psi ,id_{Com^{\mathscr C}})$ and 
$\kappa\in W=Nat(id_{Com_A^{\mathscr C}(\psi)},F_\psi G_\psi )$ such that 
\begin{gather}
 F_\psi(\tau(\mathcal N))\circ \kappa_{F_\psi(\mathcal N)}= id_{F_\psi(\mathcal N)}
: \mathcal N\otimes A \xrightarrow{\qquad \kappa_{F_\psi(\mathcal N)}\qquad} \mathcal N\otimes A\otimes A
\xrightarrow{\qquad F_\psi(\tau(\mathcal N))\qquad}\mathcal N\otimes A  \label{6.2cr7} \\
\tau(G_\psi(\mathcal M))\circ G_\psi(\kappa(\mathcal M))= id_{G_\psi(\mathcal M)}
: \mathcal M\xrightarrow{\qquad G_\psi(\kappa(\mathcal M))  \qquad} \mathcal M\otimes A\xrightarrow{\qquad \tau(G_\psi(\mathcal M))\qquad}  \mathcal M\label{6.3cr7}
\end{gather}
for $\mathcal M\in Com_A^{\mathscr C}(\psi)$ and $\mathcal N\in Com^{\mathscr C}$. We can now prove the following result.

\begin{Thm}\label{T6.27}
Let $(\mathscr C,A,\psi)$ be an entwining structure. Then, the functors
\begin{equation}
F_\psi: Com^{\mathscr C}\longrightarrow Com_A^{\mathscr C}(\psi) \qquad G_\psi: Com_A^{\mathscr C}(\psi)\longrightarrow Com^{\mathscr C}
\end{equation} form a Frobenius pair if and only if there exists $\sigma\in V_1$ and $\lambda\in W_1$ such that 
\begin{gather}
\epsilon_X(f)\cdot 1= \sigma_X(f_{X1}\otimes \lambda^{X1}(f_{X2}))\lambda^{X2}(f_{X2})\label{6.4crt7}\\
\epsilon_X(f)\cdot 1= \sigma_X(f_{X1}^\psi\otimes \lambda^{X2}(f_{X2}))\lambda^{X1}(f_{X2})_\psi\label{6.5crt7} 
\end{gather} for every $f\in \mathscr C(X,X)$, $X\in Ob(\mathscr C)$. 
\end{Thm}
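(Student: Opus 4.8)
The plan is to mirror the structure of the proof of Theorem \ref{P6.2}, transporting the argument from contramodules to comodules. First I would recall the general Frobenius criterion (\cite[$\S$ 1]{uni}, invoked just above the statement): the pair $(F_\psi,G_\psi)$ is Frobenius precisely when there exist $\tau\in V=Nat(G_\psi F_\psi,id_{Com^{\mathscr C}})$ and $\kappa\in W=Nat(id_{Com_A^{\mathscr C}(\psi)},F_\psi G_\psi)$ satisfying the two triangle identities \eqref{6.2cr7} and \eqref{6.3cr7}. By Proposition \ref{P4.4} and Proposition \ref{P4.9}, the isomorphisms $V_1\cong V$ and $W_1\cong W$ let me pass freely between $\sigma\leftrightarrow\tau=\alpha(\sigma)$ and $\lambda\leftrightarrow\kappa=\alpha(\lambda)$, using the explicit formulas $\tau(\mathcal M)(X)(m\otimes a)=m_{X0}\sigma_X(m_{X1}\otimes a)$ from Lemma \ref{Lem4.1} and $\kappa(\mathcal M)(X)(m)=m_{X0}\lambda^{X1}(m_{X1})\otimes\lambda^{X2}(m_{X1})$ from Lemma \ref{Lem4.6}.

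For the ``if'' direction I would assume $\sigma\in V_1$ and $\lambda\in W_1$ satisfy \eqref{6.4crt7} and \eqref{6.5crt7}, set $\tau=\alpha(\sigma)$, $\kappa=\alpha(\lambda)$, and verify the two identities by direct Sweedler computation. For \eqref{6.3cr7} on $\mathcal M\in Com_A^{\mathscr C}(\psi)$ and $m\in\mathcal M(X)$, applying $\kappa$ produces $m_{X0}\lambda^{X1}(m_{X1})\otimes\lambda^{X2}(m_{X1})$, and applying $\tau$ then requires the coaction of the element $m_{X0}\lambda^{X1}(m_{X1})\in\mathcal M(X)$; expanding this via the entwined-comodule axiom \eqref{e2.5} and invoking coassociativity \eqref{not2.5} to replace the iterated coaction of $m$ by a single coaction followed by $\delta_{XXX}$, the accumulated coefficient collapses under \eqref{6.5crt7} to $\epsilon_X(m_{X1})\cdot 1$, and the comodule counit returns $m$. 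For \eqref{6.2cr7} on $\mathcal N\otimes A$ with $\mathcal N\in Com^{\mathscr C}$, I would compute the coaction on $\mathcal N\otimes A$ through Lemma \ref{L2.2} (which introduces the entwining factors $a_\psi$ and $(n_{X1})^\psi$), apply $\kappa$ and then $F_\psi(\tau(\mathcal N))=\tau(\mathcal N)\otimes A$, and reduce the resulting expression to $n\otimes a$.

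The main obstacle lies in this last reduction for \eqref{6.2cr7}: the entwining $\psi$ entangles the algebra variable $a$ with the coalgebra variable $n_{X1}$ before $\lambda$ and the second coaction act, so coassociativity cannot be applied directly. The key maneuver is to use the $W_1$-compatibility \eqref{426n} first, in the form $a_\psi\lambda^{X1}((n_{X1})^\psi)\otimes\lambda^{X2}((n_{X1})^\psi)=\lambda^{X1}(n_{X1})\otimes\lambda^{X2}(n_{X1})a$, to strip the $\psi$'s off $n_{X1}$ and pull $a$ to the outside; only then does coassociativity \eqref{not2.5} put the expression in the shape $\sigma_X(f_{X1}\otimes\lambda^{X1}(f_{X2}))\lambda^{X2}(f_{X2})$ with $f=n_{X1}$, on which \eqref{6.4crt7} and the counit finish the computation. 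Getting the order of operations right, namely the entwining identity before coassociativity, is the delicate step, exactly dual to the role of \eqref{6.5crt} in the triangle of \eqref{617com} in the contramodule proof.

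For the ``only if'' direction I would argue as in Lemma \ref{Lm6.1}: assuming $(F_\psi,G_\psi)$ is Frobenius, I choose $\tau\in V$, $\kappa\in W$ realizing \eqref{6.2cr7} and \eqref{6.3cr7}, put $\sigma=\beta(\tau)\in V_1$ and $\lambda=\beta(\kappa)\in W_1$, and specialize the two triangle identities to the regular comodules $\mathcal H_X$ and $\mathcal H_X\otimes A$. Evaluating at $X$ and composing with $\epsilon_X$ (using \eqref{Mc111} and the counit, as in the separability proofs of Section 4) reads off \eqref{6.4crt7} from \eqref{6.2cr7} and \eqref{6.5crt7} from \eqref{6.3cr7}, completing the equivalence.
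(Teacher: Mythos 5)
Your proposal is correct and follows essentially the same route as the paper: the paper's (very terse) proof likewise reduces the statement to the triangle identities \eqref{6.2cr7} and \eqref{6.3cr7} via the isomorphisms $V_1\cong V$ and $W_1\cong W$, obtains the ``only if'' part by specializing to $\mathcal H_X\in Com^{\mathscr C}$ and $\mathcal H_X\otimes A\in Com_A^{\mathscr C}(\psi)$, and leaves the ``if'' part as a direct verification, which your Sweedler computations (using \eqref{e2.5}, coassociativity, \eqref{426n}, and the two conditions \eqref{6.4crt7}--\eqref{6.5crt7}) carry out correctly. The only quibble is your remark that coassociativity ``cannot be applied directly'' before \eqref{426n} in the \eqref{6.2cr7} computation — in fact either order works — but this does not affect the validity of your argument.
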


\begin{proof}
The only if part follows by applying \eqref{6.2cr7} and \eqref{6.3cr7} to $\mathcal H_X\in Com^{\mathscr C}$  and 
$\mathcal H_X\otimes A\in  Com_A^{\mathscr C}(\psi)$ for $X\in Ob(\mathscr C)$. The if part may also be verified directly.
\end{proof}

\section{Maschke type theorems}

In this final section, our objective is to prove Maschke type results for the categories $ {_A^{\mathscr C}}Ctr(\psi)$ and $Com_A^{\mathscr C}(\psi)$. In the case of modules over an ordinary entwining structure, such results were proved by Brzezi\'nski \cite{Brz1} and earlier by Caenepeel, Militiaru and Zhu \cite{CMZ} for the particular case of Doi-Hopf modules. 

\smallskip
Throughout this section, we will assume that the algebra $A$ is finite dimensional as a vector space over $K$. In particular, this means that we have a standard coevaluation map
\begin{equation}\label{coev7}
coev_A: K\longrightarrow A\otimes A^*\qquad t\mapsto t\sum_{i\in I} a_i\otimes a_i^*
\end{equation} where $\{a_i\}_{i\in I}$ is a basis of $A$, and $\{a^*_i\}_{i\in I}$ its dual basis. It is well known that the coevaluation map is independent of the choice of the basis
$\{a_i\}_{i\in I}$. The following  now extends \cite[Definition 4.5]{Brz1} to the case of coalgebras with several objects.

\begin{defn}\label{D7.1}
Let $(\mathscr C,A,\psi)$ be an entwining structure and let $A$ be finite dimensional over $K$. A normalized cointegral on $(\mathscr C,A,\psi)$ consists of a collection of
$K$-linear maps
\begin{equation}\label{norm7}
\gamma=\{\gamma_{X}:A^*\otimes\mathscr C(X,X)\longrightarrow A\}_{X\in Ob(\mathscr C)}
\end{equation} satisfying the following three conditions (for $X$, $Y\in Ob(\mathscr C)$:

\smallskip
(1) $(A\otimes \psi_{XY})\circ (\psi_{XY}\otimes\gamma_X)\circ (\mathscr C(X,Y)\otimes coev_A\otimes \mathscr C(X,X))\circ \delta_{XXY}=(A\otimes \gamma_Y\otimes \mathscr C(X,Y))\circ (coev_A\otimes \delta_{XYY})$ which can also be expressed as
\begin{equation}\label{7.2e}
\begin{CD}
(\mathscr C(X,Y),A,A,\_\_) @>\qquad (\delta_{XXY},\_\_)\circ (\mathscr C(X,X),coev_A,\mathscr C(X,Y),\_\_)\circ (\gamma_X,\psi_{XY},\_\_)\circ (\psi_{XY},A,\_\_)\qquad >=(\delta_{XYY},coev_A,\_\_)\circ  (\mathscr C(X,Y),\gamma_Y,A,\_\_)>   (\mathscr C(X,Y),\_\_) 
\end{CD}
\end{equation}

\smallskip
(2) $ (A\otimes \mu_A)\circ(A\otimes \gamma_X\otimes A)\circ (coev_A\otimes \mathscr C(X,X)\otimes A)=(\mu_A\otimes \gamma_X)\circ (A\otimes coev_A
\otimes \mathscr C(X,X))\circ \psi_{XX}$  which can also be expressed as
\begin{equation}\label{7.3e}
\begin{CD}
(A,A,\_\_)  @>\qquad  (A,
\mathscr C(X,X),coev_A,\_\_)\circ (A,\gamma_X,A,\_\_)\circ (\mu_A,A,\_\_)\qquad>= (\psi_{XX},\_\_)\circ (\mathscr C(X,X),coev_A,A,\_\_)\circ (\gamma_X,\mu_A,\_\_)>  (A,\mathscr C(X,X),\_\_) 
\end{CD}
\end{equation}

\smallskip
(3) $\mu_A\circ (A\otimes \gamma_X)\circ (coev_A\otimes \mathscr C(X,X))=u_A\circ \epsilon_X$ which can also be expressed as
\begin{equation}\label{7.4e}
\begin{CD}
(A,\_\_)@>\qquad   (\mathscr C(X,X),coev_A,\_\_)\circ (\gamma_X,A,\_\_)\circ (\mu_A,\_\_)\qquad>=  (\epsilon_X,\_\_)\circ (u_A,\_\_)> (\mathscr C(X,X),\_\_)
\end{CD} 
\end{equation} 
\end{defn}

We now have the following result.

\begin{lem}\label{L7.2}
Let $(\matholdcal M,\pi,\mu)$,  $(\matholdcal M',\pi',\mu')$ be objects in 
${_A^{\mathscr C}}Ctr(\psi)$ and let $\phi:\matholdcal M\longrightarrow\matholdcal M'$ be a morphism in ${^{\mathscr C}}Ctr$. Then, the collection
$\{\widehat\phi(Y)\}_{Y\in Ob(\mathscr C)}$  of morphisms defined by the compositions
\begin{equation}\label{comp7.5}
\xymatrix{
\matholdcal M(Y) \ar[d]_{\widehat\phi(Y)}\ar[rr]^{\mu_Y} &&  (A,\matholdcal M(Y)) \ar[rr]^{(A,\phi(Y))} && (A,\matholdcal M'(Y))\ar[rr]^{(A,\mu_Y')}&& (A,A,
\matholdcal M'(Y)) \ar[d]^{(\gamma_Y,A,\matholdcal M'(Y))}\\
\matholdcal M'(Y) &&\ar[ll]^{\pi'_{YY}}(\mathscr C(Y,Y),\matholdcal M'(Y))  && && \ar[llll]^{(\mathscr C(Y,Y),coev_A,\matholdcal M'(Y))}(\mathscr C(Y,Y),A^*,A,\matholdcal M'(Y))\\
}
\end{equation}
induces a map $\widehat\phi:\matholdcal M\longrightarrow \matholdcal M'$ in $ {_A^{\mathscr C}}Ctr(\psi)$.
\end{lem}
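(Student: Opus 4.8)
The plan is to verify that the family $\widehat\phi=\{\widehat\phi(Y)\}_{Y\in Ob(\mathscr C)}$ satisfies the two requirements that Definition \ref{D3.2} imposes on a morphism in ${_A^{\mathscr C}}Ctr(\psi)$: that each component is $A$-linear, and that the family is compatible with the $\mathscr C$-contramodule structure maps $\pi$, $\pi'$. Reading off \eqref{comp7.5}, we have
\[
\widehat\phi(Y)=\pi'_{YY}\circ (\mathscr C(Y,Y),coev_A,\matholdcal M'(Y))\circ (\gamma_Y,A,\matholdcal M'(Y))\circ (A,\mu'_Y)\circ (A,\phi(Y))\circ \mu_Y ,
\]
which is a well-defined $K$-linear map $\matholdcal M(Y)\longrightarrow \matholdcal M'(Y)$ by composition; only the two compatibilities require work, and each is a diagram chase whose essential input is one of the defining conditions of the normalized cointegral.

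For the $A$-linearity I would check, for each $Y$, the identity $\mu'_Y\circ \widehat\phi(Y)=(A,\widehat\phi(Y))\circ \mu_Y$. The incoming action is carried by $\mu_Y$ and the outgoing one by $\mu'_Y$, and the essential step is to commute the multiplication $\mu_A$ on $A$ past the cointegral $\gamma_Y$, the coevaluation $coev_A$ and the entwining $\psi_{YY}$. This is precisely the content of cointegral condition (2), equation \eqref{7.3e}; combined with the $A$-module axiom for $\mu'$ and the entwined-contramodule compatibility \eqref{com3.2} for $\matholdcal M'$, it yields the required equality. Note that this compatibility does \emph{not} use the hypothesis that $\phi$ is a $\mathscr C$-contramodule morphism, since $\phi$ enters only through the $K$-linear map $(A,\phi(Y))$.

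For the $\mathscr C$-contramodule compatibility I would verify, for $X$, $Y\in Ob(\mathscr C)$, that $\widehat\phi(X)\circ \pi_{XY}=\pi'_{XY}\circ (\mathscr C(X,Y),\widehat\phi(Y))$. Here the hypothesis that $\phi$ lies in ${^{\mathscr C}}Ctr$ enters in the form $\phi(X)\circ \pi_{XY}=\pi'_{XY}\circ (\mathscr C(X,Y),\phi(Y))$, together with the two contramodule axioms of \eqref{dg3.4} for the structure maps of $\matholdcal M$ and $\matholdcal M'$ and the entwined compatibility \eqref{com3.2}. The crux is to rewrite a composite involving $\delta_{XXY}$, $\gamma_X$, $coev_A$ and $\psi_{XY}$ so that the comultiplication is transferred from the $X$-side to the $Y$-side, and this is exactly cointegral condition (1), equation \eqref{7.2e}. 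Condition (3), equation \eqref{7.4e}, is the normalization condition and is not needed for the present lemma; it is reserved for the semisimplicity and Maschke statements that follow, where it ensures that the averaging $\phi\mapsto\widehat\phi$ produces the required section. The main obstacle is this second compatibility: its commutative diagram is large, one must interleave the entwining maps $\psi$ and the coevaluation $coev_A$ with the contramodule structure while respecting the hom-tensor bookkeeping of the notation $(L_k,\dots,L_1)$, and threading \eqref{7.2e} through the diagram at exactly the right place is where the real content lies, whereas the $A$-linearity is a comparatively short chase.
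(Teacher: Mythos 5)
Your proposal is correct and follows essentially the same route as the paper: the paper likewise reduces the $\mathscr C$-contramodule compatibility to a diagram chase whose key step is cointegral condition \eqref{7.2e} (after using that $\mu$ is a contramodule morphism via \eqref{com3.2} and that $\phi$ is one by hypothesis), and establishes $A$-linearity via \eqref{7.3e} together with the module axiom and \eqref{com3.2} for $\matholdcal M'$. Your observations that $\phi$ enters the $A$-linearity check only as a $K$-linear map, and that the normalization \eqref{7.4e} is not needed here but only later (in Lemma \ref{L7.3}(a), feeding into Proposition \ref{P7.4}), also match the paper exactly.
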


\begin{proof}
We will first show that $\widehat{\phi}$ is well behaved with respect to the contramodule structures.  Since $\matholdcal M\in {_A^{\mathscr C}}Ctr(\psi)$ is an entwined contramodule, we know that the maps $\mu_Y:\matholdcal M(Y)\longrightarrow (A,\matholdcal M(Y))$ are well behaved with respect 
to the contramodule structures on $\matholdcal M$ and $(A,\matholdcal M)$.  We also know that $\phi$ is already a morphism in  ${^{\mathscr C}}Ctr$. It therefore suffices to show that the following diagram commutes
\begin{equation*}\small
\xymatrix{
(\mathscr C(X,Y),A,A,\matholdcal M'(Y)) \ar[d]^{(\mathscr C(X,Y),\gamma_Y,A,\matholdcal M'(Y))}\ar[rr]^{(\psi_{XY},A,\matholdcal M'(Y))}&&  (A,\mathscr C(X,Y),A,\matholdcal M'(Y)) \ar[rr]^{(A,\psi_{XY},\matholdcal M'(Y))}& & (A,A,\mathscr C(X,Y),\matholdcal M'(Y))
\ar[d]_{(\gamma_X,A,\mathscr C(X,Y),\matholdcal M'(Y))}
\ar[rr]^{(A,A,\pi'_{XY})} &&   (A,A,\matholdcal M'(X)) \ar[d]_{(\gamma_X,A,\matholdcal M'(X))}
\\ (\mathscr C(X,Y),\mathscr C(Y,Y),A^*,A,\matholdcal M'(Y)) \ar[d]^{(\mathscr C(X,Y),\mathscr C(Y,Y),coev_A,\matholdcal M'(Y))} &&&   &
\bullet 
\ar[d]_{(\mathscr C(X,X),coev_A,\mathscr C(X,Y),\matholdcal M'(Y))}\ar[rr]^{(\mathscr C(X,X),A^*,A,\pi'_{XY})\qquad}&& (\mathscr C(X,X),A^*,A,\matholdcal M'(X)) \ar[d]_{(\mathscr C(X,X),coev_A,\matholdcal M'(X))} \\
(\mathscr C(X,Y),\mathscr C(Y,Y),\matholdcal M'(Y))  \ar[d]^{(\mathscr C(X,Y),\pi'_{YY})}&&&  &
\bullet \ar[d]^{(\delta_{XXY},\matholdcal M'(Y))} \ar[rr]^{(\mathscr C(X,X),\pi'_{XY})\quad}&&   (\mathscr C(X,X),\matholdcal M'(X)) 
\ar[d]_{\pi'_{XX}}\\
(\mathscr C(X,Y),\matholdcal M'(Y))\ar[rrrr]_{id}&&&  & (\mathscr C(X,Y),\matholdcal M'(Y))\ar[rr]_{\pi'_{XY}} &&   \matholdcal M'(X)  \\
}
\end{equation*} In the above diagram, we have suppressed some of the objects and replaced them by $\bullet$ where these are clear from the source and target
of the morphisms involved. We now check that
\begin{equation*}
\begin{array}{ll}
\pi'_{XY}\circ (\mathscr C(X,Y),\pi'_{YY})\circ (\mathscr C(X,Y),\mathscr C(Y,Y),coev_A,\matholdcal M'(Y))\circ (\mathscr C(X,Y),\gamma_Y,A,\matholdcal M'(Y)) &\\
=\pi'_{XY}\circ (\delta_{XYY},\matholdcal M'(Y))\circ (\mathscr C(X,Y),\mathscr C(Y,Y),coev_A,\matholdcal M'(Y))\circ (\mathscr C(X,Y),\gamma_Y,A,\matholdcal M'(Y)) &\\
=\pi'_{XY}\circ (\delta_{XXY},\matholdcal M'(Y))\circ (\mathscr C(X,X),coev_A,\mathscr C(X,Y),\matholdcal M'(Y))\circ (\gamma_X,
\psi_{XY},\matholdcal M'(Y)) \circ (\psi_{XY},A,
\matholdcal M'(Y))& \mbox{(by \eqref{7.2e})}\\
=\pi'_{XX}\circ (\mathscr C(X,X),coev_A,\matholdcal M'(X)) \circ (\gamma_X,A,\matholdcal M'(X))\circ (A,A,\pi'
_{XY})\circ  (A,\psi_{XY},\matholdcal M'(Y)) \circ (\psi_{XY},A,\matholdcal M'(Y))&\\
\end{array}
\end{equation*}
Similarly, in order to show that each $\widehat\phi(Y)$ is $A$-linear it suffices to check that the following diagram commutes. 
\begin{equation*}\small 
\xymatrix{
(A,\matholdcal M'(Y)) 
\ar[dd]^{(\mu_A,\matholdcal M'(Y))}\ar[rr]^{(A,\mu'_Y)}&&(A,A,\matholdcal M'(Y)) \ar[d]^{(A,A,\mu'_Y)}\ar[rrr]^{(\gamma_Y,A,\matholdcal M'(Y))} 
&&&  (\mathscr C(Y,Y),A^*,A,\matholdcal M'(Y))\ar[rrr]^{(\mathscr C(Y,Y),coev_A,\matholdcal M'(Y))} 
\ar[d]^{(\mathscr C(Y,Y),A^*,A,\mu'_Y)} &&& (\mathscr C(Y,Y),\matholdcal M'(Y)) \ar[d]^{(\mathscr C(Y,Y),\mu'_Y)}\\
  &&(A,A,A,\matholdcal M'(Y)) \ar[rrr]^{(\gamma_Y,A,A,\matholdcal M'(Y))} &&& (\mathscr C(Y,Y),A^*,A,A,\matholdcal M'(Y)) \ar[rrr]^{(\mathscr C(Y,Y),coev_A,A,\matholdcal M'(Y))}  &&& (\mathscr C(Y,Y),A,\matholdcal M'(Y))
\ar[d]^{(\psi_{YY},\matholdcal M'(Y))}\\
(A,A,\matholdcal M'(Y)) \ar[rr]^{(A,A,\mu'_Y)}&& (A,A,A,\matholdcal M'(Y)) \ar[rrr]^{(A,\gamma_Y,A,\matholdcal M'(Y))} &&&  (A,\mathscr C(Y,Y),A^*,A,\matholdcal M'(Y))\ar[rrr]^{(A,\mathscr C(Y,Y),coev_A,\matholdcal M'(Y))}  &&& (A,\mathscr C(Y,Y),\matholdcal M'(Y)) \\
}
\end{equation*} We now verify that
\begin{equation*}
\begin{array}{ll}
(A,\mathscr C(Y,Y),coev_A,\matholdcal M'(Y))\circ (A,\gamma_Y,A,\matholdcal M'(Y)) \circ  (A,A,\mu'_Y)\circ (\mu_A,
\matholdcal M'(Y))& \\
=(A,\mathscr C(Y,Y),coev_A,\matholdcal M'(Y))\circ (A,\gamma_Y,A,\matholdcal M'(Y)) \circ  (\mu_A,A,\matholdcal M'(Y))\circ (A,
\mu'_Y)& \\
=(\psi_{YY},\matholdcal M'(Y))\circ (\mathscr C(Y,Y),coev_A,A,\matholdcal M'(Y))\circ (\gamma_Y,\mu_A,
\matholdcal M'(Y))\circ (A,
\mu'_Y)&   \mbox{(by \eqref{7.3e})} \\
=(\psi_{YY},\matholdcal M'(Y))\circ (\mathscr C(Y,Y),coev_A,A,\matholdcal M'(Y))\circ (\gamma_Y,A,A,
\matholdcal M'(Y))\circ (A,A,\mu'_Y)\circ (A,
\mu'_Y)&\\
= (\psi_{YY},\matholdcal M'(Y))\circ (\mathscr C(Y,Y),\mu'_Y) \circ (\mathscr C(Y,Y),coev_A,\matholdcal M'(Y)) \circ 
(\gamma_Y,A,\matholdcal M'(Y)) \circ  (A,
\mu'_Y) &\\
\end{array}
\end{equation*} This proves the result. 
\end{proof}

\begin{lem}\label{L7.3} Let  $(\matholdcal M,\pi,\mu)$,  $(\matholdcal M',\pi',\mu')\in {_A^{\mathscr C}}Ctr(\psi)$ and let $\phi:\matholdcal M\longrightarrow\matholdcal M'$ be a morphism in ${^{\mathscr C}}Ctr$.

\smallskip
(a)  If $\phi:\matholdcal M\longrightarrow\matholdcal M'$ is already a morphism in ${_A^{\mathscr C}}Ctr(\psi)$, then 
$\widehat\phi=\phi$.

\smallskip
(b) If $\phi_P:\matholdcal P\longrightarrow \matholdcal M$ and $\phi_Q:\matholdcal M'\longrightarrow \matholdcal Q$ are morphisms in 
$ {_A^{\mathscr C}}Ctr(\psi)$, then $\widehat{\phi\circ\phi_P}=\widehat\phi\circ \phi_P$ and $\widehat{\phi_Q\circ \phi}=
\phi_Q\circ \widehat\phi$.
\end{lem}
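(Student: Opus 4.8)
The plan is to unwind the definition of $\widehat\phi$ given by the composition \eqref{comp7.5} and then, for part (a), to collapse its middle portion using the cointegral axioms together with the unit and counit conditions, while for part (b) to argue purely by naturality of the arrows occurring in \eqref{comp7.5}. Spelling out \eqref{comp7.5}, the map $\widehat\phi(Y)$ is the composite
$$\pi'_{YY}\circ(\mathscr C(Y,Y),coev_A,\matholdcal M'(Y))\circ(\gamma_Y,A,\matholdcal M'(Y))\circ(A,\mu'_Y)\circ(A,\phi(Y))\circ\mu_Y,$$
where $\mu_Y$ is the $A$-module structure on the source $\matholdcal M(Y)$ and $\mu'_Y$, $\pi'_{YY}$ those of the target $\matholdcal M'(Y)$.

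For part (a), assume $\phi$ is a morphism in ${_A^{\mathscr C}}Ctr(\psi)$. First I would use the $A$-linearity relation $(A,\phi(Y))\circ\mu_Y=\mu'_Y\circ\phi(Y)$ to rewrite $(A,\mu'_Y)\circ(A,\phi(Y))\circ\mu_Y$ as $(A,\mu'_Y)\circ\mu'_Y\circ\phi(Y)$. Associativity of the left $A$-module $\matholdcal M'(Y)$ gives $(A,\mu'_Y)\circ\mu'_Y=(\mu_A,\matholdcal M'(Y))\circ\mu'_Y$, so that the block $(\mathscr C(Y,Y),coev_A,\matholdcal M'(Y))\circ(\gamma_Y,A,\matholdcal M'(Y))\circ(\mu_A,\matholdcal M'(Y))$ now appears, which is exactly the left-hand side of the cointegral condition \eqref{7.4e} with $X=Y$ and the open slot filled by $\matholdcal M'(Y)$. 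Applying \eqref{7.4e} replaces this block by $(\epsilon_Y,\matholdcal M'(Y))\circ(u_A,\matholdcal M'(Y))$, so that $\widehat\phi(Y)=\pi'_{YY}\circ(\epsilon_Y,\matholdcal M'(Y))\circ(u_A,\matholdcal M'(Y))\circ\mu'_Y\circ\phi(Y)$. The unit axiom $(u_A,\matholdcal M'(Y))\circ\mu'_Y=id$ and then the counit condition $\pi'_{YY}\circ(\epsilon_Y,\matholdcal M'(Y))=id$ from \eqref{dg3.4} leave exactly $\phi(Y)$, giving $\widehat\phi=\phi$.

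Part (b) is formal. For $\widehat{\phi\circ\phi_P}=\widehat\phi\circ\phi_P$, since $\phi_P$ is $A$-linear I would replace the innermost piece $(A,\phi_P(Y))\circ\mu_Y^{\matholdcal P}$ of the composite defining $\widehat{\phi\circ\phi_P}(Y)$ by $\mu_Y\circ\phi_P(Y)$; the factor $\phi_P(Y)$ then splits off on the right and what remains is precisely $\widehat\phi(Y)$. For $\widehat{\phi_Q\circ\phi}=\phi_Q\circ\widehat\phi$, I would push $\phi_Q(Y)$ leftward through the composite: $A$-linearity of $\phi_Q$ lets it pass through $(A,\mu_Y^{\matholdcal Q})$, naturality of $(\gamma_Y,A,-)$ and of $(\mathscr C(Y,Y),coev_A,-)$ in the open slot carries it to the form $(\mathscr C(Y,Y),\phi_Q(Y))$, and finally, because $\phi_Q$ is a morphism in ${^{\mathscr C}}Ctr$, the relation $\pi_{YY}^{\matholdcal Q}\circ(\mathscr C(Y,Y),\phi_Q(Y))=\phi_Q(Y)\circ\pi'_{YY}$ brings $\phi_Q(Y)$ to the outside, leaving $\phi_Q(Y)\circ\widehat\phi(Y)$.

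No individual step is difficult; the only essential input is the cointegral axiom \eqref{7.4e} used in part (a), and I expect the main obstacle to be purely organisational, namely correctly tracking the hom-tensor identifications and the directions of the functorial arrows $(\gamma_Y,A,-)$ and $(\mathscr C(Y,Y),coev_A,-)$, so that associativity, the unit and counit axioms, and naturality are each invoked in the correct argument slot.
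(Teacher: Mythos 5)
Your proof is correct and takes essentially the same route as the paper: part (a) is the identical computation (use $A$-linearity of $\phi$, module associativity to produce $(\mu_A,\matholdcal M'(Y))$, apply the cointegral axiom \eqref{7.4e}, then collapse via the module unit and the contramodule counit from \eqref{dg3.4}). For part (b) the paper merely states that it ``may also be verified directly,'' and your argument — splitting off $\phi_P$ via its $A$-linearity, and pushing $\phi_Q$ leftward through $(A,\mu_Y^{\matholdcal Q})$, the naturality of $(\gamma_Y,A,\_\_)$ and $(\mathscr C(Y,Y),coev_A,\_\_)$, and the relation $\pi_{YY}^{\matholdcal Q}\circ(\mathscr C(Y,Y),\phi_Q(Y))=\phi_Q(Y)\circ\pi'_{YY}$ — is precisely that direct verification.
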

\begin{proof}
We first prove (a). If each $\phi(Y)$ in \eqref{comp7.5} is also $A$-linear, we observe that $(A,\phi(Y))\circ \mu_Y=\mu'_Y\circ \phi(Y)$. Accordingly, it follows from \eqref{comp7.5} that
\begin{equation*}
\begin{array}{lll}
\widehat\phi(Y)&=\pi'_{YY}\circ (\mathscr C(Y,Y),coev_A,\matholdcal M'(Y))\circ  (\gamma_Y,A,\matholdcal M'(Y))\circ (A,\mu'_Y)\circ 
(A,\phi(Y))\circ \mu_Y &\\
&=\pi'_{YY}\circ (\mathscr C(Y,Y),coev_A,\matholdcal M'(Y))\circ  (\gamma_Y,A,\matholdcal M'(Y))\circ (A,\mu'_Y)\circ 
\mu'_Y\circ \phi(Y) &\\
&=\pi'_{YY}\circ (\mathscr C(Y,Y),coev_A,\matholdcal M'(Y))\circ  (\gamma_Y,A,\matholdcal M'(Y))\circ (\mu_A,\matholdcal M'(Y))\circ 
\mu'_Y\circ \phi(Y) &\\
&=\pi'_{YY}\circ (\epsilon_Y,\matholdcal M'(Y))\circ (u_A,\matholdcal M'(Y))\circ 
\mu'_Y\circ \phi(Y) &  \mbox{(by \eqref{7.4e})}\\
&=\phi(Y)&\\
\end{array}
\end{equation*}The result of (b) may also be verified directly. \end{proof}

\begin{thm}\label{P7.4}
Let $(\mathscr C,A,\psi)$ be an entwining structure such that there exists a normalized cointegral  $\gamma=\{\gamma_{X}:A^*\otimes\mathscr C(X,X)\longrightarrow A\}_{X\in Ob(\mathscr C)}$. Then, a morphism $\phi:\matholdcal M\longrightarrow\matholdcal M'$  in ${_A^{\mathscr C}}Ctr(\psi)$ has a section (resp. a retraction) in ${_A^{\mathscr C}}Ctr(\psi)$ if and only if it has a section (resp. a retraction) in ${^{\mathscr C}}Ctr$.
\end{thm}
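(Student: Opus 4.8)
The plan is to treat the two directions separately, with the ``only if'' direction being essentially formal and the ``if'' direction relying on the averaging construction $\phi\mapsto\widehat\phi$ supplied by Lemma \ref{L7.2}. For the ``only if'' direction I would simply invoke the forgetful functor $S_\psi:{_A^{\mathscr C}}Ctr(\psi)\longrightarrow{^{\mathscr C}}Ctr$ of Proposition \ref{P3.7}: since $S_\psi$ is a functor and acts as the identity on underlying morphisms, a section $s$ (resp. retraction $r$) of $\phi$ in ${_A^{\mathscr C}}Ctr(\psi)$, satisfying $\phi\circ s=id$ (resp. $r\circ\phi=id$), continues to satisfy the same identity in ${^{\mathscr C}}Ctr$. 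Thus the existence of a cointegral is not needed for this implication, and the whole content lies in the ``if'' direction.

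For the ``if'' direction of the section case, I would start from a section $s:\matholdcal M'\longrightarrow\matholdcal M$ of $\phi$ in ${^{\mathscr C}}Ctr$, so that $\phi\circ s=id_{\matholdcal M'}$ holds merely as contramodule morphisms. Applying the hat construction of Lemma \ref{L7.2} to $s$ produces $\widehat s:\matholdcal M'\longrightarrow\matholdcal M$, which is by construction a morphism in ${_A^{\mathscr C}}Ctr(\psi)$. Because $\phi$ itself is already a morphism in ${_A^{\mathscr C}}Ctr(\psi)$, Lemma \ref{L7.3}(b), applied with $\phi$ playing the role of the $A$-linear post-composition factor $\phi_Q$ and $s$ the generic contramodule morphism, gives $\widehat{\phi\circ s}=\phi\circ\widehat s$. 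On the other hand $\phi\circ s=id_{\matholdcal M'}$ is itself a morphism in ${_A^{\mathscr C}}Ctr(\psi)$, so Lemma \ref{L7.3}(a) yields $\widehat{\phi\circ s}=id_{\matholdcal M'}$. Comparing the two computations gives $\phi\circ\widehat s=id_{\matholdcal M'}$, so $\widehat s$ is the desired section in ${_A^{\mathscr C}}Ctr(\psi)$.

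The retraction case is entirely dual. Given a retraction $r:\matholdcal M'\longrightarrow\matholdcal M$ in ${^{\mathscr C}}Ctr$ with $r\circ\phi=id_{\matholdcal M}$, I would form $\widehat r\in{_A^{\mathscr C}}Ctr(\psi)$ and this time use Lemma \ref{L7.3}(b) with $\phi$ as the $A$-linear pre-composition factor $\phi_P$ and $r$ as the generic morphism, obtaining $\widehat{r\circ\phi}=\widehat r\circ\phi$. Since $r\circ\phi=id_{\matholdcal M}$ is $A$-linear, Lemma \ref{L7.3}(a) gives $\widehat{r\circ\phi}=id_{\matholdcal M}$, whence $\widehat r\circ\phi=id_{\matholdcal M}$ and $\widehat r$ is a retraction in ${_A^{\mathscr C}}Ctr(\psi)$.

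Since Lemmas \ref{L7.2} and \ref{L7.3} are already established, the theorem is a formal consequence and I expect no serious computational obstacle; the cointegral $\gamma$ enters only through those lemmas, where it supplies an $A$-linear averaging operator that fixes $A$-linear maps. The one point that genuinely requires care is the bookkeeping: in each of the two cases I must match the sources and targets of $\phi$, $s$ and $r$ against the hypotheses of Lemma \ref{L7.3}(b) so that exactly one composition factor is the $A$-linear map $\phi$ while the other is the bare contramodule map, and then invoke the normalization $\widehat{id}=id$ from Lemma \ref{L7.3}(a). Once the composition orders are aligned correctly, the argument closes immediately.
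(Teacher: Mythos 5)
Your proof is correct and is essentially identical to the paper's: the paper likewise applies the averaging construction $\xi\mapsto\widehat\xi$ of Lemma \ref{L7.2} to the section and concludes via Lemma \ref{L7.3} that $\phi\circ\widehat\xi=\widehat{\phi\circ\xi}=id$, treating the retraction case and the trivial ``only if'' direction (forgetfulness along $S_\psi$) exactly as you do. Your careful matching of $\phi$ with the roles $\phi_P$ and $\phi_Q$ in Lemma \ref{L7.3}(b) is precisely the bookkeeping the paper leaves implicit in its phrase ``follows similarly.''
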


\begin{proof}
Suppose that $\phi$ has a section $\xi:\matholdcal M'\longrightarrow\matholdcal M$  in ${^{\mathscr C}}Ctr$, i.e., $\phi\circ\xi=id$. By Lemma 
\ref{L7.2}, we see that  $\widehat\xi:\matholdcal M'\longrightarrow\matholdcal M$ is a morphism in ${_A^{\mathscr C}}Ctr(\psi)$. By Lemma \ref{L7.3},
$\phi\circ \widehat\xi=\widehat{\phi\circ\xi}=id$ and hence $\phi$ has a section in ${_A^{\mathscr C}}Ctr(\psi)$. The result for the retraction follows similarly. 
\end{proof}

We now recall the following two notions from \cite{CM2003}.

\begin{defn}\label{D7.4} (see \cite[$\S$ 3]{CM2003})
Let $F:\mathscr D\longrightarrow \mathscr D'$ be a functor between abelian categories. 

\smallskip
(a) $F$ is said to be semisimple if it satisfies the following condition: any short exact sequence $0\longrightarrow X\longrightarrow Y\longrightarrow Z\longrightarrow 0$ in 
$\mathscr D$ such that $0\longrightarrow F(X)\longrightarrow F(Y)\longrightarrow F(Z)\longrightarrow 0$ is split exact in $\mathscr D'$ must split in $\mathscr D$. 

\smallskip
(b) $F$ is said to be Maschke if it satisfies the following condition: for any morphisms $i:X\longrightarrow X'$, $f:X\longrightarrow Y$  in $\mathscr D$ such that $F(i):F(X)\longrightarrow F(X')$ is a split monomorphism in 
$\mathscr D'$, there exists $g:X'\longrightarrow Y$ such that $g\circ i=f$. 
\end{defn}

We are now ready to prove the main result of this section for entwined contramodules. 

\begin{Thm}\label{T7.5}
Let $(\mathscr C,A,\psi)$ be an entwining structure. Suppose there exists a normalized cointegral  $\gamma=\{\gamma_{X}:A^*\otimes\mathscr C(X,X)\longrightarrow A\}_{X\in Ob(\mathscr C)}$ on $(\mathscr C,A,\psi)$. Then, we have

\smallskip
(a) The  functor $S_\psi: {_A^{\mathscr C}}Ctr(\psi)\longrightarrow {^{\mathscr C}}Ctr$ is a semisimple functor.

\smallskip
(b) The  functor $S_\psi: {_A^{\mathscr C}}Ctr(\psi)\longrightarrow {^{\mathscr C}}Ctr$ is a Maschke functor.

\end{Thm}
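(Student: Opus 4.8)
The plan is to deduce both statements directly from Proposition \ref{P7.4}, which transfers the existence of sections and retractions between ${^{\mathscr C}}Ctr$ and ${_A^{\mathscr C}}Ctr(\psi)$. The substantive work has already been carried out in Lemma \ref{L7.2} (the construction of the averaged morphism $\widehat\phi$ from the cointegral $\gamma$) and Lemma \ref{L7.3} (its functoriality), so what remains is to match the hypotheses of Definition \ref{D7.4} against the section/retraction language of Proposition \ref{P7.4}.

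For part (a), I would start with a short exact sequence $0\longrightarrow \matholdcal X\xrightarrow{\textrm{ }f\textrm{ }}\matholdcal Y\xrightarrow{\textrm{ }g\textrm{ }}\matholdcal Z\longrightarrow 0$ in ${_A^{\mathscr C}}Ctr(\psi)$ whose image under $S_\psi$ is split exact in ${^{\mathscr C}}Ctr$. Since $S_\psi$ is the forgetful functor, $S_\psi(f)=f$ and $S_\psi(g)=g$ on underlying data, and split exactness means that the monomorphism $f$ admits a retraction in ${^{\mathscr C}}Ctr$. As $f$ is already a morphism in ${_A^{\mathscr C}}Ctr(\psi)$, Proposition \ref{P7.4} upgrades this to a retraction of $f$ in ${_A^{\mathscr C}}Ctr(\psi)$, whence the original sequence splits there. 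This is exactly the semisimplicity condition of Definition \ref{D7.4}(a).

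For part (b), I would take morphisms $i:\matholdcal X\longrightarrow \matholdcal X'$ and $f:\matholdcal X\longrightarrow \matholdcal Y$ in ${_A^{\mathscr C}}Ctr(\psi)$ with $S_\psi(i)$ a split monomorphism in ${^{\mathscr C}}Ctr$, i.e. $i$ admits a retraction in ${^{\mathscr C}}Ctr$. Proposition \ref{P7.4} then produces a retraction $r:\matholdcal X'\longrightarrow \matholdcal X$ of $i$ inside ${_A^{\mathscr C}}Ctr(\psi)$, and the morphism $g:=f\circ r:\matholdcal X'\longrightarrow \matholdcal Y$ is a morphism of entwined contramodules satisfying $g\circ i=f\circ r\circ i=f$. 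This verifies the Maschke condition of Definition \ref{D7.4}(b).

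The argument is essentially formal once Proposition \ref{P7.4} is in hand, so there is no serious obstacle at this stage; the only care needed is in reading off the correct one of ``section'' versus ``retraction'' from the definitions of split exactness and split monomorphism, and in noting that $i$ and $f$ being morphisms in ${_A^{\mathscr C}}Ctr(\psi)$ is precisely what licenses the application of Proposition \ref{P7.4} to them. The genuine difficulty of the section lies upstream, in extracting from the normalized cointegral the averaging operator $\widehat{(-)}$ of Lemma \ref{L7.2} and establishing the compatibility relations of Lemma \ref{L7.3}, on which Proposition \ref{P7.4} itself rests.
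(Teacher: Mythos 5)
Your proof is correct, and part (a) coincides with the paper's own argument: both reduce the splitting of the sequence to the transfer of retractions furnished by Proposition \ref{P7.4}. For part (b), however, you take a genuinely different and more self-contained route. The paper does not verify the Maschke condition directly; instead it observes that $S_\psi$, being a forgetful functor, reflects monomorphisms, and then invokes the general result of \cite[Proposition 3.7]{CM2003}, which makes (b) a formal consequence of semisimplicity (a) together with reflection of monomorphisms. Your argument instead verifies Definition \ref{D7.4}(b) directly: from the hypothesis that $S_\psi(i)$ is a split monomorphism you read off a retraction of $i$ in ${^{\mathscr C}}Ctr$, upgrade it to a retraction $r$ of $i$ in ${_A^{\mathscr C}}Ctr(\psi)$ via Proposition \ref{P7.4}, and set $g:=f\circ r$, which satisfies $g\circ i=f\circ r\circ i=f$ and is a morphism of entwined contramodules since $f$ and $r$ are. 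This is watertight, avoids the external citation, and makes the mechanism transparent; what it forgoes is the general principle relating semisimple and Maschke functors that the paper's phrasing keeps visible. Both routes rest on the same substantive input, namely Lemmas \ref{L7.2} and \ref{L7.3} and Proposition \ref{P7.4}.
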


\begin{proof}
Let $0\longrightarrow \matholdcal M'\longrightarrow \matholdcal M\longrightarrow \matholdcal M''\longrightarrow 0$ be a short exact sequence in 
$ {_A^{\mathscr C}}Ctr(\psi)$ that splits in $ { ^{\mathscr C}}Ctr$. Applying Proposition \ref{P7.4}, it is clear that the short exact sequence also 
splits in $ {_A^{\mathscr C}}Ctr(\psi)$. This proves (a). Also it is clear that $S_\psi$ reflects monomorphisms, i.e., a morphism $\phi$ in $ {_A^{\mathscr C}}Ctr(\psi)$ is a monomorphism if  $S_\psi(\phi)$ is a monomorphism. The result of (b) now follows by applying \cite[Proposition 3.7]{CM2003}. 
\end{proof}

We can also establish similar results for the category  $Com_A^{\mathscr C}(\psi)$ of entwined comodules over the datum  $(\mathscr C,A,\psi)$. More precisely, let 
 $(\mathcal M,\rho,\mu)$,  $(\mathcal M',\rho',\mu')$ be objects in 
$Com_A^{\mathscr C}(\psi)$ and let $\phi:\mathcal M\longrightarrow\mathcal M'$ be a morphism in $Com^{\mathscr C}$. Then, we can verify that the collection
$\{\check\phi(Y)\}_{Y\in Ob(\mathscr C)}$  of morphisms defined by the compositions (compare \cite[Lemma 4.7]{Brz1})
\begin{equation}\label{comp7.6}
\xymatrix{
\mathcal M(Y) \ar[d]_{\check\phi(Y)}\ar[rr]^{\rho_{YY}} &&  \mathcal M(Y)\otimes \mathscr C(Y,Y) \ar[rrrr]^{ \mathcal M(Y)\otimes coev_A\otimes \mathscr C(Y,Y) } &&  &&  \mathcal M(Y)\otimes A\otimes A^*\otimes  \mathscr C(Y,Y)  \ar[d]^{\mathcal M(Y)\otimes A\otimes \gamma_Y}\\
\mathcal M'(Y) &&\ar[ll]^{\mu'_{Y}} \mathcal M'(Y)\otimes A  && \ar[ll]^{\phi(Y)\otimes A}  \mathcal M(Y)\otimes A && \ar[ll]^{\mu_Y\otimes A} \mathcal M(Y)\otimes A\otimes A\\
}
\end{equation}
induces a map $\check\phi:\mathcal M\longrightarrow \mathcal M'$ in $ Com_A^{\mathscr C}(\psi)$.  We can also check that the association $\phi\mapsto \check\phi$ is well behaved with respect to compositions in a manner similar to Lemma \ref{L7.3}. As in Proposition \ref{P7.4}, this gives the following result.

\begin{thm}\label{P7.6}
Let $(\mathscr C,A,\psi)$ be an entwining structure such that there exists a normalized cointegral  $\gamma=\{\gamma_{X}:A^*\otimes\mathscr C(X,X)\longrightarrow A\}_{X\in Ob(\mathscr C)}$. Then, a morphism $\phi:\mathcal M\longrightarrow\mathcal M'$  in $Com_A^{\mathscr C}(\psi)$ has a section (resp. a retraction) in $Com_A^{\mathscr C}(\psi)$ if and only if it has a section (resp. a retraction) in $Com^{\mathscr C}$.
\end{thm}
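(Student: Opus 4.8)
The plan is to mirror the proof of Proposition~\ref{P7.4} verbatim, transporting the argument from entwined contramodules to entwined comodules through the dual construction $\phi\mapsto\check\phi$ recorded in \eqref{comp7.6}. The two inputs I would rely on are the comodule analogues of Lemma~\ref{L7.2} and Lemma~\ref{L7.3}, both asserted in the paragraph preceding the statement: first, that for any morphism $\phi:\mathcal M\longrightarrow\mathcal M'$ in $Com^{\mathscr C}$ the collection $\{\check\phi(Y)\}_{Y\in Ob(\mathscr C)}$ of \eqref{comp7.6} genuinely assembles into a morphism $\check\phi:\mathcal M\longrightarrow\mathcal M'$ in $Com_A^{\mathscr C}(\psi)$; and second, that $\check{\,\cdot\,}$ fixes morphisms that are already $A$-linear (so $\check\phi=\phi$ whenever $\phi\in Com_A^{\mathscr C}(\psi)$) and is compatible with pre- and post-composition by morphisms of $Com_A^{\mathscr C}(\psi)$, that is $\widecheck{\phi\circ\phi_P}=\check\phi\circ\phi_P$ and $\widecheck{\phi_Q\circ\phi}=\phi_Q\circ\check\phi$ for $\phi_P,\phi_Q$ in $Com_A^{\mathscr C}(\psi)$.

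Granting these, the argument is purely formal. Suppose $\phi:\mathcal M\longrightarrow\mathcal M'$ in $Com_A^{\mathscr C}(\psi)$ admits a section $\xi:\mathcal M'\longrightarrow\mathcal M$ in $Com^{\mathscr C}$, so that $\phi\circ\xi=id_{\mathcal M'}$. The construction produces $\check\xi\in Com_A^{\mathscr C}(\psi)(\mathcal M',\mathcal M)$, and since $\phi$ is itself a morphism of entwined comodules the compositional property gives $\phi\circ\check\xi=\widecheck{\phi\circ\xi}=\widecheck{id_{\mathcal M'}}$; as $id_{\mathcal M'}$ is $A$-linear, the fixed-point property forces $\widecheck{id_{\mathcal M'}}=id_{\mathcal M'}$, so $\check\xi$ is a section of $\phi$ in $Com_A^{\mathscr C}(\psi)$. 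The retraction case is symmetric: if $\zeta\circ\phi=id_{\mathcal M}$ in $Com^{\mathscr C}$, then $\check\zeta\circ\phi=\widecheck{\zeta\circ\phi}=\widecheck{id_{\mathcal M}}=id_{\mathcal M}$ in $Com_A^{\mathscr C}(\psi)$. The reverse implications are immediate, since any section (resp.\ retraction) in $Com_A^{\mathscr C}(\psi)$ remains one in $Com^{\mathscr C}$ after applying the forgetful functor $G_\psi$.

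The genuinely laborious content lies not in the theorem but in the two supporting facts, which I would establish by dualizing the diagram chases of Lemma~\ref{L7.2} and Lemma~\ref{L7.3} under the tensor--hom correspondence. Concretely, verifying that $\check\phi$ respects the $\mathscr C$-coaction should invoke cointegral axiom~(1) of \eqref{7.2e}, checking $A$-linearity of each $\check\phi(Y)$ should invoke axiom~(2) of \eqref{7.3e}, and the identity $\check\phi=\phi$ for $A$-linear $\phi$ (the analogue of Lemma~\ref{L7.3}(a)) should invoke the normalization axiom~(3) of \eqref{7.4e}, exactly as $\epsilon_X$-normalization collapses the coevaluation--cointegral composite to the identity in the contramodule computation. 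I expect the main obstacle to be bookkeeping rather than conceptual: keeping the factors $A$, $A^*$ and $\mathscr C(Y,Y)$ in the correct order across the coevaluation insertion, and applying each $\psi_{YY}$ on the intended tensor slot, so that the three cointegral axioms line up with the relevant subdiagrams.
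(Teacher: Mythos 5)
Your proposal is correct and follows essentially the same route as the paper, which proves Proposition \ref{P7.6} precisely by transporting the argument of Proposition \ref{P7.4} through the deformation $\phi\mapsto\check\phi$ of \eqref{comp7.6}, invoking (as you do) the comodule analogues of Lemma \ref{L7.2} and Lemma \ref{L7.3} and then running the identical formal section/retraction computation, with the converse handled by the forgetful functor. Your assignment of the cointegral axioms \eqref{7.2e}, \eqref{7.3e} and \eqref{7.4e} to coaction compatibility, $A$-linearity and the fixed-point identity $\check\phi=\phi$, respectively, matches exactly how these axioms are deployed in the contramodule verifications being dualized.
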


We conclude with the following result for the category $Com_A^{\mathscr C}(\psi)$ of entwined comodules, which can be verified in a manner similar to Theorem \ref{T7.5}.

\begin{Thm}\label{T7.7}
Let $(\mathscr C,A,\psi)$ be an entwining structure. Suppose there exists a normalized cointegral  $\gamma=\{\gamma_{X}:A^*\otimes\mathscr C(X,X)\longrightarrow A\}_{X\in Ob(\mathscr C)}$  on $(\mathscr C,A,\psi)$. Then, we have

\smallskip
(a) The  functor $G_\psi: Com_A^{\mathscr C}(\psi)\longrightarrow Com^{\mathscr C}$ is a semisimple functor.

\smallskip
(b) The  functor $G_\psi:  Com_A^{\mathscr C}(\psi)\longrightarrow Com^{\mathscr C}$ is a Maschke functor.

\end{Thm}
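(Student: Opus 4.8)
The plan is to mirror the proof of Theorem \ref{T7.5} almost verbatim, replacing the contramodule input Proposition \ref{P7.4} by its comodule counterpart Proposition \ref{P7.6}, which the preceding discussion has already supplied. Since all the genuine homological content is packaged into Proposition \ref{P7.6}, the argument for Theorem \ref{T7.7} itself is purely formal once that proposition is granted, and both parts reduce to recognizing the relevant categorical conditions from Definition \ref{D7.4}.

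For part (a), I would begin with a short exact sequence $0\longrightarrow \mathcal M'\longrightarrow \mathcal M\longrightarrow \mathcal M''\longrightarrow 0$ in $Com_A^{\mathscr C}(\psi)$ whose image under $G_\psi$ is split exact in $Com^{\mathscr C}$. Splitting of the image means exactly that the epimorphism $\mathcal M\longrightarrow \mathcal M''$ admits a section, equivalently that the monomorphism $\mathcal M'\longrightarrow \mathcal M$ admits a retraction, in $Com^{\mathscr C}$. Proposition \ref{P7.6} then promotes this section (resp.\ retraction) to one in $Com_A^{\mathscr C}(\psi)$, so the original sequence splits there. This is precisely the semisimplicity condition of Definition \ref{D7.4}(a).

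For part (b), I would first record that $G_\psi$ reflects monomorphisms. Indeed, kernels in both $Com_A^{\mathscr C}(\psi)$ and $Com^{\mathscr C}$ are computed objectwise in $Vect_K$, and $G_\psi$ is the identity on the underlying families of vector spaces; hence a morphism $\phi$ in $Com_A^{\mathscr C}(\psi)$ is a monomorphism precisely when $G_\psi(\phi)$ is one in $Com^{\mathscr C}$. Combined with the section-promotion statement of Proposition \ref{P7.6}, which is exactly the hypothesis needed to invoke \cite[Proposition 3.7]{CM2003}, the Maschke property of $G_\psi$ follows immediately from that proposition.

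The genuine obstacle lies not in Theorem \ref{T7.7} but one level down, in Proposition \ref{P7.6}, that is, in the comodule analogues of Lemma \ref{L7.2} and Lemma \ref{L7.3}. One must verify that the assignment $\phi\mapsto\check\phi$ built from the composite \eqref{comp7.6} actually lands in $Com_A^{\mathscr C}(\psi)$, i.e.\ that each $\check\phi(Y)$ is simultaneously $\mathscr C$-colinear and right $A$-linear, that $\phi\mapsto\check\phi$ restricts to the identity on genuine morphisms of $Com_A^{\mathscr C}(\psi)$, and that it is compatible with pre- and post-composition. These checks are diagram chases dual to the two large commutative diagrams in the proof of Lemma \ref{L7.2}, now phrased with $\otimes$ in place of internal $\mathrm{Hom}$, and they consume all three normalized-cointegral axioms \eqref{7.2e}--\eqref{7.4e} together with the entwining relations \eqref{e2.5} and \eqref{ent2}. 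The delicate bookkeeping is threading $coev_A$ and the maps $\gamma_Y$ through the coaction $\rho_{YY}$; once this is carried out, Theorem \ref{T7.7} follows formally as above.
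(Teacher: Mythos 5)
Your proposal is correct and follows the paper's own route exactly: the paper proves Theorem \ref{T7.7} by transplanting the proof of Theorem \ref{T7.5}, with Proposition \ref{P7.6} (built on the assignment $\phi\mapsto\check\phi$ from \eqref{comp7.6} and the comodule analogue of Lemma \ref{L7.3}) replacing Proposition \ref{P7.4}, then deducing (a) from the splitting promotion and (b) from the fact that $G_\psi$ reflects monomorphisms together with \cite[Proposition 3.7]{CM2003}. Your observation that the real content sits in Proposition \ref{P7.6} rather than in the theorem itself likewise matches the paper, which leaves those diagram chases as verifications dual to Lemma \ref{L7.2}.
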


\small
\begin{bibdiv}
\begin{biblist}

\bib{BBR-}{article}{
   author={Balodi, M.},
   author={Banerjee, A.},
   author={Ray, S.},
   title={Cohomology of modules over H-categories and co-H-categories},
   journal={Canadian J. Math.},
   volume={72},
   date={2020},
   number={5},
   pages={1352--1385},
}

\bib{BBR1}{article}{
   author={Balodi, M.},
   author={Banerjee, A.},
   author={Ray, S.},
   title={Entwined modules over linear categories and Galois extensions},
   journal={Israel J. Math.},
   volume={241},
   date={2021},
   number={2},
   pages={623--692},
}

\bib{BBR2}{article}{
   author={Balodi, M.},
   author={Banerjee, A.},
   author={Ray, S.},
   title={Categories of modules, comodules and contramodules over
   representations},
   journal={Forum Math.},
   volume={36},
   date={2024},
   number={1},
   pages={111--152},
}
	
\bib{AB}{article}{
   author={Banerjee, A.},
   title={Entwined modules over representations of categories},
   journal={Algebr. Represent. Theory},
   volume={26},
   date={2023},
   number={6},
   pages={3185--3221},
}

\bib{Baz}{article}{
   author={Bazzoni, S.},
   author={Positselski, L.},
   title={Matlis category equivalences for a ring epimorphism},
   journal={J. Pure Appl. Algebra},
   volume={224},
   date={2020},
   number={10},
   pages={106398, 25},
}

\bib{P}{article}{
   author={Bazzoni, S.},
   author={Positselski, L.},
   author={\v{S}\v{t}ov\'{\i}\v{c}ek, J.},
   title={Projective covers of flat contramodules},
   journal={Int. Math. Res. Not. IMRN},
   date={2022},
   number={24},
   pages={19527--19564},
}

\bib{BrMj}{article}{
   author={Brzezi\'nski, T.},
   author={Majid, S.},
   title={Coalgebra bundles},
   journal={Comm. Math. Phys.},
   volume={191},
   date={1998},
   number={2},
   pages={467--492},
}

\bib{Brz1999}{article}{
   author={Brzezi\'{n}ski, T.},
   title={On modules associated to coalgebra Galois extensions},
   journal={J. Algebra},
   volume={215},
   date={1999},
   number={1},
   pages={290--317},
}

\bib{uni}{article}{
   author={Brzezi\'nski, T.},
   author={Caenepeel, S.},
   author={Militaru, G.},
   author={Zhu, S.},
   title={Frobenius and Maschke type theorems for Doi-Hopf modules and
   entwined modules revisited: a unified approach},
   conference={
      title={Ring theory and algebraic geometry},
      address={Le\'on},
      date={1999},
   },
   book={
      series={Lecture Notes in Pure and Appl. Math.},
      volume={221},
      publisher={Dekker, New York},
   },
}

\bib{Brz1}{article}{
   author={Brzezi\'{n}ski, T.},
   title={Frobenius properties and Maschke-type theorems for entwined
   modules},
   journal={Proc. Amer. Math. Soc.},
   volume={128},
   date={2000},
   number={8},
   pages={2261--2270},
}

\bib{Brz2002}{article}{
   author={Brzezi\'{n}ski, T.},
   title={The structure of corings: induction functors, Maschke-type
   theorem, and Frobenius and Galois-type properties},
   journal={Algebr. Represent. Theory},
   volume={5},
   date={2002},
   number={4},
   pages={389--410},
}

\bib{Bul1}{article}{
   author={Bulacu, D.},
   author={Caenepeel, S.},
   author={Torrecillas, B.},
   title={Frobenius and separable functors for the category of entwined
   modules over cowreaths, I: General theory},
   journal={Algebr. Represent. Theory},
   volume={23},
   date={2020},
   number={3},
   pages={1119--1157},
}

\bib{Bul2}{article}{
   author={Bulacu, D.},
   author={Caenepeel, S.},
   author={Torrecillas, B.},
   title={Frobenius and separable functors for the category of entwined
   modules over cowreaths, II: applications},
   journal={J. Algebra},
   volume={515},
   date={2018},
   pages={236--277},
}

\bib{CMZ}{article}{
   author={Caenepeel, S.},
   author={Militaru, G.},
   author={Zhu, Shenglin},
   title={A Maschke type theorem for Doi-Hopf modules and applications},
   journal={J. Algebra},
   volume={187},
   date={1997},
   number={2},
   pages={388--412},
}

\bib{C13}{article}{
   author={Caenepeel, S.},
   author={Militaru, G.},
   author={Zhu, S.},
   title={Doi-Hopf modules, Yetter-Drinfel\cprime d modules and Frobenius type
   properties},
   journal={Trans. Amer. Math. Soc.},
   volume={349},
   date={1997},
   number={11},
   pages={4311--4342},
}

\bib{C10}{article}{
   author={Caenepeel, S.},
   author={Militaru, G.},
   author={Ion, Bogdan},
   author={Zhu, Shenglin},
   title={Separable functors for the category of Doi-Hopf modules,
   applications},
   journal={Adv. Math.},
   volume={145},
   date={1999},
   number={2},
   pages={239--290},
}

\bib{C11}{article}{
   author={Caenepeel, S.},
   author={Ion, B.},
   author={Militaru, G.},
   author={Zhu, S.},
   title={Separable functors for the category of Doi-Hopf modules. II},
   conference={
      title={Hopf algebras and quantum groups},
      address={Brussels},
      date={1998},
   },
   book={
      series={Lecture Notes in Pure and Appl. Math.},
      volume={209},
      publisher={Dekker, New York},
   },
   date={2000},
   pages={69--103},
}

\bib{CaDe}{article}{
   author={Caenepeel, S.},
   author={De Groot, E.},
   title={Modules over weak entwining structures},
   conference={
      title={New trends in Hopf algebra theory},
      address={La Falda},
      date={1999},
   },
   book={
      series={Contemp. Math.},
      volume={267},
      publisher={Amer. Math. Soc., Providence, RI},
   },
   date={2000},
   pages={31--54},
}

\bib{CM2003}{article}{
   author={Caenepeel, S.},
   author={Militaru, G.},
   title={Maschke functors, semisimple functors and separable functors of
   the second kind: applications},
   journal={J. Pure Appl. Algebra},
   volume={178},
   date={2003},
   number={2},
   pages={131--157},
}

\bib{Das}{book}{
   author={D\u asc\u alescu, S.},
   author={N\u ast\u asescu, C.},
   author={Raianu, \c S.},
   title={Hopf algebras},
   series={Monographs and Textbooks in Pure and Applied Mathematics},
   volume={235},
   note={An introduction},
   publisher={Marcel Dekker, Inc., New York},
   date={2001},
}

\bib{DS}{article}{
   author={Day, B.},
   author={Street, R.},
   title={Monoidal bicategories and Hopf algebroids},
   journal={Adv. Math.},
   volume={129},
   date={1997},
   number={1},
   pages={99--157},
}

\bib{EiMo}{article}{
   author={Eilenberg, S.},
   author={Moore, J. C.},
   title={Foundations of relative homological algebra},
   journal={Mem. Amer. Math. Soc.},
   volume={55},
   date={1965},
   pages={39},
}

\bib{EV}{article}{
   author={Estrada, S.},
   author={Virili, S.},
   title={Cartesian modules over representations of small categories},
   journal={Adv. Math.},
   volume={310},
   date={2017},
   pages={557--609},
}

\bib{Toh}{article}{
   author={Grothendieck, A.},
   title={Sur quelques points d'alg\`{e}bre homologique},
   journal={Tohoku Math. J. (2)},
   volume={9},
   date={1957},
   pages={119--221},
}

\bib{Mc1}{article}{
   author={McCrudden, P.},
   title={Categories of representations of coalgebroids},
   journal={Adv. Math.},
   volume={154},
   date={2000},
   number={2},
   pages={299--332},
}

\bib{Mit}{article}{
   author={Mitchell, B.},
   title={Rings with several objects},
   journal={Advances in Math.},
   volume={8},
   date={1972},
   pages={1--161},
}

\bib{Pos}{book}{
   author={Positselski, L.},
   title={Homological algebra of semimodules and semicontramodules},
   series={Instytut Matematyczny Polskiej Akademii Nauk. Monografie
   Matematyczne (New Series) },
   volume={70},
   note={Semi-infinite homological algebra of associative algebraic
   structures;
   Appendix C in collaboration with Dmitriy Rumynin; Appendix D in
   collaboration with Sergey Arkhipov},
   publisher={Birkhauser/Springer Basel AG, Basel},
   date={2010},
   pages={xxiv+349},
}

\bib{Pos1}{article}{
   author={Positselski, L.},
   title={Two kinds of derived categories, Koszul duality, and
   comodule-contramodule correspondence},
   journal={Mem. Amer. Math. Soc.},
   volume={212},
   date={2011},
   number={996},
   pages={vi+133},
}

\bib{Pos0}{article}{
   author={Positselski, L.},
   title={Contraadjusted modules, contramodules, and reduced cotorsion
   modules},
   journal={Mosc. Math. J.},
   volume={17},
   date={2017},
   number={3},
   pages={385--455},
}

\bib{Posi}{article}{
   author={Positselski, L.},
   title={Comodules and contramodules over coalgebras associated with locally finite categories},
   journal={	arXiv:2307.13358 [math.CT]},
   date={2023},
}

\bib{SchP}{article}{
   author={Schauenburg, P.},
   title={Doi-Koppinen Hopf modules versus entwined modules},
   journal={New York J. Math.},
   volume={6},
   date={2000},
   pages={325--329},
}

\bib{Shap}{article}{
   author={Shapiro, I.},
   title={Mixed vs stable anti-Yetter-Drinfeld contramodules},
   journal={SIGMA Symmetry Integrability Geom. Methods Appl.},
   volume={17},
   date={2021},
   pages={Paper No. 026, 10},
}

\bib{Wiscc}{article}{
   author={Wisbauer, R.},
   title={Comodules and contramodules},
   journal={Glasg. Math. J.},
   volume={52},
   date={2010},
   number={A},
   pages={151--162},
}

\end{biblist}
\end{bibdiv}

\end{document}